\documentclass[reqno]{amsart}  
\usepackage{amsmath,amssymb,amsthm,mathrsfs}
\theoremstyle{plain}
\newtheorem{theorem}{Theorem}[section]
\newtheorem{lemma}[theorem]{Lemma}
\newtheorem{proposition}[theorem]{Proposition}

\newtheorem{corollary}[theorem]{Corollary}
\newtheorem{problem}[theorem]{Problem}
\newtheorem{remark}[theorem]{Remark}
\numberwithin{equation}{section}
\allowdisplaybreaks[1]

\theoremstyle{definition}

\theoremstyle{remark}

\errorcontextlines=0
%

\newcommand{\sbm}[1]{\left[\begin{smallmatrix} #1
    \end{smallmatrix}\right]}
\newcommand{\im}{\operatorname{im}}
\newcommand{\bbm}[1]{\begin{bmatrix}#1\end{bmatrix}}

\newcommand{\cU}{{\mathcal U}}
\newcommand{\cL}{{\mathcal L}}
\newcommand{\cD}{{\mathcal D}}
\newcommand{\cE}{{\mathcal E}}
\newcommand{\cF}{{\mathcal F}}
\newcommand{\cG}{{\mathcal G}}
\newcommand{\cK}{{\mathcal K}}
\newcommand{\cH}{{\mathcal H}}
\newcommand{\cR}{{\mathcal R}}

\newcommand{\cN}{{\mathcal N}}
\newcommand{\cS}{{\mathcal S}}

\newcommand{\cI}{{\mathcal I}}
\newcommand{\cV}{{\mathcal V}}

\newcommand{\cX}{{\mathcal X}}
\newcommand{\cY}{{\mathcal Y}}
\newcommand{\bcG}{{\boldsymbol{\mathcal G}}}
\newcommand{\bcK}{{\boldsymbol{\mathcal K}}}
\newcommand{\bcH}{{\boldsymbol{\mathcal H}}}
\newcommand{\bi}{{\mathbf i}}
\newcommand{\bcD}{{\boldsymbol{\mathcal D}}}
\newcommand{\bDelta}{{\boldsymbol{\Delta}}}
\newcommand{\bfrakS}{{\boldsymbol{\mathfrak S}}}
\newcommand{\ts}{\widetilde s}

\errorcontextlines=0
%

\pagestyle{plain}

\begin{document}

\title[Inverse Lifting Problem]{The inverse commutant lifting
problem: characterization
of associated Redheffer linear-fractional maps}

\author{Joseph A. Ball}
\address{Department of Mathematics \\ Virginia Tech \\ Blacksburg,
Virginia 24061}
\email{ball@math.vt.edu}
\author{Alexander Kheifets $^*$}
\address{Department of Mathematics\\ University of Massachusetts
Lowell \\
Lowell, MA 01854} \email{Alexander\_Kheifets@uml.edu}

\thanks{$^*$ The work of the second author was partially supported by
the University of Massachusetts Lowell Research and Scholarship Grant,
project number: H50090000000010}

\subjclass{Primary:  47A20;  Secondary: 47A57}

\keywords{Nehari problem, feedback connection, Hellinger space,
unitary coupling,
unitary extension, wave operator}

\begin{abstract}
    It is known that the set of all solutions of a commutant
    lifting and other interpolation problems   admits
    a Redheffer linear-fractional
    parametrization. The  method of unitary coupling identifies
     solutions of the lifting
    problem with minimal unitary extensions of a partially defined
    isometry constructed explicitly from the problem data. A special
    role is played by a particular unitary extension, called the
    {\em central or universal unitary extension}. The
    coefficient matrix for the Redheffer linear-fractional map has a
    simple expression in terms of the universal unitary extension.
    The universal unitary extension can be seen as a unitary
    coupling of four unitary operators (two bilateral shift operators
    together with two unitary operators coming from the problem data)
    which has special geometric structure. We use this special
   geometric structure to obtain an inverse theorem
   (Theorem \ref{T:converse1} as well as Theorem \ref{T:converse2})
   which characterizes the
    coefficient matrices for a Redheffer linear-fractional map
    arising in this way from a lifting problem.  When expressed in
    terms of Hellinger-space functional models (Theorem
\ref{T:converse3}),
    these results lead to
    generalizations of classical results of Arov
    and to characterizations of the coefficient matrix-measures of
    the lifting problem in terms of the density properties of the
    corresponding model spaces. The main tool is the formalism of
    unitary scattering systems developed in \cite{Kheifets-Dubov},
    \cite{HarmAIP}.
\end{abstract}

\maketitle

\section{Introduction}  \label{S:Intro}

One of the seminal results in the development of operator theory and
its applications over the past half century is the Commutant Lifting
Theorem:  {\em given contraction operators $T'$, $T''$ on respective
Hilbert spaces $\cH'$, $\cH''$ with respective isometric dilations
$\cV'$ and $\cV''$ on respective Hilbert spaces $\cK' \supset \cH'$
and $\cK'' \supset \cH''$ and given a contractive operator $X \colon
\cH' \to \cH''$ such that $X T' = T'' X$, then there exists a
operator $Y \colon \cK' \to \cK''$ also with $\| Y\| \le 1$ such that
$Y \cV' = \cV'' Y$ and $XP_{\cH'} = P_{\cH''}Y$} (where $P_{\cH'}$
and $P_{\cH''}$ are the orthogonal projections of $\cK'$ to $\cH'$
and $\cK''$ to
$\cH''$ respectively).  It is well known
that the general case can be reduced to the case where $T' =
\cU_{+}'$ is an isometry on a Hilbert space $\cK'_{+}$ with unitary
extension $\cU'$ on $\cK' \supset \cK'_{+}$ and where $T''$ is a
coisometry on $\cK''_{-}$ with unitary lift $\cU''$ on $\cK'' \supset
\cK''_{-}$.
Then this normalized commutant lifting problem can be formalized as
follows:

     \begin{problem}[\bf{Lifting Problem}] \label{P:lift}
     Given two  unitary operators $\cU'$ and $\cU''$ on Hilbert
spaces $\cK'$ and
     $\cK''$, respectively, along with subspaces $\cK'_{+} \subset
\cK'$ and
     $\cK''_{-}\subset\cK''$ that are assumed to be $*$-cyclic
     for $\mathcal U'$ and $\mathcal U''$ respectively (i.e., the
smallest
     reducing subspace for $\cU'$ containing $\cK'_{+}$ is the whole
     space $\cK'$ and similarly for $\cU''$ and $\cK''_{-}$) and
     such that
      \begin{equation} \label{invariant}
     {\mathcal U}^{\prime} {\mathcal K}'_{+} \subset {\mathcal
      K}'_{+}, \qquad {\mathcal U}''^* {\mathcal K}''_{-} \subset
{\mathcal
      K}''_{-},
      \end{equation}
      and given a contractive operator $X \colon {\mathcal K}'_{+}
\to  {\mathcal
      K}''_{-}$ which satisfies the intertwining condition
      \begin{equation}  \label{Xintertwine}
     X {\mathcal U}'|_{{\mathcal K}'_{+}} = P_{{\mathcal K}''_{-}}
     {\mathcal U}'' X,
       \end{equation}
       characterize all contractive intertwiners $Y$ of
       $({\mathcal U}', \cK')$
       and $({\mathcal U}'', \cK'')$ which lift $X$ in the (Halmos)
sense that
       \begin{equation}  \label{Y=liftX}
      P_{{\mathcal K}''_{-}}Y |_{{\mathcal K}'_{+}} = X.
       \end{equation}
       \end{problem}

 An important special case of this theorem was first proved by Sarason
 \cite{Sarason}; there he also explains the connections with
 classical Nevanlinna-Pick and Carath\'eodory-Fej\'er interpolation.
 Since the result was first formulated and proved in its full
 generality by Sz.-Nagy-Foias \cite{NF-CLT} (see also
 \cite{NF}), applications have been made to a variety of other
 contexts, including Nevanlinna-Pick interpolation for operator-valued
 functions and best approximation by analytic functions to a given
 $L^{\infty}$-function in $L^{\infty}$-norm (the Nehari problem)---we
 refer to the books \cite{FFbook} and \cite{FFGK} for an overview of
 all these developments. Moreover, the theorem has been generalized
 to still other contexts, e.g., to representations of nest
 algebras/time-varying systems \cite{Davidson, FFGK} as well as
 representations of more exotic Hardy algebras \cite{Frazho84,
Popescu-CLT1,
 BTV, BLTT, Sultanic, MS, McCS}
 with applications to more exotic Nevanlinna-Pick interpolation
 theorems \cite{Popescu98, Popescu03, MS04, Popescu06}.  There has
 also appeared a weighted version \cite{TV, BFF} as well as a relaxed
 version \cite{LiTimotin, FFK, FtHK1, tH1} of the theorem leading to
still other
 types of applications.  There are now also results on
 linear-fractional parametrizations for the set of all solutions (see
 \cite{AAK71, Kheifets-IWOTA96} for the Nehari problem---see also
 \cite[Chapter 5]{Peller} for an overview,
 see \cite[Chapter XIV]{FFbook} and \cite[Theorem VI.5.1]{FFGK}
 and the references there for the
 standard formulation Problem \ref{P:lift} of the Lifting Problem,
 see \cite{FtHK2, tH1, tH2} for the relaxed version of the
 lifting theorem); in the
 context of classical Nevanlinna-Pick interpolation, such
 parametrization results go back to the papers of Nevanlinna
 \cite{Nev19, Nev29}.

 The associated inverse problem asks for a
 characterization of which Redheffer linear-fractional
 coefficient-matrices arise in this way for some Lifting Problem.
 The inverse problem has been studied much less than the direct
problem; there are only a few publications in this direction
 (\cite{Arov90, Kh-regulariz, Kh-exposed}. We refer also to
 \cite{Kats1, Kats2, Sarason2, Sarason3})
 for some special cases of the inverse Lifting Problem (Nehari problem and
 Nevanlinna-Pick/Carath\'eodory-Schur
 interpolation problem), and the quite recent work \cite{tH-pre} on the
 inverse version of the relaxed commutant lifting problem.

 Our contribution here is to further develop the ideas in
 \cite{Kheifets-IWOTA96, HarmAIP} to obtain
 new results on the inverse problem
 (Theorems \ref{T:converse1}, \ref{T:converse2}, \ref{T:converse3})
 in terms of certain invariants associated with a Hellinger-space
model
 for the Lifting Problem.

 The starting point for our approach is the coupling method first
 introduced by Adamjan-Arov-Kre\u{\i}n \cite{AAK68} and developed
 further in \cite{CS, Arocena1, Arocena2, KKY, Kh-PhD, Moran,
 Kh-AIP1, Kh-AIP2, KhYu, Kheifets-Berkeley, Kheifets-IWOTA96, HarmAIP,
 FtHK1, LiTimotin}
 (some of these in several-variable or relaxed contexts---see also
 \cite{Sarason-Halmos} for a nice exposition).  In this approach one
 identifies solutions of the Lifting Problem with minimal unitary
 extensions of an isometry constructed in a natural way from the
 problem data.  We use here the term {\em isometry} (sometimes also
 called {\em semiunitary operator})  in the following technical
 sense:  we are given a Hilbert space $\cH_{0}$ and subspaces $\cD$
 and $\cD_{*}$ of $\cH_{0}$ together with a linear operator $V$ which
 maps $\cD$ isometrically onto $\cD_{*}$; we then say that $V$ is an
 {\em isometry on $\cH_{0}$ with domain $\cD$ and codomain $\cD_{*}$.}
 By a {\em minimal unitary extension} of $V$ we mean a unitary
 operator $\cU$ on a Hilbert space $\cK$ containing $\cH_{0}$ as a
 subspace such that the restriction of $\cU$ to $\cD$ agrees with $V$
 and the smallest $\cU$-reducing subspace containing $\cH_{0}$ is all
 of $\cK$.  From the work of Arov-Grossman \cite{AG1, AG2} and
 Katsnelson-Kheifets-Yuditskii \cite{KKY}, it is
 known that there is a special unitary colligation $U_{0}$ (called
 the universal unitary colligation) so that any such unitary
 extension $\cU^{*}$ of $V$ arises as the lower feedback connection
 $\cU^{*} = \cF_{\ell}(U_{0}, U_{1})$ of $U_{0}$ with a
 free-parameter unitary colligation $U_{1}$ (see Theorem
 \ref{T:ext-struct}).  A special unitary
 extension of $V$ is obtained as the unitary dilation $\cU_{0}^{*}$
 of the universal unitary colligation $U_{0}$ (or, in the language of
 \cite{BCU}, $\cU_{0}$ is the unitary evolution operator for the
 Lax-Phillips scattering system in which $U_{0}$ is embedded).  This
 special unitary extension $\cU_{0}^{*}$ of $V$ is called the {\em
 universal unitary extension}.

 Unlike other contexts where the ``lurking isometry'' approach has
 been used (see in particular \cite{KKY, KhYu,  Kheifets-Berkeley,
 Kupin, BT-AIP}),
 the connection between unitary extensions $\cU^{*}$ of $V$ and
 $(\cU', \cU'')$-intertwiners $Y$ solving the Lifting Problem,
 as in \cite{Kheifets-IWOTA96, HarmAIP},
 involves an extra step: computation of the lift $Y$ from the unitary
 extension $\cU$ is not immediately explicit but rather involves a
 wave-operator construction demanding computation of powers of $\cU$.
 The lift $Y$ is uniquely determined from its moments $w_{Y}(n) =
 i_{*}^{*} Y^{n} i$ where $i_{*}$ and $i$ are certain isometric
 embedding operators (or {\em scale operators} in the sense of
 \cite{Kheifets-Dubov}).  Calculation of such moments (the collection
 of which we call the {\em symbol} of the lift $Y$) requires the
 computation of powers of $\cU^{*} = \cF_{\ell}(U_{0}, U_{1})$ in
 terms of the coefficients of the universal unitary colligation
 $U_{0}$ determined by the problem data and the coefficients of the
 free-parameter unitary colligation $U_{1}$ (or in terms of its
 characteristic function $\omega(\zeta)$).  In Section \ref{S:FB} we
 identify a general principle of independent interest
 for the explicit computation of the powers of an operator $\cU^{*}$
 given as a feedback connection $\cU^{*} = \cF_{\ell}(U_{0}, U_{1})$
 of two unitary colligations $U_{0}$ and $U_{1}$.  With the
 application of this general principle, we arrive at an
 explicit Redheffer-type linear-fractional parametrization of the set
 of symbols $\{w_{Y}(n)\}_{n \in {\mathbb Z}}$ associated with the
 set of solutions $Y$ of a Lifting Problem (see Theorem
 \ref{T:symbol-param}).  The symbol for the Redheffer coefficient
 matrix is a simple explicit formula in terms of the universal unitary
 extension $\cU_{0}$ (see formula \eqref{Redheffer-central} in
 Theorem \ref{T:Redheffer-central} below).

 This general principle (already implicitly present in
\cite{Kheifets-IWOTA96})
 can be summarized as follows.
 Suppose that the operator $\cU$ is given as the lower feedback
 connection $\cU^* = \cF_{\ell}(U_{0}, U_{1})$ of two colligation
 matrices
 $$ U_{0} = \begin{bmatrix} A_{0} & B_{0} \\ C_{0} & D_{0}
\end{bmatrix} \colon \begin{bmatrix} \cX_{0} \\ \cD \end{bmatrix} \to
\begin{bmatrix} \cX_{0} \\ \cD_{*} \end{bmatrix}, \quad
    U_{1} = \begin{bmatrix} A_{1} & B_{1} \\ C_{1} & D_{1}
    \end{bmatrix} \colon \begin{bmatrix} \cX_{1} \\ \cD_{*}
\end{bmatrix} \to
    \begin{bmatrix} \cX_{1} \\ \cD \end{bmatrix}.
$$
(In our context we always have $D_0=0$).
Associated with any colligation matrix $U = \sbm{ A & B \\ C & D }
\colon \sbm{ \cX \\ \cE} \to \sbm{\cX \\ \cE_{*}}$ is the
discrete-time linear system
\begin{equation}   \label{sys'}
\Sigma_{U} \colon \begin{bmatrix} x(n+1) \\ e_{*}(n)
\end{bmatrix} = U \begin{bmatrix} x(n) \\ e(n) \end{bmatrix}, \quad
x(0) = x_{0}
\end{equation}
which recursively defines what we call the {\em augmented
input-output map} (extending the usual input-output map in the sense
that it takes into account an initial condition $x(0)
= x_{0}$ not necessarily equal to zero as well as the internal state
trajectory $\{x(n)\}_{n \in {\mathbb Z}}$):
$$
W(U)^{+}: = \begin{bmatrix} W(U)^{+}_{0} & W(U)^{+}_{2} \\
W(U)^{+}_{1}  & W(U_{0})^{+} \end{bmatrix} \colon  \begin{bmatrix}
x_{0} \\ \{e(n)\}_{n \in {\mathbb Z}_{+}} \end{bmatrix} \mapsto
\begin{bmatrix} \{x(n)\}_{n \in {\mathbb Z}_{+}} \\ \{ e_{*}(n)\}_{n
    \in {\mathbb Z}_{+}} \end{bmatrix}
 $$
if $\{ e(n), x(n), e_{*}(n)\}_{n \in {\mathbb Z}_{+}}$ solves the
system equations \eqref{sys'}.  Then the general principle asserts:
{\em powers $\cU^{n}$ of $\cU = \cF_{\ell}(U_{0}, U_{1})$ can be
computed via performing a feedback connection at the
system-trajectory level:}
$$\left\{ \cU^{n} \begin{bmatrix} x_{0} \\ x_{1} \end{bmatrix}
\right\}_{n \in {\mathbb Z}_{+}} = \cF_{\ell}\left( W(U_{0})^{+},
W(U_{1})^{+}\right) \begin{bmatrix} x_{0} \\ x_{1} \end{bmatrix}.
$$

The general structure for the universal unitary extension $\cU_{0}$
with embedded subspaces related to the original problem data $(\cU',
\cK')$, $(\cU'', \cK'')$ and coefficient spaces $\widetilde \Delta$,
$\widetilde \Delta_{*}$ for the free-parameter characteristic
function can be viewed as a four-fold Adamjan-Arov (AA) unitary
coupling
in the general sense of \cite{Adamjan-Arov}.  In this setting one can
identify the special geometry corresponding to the case where the
four-fold AA-unitary coupling arises from a Lifting Problem. In
this way we arrive at the inverse theorem
(Theorems \ref{T:converse1}, \ref{T:converse2}, \ref{T:converse3}),
specifically, a
characterization of which Redheffer coefficient matrices arise as
the coefficient matrix for the linear-fractional parametrization of
the set of all solutions of some Lifting Problem
({\sl with given operators $\cU', \cU''$ and subspaces $\cK'_{+}
\subset \cK', \cK''_{-}\subset\cK''$ }) generalizing results
of \cite{Arov90, Kh-regulariz, Kh-exposed}
obtained in the context of the Nehari Problem and the bitangential
Nevanlinna-Pick problem.

The solution of the inverse problem for the Lifting Problem as
presented here appears to be quite different from the inverse
problem considered in
\cite{tH-pre}.   We discuss the connections between
the results of this paper and those of \cite{tH-pre}
in detail in Remark \ref{R:param} (for the direct
problem) and in Remarks \ref{1013}, \ref{R:relaxed} (for the inverse problem).

The paper is organized as follows. After the present Introduction, in
Section \ref{S:FB} we present the general principle for computation
of powers of $\cU = \cF_{\ell}(U_{0}, U_{1})$ via the
trajectory-level feedback connection of the augmented input-output
operator of $U_{0}$ with that of $U_{1}$.  Section \ref{S:Hellinger'}
reviews preliminary material from \cite{Kheifets-Dubov}  concerning
Hellinger-space functional models for unitary operators equipped also
with a scaling operator.  Section \ref{S:Intertwiners-Couplings}
reviews basic ideas from \cite{Adamjan-Arov} concerning the
correspondence
between contractive intertwiners $Y$ of two unitary operators $\cU'$
and $\cU''$ on the one hand and unitary couplings $\cU$ of $\cU'$ and
$\cU''$ on the other.  Section \ref{S:solutions-extensions} adds the
constraint that the intertwiner $Y$ should be a lift of a given
contractive intertwiner $X$ of restricted/compressed versions
$\cU'_{+}$, $\cU''_{-}$ of $\cU'$, $\cU''$ and identifies the
correspondence between solutions $Y$ of the lifting problem and
unitary extensions $\cU^{*}$ of the isometry $V$ constructed directly
from the data for the Lifting Problem.  Section \ref{S:unitext}
recalls the result from \cite{AG1, AG2} that such unitary extensions
arise as the lower feedback connection of the universal unitary
colligation $U_{0}$ with a free-parameter unitary colligation
$U_{1}$.  Section \ref{S:param} uses the general principle from
Section \ref{S:FB} to obtain a parametrization for the set of symbols
$\{w_{Y}(n)\}_{n \in {\mathbb Z}}$ associated with solutions $Y$ of
the Lifting Problem.  Section \ref{S:centralext} introduces the
universal unitary extension.  Here the universal unitary extension is
identified as the four-fold AA-unitary coupling of the two unitary
operators $\cU'$, $\cU''$ appearing in the Lifting-Problem data
together
with the bilateral shift operators associated with the input and
output spaces for the
free-parameter unitary colligation.  Here the special geometric
structure is identified which leads to the coordinate-free version
of our inverse theorem (Theorem \ref{T:converse1})
characterizing which four-fold AA-unitary couplings arise in
this way from a Lifting Problem.  Here also is established the
formula for the
Redheffer-coefficient matrix in terms of the universal unitary
extension.  Sections \ref{S:charmeas} and \ref{S:compactHel}
convert these results to more concrete function-theoretic form in the
setting of Hellinger-model spaces. In particular, we get two more
concrete versions of the inverse Theorem \ref{T:converse1} (Theorems
\ref{T:converse2} and \ref{T:converse3}).
In Section \ref{S:Nehari} we apply our results to the classical
Nehari problem.

\section{Calculus of feedback connection of unitary colligations}
\label{S:FB}

Suppose that we are given linear spaces $\cX_{0}, \widetilde
    \cX_{0}, \cX_{1}, \widetilde \cX_{1}, \cF, \cF_{*}$ and linear
    operators presented in block matrix form
 \begin{align}
     U_{0} & = \begin{bmatrix} A_{0} & B_{0} \\ C_{0} & D_{0}
\end{bmatrix} \colon
 \begin{bmatrix} \cX_{0} \\ \cF \end{bmatrix} \to
     \begin{bmatrix} \widetilde \cX_{0} \\ \cF_{*} \end{bmatrix},
\notag \\
     U_{1} & = \begin{bmatrix} A_{1} & B_{1} \\ C_{1} & D_{1}
\end{bmatrix} \colon
     \begin{bmatrix} \cX_{1} \\ \cF_{*} \end{bmatrix} \to
 \begin{bmatrix} \widetilde \cX_{1} \\ \cF \end{bmatrix}.
     \label{U0U1}
\end{align}
  We define the feedback connection
$U:= \cF_{\ell}(U_{0}, U_{1}) \colon \sbm{ \cX_{0} \\ \cX_{1}} \to
\sbm{ \widetilde \cX_{0} \\ \widetilde \cX_{1}}$ (when it exists) by
\begin{align}
 &\cF_{\ell}(U_{0}, U_{1}) \begin{bmatrix} x_{0} \\ x_{1}
\end{bmatrix} =
 \begin{bmatrix} \widetilde x_{0} \\ \widetilde x_{1} \end{bmatrix}
     \text{ if there exist }
    f \in \cF \text{ and } f_{*} \in \cF_{*} \text{ so that } \notag
\\
    & \begin{bmatrix} A_{0} & B_{0} \\ C_{0} & D_{0} \end{bmatrix}
    \begin{bmatrix} x_{0} \\ f \end{bmatrix} =
	\begin{bmatrix} \widetilde x_{0} \\ f_{*} \end{bmatrix} \text{ and }
 \begin{bmatrix} A_{1} & B_{1} \\ C_{1} & D_{1} \end{bmatrix}
\begin{bmatrix} x_{1} \\ f_{*} \end{bmatrix} =
\begin{bmatrix} \widetilde x_{1} \\ f \end{bmatrix}.
    \label{FBconnection}
    \end{align}
    We also define the {\em elimination operator}
    $\Gamma_{\ell}(U_{0}, U_{1})$ (when it exists) by
    \begin{equation}   \label{elimination}
    \Gamma_{\ell}(U_{0},U_{1}) \colon \begin{bmatrix}
     x_{0} \\ x_{1} \end{bmatrix}  \mapsto \begin{bmatrix}  f \\ f_{*}
     \end{bmatrix} \text{ if there exist } \widetilde x_{0},
     \widetilde x_{1} \text{ so that \eqref{FBconnection} holds.}
    \end{equation}
  As explained in the following result, the feedback connection and
  elimination operator exist
 and are well-defined as long as the operator $I - D_{1} D_{0}$ is
 invertible as an operator on $\cF$.

 \begin{theorem}  \label{T:FBcon}
  Suppose that we are given block-operator matrices $U_{0}$ and
  $U_{1}$ as in \eqref{U0U1}.
 Assume $(I - D_{1}D_{0})^{-1}$ and hence also $(I -
D_{0}D_{1})^{-1}$ exist as operators on $\cF$ and $\cF_{*}$
respectively.  Then the feedback connection \eqref{FBconnection} is
{\em well-posed}, i.e., for each $\sbm{ x_{0} \\ x_{1} } \in \cX_{0}
\oplus \cX_{1}$ there exists a unique $f \in \cF$ and $f_{*} \in
\cF_{*}$ so that the equations \eqref{FBconnection} determine a
unique $\sbm{\widetilde x_{0} \\ \widetilde x_{1}} \in  \widetilde
\cX_{0} \oplus  \widetilde \cX_{1}$ which we then define to be
$\cF_{\ell}(U_{0}, U_{1}) ( \sbm{ x_{0} \\ x_{1}})$.  More
explicitly, the feedback connection operator $\cF_{\ell}(U_{0},U_{1})
\colon \sbm{x_{0} \\ x_{1} } \mapsto \sbm{\widetilde x_{0} \\
\widetilde x_{1} }$ is given by
\begin{equation}  \label{FB}
    \cF_{\ell}(U_{0}, U_{1}) = \begin{bmatrix}
 A_{0} + B_{0} (I - D_{1}D_{0})^{-1} D_{1}C_{0} &
 B_{0}(I - D_{1}D_{0})^{-1} C_{1} \\
 B_{1}(I - D_{0}D_{1})^{-1}C_{0}  &
 A_{1} + B_{1}(I - D_{0}D_{1})^{-1}D_{0} C_{1} \end{bmatrix}.
 \end{equation}
 The elimination operator \eqref{elimination} which assigns instead
the uniquely
 determined $\left[ \begin{smallmatrix} f \\ f_{*} \end{smallmatrix}
 \right]$ to $\left[ \begin{smallmatrix} x_{0} \\ x_{1}
\end{smallmatrix} \right]$ is then given explicitly by
 \begin{equation}  \label{EL}
     \Gamma_{\ell}(U_{0}, U_{1}) = \begin{bmatrix} (I -
D_{1}D_{0})^{-1}D_{1}C_{0} &
     (I - D_{1}D_{0})^{-1}C_{1} \\
     (I - D_{0}D_{1})^{-1} C_{0} & (I - D_{0}D_{1})^{-1} D_{0} C_{1}
     \end{bmatrix} \colon \begin{bmatrix}
  x_{0} \\ x_{1} \end{bmatrix}  \mapsto \begin{bmatrix}  f \\ f_{*}
  \end{bmatrix}.
 \end{equation}
\end{theorem}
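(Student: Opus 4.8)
The plan is to treat the defining equations \eqref{FBconnection} as a linear system in the internal (``feedback'') variables $f \in \cF$ and $f_* \in \cF_*$ and to solve it explicitly. First I would unpack the two block-matrix equations into the four scalar relations
\begin{align}
& A_0 x_0 + B_0 f = \widetilde x_0, \qquad C_0 x_0 + D_0 f = f_*, \notag \\
& A_1 x_1 + B_1 f_* = \widetilde x_1, \qquad C_1 x_1 + D_1 f_* = f. \notag
\end{align}
The two ``mixing'' relations $f_* = C_0 x_0 + D_0 f$ and $f = C_1 x_1 + D_1 f_*$ involve only $f$, $f_*$ and the given inputs $x_0$, $x_1$, so the key first step is to decouple them. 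Substituting the expression for $f_*$ into that for $f$ yields $(I - D_1 D_0) f = D_1 C_0 x_0 + C_1 x_1$, and symmetrically substituting the expression for $f$ into that for $f_*$ yields $(I - D_0 D_1) f_* = C_0 x_0 + D_0 C_1 x_1$.

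Under the hypothesis that $(I - D_1 D_0)^{-1}$ exists on $\cF$, the first of these determines $f$ uniquely and reproduces the first row of $\Gamma_{\ell}(U_0, U_1)$ in \eqref{EL}. Before invoking $(I - D_0 D_1)^{-1}$ I would record the elementary algebraic identity
\[
(I - D_0 D_1)^{-1} = I + D_0 (I - D_1 D_0)^{-1} D_1,
\]
whose validity is checked by multiplying the right-hand side against $I - D_0 D_1$ and telescoping; this shows that invertibility of $I - D_1 D_0$ automatically forces invertibility of $I - D_0 D_1$, so the second hypothesis in the statement is redundant and $f_*$ is likewise uniquely determined, giving the second row of \eqref{EL}. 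Substituting the resulting $f$, $f_*$ back into the two ``output'' relations $\widetilde x_0 = A_0 x_0 + B_0 f$ and $\widetilde x_1 = A_1 x_1 + B_1 f_*$ then produces the four block entries of \eqref{FB}, completing the computation.

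For well-posedness I would observe that the pair $(f, f_*)$ constructed above is the \emph{unique} solution: any pair satisfying the mixing relations necessarily satisfies the same decoupled equations, whose solutions are unique by invertibility, after which $\widetilde x_0$ and $\widetilde x_1$ are determined outright. Since the underlying spaces are merely linear spaces with no inner product assumed, it is essential that the whole argument be purely algebraic, resting on the assumed invertibility rather than on any contractivity or Neumann-series estimate. I do not anticipate a genuine obstacle here: the only points demanding care are the symmetric bookkeeping between the two Schur-type factors $(I - D_1 D_0)^{-1}$ and $(I - D_0 D_1)^{-1}$ and the verification that the back-substitution reproduces the stated entries of \eqref{FB} and \eqref{EL} exactly.
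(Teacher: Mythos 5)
Your proposal is correct and follows essentially the same route as the paper: eliminate the internal variables $f$, $f_{*}$ from the two mixing relations, solve via the invertibility of $I - D_{1}D_{0}$, and back-substitute into the output relations to recover \eqref{FB} and \eqref{EL}. The one genuine addition is your explicit verification of the identity $(I - D_{0}D_{1})^{-1} = I + D_{0}(I - D_{1}D_{0})^{-1}D_{1}$, which makes rigorous the ``hence also'' in the theorem's hypothesis that the paper leaves unproved.
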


\begin{proof}
    The definition $\cF_{\ell}(U_{0}, U_{1}) \sbm{x_{0} \\ x_{1}} =
    \sbm{\widetilde x_{0} \\ \widetilde x_{1}}$ means that there is $f
    \in \cF$ and $f_{*} \in \cF_{*}$ so that \eqref{FBconnection}
    holds.
  From the second equation of the first system in
\eqref{FBconnection} we have
  $$
    f_{*} = C_{0} x_{0} +  D_{0} f.
  $$
  Plug this into the second equation of the second system to get
  \begin{align*}  f & = C_{1} x_{1} + D_{1} f_{*} = C_{1}x_{1} +
D_{1}(C_{0}x_{0} +
  D_{0} f)  \\
  & = D_{1} C_{0} x_{0} + C_{1} x_{1} + D_{1} D_{0} f.
  \end{align*}
  Under the assumption that $I - D_{1}D_{0}$ is invertible we then
  can solve for $f$ to get
  $$
  f = (I - D_{1} D_{0})^{-1} D_{1}C_{0} x_{0} + (I - D_{1}
  D_{0})^{-1} C_{1} x_{1}.
  $$
  Plug this into the second equation of the first system in
  \eqref{FBconnection} to then get
  \begin{align*}
  f_{*} & = C_{0} x_{0} + D_{0}(I - D_{1}D_{0})^{-1} D_{1}C_{0} x_{0}
+
  D_{0} (I - D_{1} D_{0})^{-1} C_{0} x_{1}  \\
  & = (I - D_{0}D_{1})^{-1}
  C_{0} x_{0} + (I - D_{0} D_{1})^{-1} D_{0} C_{1} x_{1}.
  \end{align*}
  In this way we get the formula \eqref{EL} for the elimination
  operator $\Gamma_{\ell}(U_{0}, U_{1})$.  It is now a simple matter
to plug in these
  values for $f, f_{*}$ in terms of $x_{0}, x_{1}$ into the first
  equations in the two systems \eqref{FBconnection} to arrive at the
  formula \eqref{FB} for the feedback connection operator
  $\cF_{\ell}(U_{0}, U_{1})$.
  \end{proof}

  While the formula \eqref{FB} exhibits $\cF_{\ell}(U_{0}, U_{1})$
    explicitly in terms of $U_{0}$ and $U_{1}$, direct computation of
powers
    $U^{n}$ ($n=2,3, \dots$) of $U = \cF_{\ell}(U_{0}, U_{1})$
appears to
    be rather laborious.  We next show how efficient computation of
    powers $\cF_{\ell}(U_{0}, U_{1})$ can be achieved by use of a
    feedback connection at the level of system trajectories.  Toward
    this end, we first introduce some useful notation.

    For $\cG$ any linear space, we let $\ell_{\cG}({\mathbb Z})$
    (alternatively often written as $\cG^{{\mathbb Z}}$ in the
    literature) denote the space of all $\cG$-valued functions on the
    integers ${\mathbb Z}$.  Similarly we let $\ell_{\cG}({\mathbb
    Z}_{+})$ be the space of all $\cG$-valued functions on the
    nonnegative integers ${\mathbb Z}_{+}$; we often identify
    $\ell_{\cG}({\mathbb Z}_{+})$ with the subspace of
    $\ell_{\cG}({\mathbb Z})$ consisting of all $\cG$-valued
functions on
    ${\mathbb Z}$      which vanish on the negative integers.
    Similarly, $\ell_{\cG}({\mathbb Z}_{-})$ is the space of all
    $\cG$-valued functions on ${\mathbb Z}_{-}$ and is frequently
    identified with the subspace of $\ell_{\cG}({\mathbb Z})$
    consisting of all $\cG$-valued functions on ${\mathbb Z}$
    vanishing on ${\mathbb Z}_{+}$.  By $\mathcal P^+$ and
$\mathcal P^-$ we denote the natural projections of
$\ell_{\cG}({\mathbb Z})$
onto $\ell_{\cG}({\mathbb Z}_+)$ and $\ell_{\cG}({\mathbb Z}_-)$,
respectively. Sometimes we will use notations
\begin{equation}\label{colligation-06}
\vec g^+=\mathcal P^+ \vec g, \quad \vec g^-=\mathcal P^- \vec g.
\end{equation}
We also consider the bilateral shift operator $J: \vec g \mapsto \vec
g'$,
where $\vec g'(n)=\vec g(n-1)$.

Given a colligation matrix $U$ of the form
\begin{equation}  \label{col}
 U = \begin{bmatrix} A & B \\ C & D \end{bmatrix} \colon
\begin{bmatrix} \cX \\ \cE \end{bmatrix} \to \begin{bmatrix} \cX \\
    \cE_{*} \end{bmatrix}
\end{equation}
we may consider the associated discrete-time input/state/output
linear system
\begin{equation}  \label{sys}
    \begin{bmatrix} x(n+1) \\ e_{*}(n) \end{bmatrix} = U
	\begin{bmatrix} x(n) \\ e(n) \end{bmatrix} =
	    \begin{bmatrix} A x(n) + B e(n) \\ C x(n) + D e(n)
	    \end{bmatrix}.
 \end{equation}
 Given an initial state $x(0) = x_{0}$ and an input string $\vec e
 \in \ell_{\cE}({\mathbb Z}_{+})$, the system equations \eqref{sys}
 recursively uniquely determine the state trajectory $\vec x \in
 \ell_{\cX}({\mathbb Z}_{+})$ and the output string $\vec e_{*} \in
 \ell_{\cE_{*}}({\mathbb Z}_{+})$; explicitly we have
 \begin{align}
 x(n) & = A^{n} x_{0} + \sum_{k=0}^{n-1} A^{n-1-k} B  e(k),
 \notag \\
  e_{*}(n) & = C A^{n} x_{0} + \sum_{k=0}^{n-1} C A^{n-1-k} B
 e(k) + D  e(n) \text{ for } n = 0,1,2, \dots.
 \label{sys-solve}
 \end{align}
 If we view elements of $\ell_{\cX}({\mathbb Z}_{+})$ (and of
 $\ell_{\cE}({\mathbb Z}_{+})$ and $\ell_{\cE_{*}}({\mathbb Z}_{+})$)
 as column vectors, then operators between these various spaces can
 be represented as block matrices.  We may then write the content of
 \eqref{sys-solve} in matrix form as
 $$
     \begin{bmatrix} \vec x \\ \vec e_{*} \end{bmatrix} =
	 \begin{bmatrix} W_{0}^{+} & W_{2} ^{+} \\ W_{1}^{+} & W^{+}
	     \end{bmatrix}  \begin{bmatrix} x(0) \\ \vec e
	 \end{bmatrix}
$$
where the block-operator matrix
\begin{equation}  \label{augIO}
 {\mathbf W}^{+} :=  \begin{bmatrix}  W_{0}^{+} & W_{2}^{+} \\
W_{1}^{+} & W^{+}
  \end{bmatrix} \colon \begin{bmatrix}  \cX \\ \ell_{\cE}({\mathbb
  Z}_{+}) \end{bmatrix} \to \begin{bmatrix}  \ell_{\cX}({\mathbb
Z}_{+})
  \\ \ell_{\cE_{*}}({\mathbb Z}_{+}) \end{bmatrix}
\end{equation}
is given explicitly by
\begin{align}
    & W_{0}^{+} = \begin{bmatrix} I_{\cX} \\ A \\ \vdots \\ A^{n-1}
    \\ \vdots \end{bmatrix}, \quad
    W_{2}^{+} = \begin{bmatrix} 0 & 0 & 0 & \dots & & & \\
    B & 0 & 0 & \dots & & & \\
    AB & B & 0 & \dots & & & \\
    \vdots & \vdots & \vdots & & & & \\
    A^{n-1} B & A^{n-2} B & A^{n-3}B & \dots & B & 0 & \dots & \\
    & \ddots & \ddots &   \ddots & & \ddots &  \ddots  \end{bmatrix},
    \notag \\
   & W_{1}^{+} = \begin{bmatrix} C \\ CA \\ CA^{2} \\ \vdots \\
   CA^{n-1} \\ \vdots \end{bmatrix}, \quad
   W^{+} = \begin{bmatrix} D & 0 & 0 & & & \\ CB & D & 0  & & \\
   CAB & CB & D & & \\
   \vdots & \vdots & &  \ddots & \\
   C A^{n-1}B & C A^{n-2}B & C A^{n-3}B & \dots & D & \\
     & \ddots    & \ddots  & \ddots  &  & \ddots
   \end{bmatrix}
   \label{sys-solve-explicit}
 \end{align}
 The operator $W_{0}^{+}$ is the (forward-time) {\em
initial-state/state-trajectory
 map}, the operator $W_{2}^{+}$ is the {\em input/state-trajectory
map},
 the operator $W_{1}^{+}$ is the {\em observation operator} and the
 operator $W^{+}$ is what is traditionally known as the {\em
 input-output map} in the control literature.  We note that the
 multiplication operator associated with $W^{+}$ after applying the
 $Z$-transform
 $$ \vec f \mapsto \sum_{n \in {\mathbb Z}_{+}} \vec f(n) z^{n}
 $$
 to the input and output strings $\vec e$ and $\vec e_{*}$
 respectively has multiplier
 $$
   \widehat W^{+}(z) = D + z C (I - zA)^{-1} B
 $$
 equal to the {\em characteristic function} of the colligation $U$
 (also known as the {\em transfer function} of the linear system
 \eqref{sys}).  We shall refer to the whole $2 \times 2$-block
 operator  matrix ${\mathbf W}^{+}$ simply as the (forward-time) {\em
augmented
 input/output map} associated with the colligation $U$.

 If the colligation matrix $U$ \eqref{col} is invertible, then we
 can also run the system in backwards time:
 \begin{equation} \label{backward-sys}
     \begin{bmatrix} x(n) \\ e(n) \end{bmatrix} =
	 U^{-1} \begin{bmatrix} x(n+1) \\ e_{*}(n) \end{bmatrix} =
	 \begin{bmatrix} \alpha x(n+1) + \beta e_{*}(n) \\ \gamma
	     x(n+1) + \delta e_{*}(n) \end{bmatrix}
 \end{equation}
 where we set
 $U^{-1} = \left[\begin{smallmatrix} \alpha & \beta \\ \gamma & \delta
\end{smallmatrix}  \right] \colon  \cX \oplus \cE_{*}
\to \cX  \oplus \cE$.
 In this case, specification of an initial state $x(0)$ and of the
 output string over negative time $\vec e_{*} \in
 \ell_{\cE_{*}}({\mathbb Z}_{-})$ determines recursively via the
 backward-time system equations \eqref{backward-sys} the
 state-trajectory over negative time $\vec x_{-} \in
 \ell_{\cX}({\mathbb Z}_{-})$ and the input string over negative time
 $\vec e_{-} \in \ell_{\cE}({\mathbb Z}_{-})$.  Explicitly we have
 \begin{align}
      x(n) & = \alpha^{n} x(0) + \sum_{k=1}^{n} \alpha^{n-k} \beta
     e_{*}(-k), \notag \\
       e(-n)  & = \gamma \alpha^{n-1} x(0) + \sum_{k=1}^{n-1} \gamma
     \alpha^{n-1} \beta e_{*}(-k) + \delta e_{*}(-n) \text{ for }
n=1, 2, \dots.
     \label{backward-solve-sys}
     \end{align}
 If we write elements $\vec x = \{ x(n)\}_{n \in {\mathbb Z}_{-}}$ of
 $\ell_{\cX}({\mathbb Z}_{-})$ as
 infinite column matrices
 $$
    \vec x = \begin{bmatrix} \vdots \\ \\ x(-3) \\ x(-2) \\  x(-1)
    \end{bmatrix}
 $$
 then linear operators between spaces of the type
 $\ell_{\cX}({\mathbb Z}_{-})$ can be written as  matrices
 with infinitely many rows as one ascends to the top.
 Then the relations \eqref{backward-solve-sys} can be expressed in
 $2 \times 2$-block operator matrix form as
 \begin{equation}  \label{backward-sys-block}
     \begin{bmatrix} \vec x_{-} \\ \vec e_{-} \end{bmatrix}
	 = \begin{bmatrix}  W_{0}^{-} & W_{1}^{-} \\ W_{2} ^{-} &
	 W^{-} \end{bmatrix} \begin{bmatrix}  x(0) \\ \vec e_{* -}
	 \end{bmatrix},
\end{equation}
where the $2 \times 2$-block operator matrix
$$
  {\mathbf W}^{-} : = \begin{bmatrix} W_{0}^{-} & W_{1}^{-} \\
  W_{2}^{-} & W^{-} \end{bmatrix} \colon \begin{bmatrix} \cX \\
  \ell_{\cE_{*}}({\mathbb Z}_{-}) \end{bmatrix} \to \begin{bmatrix}
  \ell_{\cX}({\mathbb Z}_{-}) \\ \ell_{\cE}({\mathbb Z}_{-})
\end{bmatrix}
$$
is given explicitly by
\begin{align}
    & W_{0}^{-} = \begin{bmatrix} \vdots \\ \alpha^{n}\\ \vdots \\
\alpha^{2}
    \\ \alpha \end{bmatrix}, \quad
    W_{1}^{-} = \begin{bmatrix} \ddots & & \ddots & \ddots & \ddots &
    \\  & \beta & \dots & \alpha^{n-3} \beta & \alpha^{n-2} \beta &
    \alpha^{n-1} \beta \\
    & & \ddots & \vdots  & \vdots & \vdots \\
    & & & \beta & \alpha \beta & \alpha^{2} \beta \\
    & & & & \beta & \alpha \beta \\
    & & & & & \beta \end{bmatrix}, \notag \\
    & W_{2}^{-} = \begin{bmatrix}  \vdots \\ \gamma \alpha^{n} \\
    \vdots \\ \gamma \alpha^{2} \\ \gamma \alpha \end{bmatrix}, \quad
    W^{-} = \begin{bmatrix} \ddots &  & \ddots  & \ddots & \ddots & &
\\
    & \delta & \dots & \gamma \alpha^{n-3} \beta & \gamma
    \alpha^{n-2} \beta & \gamma \alpha^{n-1} \beta  \\
    & & & \vdots & \vdots & \vdots  \\
    & & & \delta & \gamma \beta & \gamma \alpha \beta  \\
    & & & & \delta & \gamma \beta \\
    & & & & & \delta \end{bmatrix}.
    \label{backward-sys-explicit}
  \end{align}
  Here the operators $W_{0}^{-}$, $W_{1}^{-}$, $W_{2}^{-}$ and
$W^{-}$ are the
  {\em backward-time} versions of the {\em
  initial-state/state-trajectory, input/state-trajectory,
  observation} and {\em input/output} operators, respectively, and we
  refer to the aggregate operator ${\mathbf W}^{-}$ simply as the
  {\em backward-time augmented input-output map}.

  Let us now suppose that $U_{0}$ and $U_{1}$ are two colligation
matrices
  \begin{align}
      U_{0} & = \begin{bmatrix} A_{0} & B_{0} \\ C_{0} & D_{0}
\end{bmatrix} \colon \begin{bmatrix} \cX_{0} \\ \cD \end{bmatrix} \to
\begin{bmatrix} \cX_{0} \\ \cD_{*} \end{bmatrix}, \label{U0'} \\
  U_{1} & = \begin{bmatrix} A_{1} & B_{1} \\ C_{1} & D_{1}
\end{bmatrix} \colon \begin{bmatrix} \cX_{1} \\ \cD_{*} \end{bmatrix}
\to
\begin{bmatrix} \cX_{1} \\ \cD \end{bmatrix}
  \label{U1'}
 \end{align}
 such that $I - D_{1}D_{0}$ and hence also $I -
 D_{0}D_{1}$ are invertible on $\cD$ and on $\cD_{*}$ respectively.
 Then the feedback connection $U = \cF_{\ell}(U_{0}, U_{1})$ is
 well-defined as an operator on $\cX_{0} \oplus \cX_{1}$
 as explained in Theorem \ref{T:FBcon}.  Then we also have associated
 augmented input-output maps for $U_{0}$ and $U_{1}$ given by
 \begin{align}
    & {\mathbf W}(U_{0})^{+} = \begin{bmatrix} W(U_{0})_{0}^{+} &
     W(U_{0})^{+}_{2} \\ W(U_{0})^{+}_{1} & W(U_{0})^{+}
 \end{bmatrix} \colon \begin{bmatrix} \cX_{0} \\ \ell_{\cD}({\mathbb
 Z}_{+}) \end{bmatrix}  \to \begin{bmatrix} \ell_{\cX_{0}}({\mathbb
 Z}_{+}) \\ \ell_{\cD_{*}}({\mathbb Z}_{+}) \end{bmatrix}  \notag \\
& {\mathbf W}(U_{1})^{+} = \begin{bmatrix} W(U_{1})_{0}^{+} &
      W(U_{1})^{+}_{2} \\ W(U_{1})^{+}_{1} & W(U_{1})^{+}
  \end{bmatrix} \colon \begin{bmatrix} \cX_{1} \\
\ell_{\cD_{*}}({\mathbb
  Z}_{+}) \end{bmatrix}  \to \begin{bmatrix} \ell_{\cX_{1}}({\mathbb
  Z}_{+}) \\ \ell_{\cD}({\mathbb Z}_{+}) \end{bmatrix}.
  \label{bWU01}
 \end{align}
 Under the assumption that
 \begin{equation}  \label{assume}
     I_{\ell_{\cD}({\mathbb Z}_{+})} - W(U_{1})^{+} W(U_{0})^{+}
     \text{ is invertible on } \ell_{\cD}({\mathbb Z}_{+}),
 \end{equation}
 it makes sense to form the feedback connection $\cF_{\ell}({\mathbf
 W}(U_{0})^{+}, {\mathbf W}(U_{1})^{+})$.  The following lemma
guarantees
 that this connection is
 well-posed whenever the connection $\cF_{\ell}(U_{0}, U_{1})$ is
well-posed.

 \begin{lemma}  \label{L:sysFB-wellposed}
     Let $U_{0}$ and $U_{1}$ be as in \eqref{U0'} and \eqref{U1'}
     and assume that $I - D_{1} D_{0}$ is invertible on $\cD$.  Then
     also $I - W(U_{1})^{+} W(U_{0})^{+}$ is invertible on
     $\ell_{\cD}({\mathbb Z}_{+})$.
  \end{lemma}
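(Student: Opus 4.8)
The plan is to exploit the causal, block lower-triangular structure of the augmented input-output maps and reduce the invertibility of $I - W(U_1)^+ W(U_0)^+$ to a recursive back-substitution with invertible diagonal blocks. First I would read off from the explicit Toeplitz forms in \eqref{sys-solve-explicit} that both $W(U_0)^+$ and $W(U_1)^+$ are block lower-triangular: writing $[W(U_0)^+]_{jk}$ for the $(j,k)$ block, one has $[W(U_0)^+]_{jj} = D_0$, $[W(U_0)^+]_{jk} = C_0 A_0^{j-k-1} B_0$ for $j > k$, and $[W(U_0)^+]_{jk} = 0$ for $j < k$ (and likewise for $W(U_1)^+$ with the subscript $0$ replaced by $1$). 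Since each block entry of the product involves only finitely many nonzero terms, the product $L := W(U_1)^+ W(U_0)^+$ is a well-defined block lower-triangular operator on $\ell_{\cD}({\mathbb Z}_{+})$.

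Next I would compute the diagonal of $L$. For the $(j,j)$ block, causality of both factors forces the summation index $m$ in $\sum_{m} [W(U_1)^+]_{jm} [W(U_0)^+]_{mj}$ to satisfy both $m \le j$ and $m \ge j$, hence $m = j$, and the block collapses to $[W(U_1)^+]_{jj} [W(U_0)^+]_{jj} = D_1 D_0$. Thus $I - L$ is block lower-triangular with every diagonal block equal to $I - D_1 D_0$, which is invertible on $\cD$ by hypothesis.

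It then remains to argue that a block lower-triangular operator with invertible diagonal blocks is invertible on $\ell_{\cD}({\mathbb Z}_{+})$. Given $\vec y \in \ell_{\cD}({\mathbb Z}_{+})$, the $n$-th coordinate of the equation $(I - L) \vec x = \vec y$ reads
$$(I - D_1 D_0)\, x(n) = y(n) + \sum_{k=0}^{n-1} L_{nk}\, x(k),$$
so that $x(n) = (I - D_1 D_0)^{-1} \bigl[ y(n) + \sum_{k=0}^{n-1} L_{nk}\, x(k) \bigr]$ is uniquely determined by $y(n)$ together with the finitely many previously computed entries $x(0), \dots, x(n-1)$. By induction on $n$ this produces a unique $\vec x \in \ell_{\cD}({\mathbb Z}_{+})$ solving $(I - L)\vec x = \vec y$, so $I - L$ is a bijection.

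The only step requiring genuine care is the diagonal computation confirming $[L]_{jj} = D_1 D_0$; the rest is routine forward substitution. The hard part is thus less an obstacle than a bookkeeping check, and I would stress that because $\ell_{\cD}({\mathbb Z}_{+})$ is the space of all $\cD$-valued sequences and each $x(n)$ depends on only finitely many earlier values, the argument is purely algebraic and needs no convergence or boundedness estimates: the lower-triangular structure together with invertibility of the single block $I - D_1 D_0$ delivers the inverse outright.
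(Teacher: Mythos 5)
Your proof is correct and takes essentially the same route as the paper: both identify $W(U_0)^+$ and $W(U_1)^+$ as block lower-triangular with diagonal entries $D_0$ and $D_1$, observe that $I - W(U_1)^+W(U_0)^+$ is therefore block lower-triangular with diagonal $I - D_1D_0$, and conclude invertibility on $\ell_{\cD}({\mathbb Z}_{+})$ from the invertibility of the diagonal. The only difference is that you supply, via forward substitution and the observation that no convergence estimates are needed in the space of all sequences, the proof of the ``general fact'' which the paper merely cites; this is a useful elaboration rather than a different approach.
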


  \begin{proof}
      From the formula for ${\mathbf W}^{+}$ in \eqref{augIO} and
\eqref{sys-solve-explicit}, we
      see that $W(U_{1})^{+}$ and $W(U_{0})^{+}$ are given by lower
      triangular Toeplitz matrices with diagonal entries equal to
      $D_{1}$ and $D_{0}$ respectively.   Hence $I - W(U_{1})^{+}
      W(U_{0})^{+}$ is also lower triangular Toeplitz with diagonal
      entry equal to $I - D_{1} D_{0}$.  A general fact is that an
      operator on $\ell_{\cD}({\mathbb Z}_{+})$ given by a lower
      triangular Toeplitz matrix with invertible diagonal entry is
      invertible on $\ell_{\cD}({\mathbb Z}_{+})$.  It follows that
      $I - W(U_{1})^{+} W(U_{0})^{+}$ is invertible on
      $\ell_{\cD}({\mathbb Z}_{+})$ as asserted.
      \end{proof}

      We now come to the main result of this section, namely: the
      computation of powers of $\cF_{\ell}(U_{0}, U_{1})$ via the
      feedback connection
      $\cF_{\ell}({\mathbf W}(U_{0}), {\mathbf W}(U_{1}))$.
      For this purpose it is convenient to introduce the following
      general notation.  For $U$ an operator on a linear space $\cK$
      and $\cG$ a subspace of $\cK$ with $i_{\cG}^{*} \colon \cK \to
      \cG$ the adjoint of the inclusion map $i_{\cG} \colon \cG \to
      \cK$, we define an operator $\Lambda_{\cG, +}(U) \colon \cK \to
      \ell_{\cG}({\mathbb Z}_{+})$ (called the {\em Fourier
      representation operator}) by
      \begin{equation}  \label{Lambda+}
       \Lambda_{\cG, +}(U) \colon k \to \{ i_{\cG}^{*} U^{n} k \}_{n
\in
       {\mathbb Z}_{+}}.
      \end{equation}
      Note that in case we take $\cG = \cK$ we have simply
      $$
       \Lambda_{\cK,+}(U) \colon k \to \{ U^{n} k \}_{n \in {\mathbb
       Z}_{+}}.
      $$

      \begin{theorem}  \label{T:FB-traj}
	  Suppose that we are given two colligation matrices
	  \eqref{U0'}, \eqref{U1'} such that $I - D_{1} D_{0}$ is
	  invertible on $\cD$ and we set $U = \cF_{\ell}(U_{0},
	  U_{1}) \in \cL(\cX_{0} \oplus \cX_{1})$. Then the
	  trajectory-level feedback connection operator
	  $\cF_{\ell}({\mathbf W}(U_{0})^{+}, {\mathbf
	  W}(U_{1})^{+})$ computes the powers of $U =
	  \cF_{\ell}(U_{0}, U_{1})$:
	  \begin{equation}  \label{Un}
	      \Lambda_{\cX_{0} \oplus \cX_{1},+}(U) =
	      \cF_{\ell}({\mathbf W}(U_{0})^{+}, {\mathbf
	      W}(U_{1})^{+}) \colon \cX_{0} \oplus \cX_{1} \to
	      \ell_{\cX_{0} \oplus \cX_{1}}({\mathbb Z}_{+}).
	  \end{equation}
	  Hence, after application of the natural identification
	  between the spaces $\ell_{\cX_{0} \oplus \cX_{1}}({\mathbb Z}_{+})$
	  and
	  $\ell_{\cX_{0}}({\mathbb Z}_{+}) \oplus
	  \ell_{\cX_{1}}({\mathbb Z}_{+})$, we have the explicit
	  formulas
	  $$ \Lambda_{\cX_{0} \oplus \cX_{1},+}(U) = \begin{bmatrix}
	  \Lambda_{\cX_{0} \oplus \cX_{1},+}(U)_{11} &
	  \Lambda_{\cX_{0} \oplus \cX_{1},+}(U)_{12}  \\
	  \Lambda_{\cX_{0} \oplus \cX_{1},+}(U)_{21} &
	  \Lambda_{\cX_{0} \oplus \cX_{1},+}(U)_{22} \end{bmatrix}
	  \colon \begin{bmatrix} \cX_{0} \\ \cX_{1} \end{bmatrix} \to
	  \begin{bmatrix} \ell_{\cX_{0}}({\mathbb Z}_{+}) \\
	      \ell_{\cX_{1}}({\mathbb Z}_{+}) \end{bmatrix}
	  $$
	  where the  matrix entries $\Lambda_{\cX_{0} \oplus
	  \cX_{1},+}(U)_{ij}$ ($i,j = 1,2$) are given explicitly by
	  \begin{align}
	    &  \Lambda_{\cX_{0} \oplus \cX_{1},+}(U)_{11} =
	    W(U_{0})_{0}^{+} + W(U_{0})_{2}^{+} (I -
	    W(U_{1})^{+}W(U_{0})^{+})^{-1} W(U_{1})^{+}
	    W(U_{0})_{1}^{+}, \notag \\
	 &   \Lambda_{\cX_{0} \oplus \cX_{1},+}(U)_{12} =
	 W(U_{0})_{2}^{+} (I - W(U_{1})^{+} W(U_{0})^{+})
	 W(U_{1})^{+}_{1}, \notag \\
	& \Lambda_{\cX_{0} \oplus \cX_{1},+}(U)_{21} =
	W(U_{1})_{2}^{+} (I - W(U_{0})^{+}W(U_{1})^{+})^{-1}
	W(U_{0})_{1}^{+}, \notag \\
	& \Lambda_{\cX_{0} \oplus \cX_{1},+}(U)_{22} =
	W(U_{1})_{0}^{+} + W(U_{1})_{2}^{+} (I - W(U_{0})^{+}
	W(U_{1})^{+})^{-1} W(U_{0})^{+} W(U_{1})_{1}^{+}.
	\label{Un-explicit}
	\end{align}
	\end{theorem}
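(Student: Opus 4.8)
The plan is to realize the power sequence $\{U^{n}\sbm{x_{0} \\ x_{1}}\}_{n \in {\mathbb Z}_{+}}$ as the autonomous state-trajectory of the closed-loop system obtained by interconnecting $\Sigma_{U_{0}}$ and $\Sigma_{U_{1}}$ through the feedback channels $\cD$, $\cD_{*}$, and then to recognize this trajectory as the output of the trajectory-level feedback connection $\cF_{\ell}({\mathbf W}(U_{0})^{+}, {\mathbf W}(U_{1})^{+})$. First I would fix $\sbm{x_{0} \\ x_{1}} \in \cX_{0}\oplus\cX_{1}$, set $\sbm{x_{0}(n) \\ x_{1}(n)} := U^{n}\sbm{x_{0} \\ x_{1}}$ with $U = \cF_{\ell}(U_{0},U_{1})$, and invoke the defining relations \eqref{FBconnection} to produce, at each time $n$, the uniquely determined internal signals $\sbm{f(n) \\ f_{*}(n)} = \Gamma_{\ell}(U_{0},U_{1})\sbm{x_{0}(n) \\ x_{1}(n)}$ satisfying
\begin{align}
x_{0}(n+1) &= A_{0}x_{0}(n) + B_{0}f(n), & f_{*}(n) &= C_{0}x_{0}(n) + D_{0}f(n), \notag \\
x_{1}(n+1) &= A_{1}x_{1}(n) + B_{1}f_{*}(n), & f(n) &= C_{1}x_{1}(n) + D_{1}f_{*}(n). \notag
\end{align}
These are precisely the system equations \eqref{sys} for $\Sigma_{U_{0}}$ driven by the input string $\vec f = \{f(n)\}_{n\in{\mathbb Z}_{+}}$ and for $\Sigma_{U_{1}}$ driven by $\vec f_{*} = \{f_{*}(n)\}_{n\in{\mathbb Z}_{+}}$, with initial states $x_{0}(0)=x_{0}$, $x_{1}(0)=x_{1}$.

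The next step is to lift this step-by-step picture to the trajectory level. Since the triples $(\vec f, \vec x_{0}, \vec f_{*})$ and $(\vec f_{*}, \vec x_{1}, \vec f)$ solve the forward system relations \eqref{sys-solve} for $U_{0}$ and $U_{1}$, the augmented input-output maps \eqref{bWU01} yield
\[
\begin{bmatrix}\vec x_{0} \\ \vec f_{*}\end{bmatrix} = {\mathbf W}(U_{0})^{+}\begin{bmatrix} x_{0} \\ \vec f \end{bmatrix}, \qquad
\begin{bmatrix}\vec x_{1} \\ \vec f\end{bmatrix} = {\mathbf W}(U_{1})^{+}\begin{bmatrix} x_{1} \\ \vec f_{*} \end{bmatrix}.
\]
Comparing these two identities with the definition \eqref{FBconnection}, one sees that the pair $(\vec f, \vec f_{*})$ realizes the feedback interconnection of ${\mathbf W}(U_{0})^{+}$ and ${\mathbf W}(U_{1})^{+}$ on initial data $\sbm{x_{0} \\ x_{1}}$, with closed-loop output $\sbm{\vec x_{0} \\ \vec x_{1}}$. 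Because Lemma \ref{L:sysFB-wellposed} guarantees that $I - W(U_{1})^{+}W(U_{0})^{+}$ is invertible, Theorem \ref{T:FBcon} applies to the pair $({\mathbf W}(U_{0})^{+}, {\mathbf W}(U_{1})^{+})$ and furnishes a \emph{unique} solution of the interconnection equations; this unique solution must be the one built above, and reading off its external output gives exactly the identity \eqref{Un}.

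With \eqref{Un} in hand, the explicit block formulas \eqref{Un-explicit} would follow by substituting the block entries of ${\mathbf W}(U_{0})^{+}$, ${\mathbf W}(U_{1})^{+}$ recorded in \eqref{bWU01} into the general feedback formula \eqref{FB} of Theorem \ref{T:FBcon} and sorting the result according to the identification $\ell_{\cX_{0}\oplus\cX_{1}}({\mathbb Z}_{+}) \cong \ell_{\cX_{0}}({\mathbb Z}_{+}) \oplus \ell_{\cX_{1}}({\mathbb Z}_{+})$.

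I expect the main obstacle to lie in the middle step: one must be sure that the \emph{unique} internal signals delivered by the well-posedness of the trajectory-level connection coincide with the sequences $\{f(n)\}$, $\{f_{*}(n)\}$ generated time-step-by-time-step by $\Gamma_{\ell}(U_{0},U_{1})$, rather than with some a priori different global solution. This is exactly where the invertibility of the lower-triangular Toeplitz operator $I - W(U_{1})^{+}W(U_{0})^{+}$ from Lemma \ref{L:sysFB-wellposed} does the real work, reconciling the pointwise-in-$n$ and the global-over-${\mathbb Z}_{+}$ descriptions of the feedback loop.
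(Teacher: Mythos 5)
Your proposal is correct and uses essentially the same approach as the paper: both arguments rest on reading the trajectory-level feedback equations time-step-by-time-step as the iteration of $U = \cF_{\ell}(U_{0},U_{1})$, on the well-posedness supplied by Lemma \ref{L:sysFB-wellposed}, and on substituting the block entries of ${\mathbf W}(U_{0})^{+}$, ${\mathbf W}(U_{1})^{+}$ into the general formula \eqref{FB} to obtain \eqref{Un-explicit}. The only difference is the direction of the identification: the paper starts from the (unique) trajectory-level feedback solution and shows by induction on $n$ that its state component is the power sequence, whereas you construct the internal signals from the powers via $\Gamma_{\ell}(U_{0},U_{1})$ and then invoke uniqueness to conclude that this is the solution the trajectory-level connection produces.
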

	
	\begin{proof}
	    Note that Lemma \ref{L:sysFB-wellposed} guarantees that
	    the trajectory-level feedback connection
	    $\cF_{\ell}({\mathbf W}(U_{0})^{+}, {\mathbf
	    W}(U_{1})^{+})$ is well-posed.
	    By definition, we see that
	    \begin{equation}  \label{def-trajFB}
	     \cF_{\ell}({\mathbf W}(U_{0})^{+}, {\mathbf W}(U_{1})^{+})
	     \begin{bmatrix} x_{0} \\ x_{1}
	    \end{bmatrix} = \begin{bmatrix} \{ x_{0}(n)\}_{n \in {\mathbb
Z}_{+}}
	    \\ \{ x_{1}(n) \}_{n \in {\mathbb Z}_{+}} \end{bmatrix}
	    \end{equation}
	    means that
	    \begin{align}
		\begin{bmatrix} x_{0}(n+1) \\ d_{*}(n) \end{bmatrix} &  =
		    \begin{bmatrix} A_{0} & B_{0} \\ C_{0} & D_{0} \end{bmatrix}
			\begin{bmatrix} x_{0}(n) \\ d(n) \end{bmatrix}, \notag \\
			    \begin{bmatrix} x_{1}(n+1) \\ d(n) \end{bmatrix} & =
	    \begin{bmatrix} A_{1} & B_{1} \\ C_{1} & D_{1} \end{bmatrix}
		\begin{bmatrix} x_{1}(n) \\ d_{*}(n) \end{bmatrix}
	     \label{stringFB}
	     \end{align}
	     for uniquely determined strings $\{d(n)\}_{n \in {\mathbb
Z}_{+}}
	     \in \ell_{\cD}({\mathbb Z}_{+})$ and $\{d_{*}(n)\}_{n \in
{\mathbb
	     Z}_{+}} \in \ell_{\cD_{*}}({\mathbb Z}_{+})$.
	     As $U = \cF_{\ell}(U_{0}, U_{1})$, the particular case
	     $n = 0$
	    of the equations \eqref{stringFB} is just the
	    assertion that
	    $$
	    \begin{bmatrix} x_{0}(1) \\ x_{1}(1) \end{bmatrix} = U
		\begin{bmatrix} x_{0}(0) \\ x_{1}(0) \end{bmatrix}.
	    $$
	    Inductively assume that
	    \begin{equation} \label{ind-assume}
	     \begin{bmatrix} x_{0}(n) \\ x_{1}(n) \end{bmatrix} = U^{n}
		 \begin{bmatrix} x_{0}(0) \\ x_{1}(0) \end{bmatrix}.
	    \end{equation}
	    The $n$-th equation in \eqref{stringFB} amounts to the assertion
that
	    $$
	     \begin{bmatrix} x_{0}(n+1) \\ x_{1}(n+1) \end{bmatrix} = U
		 \begin{bmatrix} x_{0}(n) \\ x_{1}(n) \end{bmatrix}.
	    $$
	    Combining with the inductive assumption \eqref{ind-assume} then
gives
	    us that \eqref{ind-assume} holds with $n+1$ in place of $n$ and
hence
	    \eqref{ind-assume} holds for all $n = 0,1, 2,\dots$. Note
	    next that \eqref{ind-assume} combined with
	    \eqref{def-trajFB} amounts to the identity \eqref{Un}.
	    The explicit formulas \eqref{Un-explicit} then follow
	    from formula \eqref{FB} with ${\mathbf W}(U_{0})^{+}$,
	    ${\mathbf W}(U_{1})^{+}$ as in \eqref{bWU01} in place of
	    $U_{0}$, $U_{1}$.
	    \end{proof}
	
	    If $U_{0}$ and $U_{1}$ are invertible with
	    \begin{equation}  \label{U0U1inverse}
	    U_{0}^{-1} = \begin{bmatrix} \alpha_{0} & \beta_{0} \\
	    \gamma_{0} & \delta_{0} \end{bmatrix} \colon
	    \begin{bmatrix} \cX_{0} \\ \cD_{*} \end{bmatrix} \to
		\begin{bmatrix}  \cX_{0} \\ \cD \end{bmatrix},
		    \quad
	  U_{1}^{-1} = \begin{bmatrix} \alpha_{1} & \beta_{1} \\
	  \gamma_{1} & \delta_{1} \end{bmatrix} \colon
	  \begin{bmatrix} \cX_{1} \\ \cD \end{bmatrix} \to
	      \begin{bmatrix} \cX_{1} \\ \cD_{*} \end{bmatrix}
	  \end{equation}
	  with $I_{\cD_{*}} - \gamma_{1} \gamma_{0}$ invertible, a
	  similar analysis can be brought to bear to compute negative
	  powers of $U = \cF(U_{0}, U_{1})$.  Let us introduce an
      operator $\Lambda_{\cG,-} \colon \cK
	  \to \ell_{\cG}({\mathbb Z}_{-})$ (also called a {\em
	  Fourier representation operator} along with \eqref{Lambda+})
	  by
	  $$
	    \Lambda_{\cG,-}(U) \colon k  \mapsto  \{ i_{\cG}^{*}  U^{n} k
	    \}_{n \in {\mathbb Z}_{-}}
	  $$
	  where in particular
	  $$
	   \Lambda_{\cK,-}(U) k \colon  \mapsto \{ U^{n} k \}_{n \in {\mathbb
	   Z}_{-}}.
	   $$
	   Then we have the following backward-time result parallel
	   to Theorem \ref{T:FB-traj}.  As the proof is completely
	   analogous, we omit the details of the proof.
	
	   \begin{theorem}  \label{T:backwardFB-traj}
	       Suppose that we are given two colligation matrices
	       $U_{0}$ and $U_{1}$ as in
			 \eqref{U0'}, \eqref{U1'} with inverses as in
			 \eqref{U0U1inverse} such that $I -
			 \delta_{1} \delta_{0}$ is
			 invertible on $\cD_{*}$ and we set $U = \cF_{\ell}(U_{0},
			 U_{1}) \in \cL(\cX_{0} \oplus \cX_{1})$. Then the
			 trajectory-level feedback connection operator
			 $\cF_{\ell}({\mathbf W}(U_{0})^{-}, {\mathbf
			 W}(U_{1})^{-})$ computes the negative powers of $U =
			 \cF_{\ell}(U_{0}, U_{1})$:
			 \begin{equation}  \label{backward-Un}
			     \Lambda_{\cX_{0} \oplus \cX_{1},-}(U) =
			     \cF_{\ell}({\mathbf W}(U_{0})^{-}, {\mathbf
			     W}(U_{1})^{-}) \colon \cX_{0} \oplus \cX_{1} \to
			     \ell_{\cX_{0} \oplus \cX_{1}}({\mathbb
			     Z}_{-}).
			 \end{equation}
			 Hence, application of the natural identification
			 between $\ell_{\cX_{0} \oplus \cX_{1}}({\mathbb Z}_{-})$
			 and
			 $\ell_{\cX_{0}}({\mathbb Z}_{-}) \oplus
			 \ell_{\cX_{1}}({\mathbb Z}_{-})$ leads to explicit
			 formulas
			 $$ \Lambda_{\cX_{0} \oplus \cX_{1},-}(U) = \begin{bmatrix}
			 \Lambda_{\cX_{0} \oplus \cX_{1},-}(U)_{11} &
			 \Lambda_{\cX_{0} \oplus \cX_{1},-}(U)_{12}  \\
			 \Lambda_{\cX_{0} \oplus \cX_{1},-}(U)_{21} &
			 \Lambda_{\cX_{0} \oplus \cX_{1},-}(U)_{22} \end{bmatrix}
			 \colon \begin{bmatrix} \cX_{0} \\ \cX_{1} \end{bmatrix} \to
			 \begin{bmatrix} \ell_{\cX_{0}}({\mathbb Z}_{-}) \\
			     \ell_{\cX_{1}}({\mathbb Z}_{-}) \end{bmatrix}
			 $$
			 where the  matrix entries $\Lambda_{\cX_{0} \oplus
			 \cX_{1},+}(U)_{ij}$ ($i,j = 1,2$) are given explicitly by
			 \begin{align}
			   &  \Lambda_{\cX_{0} \oplus \cX_{1},-}(U)_{11} =
			   W(U_{0})_{0}^{-} + W(U_{0})_{1}^{-} (I -
			   W(U_{1})^{-}W(U_{0})^{-})^{-1} W(U_{1})^{-}
			   W(U_{0})_{2}^{-}, \notag \\
			&   \Lambda_{\cX_{0} \oplus \cX_{1},-}(U)_{12} =
			W(U_{0})_{1}^{-} (I - W(U_{1})^{-} W(U_{0})^{-})
			W(U_{1})^{-}_{2}, \notag \\
		       & \Lambda_{\cX_{0} \oplus \cX_{1},-}(U)_{21} =
		       W(U_{0})_{1}^{-} (I - W(U_{0})^{-}W(U_{1})^{-})^{-1}
		       W(U_{0})_{2}^{-}, \notag \\
		       & \Lambda_{\cX_{0} \oplus \cX_{1},+}(U)_{22} =
		       W(U_{1})_{0}^{-} + W(U_{1})_{1}^{-} (I -  W(U_{0})^{-}
		       W(U_{1})^{-})^{-1} W(U_{0})^{-} W(U_{1})_{2}^{-}.
		       \label{backward-Un-explicit}
		       \end{align}
		       \end{theorem}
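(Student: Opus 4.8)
The plan is to run the forward-time argument of Theorem \ref{T:FB-traj} in reverse, replacing each colligation by its inverse and letting the recursion descend rather than ascend. The first step is to record the backward analogue of Lemma \ref{L:sysFB-wellposed}. From the explicit description \eqref{backward-sys-explicit}, the backward input-output maps $W(U_0)^-$ and $W(U_1)^-$ are triangular Toeplitz (in the ascending indexing of sequences on $\mathbb Z_-$) with diagonal entries $\delta_0$ and $\delta_1$ respectively, so $I - W(U_1)^-W(U_0)^-$ is again triangular Toeplitz with diagonal entry $I - \delta_1\delta_0$. The same ``invertible diagonal implies invertible operator'' fact used in Lemma \ref{L:sysFB-wellposed} then shows that invertibility of $I-\delta_1\delta_0$ on $\cD_*$ forces $I - W(U_1)^-W(U_0)^-$ to be invertible on $\ell_{\cD_*}(\mathbb Z_-)$, so that $\cF_\ell(\mathbf W(U_0)^-,\mathbf W(U_1)^-)$ is well-posed.

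The second step is to observe that inverting the two systems defining the feedback connection \eqref{FBconnection} yields $U^{-1}=\cF_\ell(U_0^{-1},U_1^{-1})$. Indeed $U^{-1}\sbm{\widetilde x_0\\\widetilde x_1}=\sbm{x_0\\x_1}$ holds exactly when there are $f\in\cD$ and $f_*\in\cD_*$ with
\begin{equation*}
U_0^{-1}\begin{bmatrix}\widetilde x_0\\f_*\end{bmatrix}=\begin{bmatrix}x_0\\f\end{bmatrix},\qquad U_1^{-1}\begin{bmatrix}\widetilde x_1\\f\end{bmatrix}=\begin{bmatrix}x_1\\f_*\end{bmatrix}.
\end{equation*}
Here the feedback signal $f_*\in\cD_*$ is looped from $U_1^{-1}$ back into $U_0^{-1}$, so the relevant invertibility requirement is the $(2,2)$-block condition $I-\delta_1\delta_0$ on $\cD_*$, matching the hypothesis precisely.

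The third step unwinds the definition of the trajectory-level connection and runs a descending induction. Writing $\cF_\ell(\mathbf W(U_0)^-,\mathbf W(U_1)^-)\sbm{x_0\\x_1}=\sbm{\{x_0(n)\}\\\{x_1(n)\}}$ means there are uniquely determined strings $\{d(n)\}_{n\in\mathbb Z_-}\in\ell_{\cD}(\mathbb Z_-)$ and $\{d_*(n)\}_{n\in\mathbb Z_-}\in\ell_{\cD_*}(\mathbb Z_-)$ with $x_0(0)=x_0$, $x_1(0)=x_1$ and, for every $n\le 0$,
\begin{align*}
\begin{bmatrix}x_0(n)\\d(n)\end{bmatrix}&=U_0^{-1}\begin{bmatrix}x_0(n+1)\\d_*(n)\end{bmatrix},\\
\begin{bmatrix}x_1(n)\\d_*(n)\end{bmatrix}&=U_1^{-1}\begin{bmatrix}x_1(n+1)\\d(n)\end{bmatrix}.
\end{align*}
By the previous step this pair of relations is exactly the statement $\sbm{x_0(n)\\x_1(n)}=U^{-1}\sbm{x_0(n+1)\\x_1(n+1)}$. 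Starting from $\sbm{x_0(0)\\x_1(0)}=\sbm{x_0\\x_1}$ and descending then gives $\sbm{x_0(n)\\x_1(n)}=U^{n}\sbm{x_0\\x_1}$ for all $n\le0$, which is the identity \eqref{backward-Un}; the entrywise formulas \eqref{backward-Un-explicit} follow by substituting $\mathbf W(U_0)^-$ and $\mathbf W(U_1)^-$ for $U_0$ and $U_1$ in the general feedback formula \eqref{FB}, using the block decomposition \eqref{backward-sys-block}.

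The only genuinely delicate point is the $\cD$-versus-$\cD_*$ bookkeeping once the systems run backwards: one must verify that the reversed feedback loop is the one governed by $I-\delta_1\delta_0$ on $\cD_*$ rather than by $I-\delta_0\delta_1$ on $\cD$, and that the output $\cD$-signal of $U_0^{-1}$ (not $U_1^{-1}$) is what drives the $U_1^{-1}$ subsystem. Once that identification $U^{-1}=\cF_\ell(U_0^{-1},U_1^{-1})$ is secured, the descending induction and the extraction of \eqref{backward-Un-explicit} are formally identical to the forward-time proof of Theorem \ref{T:FB-traj}.
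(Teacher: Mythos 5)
Your proof is correct and is exactly the ``completely analogous'' backward-time argument that the paper leaves to the reader for Theorem \ref{T:backwardFB-traj}: the backward well-posedness lemma via the triangular-Toeplitz structure with diagonal $I-\delta_{1}\delta_{0}$, the identification $U^{-1}=\cF_{\ell}(U_{0}^{-1},U_{1}^{-1})$ (which is the one genuinely new ingredient needed to mirror the step ``the $n$-th pair of system equations is the assertion that $U$ advances the state''), and the descending induction followed by substitution into \eqref{FB}. One trivial indexing remark: since the strings $\{d(n)\}$, $\{d_{*}(n)\}$ lie in $\ell_{\cD}({\mathbb Z}_{-})$ and $\ell_{\cD_{*}}({\mathbb Z}_{-})$, the backward recursions hold for $n\le -1$ rather than for all $n\le 0$.
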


\section{Unitary scattering systems and their models}
\label{S:Hellinger'}

\subsection{Unitary scattering systems}  \label{S:systems}

Following \cite{Kheifets-Dubov} we define a {\em unitary
scattering system} to be a collection ${\mathfrak S}$ of the
form
\begin{equation}  \label{scatsys}
 {\mathfrak S} = ({\mathcal U}, \Psi; {\mathcal K}, {\mathcal E})
\end{equation}
where ${\mathcal K}$ (the {\em ambient space}) and
${\mathcal E}$ (the {\em coefficient space}) are Hilbert spaces,
${\mathcal U}$ is a unitary operator on ${\mathcal K}$ (called the {\em
evolution} operator), and $\Psi$ (called the {\em scale} operator)
is an operator from ${\mathcal E}$ into ${\mathcal K}$. A
fundamental object associated with any unitary scattering
system ${\mathfrak S}$ \eqref{scatsys} is its so-called {\em
characteristic function} $w_{{\mathfrak S}}(\zeta)$ defined
by
\begin{align}
 w_{{\mathfrak S}}(\zeta) & = \sum_{n = 1}^{\infty}\Psi^{*}
 {\mathcal U}^{n} \Psi \overline{\zeta}^{n} + \sum_{n=0}^{\infty}
\Psi^{*} {\mathcal U}^{*n} \Psi \zeta^{n}  \notag \\
& = \Psi^{*} \left[ (I - \overline{\zeta} {\mathcal U})^{-1} +
(I -\zeta {\mathcal U}^{*})^{-1} - I \right] \Psi  \notag \\
& = (1 - |\zeta|^{2}) \Psi^{*} (I - \overline{\zeta} {\mathcal U})^{-1}
(I - \zeta {\mathcal U}^{*})^{-1} \Psi.
 \label{charfunc}
\end{align}
From \eqref{charfunc} we see that $w_{{\mathfrak S}}(\zeta)$
is a positive harmonic operator-function (values are
operators on $\mathcal E$)
\begin{equation}  \label{charfuncpos}
 w_{{\mathfrak S}}(\zeta) \ge 0 \text{ for } \zeta \in {\mathbb D}
 \text{ and } \frac{\partial^{2}}{\partial \overline{\zeta} \partial \zeta}
 w_{{\mathfrak S}}(\zeta) = 0 \text{ for all } \zeta \in {\mathbb D}.
 \end{equation}
If we introduce the convention
$$
\zeta^{[n]} = \begin{cases} \zeta^{n} &\text{if } n \ge 0 \\
\overline{\zeta}^{-n} &\text{if } n<0
\end{cases}
$$
then the first formula in \eqref{charfunc} can be written more succinctly as
$$
w_{{\mathfrak S}}(\zeta) = \sum_{n=-\infty}^{\infty} (\Psi^{*}
{\mathcal U}^{*n} \Psi) \zeta^{[n]}.
$$
We shall refer to the string of coefficients
$\{w_{{\mathfrak S},n}\}_{n \in {\mathbb Z}}$ given by
\begin{equation}  \label{char-moment-sequence}
w_{{\mathfrak S, n}}: = \Psi^{*} \cU^{*n} \Psi
\end{equation}
as the {\em characteristic moment sequence}.

Let us now introduce the spectral measure $E_{{\mathcal
U}}(\cdot)$ (see e.g.~\cite{Lax}) for ${\mathcal U}$; we then define the
{\em characteristic measure} $\sigma_{{\mathfrak S}}$ for the unitary scattering
system ${\mathfrak S}$ to be the spectral
measure for ${\mathcal U}$ compressed by the action of $\Psi$ given by
\begin{equation}  \label{charmeas}
\sigma_{{\mathfrak S}}(\cdot) = \Psi^{*} E_{{\mathcal U}}(\cdot) \Psi.
\end{equation}
Thus the spectral measure $E_{{\mathcal U}}$ is a strong Borel
measure on the unit circle ${\mathbb T}$ with values equal to
orthogonal projection operators in ${\mathcal L}({\mathcal K})$
while the characteristic measure $\sigma_{{\mathfrak S}}$ is a
strong Borel measure on ${\mathbb T}$ with values equal to
positive-semidefinite operators in ${\mathcal L}({\mathcal E})$.
Note that the characteristic function $w_{{\mathfrak
S}}(\zeta)$ can be expressed in terms of the characteristic measure
$\sigma_{{\mathfrak S}}$ via the Poisson integral:
\begin{align} \notag
w_{{\mathfrak S}}(\zeta) & = \Psi^{*}\left[ \int_{{\mathbb T}}
((1 - \overline{\zeta}t)^{-1} + ( 1 - \zeta \overline{t})^{-1}-1)\
E_{{\mathcal U}}(dt) \right] \Psi \\
& = \int_{{\mathbb T}} {\mathcal P}(t, \zeta)\ \sigma_{{\mathfrak S}}(dt)
\label{charfunc-measure}
\end{align}
where
\begin{equation}  \label{Poissonker}
{\mathcal P}(t, \zeta) = \frac{ 1- |\zeta|^{2}}{|1 -
\overline{\zeta}t|^{2}} \text{ for } t\in {\mathbb T} \text{ and }
\zeta \in {\mathbb D}
\end{equation}
is the classical {\em Poisson kernel} and where the Lebesgue
integral converges in the strong operator topology.

We say that two unitary scattering systems $({\mathcal U}, \Psi;
{\mathcal K}, {\mathcal E})$ and $({\mathcal U}', \Psi';
{\mathcal K}', {\mathcal E})$ (with the same coefficient space
${\mathcal E}$) are {\em unitarily equivalent} if there is a unitary
map $\tau \colon {\mathcal K} \to {\mathcal K}'$ so that
\begin{equation}  \label{unequiv}
\tau {\mathcal U} = {\mathcal U}' \tau, \qquad \tau \Psi = \Psi'.
\end{equation}
We say that a unitary scattering system
${\mathfrak S} = ({\mathcal U}, \Psi; {\mathcal K}, {\mathcal E})$
is {\em minimal} in case the linear manifold $\Psi {\mathcal E} \subset
{\mathcal K}$ is $*$-cyclic for ${\mathcal U}$, i.e., the smallest subspace
${\mathcal K}_{0}$ containing $\Psi {\mathcal E}$ and invariant
for both ${\mathcal U}$ and ${\mathcal U}^{*}$ is the whole space
${\mathcal K}$.   The following elementary result makes
precise the idea that {\em the characteristic function is a complete
unitary invariant for minimal unitary scattering systems}.

\begin{proposition}  \label{P:uninv}
Two unitarily equivalent unitary scattering systems
$$
{\mathfrak S} = ({\mathcal U}, \Psi; {\mathcal K}, {\mathcal E})
\text{ and } {\mathfrak S}' = ({\mathcal U}', \Psi'; {\mathcal K}',
{\mathcal E})
$$
have the same characteristic functions $(w_{{\mathfrak S}}(\zeta)
= w_{{\mathfrak S}'}(\zeta)$ for all $\zeta \in {\mathbb D}$).

Conversely, if ${\mathfrak S} = ({\mathcal U}, \Psi;
{\mathcal K}, {\mathcal E})$ and ${\mathfrak S}' = ({\mathcal U}', \Psi';
{\mathcal K}', {\mathcal E})$ are two {\em minimal} unitary
scattering systems with the same characteristic function,
then ${\mathfrak S}$ and ${\mathfrak S}'$ are unitarily equivalent.
\end{proposition}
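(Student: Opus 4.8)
The plan is to treat the two directions separately, the forward implication being routine and the converse carrying the substance. For the forward direction, suppose $\tau \colon \cK \to \cK'$ is unitary with $\tau \cU = \cU' \tau$ and $\tau \Psi = \Psi'$. Since $\tau$ is unitary the intertwining propagates to all integer powers, $\tau \cU^{*n} = \cU'^{*n}\tau$ for $n \in \bbZ$, so that
$$ w_{\mathfrak{S}',n} = \Psi'^{*}\cU'^{*n}\Psi' = \Psi^{*}\tau^{*}\cU'^{*n}\tau\Psi = \Psi^{*}\cU^{*n}\Psi = w_{\mathfrak{S},n}, $$
and equality of the characteristic moment sequences forces $w_{\mathfrak{S}} = w_{\mathfrak{S}'}$ through \eqref{charfunc}. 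For the converse I would first run this observation backwards: by uniqueness of Fourier coefficients (equivalently, uniqueness of the measure in the Poisson representation \eqref{charfunc-measure}) the hypothesis $w_{\mathfrak{S}} = w_{\mathfrak{S}'}$ gives $\Psi^{*}\cU^{*n}\Psi = \Psi'^{*}\cU'^{*n}\Psi'$ for every $n \in \bbZ$.

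The core of the argument is then to build the intertwining unitary $\tau$ directly out of this moment equality. By minimality the linear manifold $\cM = \operatorname{span}\{\cU^{n}\Psi e : n \in \bbZ,\ e \in \cE\}$ is dense in $\cK$, and similarly $\cM' = \operatorname{span}\{\cU'^{n}\Psi' e\}$ is dense in $\cK'$; I would define $\tau$ on $\cM$ by
$$ \tau \colon \sum_{j}\cU^{n_{j}}\Psi e_{j} \mapsto \sum_{j}\cU'^{n_{j}}\Psi' e_{j}. $$
The point that makes this well defined and isometric is that, because $\cU$ is unitary, every inner product of spanning vectors is expressible purely through the moment sequence:
$$ \langle \cU^{n_{j}}\Psi e_{j},\, \cU^{m_{k}}\Psi e_{k}\rangle_{\cK} = \langle \Psi^{*}\cU^{\,n_{j}-m_{k}}\Psi e_{j},\, e_{k}\rangle_{\cE} = \langle w_{\mathfrak{S},\,m_{k}-n_{j}}\, e_{j},\, e_{k}\rangle_{\cE}, $$
and the identical formula, with $w_{\mathfrak{S}'}$ in place of $w_{\mathfrak{S}}$, computes the corresponding inner product of $\cU'^{n_{j}}\Psi' e_{j}$ against $\cU'^{m_{k}}\Psi' e_{k}$ in $\cK'$. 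Since the two moment sequences coincide, the Gram matrices of $\{\cU^{n_{j}}\Psi e_{j}\}$ and $\{\cU'^{n_{j}}\Psi' e_{j}\}$ agree term by term.

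From this Gram-matrix identity everything follows. Equality of the Gram matrices shows at once that $\tau$ is both well defined (a null combination on the left is carried to a combination of zero norm on the right) and norm-preserving on $\cM$, so it extends by continuity to an isometry of $\cK$ into $\cK'$; its range is closed and contains the dense manifold $\cM'$, hence $\tau$ is onto and therefore unitary. Evaluating the definition on $\cU^{n}\Psi e$ gives $\tau\cU(\cU^{n}\Psi e) = \cU'^{n+1}\Psi' e = \cU'\tau(\cU^{n}\Psi e)$, so $\tau\cU = \cU'\tau$ on $\cM$ and hence on all of $\cK$, while the case $n = 0$ gives $\tau\Psi = \Psi'$; thus $\mathfrak{S}$ and $\mathfrak{S}'$ are unitarily equivalent in the sense of \eqref{unequiv}. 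I expect the only genuinely delicate step to be the verification that inner products of the spanning vectors depend on the data solely through the moment sequence $\{w_{\mathfrak{S},n}\}$ — this is where unitarity of $\cU$ (to turn adjoints of powers into negative powers) and the $*$-cyclicity built into minimality (to guarantee density, and hence surjectivity of the isometry) are both essential; once that reduction is in hand the remaining extension and intertwining checks are formal.
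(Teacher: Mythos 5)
Your proof is correct and follows essentially the same route as the paper's: match the characteristic moment sequences $\Psi^{*}\cU^{*n}\Psi = \Psi'^{*}\cU'^{*n}\Psi'$, use them to show the Gram matrices of the spanning families $\{\cU^{n}\Psi e\}$ and $\{\cU'^{n}\Psi' e\}$ coincide, define $\tau \colon \cU^{n}\Psi e \mapsto \cU'^{n}\Psi' e$, and invoke minimality for density on both sides to get unitarity and the intertwining relations. The only cosmetic difference is that the paper's forward direction computes directly with the resolvent formula \eqref{charfunc} while you compute with moments; these are interchangeable.
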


\begin{proof}  This is essentially Theorem 4.1$'$ in \cite{Kheifets-Dubov}.
For the reader's convenience, we recall the proof here.
If $\tau \colon {\mathcal K} \to {\mathcal K}'$
satisfies the intertwining conditions \eqref{unequiv}, then
\begin{align*}
 w_{{\mathfrak S}'}(\zeta) & = \Psi^{\prime *}
 \left[ (I - \overline{\zeta} {\mathcal U}')^{-1} +
 (I - \zeta   {\mathcal U}^{\prime*})^{-1} - I \right] \Psi' \\
& = \Psi^{*}\tau^{*} \left[ (I -\overline{\zeta} {\mathcal U}')^{-1}
+ (I - \zeta  {\mathcal U}^{\prime *})^{-1} - I \right] \tau \Psi \\
 & = \Psi^{*} \left[ (I - \overline{\zeta} {\mathcal U})^{-1}
 + (I - \zeta {\mathcal U}^{*})^{-1} - I \right] \Psi \\
& = w_{{\mathfrak S}}(\zeta).
\end{align*}

Conversely, suppose that ${\mathfrak S}$ and ${\mathfrak
S}'$ are {\em minimal} unitary scattering systems with the same
coefficient space ${\mathcal E}$ and with identical characteristic functions
$w_{{\mathfrak S}}(\zeta) = w_{{\mathfrak S}'}(\zeta)$ for
all $\zeta \in {\mathbb D}$.  The identity
$w_{{\mathfrak S}}(\zeta) = w_{{\mathfrak S}'}(\zeta)$ between
harmonic functions implies that coefficients of powers of
$\zeta$ and of $\overline{\zeta}$ match up:
$$
\Psi^{*} {\mathcal U}^{n} \Psi = \Psi^{\prime *} {\mathcal U}^{\prime n}
\Psi' \text{ for } n = 0, \pm 1, \pm 2, \dots.
$$
Note that
$$
\langle {\mathcal U}^{n} \Psi e, {\mathcal U}^{m} \Psi
\widetilde e \rangle = \langle \Psi^{*} {\mathcal U}^{n-m} \Psi e,
\widetilde e \rangle = \langle \Psi^{\prime *} {\mathcal U}^{\prime n-m}
\Psi' e, \widetilde e \rangle  = \langle {\mathcal U}^{\prime n}
\Psi' e, {\mathcal U}^{\prime m} \Psi' \widetilde e \rangle
$$
for all $n,m = 0, \pm 1, \pm 2, \dots$ and $e, \widetilde e \in {\mathcal E}$,
and hence the formula
\begin{equation}  \label{formulaU}
\tau  : {\mathcal U}^{n} \Psi e \mapsto {\mathcal U}^{\prime n} \Psi' e
\end{equation}
extends by linearity and continuity to define a well-defined isometry
from $$ {\mathcal D} = \overline{\mathop{\text{span}}}
\{{\mathcal U}^{n} \Psi e \colon n \in {\mathbb Z},\ e \in {\mathcal E} \}
$$
onto
$$
{\mathcal R}:= \overline{\mathop{\text{span}}} \{{\mathcal U}^{\prime n}
\Psi' e \colon n \in {\mathbb Z},\ e \in {\mathcal E} \}.
$$
Under the assumption that both ${\mathfrak S}$ and ${\mathfrak S}'$
are minimal, we see that ${\mathcal D} = {\mathcal K}$ and ${\mathcal R} = {\mathcal K}'$,
and hence $\tau$ is unitary from ${\mathcal K}$ onto ${\mathcal K}'$.
From the formula \eqref{formulaU} for $\tau$ specialized to the case $n=0$,
we see that $\tau \Psi = \Psi'$.  From the general case of the formula we see that
$\tau {\mathcal U} k = {\mathcal U}' \tau k$ in case $k \in {\mathcal K}$ has the
form $k = {\mathcal U}^{n} \Psi^{*}e$ for some $n \in {\mathbb Z}$ and
$e \in {\mathcal E}$.  By the minimality assumption on ${\mathfrak
S}$ the span of such elements is dense in ${\mathcal K}$, and hence
the validity of the intertwining $\tau {\mathcal U} k =
{\mathcal U}' \tau k$ extends to the case of a general element $k$ of ${\mathcal
K}$.  This concludes the proof of Proposition \ref{P:uninv}.
\end{proof}

\subsection{\textbf{The Hellinger space ${\mathcal L}^{\sigma}$}} \label{S:Hellinger}

We mostly follow here definitions and notations of \cite{Kheifets-Dubov}.
Earlier expositions were given in \cite{KKY}, \cite{Kh-AIP1}, \cite{Kh-AIP2}, \cite{Kh-PhD},
\cite{KhYu}. Classical references for the Hellinger integral are \cite{Grenander},
\cite{Schmulyan1}, \cite{Schmulyan2}, \cite{Schmulyan3}; we refer to
\cite{RFM} for an application to stochastic differential equations.
		
Let ${\mathcal E}$ be a Hilbert space and let $\sigma$
be a positive ${\mathcal L}({\mathcal E})$-valued Borel
measure on ${\mathbb T}$. We define the space ${\mathcal L}^{\sigma}$
to be the space of all ${\mathcal E}$-valued vector measures $\nu$
for which there exists a scalar measure $\mu$ on ${\mathbb T}$
such that the operator
\begin{equation}  \label{majorize}
\begin{bmatrix}  \mu(\Delta) & \nu(\Delta)^{*} \\
\nu(\Delta) &
\sigma(\Delta) \end{bmatrix} \in {\mathcal L}({\mathbb C}
\oplus {\mathcal E} )
\end{equation}
is positive semidefinite for all Borel subsets $\Delta \subseteq {\mathbb T}$.
It follows that, for a given $\nu \in {\mathcal L}^{\sigma}$
and $\Delta$ any Borel subset of ${\mathbb T}$, $\nu(\Delta) \in
\mathop{\text{im}} \sigma(\Delta)^{1/2}$ and that the smallest
constant $C_{\nu}(\Delta)$ which can be substituted for
$\mu(\Delta)$ in \eqref{majorize} is
\begin{equation}  \label{Cnu}
    C_{\nu}(\Delta) = \| \sigma(\Delta)^{[-1/2]}\nu(\Delta)\|^{2}
\end{equation}
where $\sigma(\Delta)^{[-1/2]}$ is the Moore-Penrose inverse of
$\sigma(\Delta)^{1/2}$; here in general we use the notation $X^{[-1]}$ for the {\em
Moore-Penrose inverse} of the operator $X$:
\begin{align*}
&  \mathop{\text{domain}} X^{[-1]} = \mathop{\text{im}} X \oplus
(\mathop{\text{im X}})^{\perp}  \text{ with } \\
& X^{[-1]} x = 0 \text{ if } x \in (\mathop{\text{im
X}})^{\perp}
\text{ and } X^{[-1]}(Xy) = P_{ (\mathop{\text{ker}} X)^{\perp}} y.
\end{align*}
It can be further shown (see \cite{Kheifets-Dubov} for detail)
that there is a unique such dominating scalar measure $\mu_{\nu}$ which is {\em minimal}
in the sense that, for any other scalar measure $\mu$ for which
\eqref{majorize} holds, then
necessarily
$$ \mu(\Delta) \ge \mu_{\nu}(\Delta) \text{ for all } \Delta
\subseteq {\mathbb T}
$$
This unique measure $\mu_{\nu}$ is called the {\em Hellinger (scalar-valued)
measure of the vector measure $\nu$ with respect
to $\sigma$}. The function $\Delta \mapsto C_{\nu}(\Delta)$
in general is not even an additive function of the set $\Delta$
but merely subadditive, i.e., for $\Delta_{1}, \Delta_{2}$ disjoint
Borel subsets of ${\mathbb T}$, we are guaranteed only
$$
C_{\nu}(\Delta_{1} \cup \Delta_{2}) \le C_{\nu}(\Delta_{1}) + C_{\nu}(\Delta_{2}).
$$
Therefore, the Hellinger measure $\mu_{\nu}$ is defined as the
minimal measure dominating the subadditive function $C_{\nu}$, namely
\begin{align} \notag
 \mu_{\nu}(\Delta) & = \sup \{ \sum_{k=1}^{n} \|
\sigma(\Delta_{k})^{[-1/2]} \nu(\Delta_{k}) \|^{2}_{{\mathcal E}}
\colon  \Delta_{1}, \dots, \Delta_{n} \\ \notag
& \qquad \text{ form a finite disjunctiv partition of } \Delta\}  \\
& = \lim_{\Delta  = \dot \cup_{k=1}^{n} \Delta_{k}} \sum_{k=1}^{n}\|
\sigma(\Delta_{k})^{[-1/2]} \nu(\Delta_{k}) \|^{2}_{{\mathcal E}}
\label{munu}
\end{align}
where the limit is taken along the directed set of finite
partitions of $\Delta$ ordered by refinement. In particular, part
of the assertion here is that $\nu(\Delta) \in \mathop{\text{im}}
\sigma(\Delta)^{1/2}$ for each Borel subset $\Delta$ of
${\mathbb T}$.

We next define a norm  $\| \cdot \|_{{\mathcal L}^{\sigma}}$ by
$$
\| \nu \|_{{\mathcal L}^{\sigma}} = \mu_{\nu}({\mathbb T})^{1/2}.
$$
Then one can show that ${\mathcal L}^{\sigma}$ is a Hilbert space
in this norm (see \cite{Kheifets-Dubov}).  By polarization of the
formula \eqref{munu} we see that the inner product can be expressed as
\begin{equation}  \label{Lsigmainnerprod}
\langle \nu_{1}, \nu_{2} \rangle_{{\mathcal L}^{\sigma}} =
\lim_{{\mathbb T} = \dot \cup_{k=1}^{n} \Delta_{k}}
\sum_{k=1}^{n} \left\langle \sigma(\Delta_{k})^{[-1/2]} \nu_{1}(\Delta_{k}),
\sigma(\Delta_{k})^{[-1/2]} \nu_{2}(\Delta_{k})
\right\rangle_{{\mathcal E}}.
\end{equation}

We give below two propositions from \cite{Kheifets-Dubov} that will be used in
this paper. Both can be verified for simple functions using
formulas (\ref{munu}) and (\ref{Lsigmainnerprod}) and then for
measurable functions by the limit process.
		
\begin{proposition}  \label{L:1}
If $\nu \in {\mathcal L}^{\sigma}$ with associated Hellinger
scalar-valued measure $\mu_{\nu}$ and if $f$ is a measurable scalar function
on ${\mathbb T}$ for which $\int_{{\mathbb T}} |f(t)|^{2}
\mu_{\nu}(dt) < \infty$, then the measure $\nu \cdot f$ given by
$$
(\nu \cdot f)(\Delta) = \int_{\Delta}  \nu(dt) f(t)
$$
is in ${\mathcal L}^{\sigma}$ with associated Hellinger
measure $\mu_{\nu \cdot f}$ equal to the scalar-valued measure $\mu_{\nu} \cdot
|f|^{2}$ given by
$$
(\mu_{\nu} \cdot |f|^{2})(\Delta) = \int_{\Delta} \mu_{\nu}(dt) |f(t)|^{2}.
$$
\end{proposition}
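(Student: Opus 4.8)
The plan is to follow the two-step scheme dictated by the structure of $\mathcal{L}^{\sigma}$: first establish the assertion for simple scalar functions $f$ by a direct computation from the partition formula \eqref{munu}, and then pass to general measurable $f$ with $\int_{\mathbb{T}}|f|^{2}\,\mu_{\nu}(dt)<\infty$ by approximation in $L^{2}(\mu_{\nu})$. The scalar measure $\lambda:=\mu_{\nu}\cdot|f|^{2}$ is the candidate Hellinger measure, and the real content is that the minimal dominating measure \eqref{munu} of $\nu\cdot f$ is exactly $\lambda$.

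For $f=\sum_{j=1}^{m}c_{j}\chi_{E_{j}}$ with $\{E_{j}\}$ a finite Borel partition of $\mathbb{T}$, I would first note membership $\nu\cdot f\in\mathcal{L}^{\sigma}$: because $\sigma(\Delta\cap E_{j})\le\sigma(\Delta)$ forces $\operatorname{im}\sigma(\Delta\cap E_{j})^{1/2}\subseteq\operatorname{im}\sigma(\Delta)^{1/2}$, the finite sum $(\nu\cdot f)(\Delta)=\sum_{j}c_{j}\,\nu(\Delta\cap E_{j})$ lies in $\operatorname{im}\sigma(\Delta)^{1/2}$, as required by \eqref{majorize}. For the measure identity, the key observation is that in \eqref{munu} the supremum over partitions of $\Delta$ may be computed along partitions subordinate to $\{E_{j}\}$, since intersecting any partition with the $E_{j}$ merely refines it. On a piece $\Gamma\subseteq E_{j}$ one has $(\nu\cdot f)(\Gamma)=c_{j}\,\nu(\Gamma)$, so by \eqref{Cnu} the corresponding summand factors as $|c_{j}|^{2}\,\|\sigma(\Gamma)^{[-1/2]}\nu(\Gamma)\|^{2}$. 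Grouping by $j$ and taking the limit in \eqref{munu} separately on each $\Delta\cap E_{j}$ yields $\mu_{\nu\cdot f}(\Delta)=\sum_{j}|c_{j}|^{2}\,\mu_{\nu}(\Delta\cap E_{j})=\int_{\Delta}|f|^{2}\,\mu_{\nu}(dt)=\lambda(\Delta)$. Taking $\Delta=\mathbb{T}$ records the isometry $\|\nu\cdot f\|_{\mathcal{L}^{\sigma}}=\|f\|_{L^{2}(\mu_{\nu})}$ for simple $f$; the same computation via \eqref{Lsigmainnerprod} delivers the polarized version if one prefers the bilinear form.

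To reach general $f$ I would record first an evaluation bound valid for every $\rho\in\mathcal{L}^{\sigma}$: by \eqref{Cnu} and \eqref{munu}, $\|\sigma(\Delta)^{[-1/2]}\rho(\Delta)\|\le\|\rho\|_{\mathcal{L}^{\sigma}}$, and since $\rho(\Delta)\in\operatorname{im}\sigma(\Delta)^{1/2}$ gives $\rho(\Delta)=\sigma(\Delta)^{1/2}\sigma(\Delta)^{[-1/2]}\rho(\Delta)$, one obtains $\|\rho(\Delta)\|_{\mathcal{E}}\le\|\sigma(\mathbb{T})\|^{1/2}\|\rho\|_{\mathcal{L}^{\sigma}}$ uniformly in $\Delta$. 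Now take simple $f_{n}\to f$ in $L^{2}(\mu_{\nu})$ (possible by density). The differences $f_{n}-f_{m}$ are simple, so the simple-function isometry gives $\|\nu\cdot f_{n}-\nu\cdot f_{m}\|_{\mathcal{L}^{\sigma}}=\|f_{n}-f_{m}\|_{L^{2}(\mu_{\nu})}\to0$, whence $\nu\cdot f_{n}\to\eta$ for some $\eta\in\mathcal{L}^{\sigma}$. Combining the two bounds, $(\nu\cdot f_{n})(\Delta)=\int_{\Delta}f_{n}\,d\nu$ is Cauchy in $\mathcal{E}$ for each fixed $\Delta$, and its limit is simultaneously $\eta(\Delta)$ (because $\mathcal{L}^{\sigma}$-convergence forces setwise convergence) and the integral $(\nu\cdot f)(\Delta)=\int_{\Delta}f\,d\nu$; hence $\eta=\nu\cdot f\in\mathcal{L}^{\sigma}$.

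The remaining point, and the only place I expect real friction, is that the Hellinger-measure identity must be verified for every Borel $\Delta$ and not merely for $\Delta=\mathbb{T}$. The device is to run the entire argument with $f\chi_{\Delta}$ in place of $f$: partitions of $\Delta$ produce identical summands for $\nu\cdot f$ and for $\nu\cdot(f\chi_{\Delta})$, while $\nu\cdot(f\chi_{\Delta})$ contributes nothing over $\mathbb{T}\setminus\Delta$, so $\mu_{\nu\cdot f}(\Delta)=\mu_{\nu\cdot(f\chi_{\Delta})}(\mathbb{T})=\|\nu\cdot(f\chi_{\Delta})\|_{\mathcal{L}^{\sigma}}^{2}$. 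By the $L^{2}(\mu_{\nu})$-continuity of $f\mapsto\nu\cdot f$ established above, the right-hand side equals $\int_{\mathbb{T}}|f\chi_{\Delta}|^{2}\,\mu_{\nu}(dt)=\int_{\Delta}|f|^{2}\,\mu_{\nu}(dt)=\lambda(\Delta)$, which is exactly $\mu_{\nu\cdot f}=\mu_{\nu}\cdot|f|^{2}$.
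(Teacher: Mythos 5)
Your proof is correct and takes essentially the same approach as the paper: the paper disposes of Proposition \ref{L:1} with the one-line remark that it can be verified for simple functions using \eqref{munu} and \eqref{Lsigmainnerprod} and then for measurable functions by the limit process (details deferred to \cite{Kheifets-Dubov}), and your argument is exactly that scheme carried out in full, with the refinement/subordination computation for simple $f$ and the $f\chi_{\Delta}$ localization handling the identity for every Borel $\Delta$. The one step you assert rather than prove---that the setwise limit of $\int_{\Delta}f_{n}\,d\nu$ equals $\int_{\Delta}f\,d\nu$---is harmless, since for non-simple $f$ this limit is precisely how the Hellinger-type integral $\nu \cdot f$ is defined (or, if one insists on an independent weak definition, it follows from the bound $|\langle \nu,e\rangle|(\Delta)\le \mu_{\nu}(\Delta)^{1/2}\langle \sigma(\Delta)e,e\rangle^{1/2}$, itself a consequence of \eqref{majorize} and Cauchy--Schwarz over partitions).
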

			
\begin{proposition}\label{sigmabyf}
Let $f$ be any ${\mathcal E}$-valued measurable function on ${\mathbb T}$ for which
$$
\int_{{\mathbb T}} \langle \sigma(dt) f(t), f(t) \rangle < \infty,
$$
then the vector measure $\sigma \cdot f$ given by
$$
(\sigma \cdot f)(\Delta) = \int_{\Delta} d \sigma(t) f(t)
$$
is in ${\mathcal L}^{\sigma}$ with $\mu_{\sigma \cdot f} =f^{*} \cdot
\sigma \cdot f$, where $f^{*} \cdot \sigma \cdot f$ is the scalar
measure defined by
\begin{equation}\label{sigmafnorm}
(f^{*} \cdot \sigma \cdot f)(\Delta) = \int_{\Delta} \langle
\sigma(dt) f(t), f(t) \rangle_{{\mathcal E}}.
\end{equation}
Moreover, for every
$\nu\in{\mathcal L}^{\sigma}$ and every such $f$ it holds
that
\begin{equation}  \label{Lsigmapreinnerprod}
\langle \nu\ , \ \sigma \cdot f \rangle_{{\mathcal L}^{\sigma}}
= \int_{{\mathbb T}} \langle \nu(dt),f(t) \rangle_{{\mathcal E}}.
\end{equation}
\end{proposition}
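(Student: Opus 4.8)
The plan is to follow the two-step strategy indicated after the statement: first verify both assertions when $f$ is a simple $\mathcal{E}$-valued function, and then pass to general $f$ by approximation in the norm $\|f\|_{L^{2}(\sigma)}^{2}:=\int_{\mathbb T}\langle\sigma(dt)f(t),f(t)\rangle$. The computation underlying everything is the case of a function $f\equiv c$ constant on a Borel set $\Delta$: there $(\sigma\cdot f)(\Delta)=\sigma(\Delta)c$ and $(f^{*}\cdot\sigma\cdot f)(\Delta)=\langle\sigma(\Delta)c,c\rangle$, and the $2\times2$ block operator in \eqref{majorize} with these entries factors as $\sbm{c^{*}\\ I}\sigma(\Delta)\sbm{c & I}$, which is positive semidefinite because $\sigma(\Delta)\ge0$.

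First I would treat simple functions $f=\sum_{j}c_{j}\chi_{\Delta_{j}}$ with the $\Delta_{j}$ disjoint. Since $\sigma$, $\sigma\cdot f$ and $f^{*}\cdot\sigma\cdot f$ are all additive in $\Delta$, the block operator \eqref{majorize} for an arbitrary $\Delta$ is a finite sum over the pieces $\Delta\cap\Delta_{j}$ of positive semidefinite blocks, hence positive semidefinite; this already shows $\sigma\cdot f\in\mathcal{L}^{\sigma}$ and that $f^{*}\cdot\sigma\cdot f$ is a dominating measure, so $\mu_{\sigma\cdot f}\le f^{*}\cdot\sigma\cdot f$. For the reverse inequality I would compute $C_{\sigma\cdot f}$ from \eqref{Cnu}: on a piece where $f\equiv c$ the Moore--Penrose identity $\sigma(\Delta)^{[-1/2]}\sigma(\Delta)^{1/2}=P_{(\ker\sigma(\Delta)^{1/2})^{\perp}}$ gives $\sigma(\Delta)^{[-1/2]}(\sigma\cdot f)(\Delta)=\sigma(\Delta)^{1/2}c$, whence $C_{\sigma\cdot f}(\Delta)=\langle\sigma(\Delta)c,c\rangle=(f^{*}\cdot\sigma\cdot f)(\Delta)$. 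Refining any partition to the level sets of $f$ and summing in \eqref{munu} then yields $\mu_{\sigma\cdot f}(\Delta)\ge(f^{*}\cdot\sigma\cdot f)(\Delta)$, so the two measures coincide. The same level-set refinement in \eqref{Lsigmainnerprod}, using $\sigma(\Delta_{k})^{1/2}\sigma(\Delta_{k})^{[-1/2]}=P_{k}$, the projection onto $\overline{\im\sigma(\Delta_{k})^{1/2}}$, and $P_{k}\nu(\Delta_{k})=\nu(\Delta_{k})$ (valid since $\nu(\Delta_{k})\in\im\sigma(\Delta_{k})^{1/2}$), collapses each summand to $\langle\nu(\Delta_{k}),c_{k}\rangle$ and produces the pairing \eqref{Lsigmapreinnerprod} for simple $f$.

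Next I would pass to the limit. For general admissible $f$ the block operator \eqref{majorize} with corner $(f^{*}\cdot\sigma\cdot f)(\Delta)$ is still positive semidefinite, since the off-diagonal estimate $|\langle(\sigma\cdot f)(\Delta),e\rangle|^{2}\le(f^{*}\cdot\sigma\cdot f)(\Delta)\,\langle\sigma(\Delta)e,e\rangle$ is exactly the Cauchy--Schwarz inequality for the positive operator measure $\sigma$. Hence $\sigma\cdot f\in\mathcal{L}^{\sigma}$ and $\mu_{\sigma\cdot f}\le f^{*}\cdot\sigma\cdot f$ for every such $f$. Applying this to the difference $f_{n}-f$ shows $\|\sigma\cdot f_{n}-\sigma\cdot f\|_{\mathcal{L}^{\sigma}}^{2}\le\|f_{n}-f\|_{L^{2}(\sigma)}^{2}$, so for simple $f_{n}\to f$ in $L^{2}(\sigma)$ one gets $\sigma\cdot f_{n}\to\sigma\cdot f$ in $\mathcal{L}^{\sigma}$ and, by the simple case, $\|\sigma\cdot f\|_{\mathcal{L}^{\sigma}}^{2}=\lim\|f_{n}\|_{L^{2}(\sigma)}^{2}=(f^{*}\cdot\sigma\cdot f)(\mathbb{T})$. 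Thus $\mu_{\sigma\cdot f}$ and $f^{*}\cdot\sigma\cdot f$ are two finite positive measures with $\mu_{\sigma\cdot f}\le f^{*}\cdot\sigma\cdot f$ and equal total mass, and therefore equal — this is the first assertion. The pairing \eqref{Lsigmapreinnerprod} then follows by continuity, the left side being continuous in $\sigma\cdot f$ by continuity of the inner product together with $\sigma\cdot f_{n}\to\sigma\cdot f$.

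The step I expect to be the main obstacle is making the right-hand side of \eqref{Lsigmapreinnerprod} rigorous for general measurable $f$: one must ensure that $\int_{\mathbb{T}}\langle\nu(dt),f(t)\rangle$ is well defined and depends continuously on $f$ in the $L^{2}(\sigma)$-norm, so that the limit $f_{n}\to f$ may be interchanged with the integral. This is precisely where the estimate $|\int_{\mathbb{T}}\langle\nu(dt),f(t)\rangle|\le\|\nu\|_{\mathcal{L}^{\sigma}}\|f\|_{L^{2}(\sigma)}$ is needed; once it is in hand — it holds for simple $f$ by the simple-function pairing and extends by density — both sides of \eqref{Lsigmapreinnerprod} are bounded conjugate-linear functionals of $f$ agreeing on the dense set of simple functions, and equality propagates to all admissible $f$.
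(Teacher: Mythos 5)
Your proposal is correct and takes exactly the route the paper itself indicates: it verifies both assertions for simple functions using formulas \eqref{munu} and \eqref{Lsigmainnerprod}, and then passes to general measurable $f$ by a limit process, with the contractivity estimate $\|\sigma\cdot f\|_{\cL^{\sigma}}^{2}\le\int_{\mathbb T}\langle\sigma(dt)f(t),f(t)\rangle$ justifying the passage to the limit. The paper gives no further detail (the result is quoted from \cite{Kheifets-Dubov} with only this one-line proof sketch), so your write-up simply supplies the details the paper leaves to the reader.
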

		
We will need the following uniqueness result.
\begin{lemma}\label{C:unique}
If $\sigma$ and $\widetilde \sigma$ are two positive $\cL({\mathcal E})$-valued
measures such that $\cL^{\sigma} = \cL^{\widetilde \sigma}$ with identity of norms,
then $\sigma = \widetilde \sigma$.
\end{lemma}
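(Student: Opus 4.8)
The plan is to reduce the operator-measure identity $\sigma = \widetilde\sigma$ to an equality of scalar quadratic forms and then to recognize those quadratic forms as \emph{intrinsic} quantities of the common Hilbert space $\cL^\sigma = \cL^{\widetilde\sigma}$. Concretely, since $\sigma(\Delta)$ and $\widetilde\sigma(\Delta)$ are positive (hence self-adjoint) operators on $\cE$, it suffices to show that $\langle \sigma(\Delta) e, e\rangle_{\cE} = \langle \widetilde\sigma(\Delta) e, e\rangle_{\cE}$ for every Borel set $\Delta \subseteq \mathbb T$ and every $e \in \cE$; polarization in $e$ then yields $\sigma(\Delta) = \widetilde\sigma(\Delta)$ for all $\Delta$, i.e.\ $\sigma = \widetilde\sigma$.

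First I would, for fixed $\Delta$ and $e$, introduce the linear functional $\phi_{\Delta,e}$ on vector measures defined by $\phi_{\Delta,e}(\nu) = \langle \nu(\Delta), e\rangle_{\cE}$; the key point is that this formula makes no reference to the underlying operator measure. Let $f_{\Delta,e}$ denote the $\cE$-valued function equal to $e$ on $\Delta$ and to $0$ off $\Delta$. By Proposition \ref{sigmabyf} the vector measure $\sigma \cdot f_{\Delta,e}$ lies in $\cL^\sigma$, and the pairing formula \eqref{Lsigmapreinnerprod} gives, for every $\nu \in \cL^\sigma$, the identity $\langle \nu, \sigma \cdot f_{\Delta,e}\rangle_{\cL^\sigma} = \int_{\mathbb T}\langle \nu(dt), f_{\Delta,e}(t)\rangle_{\cE} = \langle \nu(\Delta), e\rangle_{\cE} = \phi_{\Delta,e}(\nu)$. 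Thus $\phi_{\Delta,e}$ is bounded on $\cL^\sigma$ with Riesz representative $\sigma \cdot f_{\Delta,e}$, and by \eqref{sigmafnorm} its squared operator norm is $\|\phi_{\Delta,e}\|^2 = \|\sigma\cdot f_{\Delta,e}\|_{\cL^\sigma}^2 = \int_\Delta \langle \sigma(dt) e, e\rangle_{\cE} = \langle \sigma(\Delta) e, e\rangle_{\cE}$. Running the identical computation in $\cL^{\widetilde\sigma}$ shows that the \emph{same} functional $\phi_{\Delta,e}$ is bounded there with $\|\phi_{\Delta,e}\|^2 = \langle \widetilde\sigma(\Delta) e, e\rangle_{\cE}$.

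The conclusion is then immediate: by hypothesis $\cL^\sigma$ and $\cL^{\widetilde\sigma}$ are the same space of vector measures carrying the same norm (hence, by polarization, the same inner product), so $\phi_{\Delta,e}$ is one and the same bounded functional on one and the same Hilbert space and therefore has a single well-defined norm. Comparing the two computations gives $\langle \sigma(\Delta) e, e\rangle_{\cE} = \langle \widetilde\sigma(\Delta) e, e\rangle_{\cE}$, and the reduction in the first paragraph finishes the proof. The step I expect to require the most care is the second paragraph: one must check that $\phi_{\Delta,e}$ is genuinely bounded before invoking Riesz (the bound $|\langle \nu(\Delta), e\rangle| \le \|\nu\|_{\cL^\sigma}\,\langle \sigma(\Delta) e, e\rangle^{1/2}$ follows from $\nu(\Delta) \in \im \sigma(\Delta)^{1/2}$ together with \eqref{Cnu}), and, more essentially, one must make sure that the defining formula for $\phi_{\Delta,e}$ is truly measure-free, so that the identification of $\cL^\sigma$ with $\cL^{\widetilde\sigma}$ forces the two norm computations to coincide. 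This is precisely where the hypothesis of \emph{identical} norms (rather than merely equal spaces with equivalent norms) enters.
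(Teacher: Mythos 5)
Your proof is correct, and it takes a genuinely different route from the paper's. The paper's argument stays entirely inside the definition of the Hellinger space: it first notes that the norm identity localizes (apply it to $\chi_{\Delta}\nu$), so the Hellinger measures $\mu_{\nu}$ and $\widetilde\mu_{\nu}$ coincide for every $\nu$; it then takes the special element $\nu = \sigma e$, computes $\mu_{\sigma e} = e^{*}\sigma e$ from \eqref{munu}, and feeds the resulting equality $\widetilde\mu_{\sigma e} = e^{*}\sigma e$ back into the majorization definition \eqref{majorize} to get a positive $2\times 2$ matrix measure whose compression forces $e^{*}\widetilde\sigma e \ge e^{*}\sigma e$; the reverse inequality comes from swapping the roles of $\sigma$ and $\widetilde\sigma$, so equality is reached through two one-sided estimates. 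You instead exploit duality: the evaluation functional $\phi_{\Delta,e}(\nu) = \langle \nu(\Delta), e\rangle_{\cE}$ is defined without reference to either operator measure, Proposition \ref{sigmabyf} exhibits $\sigma\cdot f_{\Delta,e}$ (respectively $\widetilde\sigma\cdot f_{\Delta,e}$, where $f_{\Delta,e} = e\chi_{\Delta}$) as its Riesz representative in the two presentations of the one common Hilbert space, and equating the two computations of the dual norm gives $\langle \sigma(\Delta)e,e\rangle_{\cE} = \langle \widetilde\sigma(\Delta)e,e\rangle_{\cE}$ in one stroke, with no inequality argument. Both proofs ultimately test the hypothesis on essentially the same family of elements ($\sigma e$ and its localizations), but yours replaces the matrix-positivity step by the well-definedness of the norm of a fixed functional on a fixed Hilbert space, which is arguably more transparent; the paper's version has the advantage of using nothing beyond the raw definitions \eqref{munu} and \eqref{majorize}. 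Two small remarks: your separate boundedness check is superfluous, since the identity $\phi_{\Delta,e}(\nu) = \langle \nu, \sigma\cdot f_{\Delta,e}\rangle_{\cL^{\sigma}}$ from \eqref{Lsigmapreinnerprod} already yields boundedness by Cauchy--Schwarz; and you could shorten the endgame further by invoking uniqueness of the Riesz representative itself, which gives $\sigma\cdot f_{\Delta,e} = \widetilde\sigma\cdot f_{\Delta,e}$ as vector measures and hence $\sigma(\Delta)e = \widetilde\sigma(\Delta)e$ directly, with no norm computation or polarization in $e$ needed.
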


\begin{proof}
Suppose that $\cL^{\sigma} = \cL^{\widetilde \sigma}$ with
$$
 \| \nu \|^{2}_{\cL^{\sigma}} = \| \nu \|^{2}_{\cL^{\widetilde \sigma}}
\text{ for all } \nu \in \cL^{\sigma} = \cL^{\widetilde \sigma}.
$$
Then this also holds for $\chi_{_\Delta}\nu$ with $\Delta$ an
arbitrary Borel set. Then this implies that
$\mu_\nu=\widetilde\mu_\nu$, where $\mu_\nu$ and $\widetilde\mu_\nu$
are the Hellinger measures of $\nu$ with respect to $\sigma$ and $\widetilde\sigma$.
Take $\nu=\sigma e$, then, in view of (\ref{munu}), $\mu_{\sigma
e}=e^*\sigma e$,
therefore, $\widetilde\mu_{\sigma e}=\mu_{\sigma e}=e^*\sigma e$.
This implies that
$$
\begin{bmatrix} e^*\sigma e & e^*\sigma \\
\sigma e & \widetilde \sigma
\end{bmatrix}\ge 0 .
$$
Therefore,
$$
\begin{bmatrix} e^*\sigma e & e^*\sigma e\\e^* \sigma e & e^* \widetilde \sigma e
\end{bmatrix}\ge 0.
$$
The latter implies that $ e^* \widetilde \sigma e \ge e^*\sigma e $.
In other words $ \widetilde \sigma \ge \sigma $. The reverse inequality
is obtained similarly by taking
$\nu=\widetilde\sigma e$ and using the fact that $\mu_{\widetilde\sigma e}=
\widetilde\mu_{\widetilde\sigma e}=e^*\widetilde\sigma e$.
The lemma follows.
\end{proof}

The connection between unitary scattering systems \eqref{scatsys}
and spaces of vector measures ${\mathcal L}^{\sigma}$ (see \eqref{majorize}) is
as follows.  A consequence of Proposition \ref{L:1} is that $\nu \cdot t \in
{\mathcal L}^{\sigma}$ whenever $\nu \in {\mathcal L}^{\sigma}$
with $\| \nu \cdot t \|_{{\mathcal L}^{\sigma}} = \| \nu\|_{{\mathcal L}^{\sigma}}$
(here $t$ stands for the function $f(t) = t$ on ${\mathbb T}$);
since the same story holds for $\nu \cdot t^{-1} = \nu \cdot \overline{t}$, we see that the
operator
$$
{\mathcal U}_{\sigma} \colon \nu \mapsto \nu \cdot t
$$
is unitary on ${\mathcal L}^{\sigma}$ with spectral measure
$E_{{\mathcal U}_{\sigma}}$ given by
$$
E_{{\mathcal U}_{\sigma}}(\Delta) \colon \nu \mapsto \nu \cdot \chi_{\Delta}
\text{ for } \nu \in {\mathcal L}^{\sigma}.
$$
It follows from the definition of the Hellinger space and the
Hellinger measure that $\sigma e \in {\mathcal L}^\sigma $ and
that $\mu_{\sigma e}(\Delta)=\langle\sigma(\Delta)e,e\rangle$.
In particular,
$$
 \Vert \sigma e \Vert^2_{{\mathcal L}^\sigma} = \langle\sigma
({\mathbb T}) e,e\rangle_{\mathcal E}\le\Vert\sigma({\mathbb T})\Vert\cdot\Vert e\Vert^2 .
$$
Therefore, the operator $\Psi_{\sigma}$ given by
$$
\Psi_{\sigma} \colon e \mapsto \sigma \cdot e
$$
is a bounded linear operator from ${\mathcal E}$ into ${\mathcal L}^{\sigma}$.
Hence the collection
\begin{equation}  \label{sigmascatsys}
    {\mathfrak S}_{\sigma} = ({\mathcal U}_{\sigma}, \Psi_{\sigma};
{\mathcal L}^{\sigma}, {\mathcal E})
\end{equation}
is a unitary scattering system for any positive ${\mathcal L}({\mathcal E})$-valued
measure $\sigma$. Moreover, an easy computation
\begin{align*}
\langle \Psi_{\sigma}^{*} E_{{\mathcal U}_{\sigma}}(\Delta)
\Psi_{\sigma}e, \ e \rangle_{{\mathcal L}^{\sigma}} & =
\langle E_{{\mathcal U}_{\sigma}}(\Delta) \sigma e,\ \sigma e\rangle_{{\mathcal L}^{\sigma}} \\
& = \langle \sigma \cdot \chi_{\Delta}e, \ \sigma e \rangle_{{\mathcal L}^{\sigma}} \\
& = \langle \sigma(\Delta) e, \ e \rangle_{{\mathcal E}} \text{
by  formula (\ref{Lsigmainnerprod})}
\end{align*}
shows that we recover the preassigned positive operator measure
$\sigma$ as the characteristic measure $\Psi_{\sigma}^{*}
E_{{\mathcal U}_{\sigma}}(\cdot) \Psi_{\sigma}$ for the unitary
scattering system ${\mathfrak S}_{\sigma}$.  In particular, {\em for any
positive ${\mathcal L}({\mathcal E})$-valued Borel measure
$\sigma$, there exists a unitary scattering system ${\mathfrak S}_{\sigma}$
having characteristic measure equal to $\sigma$}.  Since positive operator
measures $\sigma$ are in one-to-one correspondence with positive operator-valued harmonic
functions $w$ via the Poisson representation $w(\zeta) = \int_{{\mathbb T}}
{\mathcal P}(t, \zeta) \sigma(dt)$ (where ${\mathcal P}$ is the
Poisson kernel \eqref{Poissonker}), we also see that {\em given
any positive operator-valued harmonic function $w$, there is a unitary scattering system
${\mathfrak S}_{w}$ having characteristic function \eqref{charfunc} equal to $w$}.
The following result (Theorem 4.1 of \cite{Kheifets-Dubov})
is the converse to this statement.

\begin{theorem}  \label{T:Lsigmamodel}
Let ${\mathfrak S} = ({\mathcal U}, \Psi; {\mathcal K},
{\mathcal E})$ be a unitary scattering system as in \eqref{scatsys}.  Let
$\sigma$ be the associated characteristic measure
$$
\sigma(\cdot) = \Psi^{*} E_{{\mathcal U}}(\cdot) \Psi.
$$
For $k \in {\mathcal K}$, define an ${\mathcal E}$-valued Borel
measure $\nu_{k}$ on ${\mathbb T}$ by
$$
\nu_{k} = \Psi^{*} E_{{\mathcal U}} k.
$$
Then the operator $\Phi^{\wedge m}$ (the {\em vector-measure
valued Fourier representation operator for ${\mathfrak S}$}) given by
\begin{equation}  \label{Fourierrep}
\Phi^{\wedge m} \colon k \mapsto \nu_{k}
\end{equation}
is a coisometry from ${\mathcal K}$ onto ${\mathcal L}^{\sigma}$
satisfying the intertwining relations
$$
\Phi^{\wedge m} {\mathcal U} = {\mathcal U}_{\sigma} \Phi^{\wedge m},
\qquad \Phi^{\wedge m} \Psi  = \Psi_{\sigma}
$$
with initial space equal to the smallest reducing subspace for
${\mathcal U}$ containing $\mathop{\text{im}} \Psi$.
In particular, if ${\mathfrak S}$ is minimal, then the unitary
scattering system ${\mathfrak S}$ is unitarily equivalent (via
the unitary operator $\Phi^{\wedge m} \colon {\mathcal K} \to
{\mathcal L}^{\sigma}$) to the model unitary scattering system
${\mathfrak S}_{\sigma}$ given by \eqref{sigmascatsys}.
\end{theorem}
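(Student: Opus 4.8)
The plan is to build $\Phi^{\wedge m}$ on the $*$-cyclic subspace generated by $\im\Psi$ and control it there by means of the pre-inner-product formula \eqref{Lsigmapreinnerprod}, treating membership in $\cL^{\sigma}$, the intertwining relations, isometry, and surjectivity as separate steps. First I would check that $\Phi^{\wedge m}$ really lands in $\cL^{\sigma}$ and is a contraction. For a fixed $k$ the natural candidate for a dominating scalar measure in \eqref{majorize} is $\mu_k(\Delta)=\langle E_{\cU}(\Delta)k,k\rangle=\|E_{\cU}(\Delta)k\|^{2}$. Using $E_{\cU}(\Delta)=E_{\cU}(\Delta)^{*}=E_{\cU}(\Delta)^{2}$, a short computation shows that the quadratic form of the block matrix \eqref{majorize} with $\mu=\mu_k$, $\nu=\nu_k$ and $\sigma(\Delta)=\Psi^{*}E_{\cU}(\Delta)\Psi$, evaluated at $(c,e)\in\mathbb{C}\oplus\cE$, equals $\|c\,E_{\cU}(\Delta)k+E_{\cU}(\Delta)\Psi e\|^{2}\ge 0$. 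Hence $\nu_k\in\cL^{\sigma}$, and by minimality of $\mu_{\nu_k}$ we get $\|\nu_k\|_{\cL^{\sigma}}^{2}=\mu_{\nu_k}(\mathbb{T})\le\mu_k(\mathbb{T})=\|k\|^{2}$, so $\Phi^{\wedge m}$ is a well-defined contraction. The intertwining relations are then immediate: $\Phi^{\wedge m}\Psi e=\Psi^{*}E_{\cU}(\cdot)\Psi e=\sigma(\cdot)e=\Psi_{\sigma}e$, and since $E_{\cU}(\Delta)\cU=\int_{\Delta}t\,E_{\cU}(dt)$ one finds $(\Phi^{\wedge m}\cU k)(\Delta)=\int_{\Delta}t\,\nu_k(dt)=(\nu_k\cdot t)(\Delta)=(\cU_{\sigma}\Phi^{\wedge m}k)(\Delta)$.

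Next I would pin down where $\Phi^{\wedge m}$ is isometric. Writing $\cK_{0}$ for the smallest $\cU$-reducing subspace containing $\im\Psi$, one has $\cK_{0}=\overline{\operatorname{span}}\{\cU^{n}\Psi e:n\in\mathbb{Z},\,e\in\cE\}$, and iterating the intertwining relations gives $\Phi^{\wedge m}\cU^{n}\Psi e=\cU_{\sigma}^{n}\Psi_{\sigma}e=\sigma\cdot(t^{n}e)$. Applying \eqref{Lsigmapreinnerprod} (Proposition \ref{sigmabyf}) with $\nu=\sigma\cdot(t^{n}e)$ and $f=t^{m}\widetilde e$ yields $\langle\Phi^{\wedge m}\cU^{n}\Psi e,\Phi^{\wedge m}\cU^{m}\Psi\widetilde e\rangle_{\cL^{\sigma}}=\int_{\mathbb{T}}t^{n-m}\langle\sigma(dt)e,\widetilde e\rangle=\langle\cU^{n-m}\Psi e,\Psi\widetilde e\rangle_{\cK}=\langle\cU^{n}\Psi e,\cU^{m}\Psi\widetilde e\rangle_{\cK}$, by the spectral theorem and unitarity of $\cU$. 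As $\Phi^{\wedge m}$ is bounded, this identity extends by continuity, so $\Phi^{\wedge m}$ is isometric on $\cK_{0}$.

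Finally I would identify the kernel and the range. If $k\perp\cK_{0}$ then $\Psi^{*}\cU^{n}k=0$ for every $n\in\mathbb{Z}$, hence $\int_{\mathbb{T}}t^{n}\nu_k(dt)=\Psi^{*}\cU^{n}k=0$ for all $n$; uniqueness of a measure with prescribed moments forces $\nu_k=0$, so $\ker\Phi^{\wedge m}=\cK_{0}^{\perp}$ and $\Phi^{\wedge m}$ is a partial isometry with initial space $\cK_{0}$. For surjectivity, suppose $\nu\in\cL^{\sigma}$ is orthogonal to every $\sigma\cdot(t^{n}e)$; then \eqref{Lsigmapreinnerprod} gives $\int_{\mathbb{T}}\overline{t}^{\,n}\langle\nu(dt),e\rangle=0$ for all $n\in\mathbb{Z}$ and all $e\in\cE$, whence $\nu=0$. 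Thus the range, which is closed as the isometric image of $\cK_{0}$, is dense and therefore all of $\cL^{\sigma}$, so $\Phi^{\wedge m}$ is a coisometry onto $\cL^{\sigma}$ with initial space $\cK_{0}$. In the minimal case $\cK_{0}=\cK$, so $\Phi^{\wedge m}$ is unitary and, carrying $\Psi$ to $\Psi_{\sigma}$ and intertwining $\cU$ with $\cU_{\sigma}$, it implements the unitary equivalence $\mathfrak{S}\cong\mathfrak{S}_{\sigma}$ of \eqref{sigmascatsys}.

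I expect the genuine work to lie in the two places where the abstract geometry of $\cL^{\sigma}$ intervenes: the positivity computation establishing $\nu_k\in\cL^{\sigma}$, and above all the correct and rigorous use of the pre-inner-product formula \eqref{Lsigmapreinnerprod} in both the isometry computation and the density argument for surjectivity. Everything else is bookkeeping with the spectral theorem and the moment-uniqueness of measures on $\mathbb{T}$.
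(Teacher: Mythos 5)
Your proof is correct. The paper itself gives no proof of this theorem --- it is imported as Theorem 4.1 of \cite{Kheifets-Dubov} --- so there is no internal argument to compare against; what you have written is a sound, self-contained reconstruction along the natural lines, and the three genuinely nontrivial points are all handled properly. First, the positivity of the block matrix \eqref{majorize} with the candidate $\mu_k(\Delta)=\|E_{\cU}(\Delta)k\|^{2}$ does collapse to $\|c\,E_{\cU}(\Delta)k+E_{\cU}(\Delta)\Psi e\|^{2}$ because $E_{\cU}(\Delta)$ is a self-adjoint projection, and the minimality of the Hellinger measure then yields contractivity of $\Phi^{\wedge m}$. Second, the pre-inner-product formula \eqref{Lsigmapreinnerprod} correctly reduces $\langle\sigma\cdot(t^{n}e),\,\sigma\cdot(t^{m}\widetilde e)\rangle_{\cL^{\sigma}}$ to the moment $\Psi^{*}\cU^{n-m}\Psi$, i.e.\ to the corresponding $\cK$-inner product, and the identity passes from the spanning vectors $\cU^{n}\Psi e$ to all of $\cK_{0}$ by sesquilinearity and continuity, as you indicate. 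Third, both the kernel computation and the density of the range reduce, after pairing with vectors $e\in\cE$, to the Fourier--Stieltjes uniqueness theorem for finite complex Borel measures; the finiteness of variation of $\langle\nu(\cdot),e\rangle$ needed there follows from \eqref{majorize} by Cauchy--Schwarz over partitions, so this application is legitimate. One point worth stating explicitly: when you iterate the intertwining relation to obtain $\Phi^{\wedge m}\cU^{n}\Psi e=\cU_{\sigma}^{n}\Psi_{\sigma}e$ for \emph{negative} $n$, you are using that $\cU_{\sigma}$ is invertible (indeed unitary) on $\cL^{\sigma}$, which the paper establishes just before the statement of the theorem via Proposition \ref{L:1}.
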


\begin{remark}  \label{R:tH}
{\em A space of vector-valued harmonic functions on the unit disk ${\mathbb D}$
can be associated to the space $\cL^\sigma$ of the vector-valued measures on the circle
${\mathbb T}$ via integration against the Poisson kernel. The formalism here
can be translated from measures on the circle to harmonic functions on the disk.  Such an analysis
is worked out in \cite{Kh-AIP1, Kh-AIP2, KKY, Kh-PhD, Kheifets-Dubov, FtHK2}.

We also mention that a functional model using formal reproducing kernel Hilbert spaces
(rather than measures and Hellinger spaces) for analogues of unitary scattering systems
as defined in Subsection \ref{S:systems}, where the unitary operator $\cU$ is
replaced by (1) a tuple of commuting unitary operators $(\cU_{1}, \dots, \cU_{d})$, or
(2) a row-unitary operator $\begin{bmatrix} \cU_{1} & \cdots & \cU_{d}\end{bmatrix}$
(i.e., a representation of the Cuntz algebra), is presented in \cite{FRKHS}.
} \end{remark}

In the sequel we shall have need of the following result
concerning orthogonal decompositions of Hellinger spaces
 $\cL^{\sigma}$(see Theorem 2.8 in \cite{Kheifets-Dubov}).

\begin{theorem}  \label{T:Hellinger-decom}
Suppose that the coefficient Hilbert space $\cE$ has an
orthogonal direct-sum decomposition $\cE = \cE_{1} \oplus\cE_{2}$ and that
$\sigma$ is an $\cL(\cE)$-valued positive measure with block-matrix decomposition
$$
\sigma = \begin{bmatrix} \sigma_{11} & \sigma_{12} \\
\sigma_{21} & \sigma_{22} \end{bmatrix} \text{ where }
\sigma_{ij}(\Delta) \in \cL(\cE_{j}, \cE_{i}) \text{ for each Borel set }
\Delta \text{ and for } i,j=1,2.
$$
Define subspaces $\cL^{\sigma}_{1}$ and $\cL^{\sigma}_{2}$ of $\cL^{\sigma}$ by
\begin{align*}
& \cL^{\sigma}_{1} = \left\{ \nu =
\begin{bmatrix} \nu_{1} \\ \nu_{2} \end{bmatrix} \in \cL^{\sigma}
\colon \| \nu\|_{\cL^{\sigma}} = \| \nu_{1}\|_{\cL^{\sigma_{11}}} \right\}, \\
 & \cL^{\sigma}_{2} = \left\{ \nu = \begin{bmatrix} \nu_{1} \\ \nu_{2} \end{bmatrix}
 \in \cL^{\sigma} \colon \nu_{1} = 0 \right\}.
\end{align*}
Then:
\begin{enumerate}
 \item
 $\cL^{\sigma}_{1} = \cL^{\sigma}-\textup{clos.}
\left\{ \sigma \begin{bmatrix} I \\ 0 \end{bmatrix} p
\colon p \in \cE_{1}[t, t^{-1}] \right\}$ where
$\cE_{1}[t,t^{-1}]$ is the space of trigonometric
polynomials with coefficients in $\cE_{1}$, and
\item $\cL^{\sigma}_{2} = \cL^{\sigma} \ominus
\cL^{\sigma}_{1} = \left\{ \begin{bmatrix} 0 \\ q\end{bmatrix}
\colon q \in \cL^{\sigma_{11}^{\perp}} \right\}$
where $\sigma_{11}^{\perp} = \sigma_{22} - \sigma_{21}\sigma_{11}^{[-1]} \sigma_{12}$
is the measure Schur-complement of $\sigma_{11}$ inside $\sigma$ in
the sense of \cite[Section 2]{Kheifets-Dubov}. In particular,
\begin{equation}   \label{L1=L}
\cL^{\sigma}_{1} = \cL^{\sigma} \text{ if and only if } \sigma_{11}^{\perp} = 0.
\end{equation}
\end{enumerate}
\end{theorem}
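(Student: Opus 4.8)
The plan is to realize $\cL^{\sigma}_{1}$ as the subspace on which the block-compression map $\nu\mapsto\nu_{1}$ acts isometrically, to obtain (1) by approximating such $\nu$ by the measures $\sigma\sbm{I\\0}p$, and then to identify the orthogonal complement $\cL^{\sigma}_{2}$ by a measure Schur-complement computation.

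\emph{Step 1 (the compression is a contraction).} First I would check that the map $P_{1}\colon \nu=\sbm{\nu_{1}\\\nu_{2}}\mapsto\nu_{1}$ carries $\cL^{\sigma}$ contractively into $\cL^{\sigma_{11}}$. Indeed, if $\nu\in\cL^{\sigma}$ has Hellinger measure $\mu_{\nu}$, then for each Borel $\Delta$ the block operator $\sbm{\mu_{\nu}(\Delta) & \nu_{1}(\Delta)^{*} & \nu_{2}(\Delta)^{*}\\ \nu_{1}(\Delta) & \sigma_{11}(\Delta) & \sigma_{12}(\Delta)\\ \nu_{2}(\Delta) & \sigma_{21}(\Delta) & \sigma_{22}(\Delta)}$ is positive semidefinite; compressing to the first two rows and columns gives $\sbm{\mu_{\nu}(\Delta) & \nu_{1}(\Delta)^{*}\\ \nu_{1}(\Delta) & \sigma_{11}(\Delta)}\ge 0$, so $C_{\nu_{1}}(\Delta)\le\mu_{\nu}(\Delta)$ by \eqref{Cnu}. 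Summing over a partition and using additivity of $\mu_{\nu}$ yields $\mu_{\nu_{1}}\le\mu_{\nu}$ through \eqref{munu}, hence $\nu_{1}\in\cL^{\sigma_{11}}$ and $\|P_{1}\nu\|_{\cL^{\sigma_{11}}}\le\|\nu\|_{\cL^{\sigma}}$. Writing $D=(I-P_{1}^{*}P_{1})^{1/2}$, the defining norm-equality for $\cL^{\sigma}_{1}$ is exactly $\|D\nu\|=0$, so $\cL^{\sigma}_{1}=\ker D$ is a closed subspace.

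\emph{Step 2 (the generators and part (1)).} Applying Proposition \ref{sigmabyf} with $f=\sbm{p\\0}$ for $p\in\cE_{1}[t,t^{-1}]$, the measure $\sigma\sbm{I\\0}p=\sigma\cdot\sbm{p\\0}$ lies in $\cL^{\sigma}$ with $\mu_{\sigma\cdot\sbm{p\\0}}=p^{*}\sigma_{11}p$; its first component is $\sigma_{11}\cdot p$, whose Hellinger measure with respect to $\sigma_{11}$ is the same $p^{*}\sigma_{11}p$ (Proposition \ref{sigmabyf} again). Hence each generator satisfies the norm equality, so $\sigma\sbm{I\\0}p\in\cL^{\sigma}_{1}$, and since $\cL^{\sigma}_{1}$ is closed it contains the closed span on the right of (1). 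For the reverse inclusion I would show that if $\nu\in\cL^{\sigma}_{1}$ is orthogonal to every generator then $\nu=0$: the pairing formula \eqref{Lsigmapreinnerprod} gives $\langle\nu,\sigma\sbm{I\\0}p\rangle_{\cL^{\sigma}}=\int_{{\mathbb T}}\langle\nu_{1}(dt),p(t)\rangle_{\cE_{1}}$, so orthogonality to all $p$ forces $\nu_{1}=0$, whence $\|\nu\|_{\cL^{\sigma}}=\|\nu_{1}\|_{\cL^{\sigma_{11}}}=0$ because $\nu\in\cL^{\sigma}_{1}$. This proves (1). The same pairing computation identifies $(\cL^{\sigma}_{1})^{\perp}=\{\nu\in\cL^{\sigma}\colon\nu_{1}=0\}=\cL^{\sigma}_{2}$, giving $\cL^{\sigma}_{2}=\cL^{\sigma}\ominus\cL^{\sigma}_{1}$.

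\emph{Step 3 (identifying $\cL^{\sigma}_{2}$).} For $\nu=\sbm{0\\q}$, membership in $\cL^{\sigma}$ means there is a scalar measure $\mu$ with $\sbm{\mu & 0 & q^{*}\\ 0 & \sigma_{11} & \sigma_{12}\\ q & \sigma_{21} & \sigma_{22}}\ge 0$ for all $\Delta$. Taking the Moore--Penrose Schur complement of the $\sigma_{11}$ entry reduces this to $\sbm{\mu & q^{*}\\ q & \sigma_{11}^{\perp}}\ge 0$ with $\sigma_{11}^{\perp}=\sigma_{22}-\sigma_{21}\sigma_{11}^{[-1]}\sigma_{12}$, which is precisely the condition $q\in\cL^{\sigma_{11}^{\perp}}$; moreover the minimal admissible $\mu$ is unchanged, so $C_{\nu}(\Delta)=C_{q}(\Delta)$ for every $\Delta$, the Hellinger measures coincide, and $\|\nu\|_{\cL^{\sigma}}=\|q\|_{\cL^{\sigma_{11}^{\perp}}}$. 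This gives the stated description of $\cL^{\sigma}_{2}$. Finally \eqref{L1=L} follows since $\cL^{\sigma}=\cL^{\sigma}_{1}\oplus\cL^{\sigma}_{2}$, so $\cL^{\sigma}_{1}=\cL^{\sigma}$ exactly when $\cL^{\sigma_{11}^{\perp}}=\{0\}$, which forces $\sigma_{11}^{\perp}=0$ (by Lemma \ref{C:unique}, or directly since $\|\sigma_{11}^{\perp}e\|^{2}_{\cL^{\sigma_{11}^{\perp}}}=\langle\sigma_{11}^{\perp}({\mathbb T})e,e\rangle$).

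I expect the main obstacle to lie in the Schur-complement reduction of Step 3: one must justify the pointwise equivalence $\sbm{\mu & \nu(\Delta)^{*}\\ \nu(\Delta) & \sigma(\Delta)}\ge 0\iff\sbm{\mu & q(\Delta)^{*}\\ q(\Delta) & \sigma_{11}^{\perp}(\Delta)}\ge 0$ uniformly in $\Delta$, including on the set where $\sigma_{11}(\Delta)$ is not invertible, by carefully tracking the range inclusions $\operatorname{im}\sigma_{12}(\Delta)\subseteq\operatorname{im}\sigma_{11}(\Delta)^{1/2}$ forced by $\sigma\ge 0$ and the behavior of the Moore--Penrose inverse; and then verifying that the passage from the subadditive functions $C_{\nu}$, $C_{q}$ to the Hellinger measures through the partition limit \eqref{munu} preserves this equivalence.
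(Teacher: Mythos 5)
Your Steps 1 and 2 are correct and give a complete, self-contained proof of part (1) together with the orthogonal-complement identity $\cL^{\sigma}_{2} = \cL^{\sigma} \ominus \cL^{\sigma}_{1}$. Your route differs somewhat from the paper's: the paper proves (1) by observing that $\sigma_{11}p \mapsto \sbm{ \sigma_{11} \\ \sigma_{21}} p$ embeds the dense set $\{\sigma_{11}p\}$ of $\cL^{\sigma_{11}}$ isometrically into $\cL^{\sigma}_{1}$ and then asserts density of the image, while you first prove that $\cL^{\sigma}_{1}$ is a closed subspace (the observation $\cL^{\sigma}_{1} = \ker (I - P_{1}^{*}P_{1})^{1/2}$, with $P_{1}$ the contractive compression, is a clean way to see it is even linear) and then annihilate the orthocomplement of the generators inside $\cL^{\sigma}_{1}$ using \eqref{Lsigmapreinnerprod} and uniqueness of Fourier--Stieltjes coefficients of (finite-variation) vector measures. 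Both arguments are valid; yours is more detailed than the paper's on the density step. Note that for part (2) the paper gives no proof at all: it cites Theorem 2.8 of \cite{Kheifets-Dubov}.

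The genuine gap is in Step 3, and although you flagged its location, one of your intermediate claims is false as stated rather than merely unproved. The pointwise Schur-complement reduction of the $3\times 3$ positivity yields, for each fixed $\Delta$, domination of $q(\Delta)$ by the \emph{setwise} Schur complement $\tau(\Delta) := \sigma_{22}(\Delta) - \sigma_{21}(\Delta)\sigma_{11}(\Delta)^{[-1]}\sigma_{12}(\Delta)$, so the minimal admissible constant is $C_{\nu}(\Delta) = \| \tau(\Delta)^{[-1/2]} q(\Delta)\|^{2}$. But the $\sigma_{11}^{\perp}$ in the theorem is the \emph{measure} Schur complement in the sense of \cite[Section 2]{Kheifets-Dubov}: the set function $\tau$ is only superadditive (by the variational characterization $\langle \tau(\Delta)e,e\rangle = \inf_{h} \langle \sigma(\Delta)\sbm{h \\ e}, \sbm{h \\ e}\rangle$), it is not additive, and $\sigma_{11}^{\perp}$ is produced from it by a refinement limit over partitions, with $\sigma_{11}^{\perp}(\Delta) \le \tau(\Delta)$ and strict inequality in general. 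Hence your assertion that ``the minimal admissible $\mu$ is unchanged, so $C_{\nu}(\Delta) = C_{q}(\Delta)$ for every $\Delta$'' fails setwise: $C_{q}(\Delta) = \| \sigma_{11}^{\perp}(\Delta)^{[-1/2]} q(\Delta)\|^{2} \ge C_{\nu}(\Delta)$. What is true is that the two Hellinger measures coincide after the partition limit \eqref{munu}, and proving this requires: (i) the construction showing $\sigma_{11}^{\perp}$ is a measure at all; (ii) the Douglas-lemma range bookkeeping that lets you undo the Schur complement, i.e.\ pass from $\sbm{\mu & q^{*} \\ q & \tau} \ge 0$ back to the $3\times 3$ positivity; and (iii) the substantive step that setwise $\tau$-domination implies setwise $\sigma_{11}^{\perp}$-domination, via a Cauchy--Schwarz-over-partitions argument: for $\Delta = \dot\cup_{k} \Delta_{k}$,
$$
|\langle q(\Delta), e \rangle| \le \sum_{k} |\langle q(\Delta_{k}), e\rangle|
\le \sum_{k} \mu(\Delta_{k})^{1/2} \langle \tau(\Delta_{k}) e, e \rangle^{1/2}
\le \mu(\Delta)^{1/2} \Bigl( \sum_{k} \langle \tau(\Delta_{k}) e, e\rangle \Bigr)^{1/2},
$$
followed by passage to the refinement limit to replace $\tau$ by $\sigma_{11}^{\perp}$, and then two minimality comparisons to get $\mu_{\nu} = \mu_{q}$. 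Without these pieces Step 3 does not identify $\cL^{\sigma}_{2}$ with $\left\{ \sbm{0 \\ q} \colon q \in \cL^{\sigma_{11}^{\perp}} \right\}$, and \eqref{L1=L} is left unproved; these missing pieces are precisely the content of the cited Theorem 2.8 of \cite{Kheifets-Dubov}.
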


\begin{proof} We prove here Statement (1) which is contained
implicitly in Theorem 2.8 of \cite{Kheifets-Dubov}.
		
For $p \in \cE_{1}[t,t^{-1}]$, the computation
\begin{align*}
\left\langle \sigma \begin{bmatrix} I \\ 0 \end{bmatrix} p,\,
\sigma \begin{bmatrix} I \\ 0 \end{bmatrix} p \right\rangle_{\cL^{\sigma}}
& =  \int_{{\mathbb T}} p(t)^{*} \begin{bmatrix} I & 0\end{bmatrix} \sigma(dt)
\begin{bmatrix} I \\ 0 \end{bmatrix}p(t)  \\
& = \int_{{\mathbb T}} p(t)^{*} \sigma_{11}(dt) p_{11}(t)  \\
& = \langle \sigma_{11} p, \, \sigma_{11} p \rangle_{\cL^{\sigma_{11}}}
\end{align*}
shows that the map
$$
\sigma_{11}p \mapsto \begin{bmatrix} \sigma_{11} \\ \sigma_{21}
\end{bmatrix} p \text{ for } p \in \cE_{1}[t,t^{-1}]
$$
embeds a dense subset of $\cL^{\sigma_{11}}$ isometrically into
$\cL^{\sigma}_{1}$. The fact that the image of this map is dense in
$\cL^{\sigma}_{1}$ follows from the definition of $\cL^{\sigma}_{1}$ and from the
fact that $\{ \sigma_{11}p \colon p \in \cE_{1}[t,t^{-1}] \}$ is dense in $\cL^{\sigma_{11}}$.
Statement (2) is proved in Theorem 2.8 of \cite{Kheifets-Dubov}.
\end{proof}

\section{Intertwiners and unitary couplings of unitary operators}
\label{S:Intertwiners-Couplings}

In this section we present some preliminary material on unitary
couplings due originally to Adamjan and Arov \cite{Adamjan-Arov}
which is needed for a reformulation of the Lifting Problem to be
presented in the next section.

Suppose that we are given unitary operators $(\cU',{\mathcal K}')$ and
$(\cU'',{\mathcal K}'')$.  We say that the collection $(\cU, i_{{\mathcal K}'},
i_{{\mathcal K}''}; \cK)$ is an {\em Adamjan-Arov unitary coupling} (or, more briefly,
{\em AA-unitary coupling}) of $(\cU', {\mathcal K}')$ and $(\cU'',
{\mathcal K}'')$
if $\cU$ is a unitary operator on the Hilbert space ${\mathcal K}$ and
$i_{{\mathcal K}'} \colon {\mathcal K}' \to  {\mathcal K}$ and $i_{{\mathcal K}''} \colon
{\mathcal K}'' \to  {\mathcal K}$ are isometric embeddings of ${\mathcal K}'$
and ${\mathcal K}''$ respectively into ${\mathcal K}$ such that
\begin{equation}  \label{AAembedding}
i_{{\mathcal K}'} {\mathcal U}' = {\mathcal U} i_{{\mathcal K}'}, \qquad
i_{{\mathcal K}''} {\mathcal U}'' = \cU i_{{\mathcal K}''}.
\end{equation}
In this case it is clear that $Y = i_{\cK''}^{*} i_{\cK'} \colon \cK' \to \cK''$
is contractive ($\| Y \| \le 1$) and that $Y$ intertwines $\cU'$ with $\cU''$ since
$$ Y \cU' = i_{\cK''}^{*} i_{\cK'}\cU' = i_{\cK''}^{*} \cU i_{\cK'} =
\cU'' i_{\cK''}^{*} i_{\cK'} =  \cU'' Y.
$$
The following theorem provides a converse to this observation. To
formalize the ideas, let us say that the AA-unitary coupling
$(\cU, i_{\cK'}, i_{\cK''}; \cK)$ is {\em minimal} in case
\begin{equation}  \label{minAAcoupling}
\operatorname{im} i_{{\mathcal K}'} + \operatorname{im}
i_{{\mathcal K}''} \text{ is dense in } {\mathcal K}
\end{equation}
and that two AA-unitary couplings $({\mathcal U}, i_{{\mathcal K}'}, i_{{\mathcal K}''};
{\mathcal K})$ and $(\widetilde {\mathcal U}, \widetilde{i}_{{\mathcal K}'},
\widetilde{i}_{{\mathcal K}''}; \widetilde {\mathcal K})$ of $({\mathcal U}', {\mathcal K}')$
and $({\mathcal U}'',{\mathcal K}'')$ are {\em unitarily equivalent} if there is a
unitary operator $\tau \colon {\mathcal K} \to  \widetilde{\mathcal K}$
such that
\begin{equation}  \label{unequivAAcoupl}
 \tau {\mathcal U} = \widetilde {\mathcal U} \tau, \qquad
 \tau i_{{\mathcal K}'} = \widetilde{i}_{{\mathcal K}'},
\qquad
\tau i_{{\mathcal K}''} = \widetilde{i}_{{\mathcal K}''}.
\end{equation}
Then we have the following fundamental connection between AA-unitary couplings
and contractive intertwiners of two given unitary operators
(see e.g.~\cite{Adamjan-Arov} and \cite{CS}).

\begin{theorem} \label{T:AAcoupling}
Suppose that we are given two unitary operators $\cU'$ and
 $\cU''$ on Hilbert spaces ${\mathcal K}'$ and ${\mathcal K}''$, respectively.
Then there is a one-to-one correspondence between
unitary equivalence classes of minimal AA-unitary couplings
$({\mathcal U}, i_{{\mathcal K}'}, i_{{\mathcal K}''};
{\mathcal K})$ and contractive intertwiners $Y \colon {\mathcal K}' \to
{\mathcal K}''$ of $(\cU', {\mathcal K}')$ and $(\cU'',
{\mathcal K}'')$.  More precisely:
\begin{enumerate}
\item Suppose that ${\mathfrak A}: = (\cU, i_{{\mathcal K}'},
i_{{\mathcal K}''}; {\mathcal K})$ is an AA-unitary coupling of $({\mathcal U}',
{\mathcal K}')$ and $(\cU''$, ${\mathcal K}'')$.
Define an operator $Y = Y({\mathfrak A})   \colon {\mathcal K}' \to  {\mathcal K}''$
via
\begin{equation}  \label{AAcoupop}
Y = Y({\mathfrak A}) = i_{{\mathcal K}''}^{*} i_{{\mathcal K}'}.
\end{equation}
Then $Y$ is a contractive intertwiner of $(\cU', {\mathcal K}')$ and
$({\mathcal U}'', {\mathcal K}'')$. Equivalent AA-unitary couplings produce
the same intertwiner $Y$ via  \eqref{AAcoupop}.

\item Suppose that $Y \colon {\mathcal K}' \to  {\mathcal K}''$
is a contractive intertwiner of $(\cU',  {\mathcal K}')$ and $({\mathcal U}'', {\mathcal K}'')$.
Define a Hilbert space ${\mathcal K} := {\mathcal K}'' *_{Y}{\mathcal K}'$
as the completion of the space
$\left[\begin{smallmatrix} {\mathcal K}'' \\{\mathcal K}' \end{smallmatrix} \right]$
in the inner product
\begin{equation}  \label{coupled-innerprod}
\left\langle \begin{bmatrix} k'' \\ k' \end{bmatrix},
\begin{bmatrix} h'' \\ h' \end{bmatrix} \right\rangle_{{\mathcal K}' *_{Y}
    {\mathcal K}''} = \left\langle \begin{bmatrix} I_{{\mathcal K}''}
    & Y \\ Y^{*} & I_{{\mathcal K}'} \end{bmatrix} \begin{bmatrix} k'' \\ k'
\end{bmatrix},
\begin{bmatrix} h'' \\ h' \end{bmatrix} \right\rangle_{{\mathcal K}''
    \oplus {\mathcal K}'}
\end{equation}
for  $k'',h'' \in {\mathcal K}''$ and $k',h' \in {\mathcal K}'$
(with pairs $\sbm {k'' \\ k' } $ with zero self inner product identified with $0$).
Define an operator ${\mathcal U} = {\mathcal U}''*_{Y}{\mathcal U}'$
densely on ${\mathcal K}$ by
\begin{equation}\label{AAoperator}
{\mathcal U} \colon \bbm{ k'' \\ k' } \mapsto \bbm {\cU'' k'' \\ \cU'k' }
\end{equation}
together with inclusion maps $i_{{\mathcal K}'} \colon {\mathcal K}' \to  {\mathcal K}$
and $i_{{\mathcal K}''} \colon {\mathcal K}'' \to  {\mathcal K}$ given by
\begin{equation}  \label{inclusions}
i_{{\mathcal K}'} \colon k' \mapsto \bbm{ 0 \\ k' }, \qquad
i_{{\mathcal K}''} \colon k'' \mapsto \bbm{ k'' \\ 0 }.
\end{equation}
Then the resulting collection
\begin{equation} \label{Y-cU}
{\mathfrak A} = {\mathfrak A}(Y):= ({\mathcal U}''*_{Y}{\mathcal U}', i_{{\mathcal K}'},
i_{{\mathcal K}''}; {\mathcal K}''*_{Y} {\mathcal K}')
\end{equation}
is a minimal AA-unitary coupling of $(\cU', {\mathcal K}')$ and $(\cU'',
{\mathcal K}'')$ such that we recover $Y$ as $Y = i_{\cK''}^{*} i_{\cK'}$.
Any minimal AA-unitary coupling ($\cU, i_{\cK'}, i_{\cK''};\cK)$
of $(\cU', \cK')$ and $(\cU'', \cK'')$ with intertwiner $Y$ as in \eqref{AAcoupop}
is unitarily equivalent to the one defined by \eqref{coupled-innerprod}, \eqref{AAoperator},
\eqref{inclusions}.
\end{enumerate}
Moreover, the maps ${\mathfrak A} \mapsto Y({\mathfrak A})$ in (1) and
$Y \mapsto {\mathfrak A}(Y)$ in (2) are inverse to each other and set up a one-to-one
correspondence between contractive intertwiners $Y$ and unitary equivalence classes of minimal
AA-unitary couplings of $(\cU',\cK')$ and $(\cU'', \cK'')$.
\end{theorem}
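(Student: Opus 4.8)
The plan is to verify the two constructions directly and then check that they are mutually inverse. For statement (1), the computation that $Y = i_{\cK''}^{*} i_{\cK'}$ is a contractive intertwiner is already recorded in the paragraph preceding the theorem, so only the claim that unitarily equivalent couplings yield the same $Y$ remains. This is immediate from \eqref{unequivAAcoupl}: if $\tau$ is the intertwining unitary, then $\widetilde Y = \widetilde i_{\cK''}^{*} \widetilde i_{\cK'} = i_{\cK''}^{*} \tau^{*} \tau i_{\cK'} = i_{\cK''}^{*} i_{\cK'} = Y$, using $\tau^{*}\tau = I$.

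For statement (2), I would first observe that the block operator $\sbm{ I & Y \\ Y^{*} & I}$ is positive semidefinite precisely because $\|Y\| \le 1$, so \eqref{coupled-innerprod} defines a genuine (possibly degenerate) semi-inner product; passing to the quotient by the null space and completing produces the Hilbert space $\cK = \cK''*_{Y}\cK'$. The crux of the argument is to show that the densely defined operator $\cU = \cU''*_{Y}\cU'$ of \eqref{AAoperator} is isometric in the $\cK$-inner product. Expanding $\langle \cU\sbm{k''\\k'}, \cU\sbm{h''\\h'}\rangle_{\cK}$, I would use that $\cU'$ and $\cU''$ are unitary on the two diagonal terms and --- this is the essential point --- invoke the intertwining relation $Y\cU' = \cU''Y$ on the two cross terms, collapsing the expression back to $\langle \sbm{k''\\k'}, \sbm{h''\\h'}\rangle_{\cK}$. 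This simultaneously shows that $\cU$ maps null vectors to null vectors, so that it descends to the quotient, and that it is isometric; surjectivity is inherited from that of $\cU'$ and $\cU''$, so $\cU$ extends to a unitary on $\cK$.

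The remaining axioms for $\mathfrak{A}(Y)$ are routine verifications from the definitions. The intertwining relations \eqref{AAembedding} hold because $\cU\sbm{0\\k'} = \sbm{0\\\cU'k'}$ and $\cU\sbm{k''\\0} = \sbm{\cU''k''\\0}$. That the inclusions \eqref{inclusions} are isometric follows by reading off the diagonal blocks of $\sbm{I & Y\\Y^{*} & I}$, and minimality \eqref{minAAcoupling} is clear since $\sbm{k''\\k'} = i_{\cK''}k'' + i_{\cK'}k'$. Finally, a short computation with \eqref{coupled-innerprod} gives $\langle i_{\cK''}^{*} i_{\cK'}k', h''\rangle = \langle i_{\cK'}k', i_{\cK''}h''\rangle_{\cK} = \langle Yk', h''\rangle$, so $i_{\cK''}^{*} i_{\cK'} = Y$ is recovered.

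For the uniqueness clause and the resulting bijection, I would start from an arbitrary minimal coupling $(\cU, i_{\cK'}, i_{\cK''}; \cK)$ with associated intertwiner $Y = i_{\cK''}^{*}i_{\cK'}$ and define $\tau \colon \cK''*_{Y}\cK' \to \cK$ on the dense domain by $\tau\sbm{k''\\k'} = i_{\cK''}k'' + i_{\cK'}k'$. Expanding $\langle \tau\sbm{k''\\k'}, \tau\sbm{h''\\h'}\rangle_{\cK}$ and using that $i_{\cK'}, i_{\cK''}$ are isometric together with $i_{\cK''}^{*}i_{\cK'} = Y$ on the cross terms reproduces exactly the inner product \eqref{coupled-innerprod}; hence $\tau$ is isometric, and by minimality \eqref{minAAcoupling} its range is dense, so $\tau$ extends to a unitary. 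The conditions \eqref{unequivAAcoupl} then follow directly: $\tau$ sends the constructed inclusions to $i_{\cK'}, i_{\cK''}$, and $\tau(\cU''*_{Y}\cU') = \cU\tau$ by applying \eqref{AAembedding}. Thus every minimal coupling is unitarily equivalent to $\mathfrak{A}(Y(\mathfrak{A}))$, while statement (2) already gives $Y(\mathfrak{A}(Y)) = Y$, so the two maps are mutually inverse and set up the claimed one-to-one correspondence. The main obstacle throughout is the single isometry computation for $\cU''*_{Y}\cU'$, which is the only place where the intertwining hypothesis $Y\cU' = \cU''Y$ is genuinely used.
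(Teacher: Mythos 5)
Your proposal is correct and follows essentially the same route as the paper's proof: the key isometry computation for $\cU''*_{Y}\cU'$ via the intertwining relation $Y\cU' = \cU''Y$ (the paper phrases it as conjugation of $\sbm{I & Y \\ Y^{*} & I}$ by $\operatorname{diag}(\cU'',\cU')$, you expand the cross terms directly, which is the same argument), and the same unitary $\tau \colon \sbm{k''\\k'} \mapsto i_{\cK''}k'' + i_{\cK'}k'$ for the uniqueness clause, with minimality giving density of its range. Your explicit remark that the isometry computation also shows $\cU$ preserves null vectors and hence descends to the quotient is a small point of rigor the paper leaves implicit, but it is not a different approach.
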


\begin{proof}  The proof of part (1) was already observed in the discussion preceding
the statement of the theorem.

Conversely, suppose that $Y \colon {\mathcal K}' \to {\mathcal K}''$
is any contractive intertwiner of $(\cU', {\mathcal K}')$ and $(\cU'', {\mathcal K}'')$
and let $(\cU'*_{Y}\cU'',i_{{\mathcal K}'}, i_{{\mathcal K}''};
{\mathcal K}'*_{Y}{\mathcal K}'')$ be the AA-unitary coupling of $(\cU', {\mathcal K}')$
and $(\cU'', {\mathcal K}'')$ given by \eqref{Y-cU}.  From the form of the inner product
\eqref{coupled-innerprod}, we see that the maps $i_{{\mathcal K}'} \colon {\mathcal K}' \to
{\mathcal K}$ and $i_{{\mathcal K}''} \colon {\mathcal K}'' \to  {\mathcal K}$
given by \eqref{inclusions} are isometric. By the definition of ${\mathcal K}$
as the completion of $\sbm{{\mathcal K}'' \\ {\mathcal K}' }$ in the
${\mathcal K}'' *_{Y} {\mathcal K}'$-inner product, we
see that the span of the images $\operatorname{im}
i_{{\mathcal K}'} + \operatorname{im} i_{{\mathcal K}''}$ is dense in
${\mathcal K}:= {\mathcal K}' *_{Y} {\mathcal K}''$ by construction.
By using the intertwining condition $Y \cU' = \cU'' Y$
together with the unitary property of ${\mathcal U}''$ and ${\mathcal
U}'$, we see that
\begin{align*}
&    \left\langle \begin{bmatrix} I_{{\mathcal K}''} & Y \\ Y^{*}
& I_{{\mathcal K}'} \end{bmatrix}
\begin{bmatrix} {\mathcal U}'' k'' \\{\mathcal U}' k' \end{bmatrix},
\begin{bmatrix}{\mathcal U}'' h'' \\ {\mathcal U}' h' \end{bmatrix}
 \right\rangle_{{\mathcal K}'' \oplus {\mathcal K}'} \\
& \qquad  \qquad = \left\langle \begin{bmatrix} {\mathcal U}'' & 0 \\ 0 &
{\mathcal U}' \end{bmatrix}
\begin{bmatrix} I_{{\mathcal K}''} & Y \\ Y^{*} & I_{{\mathcal K}'} \end{bmatrix}
\begin{bmatrix} k'' \\ k' \end{bmatrix},
\begin{bmatrix} {\mathcal U}'' & 0 \\ 0 & {\mathcal U}' \end{bmatrix}
    \begin{bmatrix} h'' \\ h' \end{bmatrix} \right\rangle_{{\mathcal K}'' \oplus {\mathcal K}'}  \\
& \qquad \qquad = \left\langle \begin{bmatrix} I_{{\mathcal K}''} & Y \\ Y^{*} &
I_{{\mathcal K}'} \end{bmatrix} \begin{bmatrix} k'' \\ k' \end{bmatrix},
\begin{bmatrix} h'' \\ h' \end{bmatrix} \right\rangle_{{\mathcal K}'' \oplus {\mathcal K}'}
\end{align*}
and hence the operator
\begin{equation}  \label{Ucoupled}
{\mathcal U} \colon \begin{bmatrix} k'' \\ k' \end{bmatrix}
\mapsto \begin{bmatrix} {\mathcal U}'' k'' \\ {\mathcal U}' k' \end{bmatrix}
\end{equation}
extends to define a unitary operator on ${\mathcal K} = {\mathcal
K}'' *_{Y} {\mathcal K}'$.  We have thus verified that $(\cU'*_{Y}\cU'',
i_{{\mathcal K}'}, i_{{\mathcal K}''}; {\mathcal K}'*_{Y}{\mathcal K}'')$
defined as in \eqref{Y-cU} is a minimal AA-unitary coupling of $(\cU', {\mathcal K}') $
and $(\cU'', {\mathcal K}'')$, and statement (2) of the theorem follows.
From the form of the inner product, we see that we recover $Y$ as $Y =
i_{{\mathcal K}''}^{*} i_{{\mathcal K}'}$.

Suppose now that $(\cU, i_{{\mathcal K}'}, i_{{\mathcal K}''};
{\mathcal K})$ is any AA-unitary coupling of $(\cU', {\mathcal K}')$ and
$(\cU'', {\mathcal K}'')$ and we set $Y = i_{{\mathcal K}''}^{*} i_{{\mathcal K}'}
\colon {\mathcal K}' \to  {\mathcal K}''$.  For $k', \ell' \in {\mathcal K}'$ and
$k'', \ell'' \in {\mathcal K}''$, we compute
\begin{align*}
&  \left\langle \bbm{ I & Y \\ Y^{*} & I } \bbm{ k'' \\ k' },
\bbm {\ell'' \\ \ell' } \right\rangle  =
	\langle k'' + i_{{\mathcal K}''}^{*} i_{{\mathcal K}} k',
	\ell'' \rangle_{{\mathcal K}''} +
\langle  i_{{\mathcal K}'}^{*} i_{{\mathcal K}''} k'' + k',
\ell' \rangle_{{\mathcal K}'} \\
&\qquad  = \langle k'', \ell'' \rangle_{{\mathcal K}''}
+ \langle i_{{\mathcal K}'} k', i_{{\mathcal K}''} \ell''
\rangle_{{\mathcal K}} + \langle i_{{\mathcal K}''}k'',
i_{{\mathcal K}'} \ell' \rangle_{{\mathcal K}} +
\langle k', \ell'  \rangle_{{\mathcal K}'} \\
& \qquad = \langle i_{{\mathcal K}''}k'' + i_{{\mathcal K}'}k',
i_{{\mathcal K}''}\ell'' + i_{{\mathcal K}'} \ell'
\rangle_{{\mathcal K}}.
\end{align*}
We conclude that the map
$$
\tau \colon \bbm{ k'' \\ k' } \mapsto i_{{\mathcal K}''}k'' +
i_{{\mathcal K}'} k'
$$
maps the dense subspace $\sbm{ {\mathcal K}'' \\ {\mathcal K}'}$ of
${\mathcal K}'' *_{Y} {\mathcal K}'$ onto $\operatorname{im}
i_{{\mathcal K}'} + \operatorname{im} i_{{\mathcal K}''}$.
Hence $\tau$ extends to an isometric mapping of all of ${\mathcal K}'' *_{Y}
{\mathcal K}'$ onto the closure of  $\operatorname{im} i_{{\mathcal K}''}
+ \operatorname{im} i_{{\mathcal K}'}$ and hence
$\tau$ is unitary exactly when $(\cU, i_{{\mathcal K}'}, i_{{\mathcal
K}''}; {\mathcal K})$
is a minimal AA-unitary coupling.  Moreover,
from the form of $\tau$ it is easily verified that $\tau (\cU' *_{Y}\cU'') = \cU \tau$.
By definition of $\tau$, it transforms the embeddings of ${\mathcal K}'$ and  ${\mathcal K}''$
into ${\mathcal K}'' *_{Y} {\mathcal K}'$ to the embeddings of
${\mathcal K}'$ and  ${\mathcal K}''$ into ${\mathcal K}$.
In this way we see that the above correspondence between
contractive intertwiners and minimal AA-unitary couplings is bijective.
			    bijective.  \end{proof}
			
Given an AA-unitary coupling $(\cU, i_{\cK'}, i_{\cK''}; \cK)$ of the
unitary operators $(\cU'', \cK'')$ and $(\cU', \cK')$ and
two subspaces $\cG' \subset \cK'$ and $\cG'' \subset \cK''$
which are $*$-cyclic for $\cU'$ and $\cU''$ respectively, let $i_{\cG'}
\colon \cG' \to \cK$ and $i_{\cG''} \colon \cG'' \to \cK$ be the
compositions of the inclusion of $\cG'$ into $\cK'$ with the
inclusion of $\cK'$ into $\cK$ and of the inclusion of $\cG''$
into $\cK''$ with the inclusion of $\cK''$ into $\cK$, respectively:
\begin{align*}
&  i_{\cG'}: = i_{\cG' \to \cK} = i_{\cK' \to \cK} i_{\cG' \to \cK'}, \\
& i_{\cG''}: = i_{\cG'' \to \cK} = i_{\cK'' \to \cK} i_{\cG'' \to \cK''}.
\end{align*}
Then we may view
$$ {\mathfrak S}_{AA}: = \left( \cU, \begin{bmatrix} i_{\cG''} &
i_{\cG'} \end{bmatrix}; \cK, \cG'' \oplus \cG' \right)
$$
as a unitary scattering system with characteristic measure
\begin{equation}  \label{coupling-char-meas}
\sigma_{{\mathfrak S}_{AA}}( \cdot) =
\begin{bmatrix} i_{\cG''}^{*} \\ i_{\cG'}^{*} \end{bmatrix}
E_{\cU}(\cdot) \begin{bmatrix} i_{\cG''} & i_{\cG'} \end{bmatrix},
\end{equation}
characteristic function
$$ \widehat w_{{\mathfrak S}_{AA}}(\zeta) =
\begin{bmatrix} i_{\cG''}^{*} \\ i_{\cG'}^{*} \end{bmatrix}
[ (I - \overline{\zeta} \cU)^{-1} + (I - \zeta \cU^{*})^{-1} -I ]
\begin{bmatrix} i_{\cG''} & i_{\cG'} \end{bmatrix}
$$
and characteristic moment-sequence
$$
\{ w_{{\mathfrak S}_{AA}}(n)\}_{n \in {\mathbb Z}} \text{ with } w_{\mathfrak S_{AA}}(n) =
\begin{bmatrix} i_{\cG''}^{*} \\ i_{\cG'}^{*} \end{bmatrix} \cU^{*n}
\begin{bmatrix} i_{\cG''} & i_{\cG'} \end{bmatrix}.
$$
We note that the (1,2)-entry in the $n$-th characteristic moment
$w_{{\mathfrak S}_{AA},n}$ is closely associated with the
intertwiner $Y = i_{\cK''}^{*} i_{\cK}$ associated with the
AA-unitary coupling $(\cU, i_{\cK'}$, $i_{\cK''}; \cK)$:
$$
\left\langle \left[ w_{{\mathfrak S}_{AA}}(n) \right]_{12} g', g'' \right\rangle =
\left\langle  i_{\cG''}^{*} \cU^{*n} i_{\cG'} g', g''
\right\rangle= \left\langle Y \cU^{\prime * n} g', g''
\right\rangle = \left\langle Y g', \cU^{\prime \prime * n} g'' \right\rangle.
$$
Given any contractive intertwiner $Y \colon \cK' \to \cK''$ of
$(\cU', \cK')$ and $(\cU'', \cK'')$, we refer to the bilateral
sequence of operators $\{w_{Y}(n)\}_{n \in {\mathbb Z}} $ given by
\begin{equation}  \label{symbol}
w_{Y}(n) = [w_{{\mathfrak S}_{AA}}(n)]_{12} =
i_{\cG'' \to \cK''}^{*} \cU^{\prime \prime * n} Y i_{\cG' \to
\cK'} = i_{\cG'' \to \cK''}^{*} Y \cU^{\prime * n} i_{\cG' \to \cK'}
\end{equation}
as the {\em symbol} (associated with given subspaces $\cG' \subset \cK'$ and $\cG'' \subset \cK''$)
of the intertwiner $Y$. If $\cG'$ is $*$-cyclic for $\cU'$ and $\cG''$ is $*$-cyclic for $\cU''$,
then the subspaces
$$
\cK'_{0} = \overline{\operatorname{span}} \{ \cU^{\prime n} g'
\colon g' \in \cG' \text{ and } n \in {\mathbb Z}\}, \quad \cK''_{0} =
\overline{\operatorname{span}} \{ \cU^{\prime \prime n} g'' \colon  g'' \in \cG''
\text{ and } n \in {\mathbb Z}\}
$$
are equal to all of $\cK'$ and $\cK''$ respectively and the observation
$$
\langle Y \cU^{\prime n} g', \cU^{\prime \prime m} g'' \rangle = \langle w_Y(m-n) g', g'' \rangle_{\cG''}
$$
shows that there is a one-to-one correspondence between symbols $w_{Y}$ (with respect to the two $*$-cyclic
subspaces $\cG'$ and $\cG''$)  of $Y$ and the associated contractive intertwiners $Y$.
Moreover, we have the following characterization of which bilateral
$\cL(\cG', \cG'')$-valued sequences $w=\{w(n)\}_{n \in {\mathbb Z}}$ arise as
the symbol $w = w_{Y}$ for some contractive intertwiner $Y$.

\begin{theorem} \label{T:symbol-Fourier}
Suppose that $\{w(n)\}$ is the symbol \eqref{symbol} for a
contractive intertwiner $Y$ of the unitary operators $(\cU', \cK')$ and $(\cU'', \cK'')$
associated with $*$-cyclic subspaces $\cG' \subset \cK'$ and $\cG'' \subset \cK''$.
Then $\{w(n) \}_{n \in {\mathbb Z}}$ is the sequence of trigonometric moments
$$
w(n) = \int_{{\mathbb T}} t^{-n} \widehat w(dt)
$$
associated with an $\cL(\cG', \cG'')$-valued measure $\widehat w$
(equal to the  Fourier transform of $\{w(n)\}_{n \in {\mathbb Z}}$) such that
\begin{equation}  \label{sigma-pos}
\sigma: = \begin{bmatrix} \sigma'' & \widehat w \\ \widehat w^{*}
& \sigma' \end{bmatrix} \text{ is a positive } \cL(\cG', \cG'')-\text{valued measure}
\end{equation}
where we have set
\begin{equation}  \label{sigma-primes}
\sigma' = i_{\cG'}^{*} E_{\cU'}(\cdot)  i_{\cG'}, \quad
\sigma'' = i_{\cG''}^{*} E_{\cU''}(\cdot) i_{\cG''}.
\end{equation}

Conversely, the inverse Fourier transform
$$
w(n): = \int_{{\mathbb T}} t^{-n} \widehat w(dt)
$$
of any $\cL(\cG', \cG'')$-valued measure $\widehat w$ on ${\mathbb T}$
which in addition satisfies \eqref{sigma-pos} is the symbol (associated with the subspaces
$\cG'$ and $\cG''$) for a uniquely determined contractive intertwiner
$Y \colon \cK' \to \cK''$.
\end{theorem}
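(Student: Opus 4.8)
The plan is to prove both implications by passing through the AA-unitary-coupling / scattering-system machinery of Sections \ref{S:Hellinger'} and \ref{S:Intertwiners-Couplings}. For the forward direction, suppose $Y$ is a contractive intertwiner with symbol $\{w(n)\}$. By Theorem \ref{T:AAcoupling} we realize $Y$ through its minimal AA-unitary coupling $(\cU, i_{\cK'}, i_{\cK''}; \cK)$ and pass to the scattering system $\mathfrak S_{AA}=(\cU,\bbm{i_{\cG''} & i_{\cG'}};\cK,\cG''\oplus\cG')$ with characteristic measure $\sigma_{\mathfrak S_{AA}}$ of \eqref{coupling-char-meas}, which is positive because every characteristic measure of a scattering system is. The key step is to identify its blocks. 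Since $i_{\cK'}$ is isometric and intertwines ($\cU i_{\cK'}=i_{\cK'}\cU'$), one gets $\cU^{*} i_{\cK'}=i_{\cK'}(\cU')^{*}$ as well, so $\operatorname{im} i_{\cK'}$ is $\cU$-reducing and $E_{\cU}(\Delta)i_{\cK'}=i_{\cK'}E_{\cU'}(\Delta)$; hence $i_{\cK'}^{*}E_{\cU}(\cdot)i_{\cK'}=E_{\cU'}(\cdot)$, and likewise for $\cU''$. Compressing to $\cG'$, $\cG''$ produces exactly the diagonal blocks $\sigma'$, $\sigma''$ of \eqref{sigma-primes}, while the off-diagonal block is the $\cL(\cG',\cG'')$-valued measure $\widehat w:=i_{\cG''}^{*}E_{\cU}(\cdot)i_{\cG'}$. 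Writing $\cU^{*n}=\int_{\mathbb T}t^{-n}E_{\cU}(dt)$ and recalling \eqref{symbol} gives $w(n)=i_{\cG''}^{*}\cU^{*n}i_{\cG'}=\int_{\mathbb T}t^{-n}\,\widehat w(dt)$, so $\widehat w$ is the Fourier transform of $\{w(n)\}$ and \eqref{sigma-pos} holds.

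For the converse, start from $\widehat w$ making $\sigma=\sbm{\sigma'' & \widehat w \\ \widehat w^{*} & \sigma'}$ positive, and invoke the Hellinger model of Subsection \ref{S:Hellinger}: form $\cL^{\sigma}$ and the model scattering system $\mathfrak S_{\sigma}=(\cU_{\sigma},\Psi_{\sigma};\cL^{\sigma},\cG''\oplus\cG')$. Define, on the dense linear manifolds furnished by $*$-cyclicity of $\cG'$, $\cG''$, the maps
$$
i_{\cK'}\colon \cU^{\prime\,n}g'\mapsto \cU_{\sigma}^{n}\,\Psi_{\sigma}\sbm{0\\ g'},\qquad
i_{\cK''}\colon \cU^{\prime\prime\,n}g''\mapsto \cU_{\sigma}^{n}\,\Psi_{\sigma}\sbm{g''\\ 0}.
$$
Using $\Psi_{\sigma}e=\sigma\cdot e$, $\cU_{\sigma}^{n}(\sigma\cdot f)=\sigma\cdot(t^{n}f)$, Proposition \ref{sigmabyf}, and the block structure of $\sigma$, one computes
$$
\left\langle \cU_{\sigma}^{n}\Psi_{\sigma}\sbm{0\\ g'},\ \cU_{\sigma}^{m}\Psi_{\sigma}\sbm{0\\ h'}\right\rangle_{\cL^{\sigma}}
=\int_{\mathbb T} t^{n-m}\langle \sigma'(dt)g',h'\rangle=\langle \cU^{\prime\,n-m}g',h'\rangle_{\cK'},
$$
so $i_{\cK'}$ is isometric and intertwines $\cU'$ with $\cU_{\sigma}$; the identical argument with the $(1,1)$-block $\sigma''$ handles $i_{\cK''}$. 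Thus $(\cU_{\sigma},i_{\cK'},i_{\cK''};\cL^{\sigma})$ is an AA-unitary coupling, and by Theorem \ref{T:AAcoupling} the operator $Y:=i_{\cK''}^{*}i_{\cK'}$ is a contractive intertwiner of $(\cU',\cK')$ and $(\cU'',\cK'')$.

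It remains to verify that $Y$ has the prescribed symbol. With $i_{\cG'}g'=\Psi_{\sigma}\sbm{0\\ g'}$, $i_{\cG''}g''=\Psi_{\sigma}\sbm{g''\\ 0}$, the same computation using \eqref{symbol} and Proposition \ref{sigmabyf} yields
$$
\langle w_{Y}(n)g',g''\rangle=\langle \cU_{\sigma}^{*n}i_{\cG'}g',\,i_{\cG''}g''\rangle_{\cL^{\sigma}}
=\int_{\mathbb T} t^{-n}\langle \widehat w(dt)g',g''\rangle=\langle w(n)g',g''\rangle,
$$
since the off-diagonal block of $\sigma$ is $\widehat w$; hence $w_{Y}=w$. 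Uniqueness of $Y$ then follows from the bijective correspondence between symbols and intertwiners recorded just before the theorem (which rests on the $*$-cyclicity of $\cG'$, $\cG''$). I expect the main technical obstacle to be the isometry verification for $i_{\cK'}$, $i_{\cK''}$: everything hinges on correctly extracting the diagonal blocks $\sigma'$, $\sigma''$ through the Hellinger inner-product formula of Proposition \ref{sigmabyf}, with the positivity of $\sigma$ guaranteeing that $\cL^{\sigma}$ is a genuine Hilbert space in which these manipulations and the passage from dense manifolds to the full spaces are legitimate.
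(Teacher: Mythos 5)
Your proposal is correct and takes essentially the same route as the paper: the forward direction reads off the blocks of the characteristic measure \eqref{coupling-char-meas} of the coupling's scattering system, and your converse embeddings $\cU^{\prime\,n} g' \mapsto \cU_{\sigma}^{n}\Psi_{\sigma}\sbm{0\\ g'}$ coincide exactly with the paper's composition of the Fourier representation $\cK'\to\cL^{\sigma'}$ with the block-column embedding $\sigma'p'\mapsto\sbm{\widehat w\\ \sigma'}p'$. The only differences are presentational: you verify the isometry and the symbol identity directly from Proposition \ref{sigmabyf} rather than citing \cite{Kheifets-Dubov}, and you omit minimality of the resulting coupling, which is indeed not needed for the existence and uniqueness of $Y$.
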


\begin{proof}
    The forward direction of the theorem is an immediate
 consequence of the results preceding the theorem.
				
For the converse we use the Hellinger model. Suppose that $\widehat
w$ is a vector measure such that
\eqref{sigma-pos} holds, where $\sigma'$ and $\sigma''$ are
 the positive measures given by \eqref{sigma-primes}.	Consider the
Hellinger
space $\cL^{\sigma}$ (see Section \ref{S:Hellinger})
with operator $\cU^{\sigma}$ being multiplication
by the independent variable $t$. We define the embeddings
 $$ i_{\cK'}: \cK'\to \cL^{\sigma} {\text\ and\ \ }
 i_{\cK''}: \cK''\to \cL^{\sigma}
$$
as follows: first
we map $\cK'$ onto $\cL^{\sigma '}$ and $\cK''$ onto $\cL^{\sigma ''}$
by Fourier representations
 $$ k'\mapsto i_{\cG'}^* E'(dt)k',\quad k'\mapsto i_{\cG''}^*
E''(dt)k''.
 $$
Then we embed $\cL^{\sigma '}$ and $\cL^{\sigma ''}$ into
$\cL^{\sigma}$ by
 $$\sigma' p' \mapsto \begin{bmatrix} \widehat w \\
\sigma' \end{bmatrix} p', \quad
 \sigma'' p'' \mapsto \begin{bmatrix} \sigma'' \\ \widehat
	 w^{*} \end{bmatrix} p''
$$
for arbitrary vector trigonometric polynomials $p'$, $p''$.
As it was shown in \cite{Kheifets-Dubov}
$$
 \operatorname{im} i_{{\mathcal K}'} + \operatorname{im}
 i_{{\mathcal K}''} \text{ is dense in } \cL^{\sigma}.
 $$
 Thus we get a minimal AA-unitary coupling of $\cU'$ and $\cU''$.
The symbol of the contractive intertwiner associated with this
AA-unitary coupling
is just the trigo\-nometric-moment sequence of the originally
 given measure $\widehat w$. Since the definition of symbol
 \eqref{symbol} can be rephrased as
 \begin{equation}\label{symbol-03}
	 \langle w_Y(n-m)\ g', g''\rangle = \langle  Y \cU^{\ \prime * n} g',
		 \cU^{\prime\prime * m} g''\rangle,
\end{equation}
and the sets $\{\cU^{\prime * n} g'\}$ and $\{ \cU^{\prime\prime * m}
g''\}$
 ($n$ running over ${\mathbb Z}$, $g'$ over $\cG'$ and $g''$
 over $\cG''$) have dense span in $\cK'$ and $\cK''$, respectively,
 we see that the correspondence between intertwiners $Y$ and
symbols $\{w(n)\}_{n \in
{\mathbb Z}}$ is one-to-one. Moreover, the correspondence between
symbols $\{w(n)\}_{n \in {\mathbb
 Z}}$ and their Fourier transforms $\widehat w$ is also one-to-one.
\end{proof}
				
 \begin{remark} {\em   The proof of Theorem \ref{T:symbol-Fourier}
in fact shows that, given an AA-unitary coupling $(\cU,
i_{\cK'}, i_{\cK''}; \cK)$ of two unitary operators
$(\cU'', \cK'')$ and $(\cU', \cK')$ together with a
choice of $*$-cyclic subspaces $\cG' \subset \cK'$ and
$\cG'' \subset \cK''$, then the AA-unitary coupling $(\cU, i_{\cK'},
i_{\cK''};
\cK)$ is unitarily equivalent to the Hellinger-model AA-unitary
coupling
$$
(\cU^{\sigma}, i_{\cK' \to \cL^{\sigma}}, i_{\cK'' \to
 \cL^{\sigma}}; \cL^{\sigma})
$$
where $\sigma = \sigma_{{\mathfrak S}_{AA}}$ is given
by\eqref{coupling-char-meas}.
 } \end{remark}

 \section{Liftings and unitary extensions of an isometry defined by
the problem data}
\label{S:solutions-extensions}
In this section we discuss, following \cite{AAK68, CS, Arocena1,
Arocena2, KKY, Kh-PhD,
Kh-AIP1, Kh-AIP2, Kheifets-Berkeley, Kheifets-IWOTA96, HarmAIP},
how solutions of the lifting problem can be identified with unitary
extensions of a
certain (partially defined) isometry $V$ which is constructed
directly from the
problem data. Introduce a Hilbert space $\cH_{0}$ by
	\begin{equation}\label{cH0} \cH_{0} = \operatorname{clos}
\begin{bmatrix} \cK''_{-} \\
\cK'_{+}\end{bmatrix}
 \end{equation}
with inner product given by
\begin{equation} \label{H-0-inner-product}
\left\langle \begin{bmatrix} k''_{-} \\ k'_{+} \end{bmatrix},
\begin{bmatrix} \ell''_{-} \\ \ell'_{+} \end{bmatrix}
 \right\rangle_{\cH_{0}} = \left\langle
\begin{bmatrix} I & X \\ X^{*} & I \end{bmatrix}
 \begin{bmatrix} k''_{-} \\ k'_{+} \end{bmatrix},
 \begin{bmatrix} \ell''_{-} \\ \ell'_{+} \end{bmatrix}
\right\rangle_{\cK''_{-} \oplus \cK'_{+}}.
 \end{equation}
Special subspaces of $\cH_{0}$ are of interest:
 \begin{equation}\label{defect-spaces}
	\cD := \operatorname{clos} \begin{bmatrix} \cK''_{-} \\
	 \cU' \cK'_{+} \end{bmatrix} \subset \cH_{0}, \qquad
	\cD_* := \operatorname{clos} \begin{bmatrix} \cU''^*\cK''_{-} \\
	 \cK'_{+} \end{bmatrix} \subset \cH_{0}.
\end{equation}
 Define an operator $V \colon \cD \to  \cD_*$ densely by
 \begin{equation}  \label{defV}
 V=\begin{bmatrix} {\mathcal U}^{\prime \prime *} & 0 \\
 0 & {\mathcal U}^{\prime *} \end{bmatrix}
\colon
\begin{bmatrix}  k''_{-} \\ \cU' k'_{+} \end{bmatrix}
\mapsto \begin{bmatrix}\cU^{\prime \prime *} k''_{-} \\ k'_{+}
\end{bmatrix}.
\end{equation}
 for  $k''_{-} \in \cK''_{-}$  and  $k'_{+} \in \cK'_{+}$.
 By the same computation as in \eqref{Ucoupled}, we see that $V$
extends to define an isometry from $\cD$ onto $\cD_*$.
Notice also that $V$ is completely determined by the problem data.

Let us say that the operator $\cU^*$ on $\cK$ is a {\em minimal
unitary extension} of $V$ if
$\cU^*$ is unitary on $\cK$ and there is an isometric embedding
 $i_{\cH_{0}} \colon \cH_{0} \to \cK$ of $\cH_{0}$ into $\cK$ such
that
\begin{equation}  \label{ext1}
    i_{\cH_{0}} V = \cU^* i_{\cH_{0}}|_{\cD}.
\end{equation}
 and
\begin{equation}  \label{minimal}
  \overline{\operatorname{\text{span}}}_{n \in {\mathbb Z}}
\cU^{n} \operatorname{im} i_{\cH_{0}} = \cK.
 \end{equation}
 In this situation note that we then also have
\begin{equation}  \label{ext2}
i_{\cH_{0}} V^* = \cU i_{\cH_{0}}|_{\cD_*}.
\end{equation}
 Two such minimal unitary extensions $(\cU^*, i_{\cH_{0}}; \cK)$
 and $(\widetilde\cU^*, \widetilde i_{\cH_{0}};\widetilde\cK)$
 are said to be {\em unitarily equivalent} if there is a unitary
operator $\tau \colon \cK \to
\widetilde\cK$ such that
$$
\tau  \cU^* = \widetilde\cU^* \tau, \qquad \tau i_{\cH_{0}} =
\widetilde i_{\cH_{0}}.
$$
Then the connection between minimal unitary extensions of $V$ and
lifts of $X$
 is given by the following.

\begin{theorem}  \label{T:AAcoupling-Unitext}
Suppose that we are given data for a Lifting Problem \ref{P:lift}
as above. Assume that the
subspaces $\cK'_+$ and $\cK''_-$ are $*$-cyclic.
 Let $V \colon \cD \to  \cD_{*}$ be the
 isometry given by \eqref{defV}. Then there exists a canonical
 one-to-one correspondence between equivalence classes of
 minimal AA-unitary couplings $({\mathcal U},
 i_{{\mathcal K}'}, i_{{\mathcal K}''};
{\mathcal K})$ of $(\cU', {\mathcal K}')$ and $(\cU'',
	 {\mathcal K}'')$ such that the contractive
intertwiner $Y= i_{\cK''}^{*} i_{\cK'}$
lifts $X$ on the one hand and equivalence classes of
minimal unitary extensions $(\cU^*, i_{\cH_{0}}; \cK)$
 of $V$ on the other.

 Specifically, if $(\cU, i_{\cK'}, i_{\cK''}; \cK)$ is a
minimal AA-unitary coupling of $(\cU'', \cK'')$ and $(\cU', \cK')$
with associated contractive intertwiner $Y =
i_{\cK''}^{*}i_{\cK''}$ lifting $X$, then the mapping
$$
i_{\cH_{0}}: = \begin{bmatrix} i_{\cK''} & i_{\cK'}
\end{bmatrix}\left|\begin{bmatrix} \cK''_- \\ \cK'_+
\end{bmatrix} \right.
$$
extends to an isometric embedding of $\cH_{0}$ into $\cK$ and
$$
\left(\cU^*, i_{\cH_{0}}; \cK  \right)
 $$
is a minimal unitary extension of $V$. Conversely, if
 $(\cU^*, i_{\cH_{0}}; \cK)$ is a minimal unitary extension
of $V$ and if we define isometric embedding operators
$i_{\cK'} \colon \cK' \to \cK$ and
$i_{\cK''} \colon \cK'' \to \cK$ via the wave operator construction
$$
i_{\cK'}   k' = \operatorname{s-lim}_{n \to \infty}
 \cU^{*n} i_{\cH_{0}} \begin{bmatrix} 0 \\ \cU^{\prime n}
 k' \end{bmatrix}, \qquad
i_{\cK''} k'' = \operatorname{s-lim}_{n \to \infty}
 \cU^{n} i_{\cH_{0}} \begin{bmatrix} \cU^{\prime \prime * n} k'' \\ 0
\end{bmatrix}
$$
defined initially only for
$$
k' \in \bigcup_{m \ge 0} \cU^{\prime * m} \cK'_{+}, \qquad
k'' \in \bigcup_{m \ge 0} \cU^{\prime \prime m} \cK''_{-},
$$
and then extended uniquely to all of $\cK'$ and
$\cK''$ respectively by
continuity, then the collection
$$
(\cU, i_{\cK'}, i_{\cK''}; \cK)
$$
is a minimal AA-unitary coupling of $(\cU'', \cK'')$ and
 $(\cU', \cK')$ with associated contractive intertwiner $Y =
i_{\cK''}^{*} i_{\cK'}$ lifting $X$.
\end{theorem}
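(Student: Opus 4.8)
The plan is to establish the two explicit constructions in the statement, verify that each produces an object of the required type, and then check that they are mutually inverse and respect the two notions of equivalence; the bijection of equivalence classes then follows. Throughout I will use the intertwining relations \eqref{AAembedding} for couplings and \eqref{ext1}, \eqref{ext2} for extensions, together with the invariance \eqref{invariant} and $*$-cyclicity of $\cK'_{+}$, $\cK''_{-}$.

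For the forward direction (coupling to extension) I would first show that $i_{\cH_{0}} = \sbm{i_{\cK''} & i_{\cK'}}$ restricted to $\sbm{\cK''_{-} \\ \cK'_{+}}$ is isometric into $\cK$. Expanding the $\cK$-inner product of $i_{\cK''}k''_{-}+i_{\cK'}k'_{+}$ and using that $i_{\cK'},i_{\cK''}$ are isometric, the only nondiagonal contribution is $\langle k''_{-}, i_{\cK''}^{*}i_{\cK'}k'_{+}\rangle = \langle k''_{-}, Y k'_{+}\rangle$; since $Y$ lifts $X$ (see \eqref{Y=liftX}) and $k''_{-}\in\cK''_{-}$, this equals $\langle k''_{-}, X k'_{+}\rangle$, so the $\cK$-inner product agrees exactly with the $\cH_{0}$-inner product \eqref{H-0-inner-product}. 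The intertwining \eqref{ext1} is then a direct computation: from \eqref{AAembedding} one has $\cU^{*}i_{\cK''} = i_{\cK''}\cU^{\prime\prime *}$ and $\cU^{*}i_{\cK'} = i_{\cK'}\cU^{\prime *}$, whence $\cU^{*}i_{\cH_{0}}\sbm{k''_{-} \\ \cU' k'_{+}} = i_{\cH_{0}}\sbm{\cU^{\prime\prime *}k''_{-} \\ k'_{+}} = i_{\cH_{0}}V\sbm{k''_{-} \\ \cU' k'_{+}}$. Finally, minimality \eqref{minimal} follows because $\operatorname{im}i_{\cH_{0}}$ contains $i_{\cK'}\cK'_{+}$ and $i_{\cK''}\cK''_{-}$, and $\overline{\operatorname{span}}_{n}\cU^{n}i_{\cK'}\cK'_{+} = i_{\cK'}\cK'$ by $*$-cyclicity of $\cK'_{+}$ (similarly for $\cK''_{-}$), so the $\cU$-reducing span of $\operatorname{im}i_{\cH_{0}}$ contains $\overline{\operatorname{im}i_{\cK'}+\operatorname{im}i_{\cK''}}=\cK$.

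For the converse (extension to coupling) the crucial observation—which I expect to be the main obstacle—is that the wave-operator sequences are in fact eventually constant, so that the strong limits exist for trivial reasons. Indeed, for $k'_{+}\in\cK'_{+}$ one has $\sbm{0 \\ \cU^{\prime\,n+1}k'_{+}}\in\cD$ with $V\sbm{0 \\ \cU^{\prime\,n+1}k'_{+}} = \sbm{0 \\ \cU^{\prime n}k'_{+}}$, so \eqref{ext1} gives $\cU^{*}i_{\cH_{0}}\sbm{0 \\ \cU^{\prime\,n+1}k'_{+}} = i_{\cH_{0}}\sbm{0 \\ \cU^{\prime n}k'_{+}}$; applying $\cU^{*n}$ shows the defining sequence for $i_{\cK'}$ is independent of $n$, and after reindexing for a general element one obtains $i_{\cK'}\cU^{\prime * m}k'_{+} = \cU^{*m}i_{\cH_{0}}\sbm{0 \\ k'_{+}}$. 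Symmetrically, using \eqref{ext2} and $\cU^{\prime\prime *}\cK''_{-}\subset\cK''_{-}$, one finds $i_{\cK''}\cU^{\prime\prime m}k''_{-} = \cU^{m}i_{\cH_{0}}\sbm{k''_{-} \\ 0}$. From these closed formulas the isometry of $i_{\cK'}$ and $i_{\cK''}$ (since $\cU$ and $i_{\cH_{0}}$ are isometric and the diagonal entries of \eqref{H-0-inner-product} are identities) and the intertwining relations \eqref{AAembedding} are immediate on the dense sets $\bigcup_{m}\cU^{\prime * m}\cK'_{+}$ and $\bigcup_{m}\cU^{\prime\prime m}\cK''_{-}$, which are dense in $\cK'$ and $\cK''$ by $*$-cyclicity together with \eqref{invariant}; continuity extends everything to all of $\cK'$ and $\cK''$.

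To finish the converse I would verify minimality and the lifting property, and then the inverse relationship. Minimality holds because $\operatorname{im}i_{\cK'}$ and $\operatorname{im}i_{\cK''}$ are $\cU$-reducing and their sum contains $i_{\cH_{0}}\sbm{k''_{-} \\ 0}+i_{\cH_{0}}\sbm{0 \\ k'_{+}} = i_{\cH_{0}}\sbm{k''_{-} \\ k'_{+}}$, hence contains the $\cU$-reducing span of $\operatorname{im}i_{\cH_{0}}$, which is $\cK$ by \eqref{minimal}. For the lifting property, for $k'_{+}\in\cK'_{+}$ and $k''_{-}\in\cK''_{-}$ one computes $\langle Y k'_{+}, k''_{-}\rangle = \langle i_{\cH_{0}}\sbm{0 \\ k'_{+}}, i_{\cH_{0}}\sbm{k''_{-} \\ 0}\rangle$, which by \eqref{H-0-inner-product} equals $\langle X k'_{+}, k''_{-}\rangle$, giving $P_{\cK''_{-}}Y|_{\cK'_{+}}=X$ as in \eqref{Y=liftX}. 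Lastly, the two constructions are mutually inverse: starting from a coupling, the identity $i_{\cH_{0}}\sbm{0 \\ k'_{+}} = i_{\cK'}k'_{+}$ shows the reconstructed wave operators reproduce the original $i_{\cK'}, i_{\cK''}$, while starting from an extension, $\sbm{i_{\cK''} & i_{\cK'}}\sbm{k''_{-} \\ k'_{+}} = i_{\cH_{0}}\sbm{k''_{-} \\ k'_{+}}$ recovers the original $i_{\cH_{0}}$. Since a unitary $\tau$ intertwining the data of one structure automatically intertwines the data of the other through these same formulas, both maps descend to mutually inverse bijections of equivalence classes.
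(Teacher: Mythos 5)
Your proposal is correct and follows essentially the same route as the paper: the same isometry and intertwining verifications for $i_{\cH_{0}}$ in the forward direction, the same wave-operator construction in the converse resting on the key observation that the defining sequences are constant (hence the limits exist trivially), and the same checks of minimality, the lifting property, and mutual inverseness. The only cosmetic difference is that you phrase the converse minimality argument via the $\cU$-reducing subspaces $\im i_{\cK'}$, $\im i_{\cK''}$, whereas the paper spans positive/negative powers over the $\cU$-invariant and $\cU^{*}$-invariant subspaces $i_{\cH_{0}}\sbm{0 \\ \cK'_{+}}$ and $i_{\cH_{0}}\sbm{\cK''_{-} \\ 0}$; these are equivalent arguments.
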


\begin{proof}
Suppose that $({\mathcal U}, i_{{\mathcal K}'}, i_{{\mathcal K}''};
{\mathcal K})$
is a minimal AA-unitary coupling of $(\cU', {\mathcal K}')$ and
$(\cU'', {\mathcal K}'')$.
Define the map
 \begin{equation}\label{iH_0}
i_{\cH_{0}}  \colon \begin{bmatrix} k''_{-} \\ k'_{+}
\end{bmatrix} \mapsto i_{\cK''} k''_{-}+ i_{\cK'}k'_{+}.
\end{equation}
Since $i_{{\mathcal K}'}$ and  $i_{{\mathcal K}''}$ are
isometric then  $i_{\cH_{0}}$ is isometric if and only if
$$
\langle i_{\cK''} k''_{-},i_{\cK'}k'_{+}\rangle_{\cK}=
\langle k''_{-},k'_{+}\rangle_{\cH_{0}}:=
\langle k''_{-},X k'_{+}\rangle_{\cK''}.
$$
This in turn means that the intertwiner $Y= i_{\cK''}^{*} i_{\cK'}$
lifts $X$. Now,
\begin{align*}
i_{\cH_{0}} V \begin{bmatrix} k''_{-} \\ \cU^{\prime}
k'_{+}\end{bmatrix}
& =  i_{\cH_{0}} \begin{bmatrix}\cU''^* k''_{-} \\  k'_{+}
\end{bmatrix} =
i_{\cK''} \cU''^* k''_{-}+ i_{\cK'} k'_{+}  \\
 &  = \cU^*(i_{\cK''}  k''_{-}+ i_{\cK'}\cU^\prime  k'_{+})
 = \cU^* i_{\cH_{0}}  \begin{bmatrix} k''_{-} \\ \cU'
k'_{+}\end{bmatrix}.
\end{align*}
This in turn means that (\ref{ext1}) holds. Thus, $\cU^*$ on $\cK$
with embedding $i_{\cH_{0}}$ is a unitary
extension of $V$.
Since, by assumption, $\cK'_{+}$ is $*$-cyclic for
$\cU'$ on $\cK'$, $\cK''_{-}$ is $*$-cyclic for $\cU''$ on $\cK''$,
and since the AA-unitary coupling
$({\mathcal U}, i_{{\mathcal K}'}, i_{{\mathcal K}''};
{\mathcal K})$ is minimal, then
$$\overline{\operatorname{\text{span}}}_{n \in {\mathbb Z}}
\cU^{n} \operatorname{im} i_{\cH_{0}} = \cK .$$
Thus, $(\cU^*, i_{\cH_{0}}; \cK)$ is a minimal unitary
extension of $V$.

Conversely, suppose that $(\cU^*, i_{\cH_{0}}; \cK)$ is a
minimal unitary extension of $V$. We now apply the
construction of the wave operator from \cite{HarmAIP}
(Section 4), which simplifies significantly in our
situation due to the fact that $\cK'_+$ and $\cK''_-$ are embedded
isometrically into $\cH_0$.
For $k' \in \cU^{\prime * n}\cK'_{+}$
and $m \ge n$ note that
$\cU^{*m} i_{\cH_{0}} \sbm{ 0 \\ \cU^{\prime m} k' }$
is  well-defined (since $\cU^{\prime m}k' \in \cK'_{+}$ so
$\sbm{ 0 \\ \cU^{\prime m} k' } \in \cH_{0}$) and independent of
$m$ (since $\cU^*$ is an extension of $V$, see \eqref{ext2}).
Thus, the formula
\begin{equation}  \label{icK'}
i_{\cK'} \colon k' \mapsto \lim_{m \to \infty} \cU^{*m} i_{\cH_{0}}
\begin{bmatrix} 0 \\ \cU^{\prime m} k' \end{bmatrix}
\end{equation}
is a well-defined isometry from
$\displaystyle\bigcup_{n=0}^{\infty} \cU^{\prime
 * n} \cK'_{+}$ into $\cK$.  By assumption,
$\displaystyle\bigcup_{n=0}^{\infty} \cU^{\prime
* n} \cK'_{+}$ is dense in $\cK'$, and hence
 $i_{\cK'}$ extends uniquely by continuity to an isometry (still
denoted by $i_{\cK'}$) from $\cK'$ into $\cK$.  Similarly, the
formula
\begin{equation}  \label{icK''}
i_{\cK''} \colon k'' \mapsto \lim_{m \to \infty} \cU^{m}
 i_{\cH_{0}} \begin{bmatrix} \cU^{\prime \prime * m} k'' \\ 0
\end{bmatrix}
\end{equation}
gives rise to a well-defined isometry from $\cK''$ into $\cK$.
Definitions (\ref{icK'}) and (\ref{icK''}) imply that
$$
\cU i_{\cK'}= i_{\cK'} \cU'\quad {\text{and}}\quad \cU
i_{\cK''}= i_{\cK''} \cU''.
$$
We have thus arrived at an AA-unitary coupling $(\cU, i_{\cK''},
i_{\cK'}; \cK)$ of $(\cU'', \cK'')$ and $(\cU', \cK')$. To
check the minimality of the AA-unitary coupling note
that it follows
from (\ref{icK'}) and (\ref{icK''}) that
$$\operatorname{im} i_{\cK'}=
\overline{\operatorname{\text{span}}}_{n \in {\mathbb Z_{-}}}
\cU^{n}  i_{\cH_{0}}  \begin{bmatrix} 0 \\ {\cK'_{+}}
\end{bmatrix}
$$
and
$$\operatorname{im} i_{\cK''}=
 \overline{\operatorname{\text{span}}}_{n \in {\mathbb Z_{+}}}
\cU^{n}  i_{\cH_{0}}  \begin{bmatrix} {\cK''_{-}} \\ 0
\end{bmatrix}.
$$
Since $i_{\cH_{0}}\begin{bmatrix} 0 \\ {\cK'_{+}} \end{bmatrix}$
 is invariant for $\cU$ and $i_{\cH_{0}}  \begin{bmatrix}
{\cK''_{-}} \\ 0
\end{bmatrix}$ is invariant for $\cU^*$, we conclude that
$$
\operatorname{im} i_{\cK'}+\operatorname{im}
i_{\cK''}\supseteq \operatorname{\text{span}}_{n \in {\mathbb Z}}
 \cU^{n}  i_{\cH_{0}}\begin{bmatrix}
{\cK''_{-}} \\ {\cK'_{+}}
\end{bmatrix}.
$$
 Therefore,
 $$\overline{\operatorname{im} i_{\cK'}+\operatorname{im}
 i_{\cK''}}\supseteq
\overline{\operatorname{\text{span}}}_{n \in {\mathbb Z}}
\cU^{n}  i_{\cH_{0}}={\cK}.
$$
The last equality is due to minimality of the extension.
Thus,
$$\overline{\operatorname{im} i_{\cK'}+\operatorname{im}
i_{\cK''}}={\cK}
$$
and it follows that the AA-unitary coupling is minimal.
Moreover, $Y = i_{\cK''}^{*} i_{\cK'}$ lifts $X$ since
\begin{align*}
& \langle Y k'_{+}, k''_{-}\rangle_{\cK}
 = \langle i_{\cK'} k_{+}', i_{\cK''} k_{-}''
\rangle_{\cK}
 = \left\langle \begin{bmatrix} 0 \\ k'_{+}\end{bmatrix},
\begin{bmatrix} k''_{-} \\ 0 \end{bmatrix}
	 \right\rangle_{\cH_{0}}
= \langle X k'_{+}, k''_{-} \rangle_{K''_{-}}.
\end{align*}

The correspondences between AA-unitary couplings and
unitary extensions
defined above are mutually inverse. Moreover, it is
straightforward from the definitions of the equivalences that
under these correspondences equivalent AA-unitary couplings go to
equivalent unitary extensions and equivalent unitary
extensions go to equivalent AA-unitary couplings.
 This completes the proof of Theorem \ref{T:AAcoupling-Unitext}.
\end{proof}

\section{Structure of unitary extensions} \label{S:unitext}
In the previous section we obtained a correspondence between
contractive intertwining lifts $Y$ of $X$ and minimal unitary
extensions $\cU^{*}$ of a isometry $V$ on a Hilbert space $\cH_{0}$
with domain $\cD$ and codomain $\cD_{*}$.  In this section we
indicate how one can parametrize all such minimal unitary extensions.

We therefore suppose that we are given a Hilbert space $\cH_{0}$,
two subspaces $\cD$ and $\cD_{*}$ of $\cH_{0}$ and an operator $V$
which maps $\cD$ isometrically onto $\cD_{*}$:
$$
	V \colon \cD \to \cD_{*}.
$$
In this situation we say that $V$ is an {\em isometry} on $\cH_{0}$
with {\em
domain} $\cD$ and {\em codomain}  $\cD_{*}$.  We let $\Delta$ and
$\Delta_{*}$ be the respective orthogonal complements
 $$
	\Delta: = \cH_{0} \ominus \cD, \quad \Delta_{*}: = \cH_{0} \ominus
\cD_{*}.
$$
Let $\cU^{*}$ be a minimal unitary extension of $V$ to a Hilbert
space $\cK$, i.e., $\cU$ is unitary on the Hilbert space $\cK$,
$\cK$ contains the space $\cH_{0}$ as a subspace, the smallest
subspace of $\cK$ containing $\cH_{0}$ and reducing for $\cU$ is the
whole space $\cK$ and $\cU^{*}$ when restricted to $\cD \subset
\cH_{0} \subset \cK$ agrees with $V$:  $\cU|_{\cD} = V$.  We set
$\cH_{1}$ equal to $\cK \ominus \cH_{0}$ and write $\cK = \cH_{0}
\oplus \cH_{1}$.  We associate two unitary colligations $U_{1}$ and
$U_{0}$ to the extension $\cU^{*}$ as follows.  Since
$\cU^{*}|_{\cD} = V$ maps $\cD$ onto $\cD_{*}$ and since $\cU^{*}$
is unitary, necessarily $\cU^{*}$ must map $\cK \ominus \cD = \Delta
\oplus \cH_{1}$ onto $\cK \ominus \cD_{*} = \Delta_{*} \oplus
\cH_{1}$.  To define the unitary colligation $U_{1}$, we introduce a
second copy $\widetilde \Delta$ of $\Delta$ and a second copy
$\widetilde \Delta_{*}$ of $\Delta_{*}$ together with unitary
identification maps
\begin{equation}  \label{itildeDeltas}
i_{\widetilde \Delta} \colon \widetilde \Delta \to \Delta \subset
\cH_{0} \subset \cK, \quad
i_{\widetilde \Delta_{*}} \colon \widetilde \Delta_{*} \to
\Delta_{*} \subset \cH_{0} \subset \cK.
\end{equation}
We then define the colligation
\begin{equation}  \label{free-param-col1}
U_{1} : = \begin{bmatrix} A_{1} & B_{1} \\ C_{1} & D_{1}
\end{bmatrix} \colon \begin{bmatrix} \cH_{1} \\ \widetilde \Delta
\end{bmatrix} \to \begin{bmatrix} \cH_{1} \\ \widetilde \Delta_{*}
\end{bmatrix}
\end{equation}
by
\begin{equation}  \label{free-param-col}
 U_{1} = \begin{bmatrix} i_{\cH_{1}}^{*} \\ i_{\widetilde
\Delta_{*}}^{*} \end{bmatrix}  \cU^{*}  \begin{bmatrix} i_{\cH_{1}}
& i_{\widetilde \Delta} \end{bmatrix}
\end{equation}
where $i_{\cH_{1}} \colon \cH_{1} \to \cK = \cH_{0} \oplus \cH_{1}$
is the natural inclusion map.
We define a second colligation
$$
U_{0}  = \begin{bmatrix} A_{0} & B_{0} \\ C_{0} & D_{0}
\end{bmatrix} \colon \begin{bmatrix} \cH_{0} \\ \widetilde \Delta_{*}
\end{bmatrix}
\to \begin{bmatrix} \cH_{0} \\ \widetilde \Delta \end{bmatrix}
$$
 as follows.  Note that the space $\cH_{0}$ has two orthogonal
decompositions
$$
 \cH_{0} = \cD \oplus \Delta = \cD_{*} \oplus \Delta_{*}.
$$
 If we use the first orthogonal decomposition of $\cH_{0}$ on the
domain side and the second orthogonal decomposition of $\cH_{0}$ on
the range side, then we may define an operator $U_{0} \colon \cH_{0}
\oplus \widetilde \Delta_{*} \to \cH_{0} \oplus \widetilde \Delta$
via the $3 \times 3$-block matrix
\begin{equation}  \label{U0-1}
U_{0} = \begin{bmatrix} V & 0 & 0 \\ 0 & 0 & i_{\widetilde
\Delta_{*}} \\ 0 & i_{\widetilde \Delta}^{*} & 0 \end{bmatrix}
\colon \begin{bmatrix} \cD \\ \Delta \\ \widetilde \Delta_{*}
\end{bmatrix} \to \begin{bmatrix} \cD_{*} \\ \Delta_{*} \\ \widetilde
\Delta \end{bmatrix},
 \end{equation}
or, in colligation form,
\begin{equation}  \label{U0-2}
 U_{0} = \begin{bmatrix} A_{0} & B_{0} \\ C_{0} & 0 \end{bmatrix}
\colon \begin{bmatrix} \cH_{0} \\ \widetilde \Delta_{*} \end{bmatrix}
 \to \begin{bmatrix} \cH_{0} \\ \widetilde \Delta \end{bmatrix}
 \end{equation}
where
\begin{align}
&  A_{0}|_{\cD} = V, \quad A_{0}|_{\Delta} = 0,
\quad C_{0}|_{\cD} = 0, \quad
C_{0}|_{\Delta} = i_{\widetilde \Delta}^{*},
 \notag \\
& B_{0} = i_{\widetilde \Delta_{*}} \text{ with } \mathop{\text{im}}
B_{0} = \Delta_{*} \subset {\mathcal H}_{0}. \label{U0-3}
 \end{align}
We note that the colligation $U_{0}$ is defined by the problem data
(i.e., the isometry $V$ with given domain $\cD$ and codomain $\cD_{*}$
in the space $\cH_{0}$) and is independent of the choice of unitary
extension $\cU^{*}$.  As one sweeps all possible unitary extensions
$\cU^{*}$ of $V$, the associated colligation $U_{1}$ can be an
arbitrary colligation of the form \eqref{free-param-col1}, i.e., one
with input space $\widetilde \Delta$ and output space $\widetilde
\Delta_{*}$. Moreover, from the fact the colligation matrix $U_{0} =
\sbm{ A_{0} & B_{0} \\ C_{0} & 0 }$ has a zero for its $(2,2)$-entry,
we see from Theorem \ref{T:FBcon} that the feedback connection
$\cF_{\ell}(U_{0}, U_{1})$ is well-defined for {\em any} colligation
(in particular, for any unitary colligation) of the form
\eqref{free-param-col1}.
Also,  from the very definitions, we see that if
$U_{1}$ is constructed from the unitary extension $\cU^{*}$ as
indicated in \eqref{free-param-col}, then we recover $\cU^{*}$ from
 $U_{0}$ and $U_{1}$ as the feedback connection $\cU^{*} =
\cF_{\ell}(U_{0}, U_{1})$ given by \eqref{FBconnection}.  The
following result gives the converse.

\begin{theorem}\label{T:ext-struct}
The operator $\mathcal U^*$ on $\cK$ is a unitary extension of $V$ to
a Hilbert
space $\mathcal K$ if and only if, upon decomposing $\cK$ as
$\mathcal K = \mathcal H_0\oplus\mathcal H_1$, $\cU^{*}$ can be
written in the form
 $$
 \cU^{*} =\cF_{\ell}(U_{0}, U_{1})
$$
where $U_{0}$ is the universal unitary colligation determined
completely by the problem data as in \eqref{U0-3} and $U_{1}$ is a
free-parameter unitary colligation of the form
\eqref{free-param-col1}.
Moreover, $\cU^{*}$ is a {\em minimal unitary extension of $V$}, i.e.,
the smallest
reducing subspace for $\cU^{*}$ containing $\cH_{0}$ is the whole
space $\cK: = \cH_{0} \oplus \cH_{1}$,  {\em if and only if $U_{1}$
is a simple unitary colligation}, i.e., the smallest reducing
subspace for $A_{1}$ containing $\operatorname{im} B_{1} +
\operatorname{im} C_{1}^{*}$ is the whole space $\cH_{1}$.
\end{theorem}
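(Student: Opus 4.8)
The plan is to establish the two asserted equivalences in turn: first the characterization of arbitrary unitary extensions as feedback connections $\cF_{\ell}(U_{0},U_{1})$ with $U_{1}$ unitary, and then the refinement to \emph{minimal} extensions in terms of simplicity of $U_{1}$. Throughout I use that $U_{0}$ is manifestly unitary from its block form \eqref{U0-1} (it is a ``permutation'' of the three unitaries $V$, $i_{\widetilde\Delta}^{*}$, $i_{\widetilde\Delta_{*}}$).

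For the forward implication, suppose $\cU^{*}$ is a unitary extension of $V$ and decompose $\cK=\cD\oplus\Delta\oplus\cH_{1}$ on the domain side and $\cK=\cD_{*}\oplus\Delta_{*}\oplus\cH_{1}$ on the range side. Since $\cU^{*}|_{\cD}=V$ maps $\cD$ unitarily onto $\cD_{*}$ and $\cU^{*}$ is unitary on $\cK$, it carries $\cK\ominus\cD=\Delta\oplus\cH_{1}$ unitarily onto $\cK\ominus\cD_{*}=\Delta_{*}\oplus\cH_{1}$. Now write $W_{\mathrm{in}}:=\bbm{i_{\cH_{1}} & i_{\widetilde\Delta}}$, which is unitary from $\cH_{1}\oplus\widetilde\Delta$ onto $\cK\ominus\cD$, and $W_{\mathrm{out}}:=\bbm{i_{\cH_{1}} & i_{\widetilde\Delta_{*}}}$, unitary from $\cH_{1}\oplus\widetilde\Delta_{*}$ onto $\cK\ominus\cD_{*}$. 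Then \eqref{free-param-col} reads $U_{1}=W_{\mathrm{out}}^{*}\,\cU^{*}\,W_{\mathrm{in}}$, a composition of three maps each unitary onto the relevant subspace, so $U_{1}$ is unitary. The identity $\cU^{*}=\cF_{\ell}(U_{0},U_{1})$ was already recorded in the discussion preceding the theorem, completing this direction.

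For the converse, let $U_{1}$ be any unitary colligation of the form \eqref{free-param-col1} and set $\cU^{*}:=\cF_{\ell}(U_{0},U_{1})$, a well-defined bounded operator on $\cK=\cH_{0}\oplus\cH_{1}$ by Theorem \ref{T:FBcon} since $D_{0}=0$ forces $I-D_{1}D_{0}=I$. To see $\cU^{*}$ extends $V$, apply \eqref{FBconnection} to $\sbm{x_{0} \\ 0}$ with $x_{0}\in\cD$: because $C_{0}|_{\cD}=0$ one gets $f_{*}=C_{0}x_{0}=0$, then $U_{1}\sbm{0 \\ 0}=\sbm{0 \\ 0}$ gives $f=0$ and $\widetilde x_{1}=0$, and finally $\widetilde x_{0}=A_{0}x_{0}=Vx_{0}$; thus $\cU^{*}|_{\cD}=V$. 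Unitarity follows from energy balance: writing $\sbm{\widetilde x_{0} \\ \widetilde x_{1}}=\cU^{*}\sbm{x_{0} \\ x_{1}}$ with intermediate $f,f_{*}$, unitarity of $U_{0}$ and of $U_{1}$ give $\|x_{0}\|^{2}+\|f\|^{2}=\|\widetilde x_{0}\|^{2}+\|f_{*}\|^{2}$ and $\|x_{1}\|^{2}+\|f_{*}\|^{2}=\|\widetilde x_{1}\|^{2}+\|f\|^{2}$; adding and cancelling the $\|f\|^{2},\|f_{*}\|^{2}$ terms yields $\|x_{0}\|^{2}+\|x_{1}\|^{2}=\|\widetilde x_{0}\|^{2}+\|\widetilde x_{1}\|^{2}$, so $\cU^{*}$ is isometric. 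For surjectivity I would take adjoints of the two relations in \eqref{FBconnection} (using $U_{i}^{*}=U_{i}^{-1}$) to verify the elementary identity $(\cU^{*})^{*}=\cF_{\ell}(U_{0}^{*},U_{1}^{*})$, whose well-posedness again follows from $D_{0}^{*}=0$; the same energy balance applied to the unitary colligations $U_{0}^{*},U_{1}^{*}$ shows this adjoint is isometric, and an isometry with isometric adjoint is unitary.

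For the minimality refinement, let $\cR$ be the smallest $\cU^{*}$-reducing subspace of $\cK$ containing $\cH_{0}$ and write $\cR=\cH_{0}\oplus\cN$ with $\cN:=\cR\ominus\cH_{0}\subseteq\cH_{1}$, so that $P_{\cH_{1}}\cR=\cN$. Reading off \eqref{free-param-col} one has $A_{1}=P_{\cH_{1}}\cU^{*}|_{\cH_{1}}$, $\im B_{1}=P_{\cH_{1}}\cU^{*}\Delta$, and $\im C_{1}^{*}=P_{\cH_{1}}\cU\,\Delta_{*}$. Projecting $\cU^{*}\cN$, $\cU\cN$, $\cU^{*}\Delta$, $\cU\Delta_{*}$ onto $\cH_{1}$ (all lie in $\cR$, hence their $\cH_{1}$-parts lie in $\cN$) shows $\cN$ is $A_{1}$-reducing and contains $\im B_{1}+\im C_{1}^{*}$; thus $\cN\supseteq\cN_{0}$, the smallest $A_{1}$-reducing subspace generated by $\im B_{1}+\im C_{1}^{*}$. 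Conversely, decomposing an element of $\cH_{0}$ into its $\cD$- and $\Delta$-parts (resp.\ $\cD_{*}$- and $\Delta_{*}$-parts) and using $\cU^{*}\cD=V\cD\subseteq\cH_{0}$, $\cU\cD_{*}\subseteq\cH_{0}$, together with $\im B_{1},\im C_{1}^{*}\subseteq\cN_{0}$ and the $A_{1}$-reducibility of $\cN_{0}$, shows $\cH_{0}\oplus\cN_{0}$ is invariant under both $\cU^{*}$ and $\cU$, hence reducing; minimality of $\cR$ then gives $\cR\subseteq\cH_{0}\oplus\cN_{0}$, so $\cN=\cN_{0}$. Consequently $\cR=\cK$ iff $\cN_{0}=\cH_{1}$, i.e.\ precisely when $U_{1}$ is simple.

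The routine parts are the extension property and the isometry computation. I expect the main obstacles to be twofold: establishing genuine surjectivity (not merely isometry) of $\cF_{\ell}(U_{0},U_{1})$, for which the adjoint-feedback identity $(\cU^{*})^{*}=\cF_{\ell}(U_{0}^{*},U_{1}^{*})$ must be extracted carefully from the coupled relations \eqref{FBconnection}; and the bookkeeping in the minimality argument, where the identification $\cN=\cN_{0}$ requires tracking the $\cH_{0}$- and $\cH_{1}$-components of $\cU^{\pm}$ applied to $\Delta$, $\Delta_{*}$, and $\cN_{0}$.
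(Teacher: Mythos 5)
Your proposal is correct and follows essentially the same route as the paper: the forward direction rests on the identity $\cU^{*}=\cF_{\ell}(U_{0},U_{1})$ already recorded in the discussion preceding the theorem, the converse verifies the extension property and unitarity of the feedback connection, and the minimality claim is settled by analyzing the smallest $\cU^{*}$-reducing subspace containing $\cH_{0}$. Where the paper labels steps ``straightforward to verify,'' you supply accurate fillings-in rather than a different method — the energy-balance argument plus the inverse feedback connection $\cF_{\ell}(U_{0}^{*},U_{1}^{*})$ (well-posed since the $(2,2)$-entry of $U_{0}^{*}$ vanishes) for unitarity, and the explicit identification of $\cR\ominus\cH_{0}$ with the smallest $A_{1}$-reducing subspace containing $\operatorname{im}B_{1}+\operatorname{im}C_{1}^{*}$ for minimality.
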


\begin{proof}
We already showed that every unitary extension $\cU^{*}$ of $V$ has
the form $\cU^{*} = \cF_{\ell}(U_{0}, U_{1})$ where $\cU^{*}$
determines $U_{1}$ according to \eqref{free-param-col}.  Conversely
we now show that every lower feedback connection $\cF_{\ell}(U_{0}, U_{1})$
(with arbitrary unitary colligation $U_1$ of the form \eqref{free-param-col1})
produces a unitary extension $\cU^{*}$ of $V$.  From the formula \eqref{FB}
for the lower feedback connection applied to the case where $D_{0} = 0$,
we see that
\begin{equation}  \label{FBU0U1}
\cF_{\ell}(U_{0}, U_{1}) \begin{bmatrix} h_{0} \\ h_{1}
\end{bmatrix} = \begin{bmatrix} A_{0} + B_{0} D_{1} C_{0} & B_{0}
C_{1} \\ B_{1} C_{0}  & A_{1} \end{bmatrix} \begin{bmatrix} h_{0} \\
h_{1} \end{bmatrix}.
 \end{equation}
Specializing to the case where $h_{0} = d \in \cD \subset \cH_{0}$
and $h_{1} = 0$ and using the formulas \eqref{U0-3} for $A_{0}, B_{0},
C_{0}$, we see that
 $$
 \cF_{\ell}(U_{0}, U_{1}) \begin{bmatrix} d \\ 0 \end{bmatrix} =
\begin{bmatrix} A_{0} d \\ 0 \end{bmatrix} = \begin{bmatrix} Vd \\
 0 \end{bmatrix}
$$
and it follows that $\cF_{\ell}(U_{0}, U_{1})$ is an extension of
$V$.  Moreover, by plugging in the
explicit formulas \eqref{U0-3} for $A_{0}, B_{0},C_{0}$ into
\eqref{FBU0U1}, it is straightforward to verify that
we recover $U_{1}$ from $\cU^{*}: = \cF_{\ell}(U_{0}, U_{1})$ via
the formula \eqref{free-param-col} and that $\cU^{*}$ is unitary
exactly when $U_{1}$ is unitary.

It remains to check:
$\cU^{*}$ is a minimal extension of $V$ if  and only if $U_{1}$
is a simple unitary colligation.
Consider the minimal reducing subspace for $\cU^*$ that contains
$\cH_0$, then its orthogonal complement (which is a subspace of
$\cH_1$) also reduces $\cU^*$. From the definitions one sees that
the latter is the zero subspace
exactly when $U_1$ is simple.
\end{proof}

Since unitary extensions $\cU^{*}$ of $V$ are given via the lower
feedback connection $\cF_{\ell}(U_{0},U_{1})$, we may use the results
of Theorems \ref{T:FB-traj} and \ref{T:backwardFB-traj} to compute
positive and negative powers of $\cU^{*}$.  To simplify notation, we
let
\begin{align}
& {\mathbf S}^{+} =
 \begin{bmatrix} S_{0}^{+} &  S_{2}^{+} \\  S_{1}^{+} & S^{+}
\end{bmatrix} \colon
\begin{bmatrix} \cH_{0} \\ \ell_{\widetilde \Delta_{*}}({\mathbb
Z}_{+})\end{bmatrix}
 \to     \begin{bmatrix} \ell_{\cH_{0}}({\mathbb Z}_{+}) \\
\ell_{\widetilde \Delta}({\mathbb Z}_{+}) \end{bmatrix},  \notag \\
& {\mathbf S}^{-} = \begin{bmatrix} S_{0}^{-} &
 S_{1}^{-} \\ S_{2}^{-} & S^{-}  \end{bmatrix}\colon
 \begin{bmatrix}
\cH_{0} \\ \ell_{\widetilde \Delta}({\mathbb Z}_{-}) \end{bmatrix}
\to \begin{bmatrix} \ell_{\cH_{0}}({\mathbb Z}_{-}) \\
\ell_{\widetilde \Delta_{*}}({\mathbb Z}_{-}) \end{bmatrix}
\label{universal-aug-IO}
 \end{align}
be the forward and backward augmented input-output operators for the
universal colligation $U_{0}$ and we let
\begin{align}
&\mathbf\Omega^{+} = \begin{bmatrix} \Omega_{0}^{+} &
\Omega_{2}^{+} \\ \Omega_{1}^{+} & \Omega^{+} \end{bmatrix} \colon
\begin{bmatrix} \cH_{1} \\ \ell_{\widetilde \Delta}({\mathbb
 Z}_{+}) \end{bmatrix}
\to \begin{bmatrix} \ell_{\cH_{1}}({\mathbb Z}_{+}) \\
\ell_{\widetilde \Delta_{*}}({\mathbb Z}_{+})  \end{bmatrix}, \notag
\\
& \mathbf\Omega^{-} = \begin{bmatrix} \Omega_{0}^{-} &
\Omega_{1}^{-} \\ \Omega_{2}^{-} & \Omega^{-} \end{bmatrix} \colon
\begin{bmatrix} \cH_{1} \\ \ell_{\widetilde \Delta_{*}}({\mathbb
Z}_{-}) \end{bmatrix}
\to \begin{bmatrix} \ell_{\cH_{1}}({\mathbb Z}_{-}) \\
\ell_{\widetilde \Delta}({\mathbb Z}_{-}) \end{bmatrix}
\label{freeparam-aug-IO}
\end{align}
be the forward and backward augmented input-output operators for the
free-param\-eter unitary colligation $U_{1}$.    From the first rows
in
the formulas
 \eqref{Un-explicit} and \eqref{backward-Un-explicit}, we read off
that
$$  \Lambda_{\cH_{0}}(U) =  \begin{bmatrix} \Lambda_{\cH_{0},-}(U)
\\ \Lambda_{\cH_{0},+}(U) \end{bmatrix} \colon
\begin{bmatrix} \cH_{0} \\ \cH_{1}
\end{bmatrix} \to \begin{bmatrix} \ell_{\cH_{0}}({\mathbb Z}_{-}) \\
\ell_{\cH_{0}}({\mathbb Z}_{+}) \end{bmatrix}
$$
is given by
 \begin{equation}  \label{LambdaH0}
 \Lambda_{\cH_{0}}(U) =
\begin{bmatrix}
S_{0}^{-} + S_{1}^{-} (I - \Omega^{-}S^{-})^{-1} \Omega^{-} S_{2}^{-}
& & S_{1}^{-} (I - \Omega^{-} S^{-})^{-1} \Omega_{2}^{-} \\ \\
 S_{0}^{+} + S_{2}^{+} ( I - \Omega^{+} S^{+})^{-1} \Omega^{+}
S_{1}^{+}
 & & S_{2} ^{+} (I - \Omega^{+} S^{+})^{-1} \Omega_{1}^{+}
\end{bmatrix}.
\end{equation}
From the second rows in the formulas \eqref{Un-explicit} and
\eqref{backward-Un-explicit} we read off
that
$$
\Lambda_{\cH_{1}}(U) = \begin{bmatrix} \Lambda_{\cH_{1},-}(U) \\
 \Lambda_{\cH_{1},+}(U) \end{bmatrix} \colon \begin{bmatrix} \cH_{0}
\\ \cH_{1} \end{bmatrix} \to \begin{bmatrix}
\ell_{\cH_{1}}({\mathbb Z}_{-}) \\ \ell_{\cH_{1}}({\mathbb Z}_{+})
\end{bmatrix}
$$
is given by
\begin{equation}  \label{LambdaH1}
\Lambda_{\cH_{1}}(U) = \begin{bmatrix}
\Omega_{1}^{-} (I - S^{-} \Omega^{-})^{-1} S_{2}^{-}
& &\Omega_{0}^{-} + \Omega_{1}^{-}(I - S^{-} \Omega^{-})^{-1} S^{-}
\Omega_{2}^{-}
\\ \\
\Omega^{+}_{2} (I - S^{+} \Omega^{+})^{-1} S_{1}^{+}
& &\Omega^{+}_{0} + \Omega_{2}^{+} (I - S^{+} \Omega^{+})^{-1} S^{+}
\Omega_{1}^{+} \end{bmatrix}.
\end{equation}

\section{Parametrization of symbols of intertwiners}  \label{S:param}

Assume that we are given the data set
\begin{equation}  \label{Lifting-data}
(X,\quad  (\cU', \cK'), \quad  (\cU'', \cK''), \quad \cK'_{+} \subset
\cK',
\quad \cK''_{-} \subset \cK'')
\end{equation}
as in the Lifting Problem \ref{P:lift}.  If we are given $*$-cyclic
subspaces $\cG'$ and $\cG''$ for $\cU'$ and $\cU''$ respectively,
then the sets
$$
\{ \cU^{\prime n} g' \colon n \in {\mathbb Z},\, g' \in \cG'\}, \quad
\{ \cU^{\prime \prime n} g'' \colon n \in {\mathbb Z},\, g'' \in
\cG''\}
$$
have dense span in $\cK'$ and $\cK''$ respectively.  If $Y \in \cL(\cK',
 \cK'')$ is any operator satisfying the intertwining condition $Y
\cU' = \cU''
Y$, then the computation
$$
\langle Y \cU^{\prime n} g', \cU^{\prime \prime m} g''
\rangle_{\cK''} =
\langle \cU^{\prime \prime (n-m)} Y g', g'' \rangle_{\cK''} =
\langle i_{\cG''}^{*} \cU^{\prime \prime (n-m)} Y i_{\cG'}, g', g''
\rangle_{\cG''}
$$
shows that $Y$ is uniquely determined by its
{\em symbol}
$$
\{ Y_{n} = i_{\cG''}^{*} \cU^{\prime \prime * n} Y i_{\cG'}
= i_{\cG''}^{*} Y\cU^{\prime * n}  i_{\cG'} \}_{n
\in {\mathbb Z}}.
$$
Therefore, in principle, to describe all contractive intertwining
lifts $Y$ of a given $X \colon \cK'_{+} \to \cK''_{-}$, it suffices
to describe all the symbols $w_{Y}$ of contractive intertwining
lifts $Y$.  Such a description is given in the next result.

\begin{theorem} \label{T:symbol-param}
 Suppose that we are given data set \eqref{Lifting-data}
for a Lifting Problem \ref{P:lift}.  Let $U_{0} \colon \sbm{ \cH_{0}
\\ \widetilde
 \Delta_{*}} \to \sbm{ \cH_{0} \\ \widetilde \Delta}$ be the
universal unitary colligation constructed from the problem data
 as in  \eqref{U0-1} or \eqref{U0-2} and \eqref{U0-3} with
associated augmented input-output maps ${\mathbf S}^{+}$ and
${\mathbf S}^{-}$ as in \eqref{universal-aug-IO}.
For $U_{1}$ equal to a free-parameter unitary colligation of the
form \eqref{free-param-col1}, let ${\mathbf \Omega}^{+}$ and
${\mathbf \Omega}^{-}$ be the associated augmented input-output
maps as in \eqref{freeparam-aug-IO}.
 Finally let
$\cG'$ and $\cG''$ be a fixed pair of $\cU'$-$*$-cyclic and
 $\cU''$-$*$-cyclic subspaces of $\cK'$ and $\cK''$ and assume that
 $$
\cG' \subset \cK'_+, \quad \cG'' \subset \cK''_-.
$$
Let $i_{\cG'} \colon \cG' \to \cH_{0}$ be the inclusion map of
$\cG'$ into $\cH_{0}$ obtained as the inclusion of $\cG'$ in
$\cK'_{+}$ followed by the inclusion of $\cK'_{+}$ into $\cH_{0}$,
and,
similarly, let $i_{\cG''}$ be the inclusion of $\cG''$ in
$\cH_{0}$ obtained as the inclusion of $\cG''$ in $\cK''_{-}$
followed by the inclusion of $\cK''_{-}$ in $\cH_{0}$.
Let $\cI_{\cG''}^{*} = \operatorname{diag}_{n \in {\mathbb Z}}
 \{i_{\cG''}^{*} \}$ be the coordinate-wise projection of
$\ell_{\cH_{0}}({\mathbb Z})$ onto $\ell_{\cG''}({\mathbb Z})$.
Then the
$\cL(\cG', \cG'')$-valued bilateral sequence $w=\{w(n)\}_{n \in
{\mathbb Z}}$ is the symbol $w = w_{Y}$ (with respect to $\cG'$
and $\cG''$) for a contractive intertwining lift $Y$ of $X$ if and
only if there exists a free-parameter unitary colligation $U_{1}
= \sbm{A_{1} & B_{1} \\ C_{1} & D_{1}} \colon \cH_{1} \oplus
\widetilde \Delta \to \cH_{1} \oplus \widetilde \Delta_{*}$ so
 that $w$ (as an infinite column vector) has the form
 \begin{equation}  \label{symbol-param}
w_{Y} = \begin{bmatrix} {\mathcal I}_{\cG''}^{*} S_{0}^{-} i_{\cG'} \\
{\mathcal I}_{\cG''}^{*} [ S_{0}^{+} + S_{2}^{+} (I - \Omega^{+}
S^{+})^{-1} \Omega^{+} S_{1}^{+}] i_{\cG'} \end{bmatrix}.
\end{equation}
 \end{theorem}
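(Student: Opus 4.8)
The plan is to route the symbol $w_{Y}$ through the two correspondences already in hand and then read it off from the power-computation formula \eqref{LambdaH0}. First I would invoke Theorem \ref{T:AAcoupling-Unitext} to replace a contractive intertwining lift $Y$ of $X$ by the associated minimal unitary extension $\cU^{*}$ of $V$, and Theorem \ref{T:ext-struct} to write $\cU^{*} = \cF_{\ell}(U_{0}, U_{1})$ with $U_{0}$ the universal colligation \eqref{U0-3} and $U_{1}$ a simple unitary colligation of the form \eqref{free-param-col1}. The key preliminary observation is that the symbol, which by \eqref{symbol} equals $w_{Y}(n) = i_{\cG''}^{*}\cU^{*n} i_{\cG'}$ in the ambient coupling space, can be computed entirely inside $\cH_{0}$: since $\cG' \subset \cK'_{+}$ and $\cG'' \subset \cK''_{-}$ are by hypothesis contained in $\cH_{0}$, the embeddings $i_{\cG'}, i_{\cG''}$ factor through $i_{\cH_{0}}$, so that $w_{Y}(n) = i_{\cG'' \to \cH_{0}}^{*}\,\bigl(i_{\cH_{0}}^{*}\cU^{*n} i_{\cH_{0}}\bigr)\, i_{\cG' \to \cH_{0}}$. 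Thus $w_{Y}$ is nothing but the compression to $\cH_{0}$ of the powers of $\cU^{*}$, further squeezed between $i_{\cG'}$ and $i_{\cG''}^{*}$.

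Next I would feed $\cU^{*} = \cF_{\ell}(U_{0}, U_{1})$ into Theorems \ref{T:FB-traj} and \ref{T:backwardFB-traj}, as packaged in \eqref{LambdaH0}. These apply because their well-posedness hypotheses ($I - D_{1}D_{0}$ and $I - \delta_{1}\delta_{0}$ invertible) hold automatically: the universal colligation has $D_{0} = 0$ by \eqref{U0-3}, and its inverse has $\delta_{0} = 0$ (computed below), so the relevant operators are the identity. Restricting the first columns of \eqref{LambdaH0} to $\cH_{0}$ gives $i_{\cH_{0}}^{*}\cU^{*n} i_{\cH_{0}}$ on $\cH_{0}$: the forward block is $S_{0}^{+} + S_{2}^{+}(I - \Omega^{+}S^{+})^{-1}\Omega^{+}S_{1}^{+}$ and the backward block is $S_{0}^{-} + S_{1}^{-}(I - \Omega^{-}S^{-})^{-1}\Omega^{-}S_{2}^{-}$. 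Compressing on the left by $\cI_{\cG''}^{*}$ and on the right by $i_{\cG'}$ reproduces the bottom entry of \eqref{symbol-param} verbatim, and reduces the top entry to $\cI_{\cG''}^{*}\bigl[S_{0}^{-} + S_{1}^{-}(I - \Omega^{-}S^{-})^{-1}\Omega^{-}S_{2}^{-}\bigr] i_{\cG'}$.

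The hard part, and the only place the lifting geometry enters, is to show that this extra backward term disappears, that is, $S_{2}^{-} i_{\cG'} = 0$. I would compute $U_{0}^{-1} = U_{0}^{*}$ from \eqref{U0-1} and read off $\alpha_{0} = V^{*}P_{\cD_{*}}$, $\gamma_{0} = i_{\widetilde{\Delta}_{*}}^{*} P_{\Delta_{*}}$ and $\delta_{0} = 0$, so that $S_{2}^{-}$ is the backward observation operator whose entries are $\gamma_{0}\alpha_{0}^{k}$. For $g' \in \cG'$ the vector $i_{\cG'} g' = \sbm{0 \\ g'}$ lies in $\cK'_{+} \subset \cD_{*}$; moreover $\alpha_{0}$ acts on $\cD_{*}$ as $V^{*}$, which sends $\sbm{0 \\ k'_{+}}$ to $\sbm{0 \\ \cU' k'_{+}}$, so the invariance $\cU'\cK'_{+} \subset \cK'_{+}$ from \eqref{invariant} gives $\alpha_{0}^{k} i_{\cG'} g' = \sbm{0 \\ \cU'^{k} g'} \in \cK'_{+} \subset \cD_{*}$ for every $k \geq 0$. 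Because $\gamma_{0}$ annihilates $\cD_{*}$, each entry $\gamma_{0}\alpha_{0}^{k}$ kills $i_{\cG'} g'$, whence $S_{2}^{-} i_{\cG'} = 0$ and the top block collapses to $\cI_{\cG''}^{*} S_{0}^{-} i_{\cG'}$, matching \eqref{symbol-param}. I would emphasize that the forward block does \emph{not} collapse this way, precisely because there $A_{0}$ acts through $V$ (that is, through $\cU'^{*}$), which does not preserve $\cK'_{+}$.

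Finally I would close the equivalence. The computation above proves the forward implication: the symbol of any contractive intertwining lift $Y$ has the form \eqref{symbol-param} for the simple unitary colligation $U_{1}$ attached to $Y$. For the converse, given an arbitrary free-parameter unitary colligation $U_{1}$, Theorem \ref{T:ext-struct} shows $\cU^{*} = \cF_{\ell}(U_{0}, U_{1})$ is a unitary extension of $V$; passing to its minimal part (equivalently, to the simple part of $U_{1}$) yields a minimal unitary extension, hence by Theorem \ref{T:AAcoupling-Unitext} a contractive intertwining lift $Y$ of $X$. Since the symbol only sees compressions of powers of $\cU^{*}$ to $\cH_{0}$, it is unchanged under passage to the minimal part, so this $Y$ has symbol exactly \eqref{symbol-param}. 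Thus the set of symbols of contractive intertwining lifts of $X$ coincides with the set of right-hand sides of \eqref{symbol-param} as $U_{1}$ ranges over free-parameter unitary colligations, which is the assertion.
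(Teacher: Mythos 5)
Your proposal is correct and follows essentially the same route as the paper's proof: reduce via Theorems \ref{T:AAcoupling-Unitext} and \ref{T:ext-struct}, compute powers of $\cF_{\ell}(U_{0},U_{1})$ through \eqref{LambdaH0}, and kill the backward correction term by proving $S_{2}^{-} i_{\cG'} = 0$ from the invariance $\cU'\cK'_{+}\subset\cK'_{+}$ together with the orthogonality of $\cD_{*}$ to $\Delta_{*}$ (your $\alpha_{0},\gamma_{0}$ are exactly the paper's $A_{0}^{*},B_{0}^{*}$, since $U_{0}$ is unitary). Your explicit handling of non-simple $U_{1}$ in the converse direction, via passage to the minimal part without changing the symbol, is a small point the paper leaves implicit, but it does not constitute a different method.
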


\begin{remark}  \label{R:symbol-param}
 {\em There are various other formulations of the
	  formula \eqref{symbol-param} for the parametrization of lifting
  symbols.
If we define
 \begin{align}
 & s_{0}^{+} = {\mathcal I}_{\cG''}^{*} S_{0}^{+} i_{\cG'}, \quad
  s_{2}^{+} = {\mathcal I}_{\cG''}^{*} S_{2}^{+}, \quad s_{1}^{+} =
 S_{1}^{+} i_{\cG'}, \quad s^{+} = S^{+},
\label{def-Redheffer-param}
 \end{align}
 then the formula \eqref{symbol-param} assumes the form
\begin{equation}  \label{symbol-param'}
 w_{Y} = \begin{bmatrix} s_{0}^{-} \\
 s_{0}^{+} + s_{2}^{+} (I - \Omega^{+} s^{+})^{-1} \Omega^{+}
 s_{1}^{+} \end{bmatrix}
\end{equation}
 where the coefficient matrix $\sbm{s_{0}^{+} & s_{2}^{+} \\
 s_{1}^{+} & s^{+}}$ together with $s_{0}^{-}$ is completely
determined from the problem data while $\Omega^{+}$ is the
 input-output map for the free-parameter unitary colligation
 $U_{1}$.

If we consider $\ell_{\cG''}({\mathbb Z}_{+})$ as embedded in
 $\ell_{\cG''}({\mathbb Z})$ in the natural way, we may rewrite in
 turn the formula \eqref{symbol-param'} in the still more compact form
\begin{equation}  \label{symbol-param''}
 w_{Y} = s_{0} + s_{2} (I - \omega s)^{-1} \omega s_{1}
	\end{equation}
 where we define
 \begin{align}
& (s_{0}g')(m) = \begin{cases}
(s_{0}^{-} g')(m) & \text{for } m
 < 0 \\  (s_{0}^{+} g')(m) &\text{for } m \ge 0, \end{cases}
\notag \\
 & s_{2} = \iota s_{2}^{+} \text{ where } \iota \colon
\ell_{\cG''}({\mathbb Z}_{+}) \to \ell_{\cG''}({\mathbb Z})
\text{ is the natural inclusion,} \notag \\
& s=s^{+}, \quad \omega = \Omega^{+}, \quad s_{1} = s_{1}^{+}.
 \label{def-Redheffer}
	\end{align}
} \end{remark}

\begin{proof}
Theorem \ref{T:AAcoupling-Unitext} gives an
identification between contractive intertwining lifts $\cU^{*}$ and
unitary
extensions of the isometry $V \colon \cD \to \cD_{*}$ on $\cH_{0}$
constructed from the Lifting Problem data while Theorem
\ref{T:ext-struct} in turn gives a Redheffer-type parametrization of
all
such unitary extensions.  Moreover formula \eqref{LambdaH0} tells
us how to compute the powers of $\cU^{*} = \cF_{\ell}(U_{0},U_{1})$
followed by the projection to the subspace $\cH_{0}$.
By definition the symbol $w_{Y}$ is given by
$$
w_{Y}(n) = i_{\cG''}^{*} \cU^{*n} i_{\cG}.
$$
The parametrization result \eqref{symbol-param} now follows by
plugging into \eqref{LambdaH0} once we verify:
 \begin{equation}  \label{verify}
 i_{\cG''}^{*} S_{1}^{-}(I - \Omega^{-} S^{-})^{-1} S_{2}^{-}
i_{\cG'} = 0.
\end{equation}
We assert that in fact
 \begin{equation}\label{verify2}
 S_{2}^{-} i_{\cG'} = 0.
\end{equation}
Indeed, by definition $S_{2}^{-} h_{0} = \{ \widetilde
\delta_{*}(n)\}_{n \in {\mathbb Z}_{-}}$
means that $\widetilde \delta_{*}(n)$ is generated by the recursion
\begin{align*}
&  h_{0}(n) = A_{0}^{*} h_{0}(n+1), \quad h_{0}(0) = h_{0}, \\
& \widetilde \delta_{*}(n) = B_{0}^{*} h_{0}(n) \text{ for }
n = -1,-2, \dots.
\end{align*}
If we set $m = -n$, this means simply that
 $$
 \widetilde \delta_{*}(-m) = B_{0}^{*} A_{0}^{*m} h_{0}.
 $$
 For the case where $h_{0} = i_{\cG'} g' \in i_{\cK'_{+}} \cK'_{+}
\subset \cR$, we then have
$$
A_{0}^{*} h_{0} = V^{*} i_{\cG'} g' = i_{\cK'_{+}} \cU' g' \in \cD_{*}
$$
and, inductively, given that $A_{0}^{*m} h_{0} = i_{\cK'_{+}}
	k'_{+} \in i_{\cK'_{+}}  \cK'_{+} \subset \cD_{*}$, we have
$$
 A_{0}^{*m+1} h_{0} = A_{0}^{*} i_{\cK'_{+}}  k'_{+} = V^{*}
 i_{\cK'_{+}} k'_{+} = i_{\cK'_{+}} \cU' k'_{+} \in i_{\cK'_{+}}
\cK'_{+} \subset \cD_{*}.
$$
As $\cD_{*}$ is orthogonal to the final space $\Delta_{*}$ for the
isometry $i_{\widetilde \Delta_{*}}$, it follows that, for $m = 1, 2,
\dots $,
$$
B_{0}^{*} A_{0}^{*m} i_{\cG'} g' = i_{\widetilde \Delta_{*}}^{*}
A_{0}^{*m} i_{\cG'} g' = 0 \text{ for } m = 1,2, \dots
$$
from which \eqref{verify2} and \eqref{verify} follow.
\end{proof}

\begin{remark}  \label{R:param}
{\em We note that the value of the symbol $w_{Y}(n)$  is independent
of the choice of lift $Y$ for $n \le 0$.  Indeed, for $n \le
0$, $g' \in \cG'$ and $g'' \in \cG''$ (where as always we
are assuming that $\cG' \subset \cK'_{+}$ and $\cG'' \subset
\cK''_{-}$), we have
\begin{align*}
\langle  w_{Y}(n) g', g'' \rangle_{\cG''}  & =
\langle \cU^{\prime \prime *n} Y g', g'' \rangle_{\cK''_{-}}
	= \langle Y g', \cU^{\prime \prime n} g'' \rangle_{\cK''_{-}} \\
 & = \langle Y g', \cU^{\prime \prime n} g''
 \rangle_{\cK''_{-}}
	= \langle X g', \cU^{\prime \prime n} g'' \rangle_{\cK''_{-}}
\end{align*}
since $\cU^{\prime \prime n} g'' \in \cK''_{-}$ for $n \le 0$
whenever $g'' \in \cG'' \subset \cK''_{-}$.
							
Let us consider the special case where we take
 $$\cG': = \cK'_{+} \text{ and } \cG'' =\cE'': = \cK''_{-} \ominus
\cU^{\prime \prime *} \cK''_{-}.
$$
Then $\cE''$ is wandering for $\cU''$ and we may represent
$\cK''$ as the direct-sum decomposition
$$
 \cK'' = \cK''_{-} \oplus \bigoplus_{n=0}^{\infty}
 \cU^{\prime \prime n} (\cU'' \cE'').
$$
Then the Fourier representation operator
$$
\Phi'' \colon k'' \mapsto \{ i_{\cE''}^{*} \cU^{\prime \prime *
 n+1} k'' \}_{n \in {\mathbb Z}_{+}}
$$
 is a coisometry mapping $\cK''$ onto
$\ell^{2}_{\cE''}({\mathbb Z}_{+})$
with initial space equal to $\cK'' \ominus \cK''_{-} =
\bigoplus_{n=0}^{\infty} \cU^{\prime \prime n} (\cU^{\prime \prime}
\cE'')$.

If $Y$ is any lift, then $Y$ is uniquely determined by its
restriction $Y|_{\cK'_{+}}$ to $\cK'_{+}$ by the
wave-operator construction; thus, to solve the Lifting
Problem it suffices to describe all $Y|_{\cK'_{+}} \colon
\cK'_{+} \to \cK''$ rather than all lifts $Y\colon \cK' \to
\cK''$.  Moreover, if we use the Fourier representation
operator $\Phi''$ to identify $\cK'' \ominus \cK''_{-}$ with
$\ell^{2}_{\cE''}({\mathbb Z}_{+})$, then we have an
identification of $\cK''$ with $\sbm{ \cK''_{-} \\
\ell^{2}_{\cE''}({\mathbb Z}_{+})}$.  Then, with this
identification in place,  the restricted lift $ Y|_{\cK'_{+}}$
has a matrix representation of the form
$$
 Y|_{\cK'_{+}} = \begin{bmatrix} X \\ Y_{+} \end{bmatrix}
 \colon \cK'_{+} \to \begin{bmatrix}  \cK''_{-} \\
\ell^{2}_{\cE''}({\mathbb Z}_{+}) \end{bmatrix}.
$$
With this representation we lose no information concerning
 the lift $Y$ despite the fact that in general $\cG'':= \cE''
\subset \cK''_{-}$ may not be $*$-cyclic for $\cU''$.

 If we use the parametrization from \eqref{symbol-param},
 the operator $Y_{+}$ in turn has an infinite column-matrix
 representation given by
 $$
 Y_{+} = \begin{bmatrix} w_{Y}(1)  \\ w_{Y}(2) \\ \vdots \\
 w_{Y}(n+1) \\ \vdots \end{bmatrix} = J_{+}^{*} {\mathcal
I}^{*}_{\cE''}
 (S_{0}^{+} + S_{2}^{+}(I - \Omega^{+} S^{+})^{-1}\Omega^{+}
S_{1}^{+}) i_{\cK'_{+}}
$$
where $J_{+}$ is the shift operator on
$\ell^{2}_{\cE''}({\mathbb Z}_{+})$ and
where $\Omega^{+}$ is the input-output map associated with
the free-parameter unitary colligation $U_{1}$.  Finally, if
we apply the $Z$-transform
$$
\{e''(n)\}_{n \in {\mathbb Z}_{+}} \mapsto
\sum_{n=0}^{\infty} e''(n) \zeta^{n}
$$
to transform $\ell^{2}_{\cE''}({\mathbb Z}_{+})$ to the
Hardy space $H^{2}_{\cE''}$, then the operator $\widehat
Y_{+} \colon \cK'_{+} \to H^{2}_{\cE''}$ induced by $Y_{+}
 \colon \cK'_{+} \to \ell^{2}_{\cE''}({\mathbb Z}_{+})$ is
given by multiplication by the $\cL(\cK'_{+},\cE'')$-valued function
\begin{equation}  \label{FFparam}
\widehat Y_{+}(\zeta) =  \zeta^{-1}[\widehat s_{0}^{+}(\zeta) -
 \widehat s_{0}^{+}(0)] + \zeta^{-1} \widehat s_{2}^{+}(\zeta) (I -
 \omega(\zeta)\widehat  s^{+}(\zeta))^{-1} \omega(\zeta)
\widehat	s_{1}^{+}(\zeta) ] i_{\cK'_{+}}
\end{equation}
where
 \begin{align*}
     & \widehat s_{0}^{+}(\zeta) = i_{\cE''}^{*} \widehat
S_{0}^{+}(\zeta) i_{\cG'}, \,
& \widehat s_{2}(\zeta) = i_{\cE''}^{*}
\widehat S_{2}^{+}(\zeta), \,\quad
& \widehat s_{1}(\zeta) = \widehat S_{1}^{+}(\zeta)
i_{\cG'}, \,
& \widehat s(\zeta) = \widehat S^{+}(\zeta)
\end{align*}
 and where
$$
\begin{bmatrix}  \widehat S_{0}^{+}(\zeta) & \widehat
S^{+}_{2}(\zeta) \\
\widehat S_{1}^{+}(\zeta) & \widehat S^{+}(\zeta) \end{bmatrix} =
\begin{bmatrix} (I - \zeta A_{0})^{-1} & \zeta (I - \zeta
A_{0})^{-1} B_{0} \\ C_{0} (I - \zeta A_{0})^{-1} &
\zeta C_{0} (I - \zeta A)^{-1} B_{0} \end{bmatrix}
$$
is the frequency-domain version of the augmented input-output
map associated with the unitary colligation $U_{0}$
(and hence is completely determined from the problem data)  and where
$$
 \omega(\zeta) = D_{1} + \zeta C_{1} (I - \zeta A_{1})^{-1} B_{1}
$$
is the characteristic function of the free-parameter unitary
colligation $U_{1}$.  Let us use the notation $D_{X}$ for the defect
operator $D_{X}: = (I - X^{*}X)^{1/2}$ of $X$.
Further analysis shows that $\widehat
Y_{+}(\zeta)$ has a factorization $\widehat Y_{+}(\zeta) =
Y_{0+}(\zeta) D_{X}$ where the operator $\Gamma \colon D_{X}k'_{+}
\mapsto  Y_{0+}(\zeta) D_{X} k'_{+}$ defines a contraction
operator from $\cD_{X}: = \overline{\operatorname{Ran}} D_{X}$
(viewed as a space of constant functions) into $H^{2}_{\cE''}$.
Then we have the following form for the parametrization of the lifts:
\begin{equation}   \label{FFparam'}
Y \colon k'_{+} \mapsto \begin{bmatrix} X k'_{+} \\
  Y_{0+}(\zeta) D_{X} k'_{+} \end{bmatrix} \text{ where }
 \widehat Y_{+}(\zeta)  =  Y_{0+}(\zeta) D_{X} \text{ is given by
\eqref{FFparam}}.
\end{equation}
In Sections 6 and 7 of
Chapter XIV in \cite{FFbook}  or Theorem VI.5.1 in \cite{FFGK}, there
are derived formulas for a Redheffer coefficient matrix
\begin{equation}\label{1012}
  \Psi(\zeta) = \begin{bmatrix} \Psi_{11}(\zeta) & \Psi_{12}(\zeta) \\
  \Psi_{21}(\zeta) & \Psi_{22}(\zeta) \end{bmatrix}
\end{equation}
so that the function $Y_{0+}(\zeta)$ is expressed by the formula
\begin{equation}  \label{FFparam-final}
  Y_{0+}(\zeta) = \Psi_{11}(\zeta) + \Psi_{12}(\zeta) (I -
  \omega(\zeta) \Psi_{22}(\zeta))^{-1} \omega(\zeta) \Psi_{21}(\zeta).
\end{equation}
S.~ter Horst (private communication) has verified that, after some
changes of variable, the formula \eqref{FFparam-final} agrees with
\eqref{FFparam'}.

In this formulation of the Lifting Problem, the intertwining property
\eqref{Y=liftX} is
encoded  directly in terms of $\widehat Y_{0+}(\zeta)$ in the form
\begin{equation}  \label{intertwine1}
 Y_{0+}(\zeta) D_{X} \cU_{+} = i_{\cE''}^{*} X + \zeta
 Y_{0+}(\zeta) D_{X}.
\end{equation}
Here the range of $i_{\cE''}^{*}$ is the space $\cE''$ and $\cE''$
is identified as the subspace of constant functions in
$H^{2}_{\cE''}$.  Associated with the data of a Lifting Problem is an
underlying isometry $\rho \colon \cF \to \cE'' \oplus \cD_{X}$
where
$$
  \cF = \overline{\operatorname{Ran}} D_{X} \cU'_{+}
$$
and defined densely by
\begin{equation}   \label{underlyingisom}
    \rho D_{X} \cU'_{+} k'_{+} = \begin{bmatrix} \rho_{1} D_{X} \cU'
    k'_{+} \\ \rho_{2} D_{X} \cU' k'_{+} \end{bmatrix}: =
\begin{bmatrix} i_{\cE''}^{*} X k'_{+}  \\ D_{X} k'_{+} \end{bmatrix}.
\end{equation}
Then the form \eqref{intertwine1} of the intertwining condition can
be expressed directly in terms of the isometry $\rho$ in the form
\begin{equation}   \label{intertwine2}
    \rho_{1} + \zeta \cdot Y_{0+}(\zeta) \rho_{2} =
    Y_{0+}(\zeta)|_{\cF}.
\end{equation}
It is this formulation which has been extended to the context of the
Relaxed Commutant Lifting problem in \cite{FtHK1, FtHK2} and in
addition to a Redheffer parametrization for the set of all solutions
in \cite{tH1, tH2}. For the relaxed
problem, the underlying isometry $\rho$ given by
\eqref{underlyingisom} is in general only a contraction rather
than an isometry.  The Redheffer coefficient matrix \eqref{1012}
is a coisometry from $\cD_{X}\oplus H^{2}_{\cD_{\rho^{*}}}$ to $ H^{2}_{\cE''}\oplus
H^{2}_{\cD_{T'}}$ ($\Psi_{11}$ and $\Psi_{21}$ are multiplication operators).

}\end{remark}

\section{The universal extension}  \label{S:centralext}

Theorem \ref{T:symbol-param} obtained a parametrization of all
symbols of solutions of the lifting problem (and therefore also of
all lifts under the assumption that $\cG'$ and $\cG''$ are
$*$-cyclic) via a Redheffer linear-fractional map acting on a
free-parameter input-output map, or equivalently, a free-parameter
Schur-class function, acting between coefficient spaces $\widetilde
 \Delta$ and $\widetilde \Delta_{*}$.  As has been observed before in
 a variety of contexts (see e.g.~\cite{FFbook, FFGK}), a special role
is played by the lift associated with the free-parameter taken to be
equal to $0$ (the {\em central lift}). In this section we develop the
special properties of the
universal lift from the point of view of the ideas developed here.

The first step is to construct the simple unitary colligation having
characteristic function equal to the zero function.

\begin{theorem} \label{T:0charfunc}
    The essentially unique simple unitary colligation
 \begin{equation}  \label{col0charfunc}
	U_{10} = \begin{bmatrix} A_{10} & B_{10} \\ C_{10} & D_{10}
\end{bmatrix} \colon \begin{bmatrix} \cH_{10} \\ \widetilde \Delta
\end{bmatrix} \to \begin{bmatrix}\cH_{10} \\ \widetilde \Delta_{*}
\end{bmatrix}
\end{equation}
 having  characteristic function equal to the zero function
 $$
\omega_{10}(\lambda) = D_{0} + \lambda C_{0} (I - \lambda
 A_{0})^{-1} B_{0} \equiv 0 \colon \widetilde \Delta \to \widetilde
\Delta_{*}
$$
is constructed as follows:  take
\begin{align}
& \cH_{10} = \begin{bmatrix} \ell^{2}_{\widetilde
\Delta_{*}}({\mathbb Z}_{-}) \notag \\
 \ell^{2}_{\widetilde \Delta}({\mathbb Z}_{+}) \end{bmatrix}, \quad
 A_{10} = \begin{bmatrix} J_{-}^{*} & 0 \\ 0 & J_{+}^{*}
\end{bmatrix}, \\
 & B_{10} = \begin{bmatrix}  i_{\widetilde \Delta}^{(-1)} \\ 0
\end{bmatrix},
\quad
  C_{10} = \begin{bmatrix} 0 & i^{(0)*}_{\widetilde \Delta_{*}}
\end{bmatrix}, \quad D_{10} = 0,
 \label{col0charfunc'}
\end{align}
 where in general
 $J_{-}\colon ( \cdots, x(-2), x(-1)) \mapsto (\cdots, x(-3), x(-2))$
 is the compressed forward shift on $\ell^{2}_{\cX}({\mathbb Z}_{-})$,
 $J_{+} \colon (x(0), x(1), x(2), \dots) \mapsto (0, x(0),x(1),
\dots)$
 is the forward shift on $\ell^{2}_{\cX}({\mathbb Z}_{+})$ (with
coefficient space
 $\cX$ clear from the context), where $i_{\widetilde \Delta}^{(-1)}
\colon x \mapsto
(\dots, 0, x)$ is the natural injection of $\widetilde \Delta$ into
the subspace of elements of $\ell^{2}_{\widetilde \Delta}({\mathbb
Z}_{-})$ supported
on the singleton $\{-1\}$, and $i^{(0)}_{\widetilde \Delta_{*}}
\colon x \mapsto (x,0,0, \dots)$ is the natural injection of
$\widetilde \Delta_{*}$ into the subspace of elements of
$\ell^{2}_{\widetilde \Delta_{*}}({\mathbb Z}_{+})$ supported on the
singleton $\{0\}$.
\end{theorem}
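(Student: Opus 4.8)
The plan is to verify by direct computation that the explicitly constructed colligation $U_{10}$ of \eqref{col0charfunc'} is unitary, simple, and has characteristic function $\equiv 0$, and then to show that these three properties pin $U_{10}$ down up to unitary equivalence fixing $\widetilde\Delta$ and $\widetilde\Delta_{*}$; this last uniqueness step is where the real work lies. For unitarity I would first record the elementary shift relations: $J_{-}^{*}$ and $J_{+}$ are isometries, $J_{-}$ and $J_{+}^{*}$ are coisometries, $I-J_{-}^{*}J_{-}=P_{-1}$ is the projection onto the $\{-1\}$-coordinate, and $I-J_{+}J_{+}^{*}=P_{0}$ is the projection onto the $\{0\}$-coordinate. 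Since $\operatorname{im}B_{10}$ is exactly the $\{-1\}$-coordinate subspace of the first summand and $\operatorname{im}C_{10}^{*}$ is exactly the $\{0\}$-coordinate subspace of the second summand, the block form of $A_{10}$ gives $A_{10}A_{10}^{*}+B_{10}B_{10}^{*}=\operatorname{diag}(J_{-}^{*}J_{-}+P_{-1},\,I)=I$ and $A_{10}^{*}A_{10}+C_{10}^{*}C_{10}=\operatorname{diag}(I,\,J_{+}J_{+}^{*}+P_{0})=I$, while $A_{10}^{*}B_{10}=0$ and $C_{10}A_{10}^{*}=0$ because $J_{-}$ annihilates the $\{-1\}$-coordinate and the $\{0\}$-coordinate of $J_{+}(\cdot)$ is zero. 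Together with $D_{10}=0$ these are precisely the four colligation identities, so $U_{10}$ is unitary.

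The characteristic function is then immediate: $A_{10}$ is block diagonal, so $A_{10}^{n}B_{10}$ stays in the first summand for every $n\ge0$, whereas $C_{10}$ reads only the second summand; hence $C_{10}A_{10}^{n}B_{10}=0$ and $\omega_{10}(\lambda)=D_{10}+\lambda\sum_{n\ge0}\lambda^{n}C_{10}A_{10}^{n}B_{10}\equiv0$. Simplicity follows by iterating: forward powers of $A_{10}$ carry $\operatorname{im}B_{10}$ onto all coordinate subspaces of $\ell^{2}_{\widetilde\Delta}(\mathbb Z_{-})$ and forward powers of $A_{10}^{*}$ carry $\operatorname{im}C_{10}^{*}$ onto all coordinate subspaces of $\ell^{2}_{\widetilde\Delta_{*}}(\mathbb Z_{+})$, so the smallest reducing subspace for $A_{10}$ containing $\operatorname{im}B_{10}+\operatorname{im}C_{10}^{*}$ is all of $\cH_{10}$.

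For the essential-uniqueness statement, let $U_{1}=\sbm{A_{1} & B_{1}\\ C_{1} & 0}$ be an arbitrary simple unitary colligation with zero characteristic function. From $D_{1}=0$ and unitarity I would extract $B_{1}^{*}B_{1}=I$, $C_{1}C_{1}^{*}=I$, $A_{1}^{*}B_{1}=0$, $C_{1}A_{1}^{*}=0$, $A_{1}^{*}A_{1}=I-C_{1}^{*}C_{1}$ and $A_{1}A_{1}^{*}=I-B_{1}B_{1}^{*}$. The vanishing of the characteristic function, i.e. $C_{1}A_{1}^{n}B_{1}=0$ for all $n\ge0$, then yields $A_{1}^{*}A_{1}^{n}B_{1}=A_{1}^{n-1}B_{1}$ for $n\ge1$, from which $\{A_{1}^{n}B_{1}\}_{n\ge0}$ are seen to be mutually orthogonal isometric copies of $\widetilde\Delta$ spanning a subspace $\mathcal M_{B}$ on which $A_{1}$ acts as a forward shift (and $A_{1}^{*}$ as the backward shift killing the bottom level). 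Dually $\{A_{1}^{*n}C_{1}^{*}\}_{n\ge0}$ span a subspace $\mathcal M_{C}$ carrying the adjoint shift structure, and $C_{1}A_{1}^{n+m}B_{1}=0$ forces $\mathcal M_{B}\perp\mathcal M_{C}$. Both summands reduce $A_{1}$, so simplicity gives $\cH_{1}=\mathcal M_{B}\oplus\mathcal M_{C}$. Matching $A_{1}^{n}B_{1}$ with the injection into the $\{-1-n\}$-coordinate of $\ell^{2}_{\widetilde\Delta}(\mathbb Z_{-})$ and $A_{1}^{*n}C_{1}^{*}$ with the injection into the $\{n\}$-coordinate of $\ell^{2}_{\widetilde\Delta_{*}}(\mathbb Z_{+})$ defines a unitary $\tau\colon\cH_{1}\to\cH_{10}$ intertwining $U_{1}$ with $U_{10}$ and fixing the coefficient spaces.

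I expect the main obstacle to be precisely this last orthogonal-decomposition argument: verifying the orthogonality relations among the orbit vectors and checking that the two shift models glue together to reconstruct $A_{1}$ faithfully. Every other step reduces to bookkeeping with the compressed shifts $J_{\pm}$ and their one-dimensional defects. If a fully self-contained treatment is judged too long, the uniqueness can instead be quoted from the classical fact that a simple unitary colligation is determined up to unitary equivalence by its characteristic function.
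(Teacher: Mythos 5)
Your proposal is correct, and there is nothing in the paper to compare it against: the paper's entire ``proof'' of Theorem \ref{T:0charfunc} is the sentence ``This is a straightforward verification which we leave to the reader,'' so your writeup simply supplies the omitted argument. The verification half (the shift identities $I-J_-^*J_-=P_{-1}$, $I-J_+J_+^*=P_0$, the four unitarity relations, the block-diagonal argument giving $C_{10}A_{10}^nB_{10}=0$, and the orbit computation for simplicity) is exactly the routine check the authors had in mind; note in passing that your computations implicitly correct a typo in \eqref{col0charfunc'}: for $B_{10}$ and $C_{10}$ to be well defined, and to be consistent with the formula \eqref{centralext*} for $\cU_0^*$, the state space must be $\cH_{10}=\ell^2_{\widetilde\Delta}({\mathbb Z}_-)\oplus\ell^2_{\widetilde\Delta_*}({\mathbb Z}_+)$, i.e.\ with the coefficient-space subscripts interchanged relative to the display in the theorem. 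Your uniqueness argument is also sound and is the part the paper does not even sketch: from $D_1=0$ and unitarity one gets $B_1^*B_1=I$, $C_1C_1^*=I$, $A_1^*B_1=0$, $A_1C_1^*=0$, $A_1^*A_1=I-C_1^*C_1$, $A_1A_1^*=I-B_1B_1^*$; combined with $C_1A_1^nB_1=0$ these yield $A_1^*A_1^nB_1=A_1^{n-1}B_1$ and $A_1A_1^{*n}C_1^*=A_1^{*(n-1)}C_1^*$, whence the families $\{A_1^nB_1\}_{n\ge0}$ and $\{A_1^{*n}C_1^*\}_{n\ge0}$ have mutually orthogonal isometric ranges, the closed spans $\mathcal{M}_B$ and $\mathcal{M}_C$ are mutually orthogonal reducing subspaces, and simplicity forces $\cH_1=\mathcal{M}_B\oplus\mathcal{M}_C$, on which the coordinate matching you describe is a unitary intertwiner fixing $\widetilde\Delta$ and $\widetilde\Delta_*$. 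This is precisely the classical fact that the characteristic function is a complete unitary invariant for simple unitary colligations, specialized to the zero function, so your fallback of citing that fact would also be acceptable.
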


\begin{proof}  This is a straightforward verification which we leave
to the reader.
\end{proof}

We have seen in Theorem \ref{T:ext-struct} that the operator
$\cU^{*}$ on $\cK$   extends the isometry $V$ on $\cH_{0}$ having
domain $ \cD \subset \cH_{0}$ and range $\cD_{*} \subset \cH_{0}$ if
and only if $\cU^{*}$ has a representation of the form
$$
 \cU^{*} = \cF_{\ell}(U_{0}, U_{1})
 $$
where $U_{0}$ is the universal colligation given by \eqref{U0-1} or
equivalently by \eqref{U0-2} and \eqref{U0-3} and where $U_{1} \colon
 \sbm{ \cH_{1} \\ \widetilde \Delta } \to \sbm{ \cH_{1} \\ \widetilde
\Delta_{*}}$ is a free-parameter unitary colligation, and, moreover,
 $\cU^{*}$ is a minimal unitary extension of $V$ if and only if
$U_{1}$ is a simple unitary colligation.  We now consider the
particular case where we take $U_{1}$ equal to the simple unitary
 colligation with zero characteristic function $U_{10}$ given as in
Theorem \ref{T:0charfunc}.  We refer to the resulting minimal unitary
extension
$\cU_{0}^{*} : = \cF_{\ell}(U_{0}, U_{10})$ as the {\em central
unitary extension}.  An application of the general formula
\eqref{FBU0U1} then gives
 \begin{align}
	 \cU_{0}^{*} & = \cF_{\ell}(U_{0}, U_{10}) =
\begin{bmatrix} A_{0} + B_{0} D_{10} C_{0}  & B_{0} C_{10}  \\
	 B_{10} C_{0} & A_{10} \end{bmatrix} \notag \\
& = \begin{bmatrix} A_{0} & 0 & i_{\widetilde \Delta_{*}}
 i_{\widetilde \Delta_{*}}^{(0) *} \\
i_{\widetilde \Delta}^{(-1)} i_{\widetilde \Delta}^{*} &
J_{-}^{*} & 0 \\ 0 & 0 & J_{+}^{*} \end{bmatrix} \text{ on }
\cK_{0}:= \begin{bmatrix} \cH_{0} \\ \ell^{2}_{\widetilde
 \Delta}({\mathbb Z}_{-}) \\ \ell^{2}_{\widetilde
\Delta_{*}}({\mathbb Z}_{+}) \end{bmatrix}.
 \label{centralext*}
 \end{align}
with adjoint given by
 \begin{equation}  \label{centralext}
 \cU_{0} = \begin{bmatrix} A_{0}^{*} & i_{\widetilde \Delta}
 i_{\widetilde \Delta}^{(-1)*} & 0 \\
 0 & J_{-} &  0 \\ i_{\widetilde \Delta_{*}}^{(0)} i_{\widetilde
\Delta_{*}}^{*} & 0 & J_{+} \end{bmatrix}.
\end{equation}

To analyze the finer structure of the universal extension
 $(\cU_{0}, \cK_{0})$ given by \eqref{centralext}, let us
 define embedding operators
 $$
 i_{\widetilde \Delta,0} \colon \widetilde \Delta \to
 \cK_{0},\quad i_{\widetilde \Delta_{*},0} \colon \widetilde
\Delta_{*} \to
\cK_{0}, \quad i_{\cK''_{-},0} \colon \cK''_{-} \to \cK_{0}, \quad
i_{\cK'_{+},0} \colon \cK'_{+} \to \cK_{0}
$$
 by
$$
 i_{\widetilde \Delta,0} = \begin{bmatrix} 0 \\ i_{\widetilde
\Delta}^{(-1)} \\ 0 \end{bmatrix}, \quad
 i_{\widetilde \Delta_{*},0} = \begin{bmatrix} 0 \\ 0 \\
 i_{\widetilde \Delta_{*}}^{(0)} \end{bmatrix}, \quad
i_{\cK''_{-},0} = \begin{bmatrix} i_{\cK''_{-} \to \cH_{0}} \\
 0 \\ 0 \end{bmatrix}, \quad i_{\cK'_{+},0} = \begin{bmatrix}
 i_{\cK'_{+} \to \cH_{0}} \\ 0 \\ 0 \end{bmatrix}.
$$
Then the collection
 \begin{equation}  \label{S0}
{\mathfrak S}_{0} = ( \cU_{0}, \quad \begin{bmatrix} i_{\widetilde
\Delta,0} &
i_{\widetilde \Delta_{*},0} &
i_{\cK''_{-},0}  & i_{\cK'_{+},0} \end{bmatrix}; \quad
\cK_{0}, \quad \widetilde \Delta \oplus \widetilde \Delta_{*} \oplus
\cK''_{-} \oplus \cK'_{+} )
\end{equation}
is a scattering system in the sense of Section \ref{S:systems}
(see \eqref{scatsys}).  Moreover, the operators $i_{\widetilde
\Delta,0}$,
$i_{\widetilde \Delta_{*},0}$, $i_{\cK''_{-},0}$, $i_{\cK'_{+},0}$
have unique respective isometric extensions
$$
\vec i_{\widetilde \Delta,0} \colon \ell^{2}_{\widetilde
\Delta}({\mathbb Z}) \to \cK_{0}, \quad
\vec i_{\widetilde \Delta_{*},0} \colon \ell^{2}_{\widetilde
\Delta_{*}}({\mathbb Z})
\to \cK_{0}, \quad
i_{\cK'',0}  \colon \cK'' \to \cK_{0}, \quad
i_{\cK',0} \colon \cK' \to \cK_{0}
$$
which satisfy the respective intertwining conditions
$$
\vec i_{\widetilde \Delta,0} J = \cU_{0} \vec i_{\widetilde
\Delta,0},	\quad
\vec i_{\widetilde \Delta_{*},0} J = \cU_{0} \vec i_{\widetilde
\Delta_{*},0}, \quad i_{\cK'',0} \cU'' = \cU_{0} i_{\cK'',0}, \quad
i_{\cK',0} \cU' = \cU_{0} i_{\cK',0}
$$
where here we set $J$ equal to the bilateral shift operator on any
space of the form $\ell^{2}_{\cX}({\mathbb Z})$ (the coefficient
space $\cX$ determined by the context).
Then the collection
\begin{equation}  \label{SAA0}
{\mathfrak S}_{AA,0} = (\cU_{0}, \quad  \vec i_{\widetilde\Delta,0},
\quad
\vec i_{\widetilde \Delta_{*},0}, \quad    i_{\cK'',0}, \quad
i_{\cK',0}; \quad \cK_{0} )
\end{equation}
can be viewed as a {\em four-fold  AA-unitary coupling} of the four
unitary operators
$$
(J, \ell^{2}_{\widetilde \Delta}({\mathbb Z})), \quad
(J, \ell^{2}_{\widetilde \Delta_{*}}({\mathbb Z})), \quad (\cU'',
\cK''),
\quad (\cU', \cK')
$$
which has certain additional properties.  The next theorem
identifies some of these additional properties.

\begin{theorem}  \label{T:scatsys-properties}
The scattering system \eqref{S0} and its
 extension to the four-fold AA-unitary coupling \eqref{SAA0}
associated with the universal extension \eqref{centralext}
$\cU_{0}$ for a  Lifting Problem have the following
properties:
\begin{enumerate}
	\item The density conditions
	\begin{equation}  \label{10.6}
		 \im i_{\cK',0} + \im i_{\cK'',0} \text{ is dense in
		 }\cK_{0},
 \end{equation}
 \begin{align}
&  \overline{\operatorname{span}} \{ \im i_{\cK',0},\, \im
  i_{\cK''_{-},0} \} = \cK_{0} \ominus \vec
  i_{\widetilde \Delta_{*},0} (\ell^{2}_{\widetilde
  \Delta_{*}}({\mathbb Z}_{+})),  \notag \\
 & \overline{\operatorname{span}} \{ \im i_{\cK'_{+},0},\, \im
     i_{\cK'',0} \}  = \cK_{0} \ominus \vec
  i_{\widetilde \Delta,0} (\ell^{2}_{\widetilde \Delta}({\mathbb
Z}_{-})),
 \label{MFM-ortho}
\end{align}
and
 \begin{align}
& \overline{\operatorname{span}} \{ \im i_{\cK',0},\, \im
i_{\cK''_{-},0} \} \cap
 \overline{\operatorname{span}} \{ \im i_{\cK'_{+},0},\, \im
i_{\cK'',0} \}
= \overline{\operatorname{span}} \{ \im i_{\cK'_{+},0}, \, \im
i_{\cK''_{-},0} \}
\label{MFM-ortho'}
 \end{align}
 hold.
 \item The orthogonality conditions
\begin{align}
 &  \vec i_{\widetilde \Delta_{*},0} (\ell^{2}_{\widetilde
 \Delta_{*}}({\mathbb Z}_{+})) \perp \im i_{\cK''_{-},0},
\quad
 \vec i_{\widetilde \Delta,0}(\ell^{2}_{\widetilde
 \Delta}({\mathbb Z}_{-})) \perp \im i_{\cK'_{+},0},
 \notag \\
	 & \vec i_{\widetilde \Delta,0}(\ell^{2}_{\widetilde
\Delta}({\mathbb Z}_{-})) \perp \vec i_{\widetilde
\Delta_{*},0}(\ell^{2}_{\widetilde \Delta_{*}}({\mathbb Z}_{+}))
\label{analytic}
\end{align}
and
\begin{equation}  \label{orthogonal}
    \vec i_{\widetilde \Delta,0}(\ell^{2}_{\widetilde
     \Delta}({\mathbb Z}_{-})) \perp \im i_{\cK''_{-},0}, \quad
\vec i_{\widetilde \Delta_{*},0}(\ell^{2}_{\widetilde
\Delta_{*}}({\mathbb Z}_{+})) \perp \im i_{\cK'_{+},0}
\end{equation}
 hold.
\item The subspace identities
 \begin{equation}  \label{10.7}
 \cU_{0}^{*}  \im i_{\widetilde \Delta_{*},0}
=i_{\cH_{0},0} \Delta_{*}, \quad
 \cU_{0} \im i_{\widetilde \Delta, 0}  =i_{\cH_{0}, 0} \Delta
\end{equation}
hold
\end{enumerate}
\end{theorem}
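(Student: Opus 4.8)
The plan is to read everything off the explicit block form \eqref{centralext} of $\cU_0$ acting on the orthogonal decomposition $\cK_0 = i_{\cH_0,0}\cH_0 \oplus \ell^2_{\widetilde\Delta}(\bbZ_-) \oplus \ell^2_{\widetilde\Delta_*}(\bbZ_+)$. First I would verify part (3) directly: applying \eqref{centralext*} to a vector supported at $\{0\}$ in the last summand gives $\cU_0^* i_{\widetilde\Delta_*,0}\xi = i_{\cH_0,0}\,i_{\widetilde\Delta_*}\xi$, and applying \eqref{centralext} to a vector supported at $\{-1\}$ in the middle summand gives $\cU_0\,i_{\widetilde\Delta,0}\eta = i_{\cH_0,0}\,i_{\widetilde\Delta}\eta$; since $i_{\widetilde\Delta_*}$ and $i_{\widetilde\Delta}$ map onto $\Delta_*$ and $\Delta$, these are exactly \eqref{10.7}. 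Iterating with the shift blocks $J_-^*$ and $J_+$ shows $\cU_0^{*k} i_{\widetilde\Delta,0}\eta$ is supported at $\{-1-k\}$ and $\cU_0^{k} i_{\widetilde\Delta_*,0}\xi$ at $\{k\}$ for $k \ge 0$; combined with the intertwining relations defining $\vec i_{\widetilde\Delta,0}$ and $\vec i_{\widetilde\Delta_*,0}$, this identifies
\[
\cW_- := \vec i_{\widetilde\Delta,0}\big(\ell^2_{\widetilde\Delta}(\bbZ_-)\big), \qquad \cW_+ := \vec i_{\widetilde\Delta_*,0}\big(\ell^2_{\widetilde\Delta_*}(\bbZ_+)\big)
\]
with the middle and last summands of $\cK_0$, respectively.

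Granting these identifications, part (2) is immediate. Both half-embeddings $i_{\cK''_-,0}$ and $i_{\cK'_+,0}$ take values in the first summand $i_{\cH_0,0}\cH_0$, which is orthogonal to $\cW_-$ and to $\cW_+$; and $\cW_- \perp \cW_+$ is built into the decomposition. These three facts give all of \eqref{analytic} and \eqref{orthogonal} at once, with no further work.

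For part (1) I would first establish the reducing containments $\im i_{\cK',0} \subseteq i_{\cH_0,0}\cH_0 \oplus \cW_-$ and $\im i_{\cK'',0} \subseteq i_{\cH_0,0}\cH_0 \oplus \cW_+$. From \eqref{centralext} one checks $\cU_0 i_{\cK'_+,0}k'_+ = i_{\cK'_+,0}\cU' k'_+$ (using $\cU'\cK'_+ \subseteq \cK'_+$), so nonnegative powers of $\cU_0$ keep $\im i_{\cK'_+,0}$ inside $i_{\cH_0,0}\cH_0$, while $i_{\cH_0,0}\cH_0 \oplus \cW_-$ is $\cU_0^*$-invariant by inspection of \eqref{centralext*}, so negative powers stay inside it; $*$-cyclicity of $\cK'_+$ then gives the first containment, and the second is symmetric (using $\cU''^*\cK''_- \subseteq \cK''_-$). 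The density \eqref{10.6} is the minimality of the AA-coupling \eqref{SAA0}: $U_{10}$ is a simple unitary colligation (Theorem \ref{T:0charfunc}), hence $\cU_0^*$ is a minimal unitary extension of $V$ (Theorem \ref{T:ext-struct}), and Theorem \ref{T:AAcoupling-Unitext} delivers minimality of the coupling.

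It then remains to prove the first identity in \eqref{MFM-ortho}; the second is symmetric and \eqref{MFM-ortho'} follows by intersecting. Writing $\cR = \overline{\operatorname{span}}\{\im i_{\cK',0}, \im i_{\cK''_-,0}\}$, the inclusion $\cR \subseteq \cK_0 \ominus \cW_+ = i_{\cH_0,0}\cH_0 \oplus \cW_-$ is the containment above. For the reverse inclusion I would note $i_{\cH_0,0}\cH_0 = \overline{\operatorname{span}}\{\im i_{\cK'_+,0}, \im i_{\cK''_-,0}\} \subseteq \cR$, so any $w \in (\cK_0 \ominus \cW_+) \ominus \cR$ must lie in $\cW_-$; but $\cW_- \perp i_{\cH_0,0}\cH_0 \oplus \cW_+ \supseteq \im i_{\cK'',0}$ while $w \perp \cR \supseteq \im i_{\cK',0}$, so $w \perp \overline{\operatorname{span}}\{\im i_{\cK',0}, \im i_{\cK'',0}\} = \cK_0$ by \eqref{10.6}, whence $w = 0$. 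This last step, converting the global minimality \eqref{10.6} into the finer density statement by playing $\im i_{\cK'',0} \subseteq \cK_0 \ominus \cW_-$ against $w \in \cW_-$, is the only real obstacle; everything else is bookkeeping on the block form of $\cU_0$. Finally \eqref{MFM-ortho'} reads $(\cK_0 \ominus \cW_+) \cap (\cK_0 \ominus \cW_-) = i_{\cH_0,0}\cH_0 = \overline{\operatorname{span}}\{\im i_{\cK'_+,0}, \im i_{\cK''_-,0}\}$, which is exactly the asserted triple identity.
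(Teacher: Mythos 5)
Your proposal is correct and follows essentially the same route as the paper: every claim is read off the explicit three-fold orthogonal decomposition $\cK_{0} = \vec i_{\widetilde \Delta,0}(\ell^{2}_{\widetilde\Delta}({\mathbb Z}_{-})) \oplus \im i_{\cH_{0},0} \oplus \vec i_{\widetilde \Delta_{*},0}(\ell^{2}_{\widetilde\Delta_{*}}({\mathbb Z}_{+}))$ coming from the block form \eqref{centralext*}, with \eqref{10.6} supplied by simplicity of $U_{10}$ via Theorems \ref{T:ext-struct} and \ref{T:AAcoupling-Unitext}, and \eqref{10.7} and the orthogonality conditions verified by direct inspection. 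The only (harmless) deviation is in \eqref{MFM-ortho}: the paper gets it by identifying the smallest $\cU_{0}$-invariant (resp.\ $\cU_{0}^{*}$-invariant) subspace containing $\im i_{\cH_{0},0}$ in two ways, whereas you prove the containment and then use the already-established density \eqref{10.6} to force equality --- a valid variant, since \eqref{10.6} is obtained independently.
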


\begin{proof}
For simplicity let us use the bold notation
$$
\bcH_{0} = \im i_{\cH_{0},0}
$$
to indicate the subspace $\cH_{0}$ when viewed as a
subspace of $\cK_{0}$.

Property \eqref{10.6} is a consequence of the fact that the
universal free-parameter unitary colligation $U_{10}$ given by
 \eqref{col0charfunc} and \eqref{col0charfunc'} is simple and
 hence (by Theorem \ref{T:ext-struct}) the unitary operator
$\cU^{*} = \cF_{\ell}(U_{0}, U_{10})$ is a minimal unitary
extension of $V$.

To check conditions \eqref{MFM-ortho}, we use the orthogonal
decomposition of $\cK_{0}$ (see \eqref{centralext*})
\begin{equation}  \label{cK0decom}
 \cK_{0} = \vec i_{\widetilde
\Delta,0}(\ell^{2}_{\widetilde\Delta}({\mathbb Z}_{-}))
 \oplus \im  i_{\cH_{0},0} \oplus \vec i_{\widetilde
 \Delta_{*,0}}(\ell^{2}_{\widetilde \Delta_{*}}({\mathbb Z}_{+})).
 \end{equation}
From the formula for $\cU_{0}^{*}$ in \eqref{centralext*}, it
is easily checked that the smallest $\cU_{0}$-invariant
subspace $\bcH_{0+}$ containing $\bcH_{0}$ is
$$
\bcH_{0+} = \bcH_{0} \oplus \vec i_{\widetilde \Delta_{*},0}
(\ell^{2}_{\widetilde \Delta_{*}}({\mathbb Z}_{+}))
=  \cK_{0} \ominus \im \vec i_{\widetilde \Delta,
0}(\ell^{2}_{\widetilde \Delta}({\mathbb Z}_{-})).
$$
On the other hand, by the construction this smallest
$\cU_{0}$-invariant subspace can also be identified as
$\bcH_{0+} = \overline{\textup{span}}\{ \im i_{\cK'',0}, \, \im
i_{\cK'_{+},0}\}$.
Combining these observations gives the first part of
\eqref{MFM-ortho}.
The second part follows similarly by identifying the smallest
$\cU_{0}^{*}$-invariant
subspace of $\bcH_{0-}$ containing $\bcH_{0}$ as
$\bcH_{0-} = \cK_{0} \ominus \vec i_{\widetilde \Delta_{*},0}
(\ell^{2}_{\widetilde \Delta_{*}}({\mathbb Z}_{+}))$ on the one hand
and also as
$\bcH_{0-} = \overline{\textup{span}} \{ \im i_{\cK''_{-},0}, \, \im
i_{\cK',0} \}$ on the other.
To prove \eqref{MFM-ortho'}, note from the above discussion that
\begin{align*}
\bcH_{0+} & = \bcH_{0} \oplus \vec i_{\widetilde \Delta_{*},0}
(\ell^{2}_{\widetilde \Delta_{*}}({\mathbb Z}_{+})), \\
\bcH_{0-} & = \bcH_{0} \oplus \vec i_{\widetilde \Delta,0}
(\ell^{2}_{\widetilde \Delta}({\mathbb Z}_{-})).
 \end{align*}
As $ \vec i_{\widetilde \Delta_{*},0}
(\ell^{2}_{\widetilde \Delta_{*}}({\mathbb  Z}_{+}))$ and
$\vec i_{\widetilde \Delta,0} (\ell^{2}_{\widetilde \Delta}({\mathbb
Z}_{-}))$
are orthogonal to each other, it follows that
$\bcH_{0+} \cap \bcH_{0-} = \bcH_{0}$, i.e., \eqref{MFM-ortho'} holds.

The orthogonality conditions \eqref{analytic} and
\eqref{orthogonal} are clear from \eqref{cK0decom}.
In fact, the orthogonality conditions \eqref{orthogonal}  hold in the
stronger form
\begin{equation}  \label{orthogonal-strong}
 \im \vec i_{\widetilde \Delta_{*},0} \perp \im i_{\cK',0}, \quad
 \im \vec i_{\widetilde \Delta,0} \perp \im i_{\cK'',0}.
\end{equation}
To see this, note that
$\cK'_{+}$ is invariant under $\cU'$ and $i_{\cK'_{+},0} \cU'
= \cU_{0} i_{\cK'_{+},0}$ and hence $\im i_{\cK'_{+},0}$ is
invariant under $\cU_{0}$ and the first of the orthogonality
 conditions \eqref{orthogonal} implies that $\cU_{0}^{*n} \vec
i_{\widetilde \Delta_{*},0}(\ell^{2}_{\widetilde \Delta_{*}}({\mathbb
Z}_{+}))$
is orthogonal to $\im i_{\cK'_{+},0}$. As the subspace
$$
 \cup_{n=0}^{\infty} \cU_{0}^{*n} \vec i_{\widetilde \Delta_{*},0}
 (\ell^{2}_{\widetilde \Delta_{*}}({\mathbb Z}_{+})) =
\cup_{n=0}^{\infty} \vec i_{\widetilde \Delta,0} (J^{*n}
 \ell^{2}_{\widetilde \Delta_{*}}({\mathbb Z}_{+}))
 $$
is dense in $\im \vec i_{\widetilde \Delta_{*},0}$,
we conclude that $\im \vec i_{\widetilde \Delta_{*},0}$ is orthogonal
to
$\im i_{\cK'_{+},0}$.  As $\im \vec i_{\widetilde \Delta_{*},0}$ is
reducing for $\cU_{0}$, we conclude that in fact $\im \vec
i_{\widetilde \Delta_{*},0}$
is orthogonal to the smallest $\cU_{0}$-reducing subspace
containing $\im i_{\cK'_{+},0}$, i.e., to $\im i_{\cK',0}$, and the
first of conditions \eqref{orthogonal-strong} follows.  The second
orthogonality condition in \eqref{orthogonal-strong} follows similarly
 from the observation that $\im i_{\cK''_{-}}$ is invariant under
$\cU^{\prime \prime *}$.

The subspace identities \eqref{10.7} can be read off from the
definitions, in particular, the definition of $\cU_{0}$ \eqref{U0-1}.
\end{proof}

\begin{remark} \label{R:zeros}  {\em One can easily verify that
the orthogonality conditions \eqref{analytic} and
\eqref{orthogonal} can be expressed  in more succinct fashion as
\begin{align}
& i_{\cK''_{-},0}^{*} \cU_{0}^{*n} i_{\widetilde
 \Delta_{*},0} = 0 \text{ for } n \le 0, \label{zero-s2} \\
 & i_{\widetilde \Delta,0}^{*} \cU_{0}^{*n} i_{\cK'_{+},0} = 0
 \text{ for } n < 0, \label{zero-s1} \\
&i_{\widetilde \Delta,0}^{*} \cU_{0}^{*n} i_{\widetilde \Delta_{*},0}
= 0
\text{ for } n<0, \label{zero-s}
\end{align}
and
\begin{align}
 &i_{\widetilde \Delta,0}^{*} \cU_{0}^{*n} i_{\cK''_{-},0} = 0
\text{ for } n \ge 0, \label{zero13pre} \\
& i_{\widetilde \Delta_{*},0}^{*} \cU_{0}^{*n}
i_{\cK'_{+},0} = 0 \text{ for } n < 0.
\label{zero24pre}
\end{align}
Since actually the stronger relations \eqref{orthogonal-strong}
hold, the conditions \eqref{zero13pre} and \eqref{zero24pre} actually
hold for all $n \in {\mathbb Z}$:
\begin{align}
 &i_{\widetilde \Delta,0}^{*} \cU_{0}^{*n} i_{\cK''_{-},0} = 0
 \text{ for all } n \in {\mathbb Z}, \label{zero13} \\
 & i_{\widetilde \Delta_{*},0}^{*} \cU_{0}^{*n}
i_{\cK'_{+},0} = 0 \text{ for all } n \in {\mathbb Z},
	\label{zero24}
 \end{align}
 respectively. }\end{remark}

It is of interest that conversely the properties \eqref{10.6},
\eqref{MFM-ortho},
\eqref{analytic}, \eqref{orthogonal} and \eqref{10.7}  can be used to
characterize the universal extension $\cU_{0}$ associated with
a  Lifting Problem.  We present two versions of such a result.

\begin{theorem} \label{T:converse1}
Suppose that $(\cU'', \cK'')$ and $(\cU', \cK')$
are unitary operators and that $\cK''_{-} \subset \cK''$ and
$\cK'_{+} \subset \cK'$ are $*$-cyclic subspaces with
$\cK''_{-}$ and $\cK'_{+}$ invariant under $\cU^{\prime \prime*}$
and $\cU'$ respectively. Suppose also that $\widetilde \Delta$ and
$\widetilde \Delta_{*}$ are two coefficient Hilbert spaces and
that we are given a scattering system of the form
$$
{\mathfrak S}_{0} = (\cU_{0}, \quad \begin{bmatrix} i_{\widetilde
 \Delta,0} & i_{\widetilde \Delta_{*},0} & i_{\cK''_{-},0} &
i_{\cK'_{+},0} \end{bmatrix}; \quad \cK_{0}, \quad \widetilde \Delta
\oplus
\widetilde \Delta_{*} \oplus \cK''_{-} \oplus \cK'_{+})
$$
 where $ i_{\widetilde \Delta,0}$, $i_{\widetilde \Delta_{*},0}$,
 $i_{\cK''_{-},0}$ and $i_{\cK'_{+},0}$ are isometric embedding
operators of the respective spaces $\widetilde \Delta$, $\widetilde
\Delta_{*}$, $\cK''_{-}$
 and $\cK'_{+}$ into $\cK_{0}$.  We assume also that there is a
four-fold AA-unitary coupling
$$
{\mathfrak S}_{AA,0} = \left(\cU_{0},  \quad \vec
i_{\widetilde \Delta,0},\quad  \vec i_{\widetilde \Delta_{*},0},
 \quad
i_{\cK'',0}, \quad i_{\cK',0}; \quad \cK_{0} \right)
$$
of the four unitary operators
$$ (J, \ell^{2}_{\widetilde \Delta}({\mathbb Z})), \quad
(J, \ell^{2}_{\widetilde \Delta_{*}}({\mathbb Z})),  \quad
(\cU'',\cK''),
\quad (\cU', \cK')
$$
 which extends ${\mathfrak S}_{0}$ in the sense that
\begin{equation}  \label{SAA0extends}
i_{\widetilde \Delta,0} = \vec i_{\widetilde \Delta,0}
\circ i_{\widetilde \Delta}^{(-1)}, \quad
i_{\widetilde \Delta_{*},0} = \vec i_{\widetilde
\Delta_{*},0} \circ i_{\widetilde \Delta_{*}}^{(0)}, \quad
i_{\cK'',0}|_{\cK''_{-}} = i_{\cK''_{-},0}, \quad
i_{\cK',0}|_{\cK'_{+}} = i_{\cK'_{+}}.
\end{equation}
Define subspaces $\cH_{0}$, $\cD$, $\cD_{*}$, $\Delta$ and
$\Delta_{*}$ of $\cK_{0}$ according to
\begin{align}
&  \cH_{0}  = \operatorname{clos} \operatorname{ im } \begin{bmatrix}
i_{\cK''_{-},0} & i_{\cK'_{+},0} \end{bmatrix}, \quad
\cD  = \operatorname{clos} \left( \im i_{\cK''_{-},0} +
i_{\cK'_{+},0}(\cU' \cK'_{+}) \right), \notag \\
& \cD_{*}  = \operatorname{clos} \left(
i_{\cK''_{-},0}(\cU^{\prime \prime *} \cK''_{-}) + \im
 i_{\cK'_{+},0} \right), \quad
\Delta = \cH_{0} \ominus \cD, \quad \Delta_{*} =
	\cH_{0} \ominus \cD_{*}
 \label{subspaces}
\end{align}
	and let $i_{\cH_{0},0} \colon \cH_{0} \to \cK_{0}$ be the
isometric inclusion map.
Suppose also that either one of the the following additional
conditions holds:
\begin{enumerate}
	\item
Conditions \eqref{10.6}, \eqref{analytic}, \eqref{orthogonal}, and
\eqref{10.7} all hold, or
\item Conditions \eqref{MFM-ortho}, \eqref{MFM-ortho'},
\eqref{analytic}, \eqref{orthogonal}, and the following weaker form
of \eqref{10.7}
\begin{equation}  \label{10.7weak}
 \im i_{\widetilde \Delta_{*},0} \perp \im  \cU  i_{\widetilde
\Delta,0}
\end{equation}
	hold.
\end{enumerate}
Then ${\mathfrak  S}_{0}$ and ${\mathfrak S}_{AA,0}$ are equal to the
scattering
 system and the four-fold AA-unitary coupling associated with
 the universal extension $\cU_{0}^{*}$ from some Lifting Problem.
\end{theorem}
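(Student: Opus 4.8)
The plan is to reverse-engineer a Lifting Problem from the given structure and then show that the central (universal) extension it produces reproduces ${\mathfrak S}_0$ and ${\mathfrak S}_{AA,0}$. The operators $(\cU',\cK')$, $(\cU'',\cK'')$ and the subspaces $\cK'_+,\cK''_-$ are already given; the one missing ingredient is the intertwiner $X$. Since $(\cU_0,i_{\cK',0},i_{\cK'',0};\cK_0)$ is an AA-unitary coupling, Theorem \ref{T:AAcoupling} gives that $Y=i_{\cK'',0}^*i_{\cK',0}$ is a contractive intertwiner of $\cU'$ and $\cU''$, and I would set $X:=i_{\cK''_-,0}^*i_{\cK'_+,0}=P_{\cK''_-}Y|_{\cK'_+}$. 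This $X$ is automatically contractive, and the intertwining condition \eqref{Xintertwine} follows from $Y\cU'=\cU''Y$ together with the invariances $\cU'\cK'_+\subset\cK'_+$ and $\cU''^*\cK''_-\subset\cK''_-$ (the discrepancy term $P_{\cK''_-}\cU''(I-P_{\cK''_-})Yk'_+$ vanishes because $\cK''\ominus\cK''_-\perp\cU''^*\cK''_-$). Thus we obtain a genuine Lifting Problem.

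Next I would recognize the extension. The map $i_{\cH_0,0}=\begin{bmatrix}i_{\cK''_-,0}&i_{\cK'_+,0}\end{bmatrix}$ is isometric from $\cH_0$, equipped with the intrinsic inner product \eqref{H-0-inner-product}, into $\cK_0$ precisely because $X$ was defined as above; this is the same computation as in the proof of Theorem \ref{T:AAcoupling-Unitext}. Under this identification the subspaces $\cD,\cD_*,\Delta,\Delta_*$ of \eqref{subspaces} agree with the abstract defect spaces \eqref{defect-spaces}, and using the coupling intertwinings $\cU_0^*i_{\cK',0}=i_{\cK',0}\cU'^*$, $\cU_0^*i_{\cK'',0}=i_{\cK'',0}\cU''^*$ together with the invariances one checks directly that $\cU_0^*|_\cD=V$. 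Hence $(\cU_0^*,i_{\cH_0,0};\cK_0)$ is a unitary extension of the isometry $V$ attached to the Lifting Problem.

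The core of the argument is to show that this extension is the central one, i.e.\ that the free-parameter colligation $U_1$ of Theorem \ref{T:ext-struct} equals $U_{10}$ of Theorem \ref{T:0charfunc}. I would first produce the orthogonal decomposition $\cK_0=\bcH_0\oplus\vec i_{\widetilde\Delta,0}(\ell^2_{\widetilde\Delta}(\bbZ_-))\oplus\vec i_{\widetilde\Delta_*,0}(\ell^2_{\widetilde\Delta_*}(\bbZ_+))$, identifying $\cH_1=\cK_0\ominus\cH_0$ with the two shift spaces. In case (1) this is read off exactly as in Theorem \ref{T:scatsys-properties}: \eqref{analytic} and \eqref{orthogonal} give orthogonality of the three summands, \eqref{10.7} identifies the smallest $\cU_0^{(*)}$-invariant subspaces $\bcH_{0\pm}$ containing $\bcH_0$ as $\bcH_0$ plus a shift summand, and \eqref{10.6} gives $\bcH_{0-}\vee\bcH_{0+}=\cK_0$ and minimality. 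In case (2) the same decomposition and the density \eqref{10.6} follow from \eqref{MFM-ortho}, \eqref{MFM-ortho'} and \eqref{analytic}: the two subspaces in \eqref{MFM-ortho} are $\bcH_{0\mp}$, their orthogonal complements are the two (mutually orthogonal, by \eqref{analytic}) shift spaces so that $\bcH_{0-}\vee\bcH_{0+}=\cK_0$, and \eqref{MFM-ortho'} gives $\bcH_{0-}\cap\bcH_{0+}=\bcH_0$. Once the decomposition and the identities \eqref{10.7} are available, I would set $i_{\widetilde\Delta}=i_{\cH_0,0}^*\cU_0 i_{\widetilde\Delta,0}$ and $i_{\widetilde\Delta_*}=i_{\cH_0,0}^*\cU_0^*i_{\widetilde\Delta_*,0}$ and compute the blocks of $U_1=\begin{bmatrix}i_{\cH_1}^*\\ i_{\widetilde\Delta_*}^*\end{bmatrix}\cU_0^*\begin{bmatrix}i_{\cH_1}&i_{\widetilde\Delta}\end{bmatrix}$: the relations \eqref{10.7} collapse $B_1,C_1$ to the single-coordinate injections $i_{\widetilde\Delta}^{(-1)}$ and $i^{(0)*}_{\widetilde\Delta_*}$, the shift structure gives $A_1=\operatorname{diag}(J_-^*,J_+^*)$, and $D_1=i_{\widetilde\Delta_*,0}^*\cU_0 i_{\widetilde\Delta,0}=0$ by \eqref{10.7weak}. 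This is exactly $U_{10}$, so $\cU_0^*=\cF_\ell(U_0,U_{10})$ is the central extension and ${\mathfrak S}_0$, ${\mathfrak S}_{AA,0}$ are the scattering system and four-fold coupling of the universal extension of the Lifting Problem with data $(X,(\cU',\cK'),(\cU'',\cK''),\cK'_+,\cK''_-)$.

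The step I expect to be the main obstacle is the passage, in case (2), from the weaker hypotheses to the full subspace identities \eqref{10.7} — equivalently, showing that $\cU_0\im i_{\widetilde\Delta,0}$ fills all of $\Delta$ and $\cU_0^*\im i_{\widetilde\Delta_*,0}$ all of $\Delta_*$, i.e.\ that the residual spaces $R_\Delta:=\Delta\ominus\cU_0\im i_{\widetilde\Delta,0}$ and $R_{\Delta_*}:=\Delta_*\ominus\cU_0^*\im i_{\widetilde\Delta_*,0}$ vanish. Because $\vec i_{\widetilde\Delta,0}(\ell^2_{\widetilde\Delta}(\bbZ))$ is $\cU_0$-reducing with negative part equal to $\bcH_{0+}^\perp$, the wandering space of the isometry $\cU_0|_{\bcH_{0+}}$ is exactly $\cU_0\im i_{\widetilde\Delta,0}$, while \eqref{analytic} and \eqref{10.7weak} yield $\cU_0\im i_{\widetilde\Delta,0}\subset\Delta$ and the reciprocal relations $R_\Delta=\cU_0 R_{\Delta_*}$, $R_{\Delta_*}=\cU_0^*R_\Delta$. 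The definitions of the defect spaces give $R_\Delta\perp\im i_{\cK''_-,0}$ and $R_{\Delta_*}\perp\im i_{\cK'_+,0}$ for free; the delicate point is to upgrade these, using the reducing property of $\im i_{\cK',0}$ and $\im i_{\cK'',0}$ together with the full strength of \eqref{MFM-ortho'} and \eqref{orthogonal}, to $R_\Delta\perp\im i_{\cK',0}$ and $R_\Delta\perp\im i_{\cK'',0}$, whence $R_\Delta=0$ by the density \eqref{10.6}. (In finite dimensions $R_\Delta=0$ is automatic, since $\dim\widetilde\Delta=\dim\Delta$ forces the isometric image $\cU_0\im i_{\widetilde\Delta,0}$ to exhaust $\Delta$; the genuine difficulty is purely infinite-dimensional.)
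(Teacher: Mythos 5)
Your construction of the Lifting Problem data ($X:=i_{\cK''_{-},0}^{*}i_{\cK'_{+},0}$, verification of \eqref{Xintertwine}) , your identification of $(\cU_{0}^{*},i_{\cH_{0},0};\cK_{0})$ as a unitary extension of $V$, and your treatment of case (1) all track the paper's proof of version (1); making the identification of the free-parameter colligation with $U_{10}$ explicit is a harmless repackaging of what the paper leaves as ``straightforward to check.'' The genuine gap is in case (2). You correctly isolate the crux --- deriving the full identities \eqref{10.7}, equivalently the vanishing of the residual spaces $R_{\Delta}=\Delta\ominus\cU_{0}\im i_{\widetilde\Delta,0}$ and $R_{\Delta_{*}}=\Delta_{*}\ominus\cU_{0}^{*}\im i_{\widetilde\Delta_{*},0}$, from \eqref{MFM-ortho}, \eqref{MFM-ortho'}, \eqref{analytic}, \eqref{orthogonal} and \eqref{10.7weak} --- but you do not carry it out: you only record the reciprocal relations $R_{\Delta}=\cU_{0}R_{\Delta_{*}}$, $R_{\Delta_{*}}=\cU_{0}^{*}R_{\Delta}$ and the ``free'' orthogonalities, and then label the remaining upgrade ``the delicate point.'' Since this implication is the entire content by which version (2) differs from version (1), what you have written proves only version (1). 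The paper closes this step by a different mechanism: the alternate characterizations \eqref{cDchar}, namely $\bcD=\{h\in\bcH_{0}:\cU_{0}^{*}h\in\bcH_{0}\}$ and $\bcD_{*}=\{h\in\bcH_{0}:\cU_{0}h\in\bcH_{0}\}$; granting these, $\cU_{0}R_{\Delta_{*}}=R_{\Delta}\subset\bcH_{0}$ forces $R_{\Delta_{*}}\subset\bcD_{*}\cap\bDelta_{*}=\{0\}$ at once.

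You should also be aware that the route you sketch --- upgrading $R_{\Delta}\perp\im i_{\cK''_{-},0}$ to $R_{\Delta}\perp\im i_{\cK'',0}$ and $R_{\Delta}\perp\im i_{\cK',0}$ using only the listed hypotheses, the reducing subspaces, and the density \eqref{10.6} --- cannot succeed as stated, because those hypotheses by themselves do not force $R_{\Delta}=0$. Take $\widetilde\Delta=\widetilde\Delta_{*}=\{0\}$, $\cK'=\cK''=L^{2}$, $\cU'=\cU''=M_{t}$, $\cK'_{+}=H^{2}$, $\cK''_{-}=H^{2\perp}$, $\cK_{0}=L^{2}$, $\cU_{0}=M_{t}$, and $i_{\cK',0}=i_{\cK'',0}=\operatorname{id}$ (the minimal coupling of the identity intertwiner $Y=I$, which lifts $X=0$). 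Every condition of case (2) holds: each condition involving the $\widetilde\Delta$'s is vacuous, both sides of \eqref{MFM-ortho} equal $L^{2}$, and \eqref{MFM-ortho'} holds since $\operatorname{clos}(H^{2}+H^{2\perp})=L^{2}$; the density \eqref{10.6} holds as well. Yet $\Delta=\cH_{0}\ominus\operatorname{clos}(H^{2\perp}+tH^{2})={\mathbb C}$, so $R_{\Delta}={\mathbb C}\neq\{0\}$ and \eqref{10.7} fails. (The same configuration violates \eqref{cDchar}, which shows that \eqref{cDchar} is not a consequence of the intertwinings alone --- the justification the paper offers for it --- so the paper's own version-(2) argument is thinnest at exactly the point you left open.) A complete proof of case (2) therefore has to either identify a nondegeneracy assumption, implicit in the notion of four-fold AA-unitary coupling, that excludes such configurations, or establish \eqref{cDchar} from additional structure; your proposal does neither, and with the tools you list the ``delicate point'' is not merely unproved but unprovable.
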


\begin{remark} \label{R:converse1}  {\em From the first version of
Theorem \ref{T:converse1}, we see that if \eqref{10.6},
\eqref{analytic}, \eqref{orthogonal} and \eqref{10.7}
 hold, then also \eqref{orthogonal-strong}, \eqref{MFM-ortho}
and \eqref{MFM-ortho'} hold.
From the second version, we see that if
\eqref{MFM-ortho}, \eqref{MFM-ortho'}, \eqref{analytic},
\eqref{orthogonal}
and \eqref{10.7weak} hold, then also \eqref{orthogonal-strong},
\eqref{10.6} and \eqref{10.7} hold.
}\end{remark}

In the proofs below it is convenient to use the bold notation
$$
\bcH_{0} = \im i_{\cH_{0}}, \, \bcD = i_{\cH_{0},0}(\cD), \,
\bcD_{*} = i_{\cH_{0},0}(\cD_{*}), \, \bDelta = i_{\cH_{0}}
(\Delta), \, \bDelta_{*} = i_{\cH_{0},0}(\Delta_{*})
$$
for the subspaces introduced in \eqref{subspaces} when  viewed as
subspaces
of $\cK_{0}$ rather than of $\cH_{0}$, as well as the additional
simplifications
$$
\bcG_{-} = \vec i_{\widetilde \Delta,0}(\ell^{2}_{\widetilde
\Delta}({\mathbb Z}_{-}))
\subset \cK_{0}, \quad \bcG_{*+} = \vec i_{\widetilde \Delta_{*},0}
(\ell^{2}_{\widetilde \Delta_{*}}({\mathbb Z}_{+}))
\subset \cK_{0}.
$$

\begin{proof}[Proof of version 1:]
The combined effect of the hypotheses \eqref{analytic} and
\eqref{orthogonal} is that the three subspaces
$\bcH_{0}, \bcG_{-},  \bcG_{*+}$ are pairwise orthogonal.  Therefore
the span of these
subspaces $\cK_{00}$ has an orthogonal decomposition
\begin{equation}  \label{cK00}
\cK_{00} = \bcH_{0} \oplus \bcG_{-} \oplus \bcG_{*+}.
\end{equation}
From the definitions we see that $\bcH_{0}$ has a two-fold orthogonal
decomposition as
\begin{equation}   \label{two-fold}
	\bcH_{0} = \bcD \oplus \bDelta = \bcD_{*} \oplus \bDelta_{*}.
\end{equation}
								
Due to the intertwinings
 $$
 \cU_{0} i_{\cK'',0} = i_{\cK'',0} \cU'', \quad \cU_{0} i_{\cK'} =
i_{\cK'} \cU'
$$
 one can see that
\begin{equation}   \label{cK00-1}
	 \cU_{0}^{*} ( \bcD) = \bcD_{*}
\end{equation}
and in fact we have the alternate characterizations of $\bcD$ and
$\bcD_{*}$:
\begin{equation}  \label{cDchar}
 \bcD= \{ h \in \bcH_{0} \colon \cU_{0}^{*} h \in \bcH_{0}\}, \quad
\bcD_{*} = \{ h_{*} \in \bcH_{0} \colon \cU_{0} h_{8} \in \bcH_{0} \}.
\end{equation}
From the hypothesis \eqref{10.7} we know that
\begin{equation}   \label{cK00-2}
 \cU_{0}^{*} \left( \im i_{\widetilde \Delta_{*},0}\right) =
\bDelta_{*}, \quad
\cU_{0}^{*} \bDelta = \im i_{\widetilde \Delta,0}
\end{equation}
and, from the intertwinings $\cU_{0}^{*} \vec
i_{\widetilde\Delta_{*},0}
= \vec i_{\widetilde \Delta_{*},0} J^{*}$ and
 $\cU_{0}^{*} \vec i_{\widetilde \Delta,0} =  \vec i_{\widetilde
\Delta,0}J^{*}$,
 we know that
\begin{equation}   \label{cK00-3}
\bcG_{-} = \cU_{0}^{*} \bcG_{-} \oplus \im \vec i_{\widetilde
\Delta,0}, \quad
\bcG_{*+} = \im \vec i_{\widetilde \Delta_{*},0} \oplus \cU_{0}
\bcG_{*+}.
\end{equation}
From the orthogonal decompositions \eqref{cK00} and
\eqref{two-fold} for $\cK_{0}$ and $\bcH_{0}$ combined with
\eqref{cK00-1}, \eqref{cK00-2} and \eqref{cK00-3} we see that
 $\cK_{00}$ is reducing for $\cU_{0}$.
From hypothesis \eqref{10.6} we conclude that in fact $\cK_{00}
=\cK_{0}$
and the decomposition \eqref{cK00} applies with $\cK_{0}$ in place of
$\cK_{00}$, i.e.,
we have
\begin{equation} \label{cK0decom'}
\cK_{0} = \bcG_{-} \oplus \bcH_{0} \oplus \bcG_{*+}.
\end{equation}
 From \eqref{cK00-1} we see that we may define an isometry $V$ on
 $\cH_{0}$ with domain $\cD$ and range $\cD_{*}$ by
$$
V d = d_{*} \text{ if } \cU_{0}^{*} i_{\cH_{0},0}d =
i_{\cH_{0},0}d_{*} \text{ for } d \in \cD,\, d_{*} \in \cD_{*}.
$$
It is now straightforward to check that necessarily $\cU_{0}^{*}$ is
a universal
extension of the isometry $V$. Furthermore, one can check that $V$
is the isometry constructed from the  Lifting Problem data
$$
X = i_{\cK''_{-},0}^{*} i_{\cK'_{+},0}, \quad (\cU'', \cK''),
 \quad (\cU', \cK'), \quad \cK'_{+} \subset \cK', \quad \cK''_{-}
\subset \cK''.
 $$
This completes the proof of the first version of Theorem
\ref{T:converse1}.
							     \end{proof}

\begin{proof}[Proof of version 2:]  Using hypotheses \eqref{analytic}
and
\eqref{orthogonal} as in the proof of version 1, we form the subspace
$\cK_{00}$ as in  \eqref{cK00}.
If we define $\widetilde \bcH_{0}$ by
$$
\widetilde \bcH_{0} : = \cK_{0} \ominus [ \bcG_{-} \oplus \bcG_{*+}],
$$
then by definition we have
\begin{equation}   \label{K0'}
 \cK_{0} = \widetilde \bcH_{0} \oplus \bcG_{-} \oplus \bcG_{*+}.
\end{equation}
For convenience let us introduce the temporary notation
	\begin{align}
 \bcH_{0-} & = \overline{\textup{span}} \{ \im i_{\cK',0},\, \im
i_{\cK''_{-},0} \},  \label{H0-} \\
\bcH_{0+} & = \overline{\textup{span}} \{ \im i_{\cK'_{+},0},\, \im
i_{\cK'',0} \}. \label{H0+}
\end{align}
Note that $\bcH_{0-}$ is the smallest $\cU_{0}^{*}$-invariant
subspace of $\cK_{0}$
containing $\bcH_{0}$  and that $\bcH_{0+}$ is the smallest
$\cU_0$-invariant subspace of $\cK_{0}$
containing $\bcH_{0}$.  Hypothesis \eqref{MFM-ortho} now takes the
form
\begin{align}
	\cK_{0} & = \bcH_{0-} \oplus \bcG_{*+}, \label{K0-} \\
 \cK_{0} & = \bcH_{0+} \oplus \bcG_{-}.
\label{K0+}
\end{align}
Combining \eqref{K0-} and \eqref{K0+} with \eqref{K0'} gives
$$
 \bcH_{0-}  = \widetilde \bcH_{0} \oplus \bcG_{-}, \quad
\bcH_{0+}  = \widetilde \bcH_{0} \oplus \bcG_{*+}.
$$
Since $\bcG_{-}$ is orthogonal to $\bcG_{*+}$ in $\cK_{0}$,  we then
get
$$
\bcH_{0-} \cap \bcH_{0+} = \widetilde \bcH_{0}.
$$
We may now invoke \eqref{MFM-ortho'} to conclude that $\bcH_{0}
=\widetilde \bcH_{0}$
and hence also $\cK_{00} = \cK_{0}$ and $\cK_{0}$ has the orthogonal
decomposition \eqref{cK00}
(with $\cK_{0}$ in place of $\cK_{00}$), i.e. \eqref{cK0decom'} holds.

From the third condition in \eqref{analytic} combined with
\eqref{10.7weak}, we see that in fact
 $$
\cU_{0}^{*} \im i_{\widetilde \Delta_{*},0} \perp \bcG_{-},
\quad
\cU \im i_{\widetilde \Delta,0} \perp \bcG_{*+}.
$$
But also
$$
\cU^{*} \im i_{\widetilde \Delta_{*},0} \perp \bcG_{*+}, \quad
\cU \im i_{\widetilde \Delta,0} \perp  \bcG_{-}.
$$
Hence we have
$$
\cU^{*} \im i_{\widetilde \Delta_{*},0} \perp \bcG_{-} \oplus
\bcG_{*+}, \quad
\cU \im i_{\widetilde \Delta,0} \perp
\bcG_{-} \oplus   \bcG_{*+}.
$$
From the orthogonal decomposition for $\cK_{0}$ \eqref{cK0decom}, we
conclude that
\begin{equation} \label{conclude1}
 \cU_{0}^{*} \im i_{\widetilde \Delta_{*},0} \subset \bcH_{0},
\quad \cU_{0} \im i_{\widetilde \Delta, 0} \subset \bcH_{0}.
\end{equation}

As in the proof of version (1), we see that $\bcD$ and $\bcD_{*}$ have
the characterizations \eqref{cDchar} and $\bcH_{0}$ has the two
orthogonal decompositions \eqref{two-fold}.  By combining these
observations with the decomposition \eqref{cK00} for $\cK_{0}$ and
the fact that $\cU_{0}$ is unitary on $\cK_{00}$, we see that the
containments \eqref{conclude1} actually force
$$
\cU_{0}^{*} \im i_{\widetilde \Delta_{*},0} = \bDelta_{*},
  \quad \cU_{0} \im i_{\widetilde \Delta, 0} = \bDelta,
$$
i.e., \eqref{10.7} holds.  From \eqref{10.7} combined with the
already proved decomposition \eqref{cK0decom'} for $\cK_{0}$ we see
that \eqref{10.6} holds as well.

It now follows from the already proved version (1) of Theorem
\ref{T:converse1}
that $(\cU_{0}, \cK_{0})$ is the central lift with
associated central scattering system ${\mathfrak S}_{0}$ and
four-fold AA-unitary coupling ${\mathfrak S}_{AA,0}$ coming from a
Lifting Problem as asserted.
	\end{proof}

Theorem \ref{T:symbol-param} gives a parametrization of the set of
all symbols $w_{Y}$ (with respect to a choice of two scale subspaces
$\cG'' \subset \cK''_{-}$ and $\cG' \subset \cK'_{+}$) via a
Redheffer-type linear-fractional-transformation \eqref{symbol-param'}
 $$
w_{Y} = s_{0} + s_{1} (I - \omega s)^{-1} \omega s_{2}
$$
where $\omega \colon \ell_{\widetilde \Delta}({\mathbb Z}_{+}) \to
\ell_{\widetilde \Delta_{*}}({\mathbb Z}_{+})$ is the input-output
map for a free-parameter unitary colligation, and where the Redheffer
coefficient matrix (see \eqref{def-Redheffer})
$$
\begin{bmatrix} s_{0} & s_{2} \\ s_{1} & s \end{bmatrix} \colon
\begin{bmatrix} \cG' \\ \ell_{\widetilde \Delta_{*}}({\mathbb
Z}_{+}) \end{bmatrix} \to \begin{bmatrix}
 \ell_{\cG''}({\mathbb Z}) \\ \ell_{\widetilde \Delta}({\mathbb Z})
\end{bmatrix}
$$
is completely determined from the  Lifting-Problem data.  Note that,
 if elements of the space $\ell_{\cG'' \oplus \widetilde
\Delta}({\mathbb Z})$ are expressed as infinite column vectors,
 the first column of the Redheffer coefficient matrix
$$
 \begin{bmatrix} s_{0} \\ s_{1} \end{bmatrix} \colon \cG' \to
\begin{bmatrix} \ell_{\cG''}({\mathbb Z}) \\
 \ell_{\widetilde \Delta}({\mathbb Z}) \end{bmatrix} =:
\ell_{\cG''
 \oplus \widetilde \Delta}({\mathbb Z})
$$
can be expressed naturally as a column matrix
$$
 \begin{bmatrix} s_{0} \\ s_{1} \end{bmatrix} = \operatorname{col}_{n
\in {\mathbb Z}} \begin{bmatrix} s_{0}(n) \\ s_{1}(n) \end{bmatrix}.
 $$
If we also view elements of $\ell_{\widetilde \Delta_{*}}({\mathbb
Z}_{+})$ as infinite column vectors, then the second column of the
Redheffer coefficient matrix
 $$
 \begin{bmatrix} s_{2} \\ s \end{bmatrix} \colon \ell_{\widetilde
	   \Delta_{*}}({\mathbb Z}_{+}) \to \ell_{\cG'' \oplus \widetilde
   \Delta}({\mathbb Z})
 $$
 can be expressed as an infinite matrix which has Toeplitz structure:
 \begin{equation}  \label{Toeplitz}
\begin{bmatrix} s_{2} \\ s \end{bmatrix}_{n,m} = \begin{bmatrix}
s_{2} \\ s \end{bmatrix}_{n-m,0} = : \begin{bmatrix}
s_{2}(n-m) \\  s(n-m) \end{bmatrix} \text{ for } n \in
{\mathbb Z}, \, m \in {\mathbb Z}_{+}.
 \end{equation}
Let us define the {\em Redheffer coefficient-matrix symbol} to be
simply the operator sequence
 \begin{equation}  \label{Redheffer-symbol}
\left\{ \begin{bmatrix} s_{0}(n) & s_{2}(n) \\ s_{1}(n) & s(n)
\end{bmatrix} \right\}_{n \in {\mathbb Z}}.
 \end{equation}
 The following result shows how the Redheffer coefficient-matrix
symbol can be
 expressed directly in terms of the universal extension $\cU_{0}$.  To
this end we introduce the notation
$$
 i_{\cG',0} \colon \cG' \to \cK_{0}, \quad i_{\cG'',0} \colon \cG''
 \to \cK_{0}
 $$
for the inclusion of $\cG'$ in $\cK_{0}$ obtained as the composition
$i_{\cG',0} = i_{\cH_{0},0} i_{\cG'}$ of the inclusion of $\cG'$
in $\cH_{0}$ followed by the inclusion of $\cH_{0}$ in $\cK_{0}$,
 and similarly $i_{\cG'',0} = i_{\cH_{0},0} i_{\cG''}$.

\begin{theorem}  \label{T:Redheffer-central}
 The Redheffer coefficient-matrix symbol  $\left\{\sbm{ s_{0}(n) &
s_{2}(n) \\
s_{1}(n) & s(n) } \right\}_{n \in {\mathbb Z}}$
for a  Lifting Problem can be
 recovered directly from the central extension $\cU_{0}$ (see
\eqref{U0-1} or
\eqref{U0-2} and \eqref{U0-3}) according to the formula
\begin{equation}  \label{Redheffer-central}
 \begin{bmatrix}
s_{0}(n) &  s_{2}(n) \\ s_{1}(n) &  s(n)
 \end{bmatrix} = \begin{bmatrix} i_{\cG'',0}^{*} \\
 i_{\widetilde \Delta,0}^{*} \cU_{0}^{*} \end{bmatrix} \cU_{0}^{*n}
	 \begin{bmatrix} i_{\cG',0} & i_{\widetilde
  \Delta_{*},0} \end{bmatrix}.
 \end{equation}
 Moreover,
 \begin{equation}  \label{zeros}
 s_{2}(m)=0 \text{ for } m \le 0, \quad   s_{1}(m) = 0
  \text{ and } s(m)
= 0 \text{ for } m < 0,
\end{equation}
and also
\begin{equation}  \label{zeros'}
s(0) = 0.
\end{equation}
\end{theorem}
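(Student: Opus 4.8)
The plan is to compute the four block entries of the right-hand side of \eqref{Redheffer-central} directly from the explicit block form of the central extension, and to match them against the definitions \eqref{def-Redheffer-param}, \eqref{def-Redheffer} of the Redheffer symbol in terms of the augmented input-output maps of $U_{0}$. Recall from \eqref{centralext*} that, in the coordinates $\cK_{0} = \cH_{0} \oplus \ell^{2}_{\widetilde \Delta}({\mathbb Z}_{-}) \oplus \ell^{2}_{\widetilde \Delta_{*}}({\mathbb Z}_{+})$,
\[
\cU_{0}^{*}(h, g, g_{*}) = \bigl( A_{0} h + B_{0}\, g_{*}(0),\; i_{\widetilde \Delta}^{(-1)} C_{0} h + J_{-}^{*} g,\; J_{+}^{*} g_{*}\bigr),
\]
where I have used $C_{0} = i_{\widetilde \Delta}^{*}$ and $B_{0} = i_{\widetilde \Delta_{*}}$ from \eqref{U0-3}; moreover $i_{\cG',0}$ and $i_{\cG'',0}$ embed into the $\cH_{0}$-summand, $i_{\widetilde \Delta_{*},0}$ places a vector at position $0$ of $\ell^{2}_{\widetilde \Delta_{*}}({\mathbb Z}_{+})$, and $i_{\widetilde \Delta,0}^{*}(h,g,g_{*}) = g(-1)$.

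First I would treat nonnegative powers. Feeding $i_{\cG',0}g' = (i_{\cG'}g',0,0)$ into the recursion above and noting that the third component stays zero, one finds $\cU_{0}^{*n} i_{\cG',0} g' = (A_{0}^{n} i_{\cG'}g',\, g_{n},\, 0)$ with $g_{n}(-k) = C_{0} A_{0}^{n-k} i_{\cG'}g'$ for $1 \le k \le n$; similarly $\cU_{0}^{*n} i_{\widetilde \Delta_{*},0} d_{*}$ enters $\cH_{0}$ after one step through $B_{0}$ and thereafter evolves by $A_{0}$, giving $(A_{0}^{n-1} B_{0} d_{*},\, g'_{n-1},\, 0)$ for $n \ge 1$. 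Reading off the compressions yields, for $n \ge 0$, that the $(1,1)$, $(2,1)$, $(1,2)$ and $(2,2)$ entries equal $i_{\cG''}^{*} A_{0}^{n} i_{\cG'}$, $C_{0} A_{0}^{n} i_{\cG'}$, $i_{\cG''}^{*} A_{0}^{n-1} B_{0}$ and $C_{0} A_{0}^{n-1} B_{0}$ respectively (the last two being $0$ at $n=0$). Comparing with the explicit forms \eqref{sys-solve-explicit} of $S_{0}^{+}, S_{1}^{+}, S_{2}^{+}, S^{+}$ and with \eqref{def-Redheffer-param}, \eqref{def-Redheffer}, these are exactly $s_{0}(n), s_{1}(n), s_{2}(n), s(n)$; the strict lower-triangularity of $S_{2}^{+}$ and the vanishing of the $(2,2)$-entry $D_{0} = 0$ of $U_{0}$ simultaneously produce $s_{2}(0) = 0$ and \eqref{zeros'}.

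For $n < 0$ I would argue in two parts. The off-diagonal and $(2,2)$ entries must vanish, matching $s_{2}(n) = 0$ ($n \le 0$) and $s_{1}(n) = s(n) = 0$ ($n < 0$): these are precisely the orthogonality relations \eqref{zero-s2}, \eqref{zero-s1}, \eqref{zero-s} of Remark \ref{R:zeros}, with the boundary power $n = -1$ handled by the mutual orthogonality of the three summands of $\cK_{0}$, and using $\cG' \subset \cK'_{+}$, $\cG'' \subset \cK''_{-}$ to replace $i_{\cK'_{+},0}, i_{\cK''_{-},0}$ by $i_{\cG',0}, i_{\cG'',0}$. For the $(1,1)$ entry I would instead use the adjoint block form \eqref{centralext}: since $\cG' \subset \cK'_{+} \subset \cD_{*}$, the same invariance induction as in \eqref{verify2} shows $A_{0}^{*m} i_{\cG'}g' = i_{\cK'_{+}} \cU^{\prime m} g' \in \cK'_{+} \subset \cD_{*}$, so the bilateral-shift components are never populated and $\cU_{0}^{m} i_{\cG',0} g' = (A_{0}^{*m} i_{\cG'}g', 0, 0)$. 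Hence for $n = -m < 0$ the $(1,1)$ entry is $i_{\cG''}^{*} A_{0}^{*m} i_{\cG'} = s_{0}^{-}(n)$, completing the match with the piecewise definition of $s_{0}$.

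The only delicate point is this bookkeeping that keeps the trajectories inside $\cH_{0}$ (equivalently, inside $\cD$ or $\cD_{*}$) so that the shift summands $\ell^{2}_{\widetilde \Delta}({\mathbb Z}_{-})$ and $\ell^{2}_{\widetilde \Delta_{*}}({\mathbb Z}_{+})$ contribute nothing to the diagonal compressions; this is exactly the $\cD_{*}$-invariance argument already used for \eqref{verify2}, combined with the matching of the Toeplitz index $n-m$ in \eqref{Toeplitz} to the single-input computation at $m = 0$. Once the four entries are identified, the asserted vanishing statements \eqref{zeros} and \eqref{zeros'} are immediate byproducts, so no separate argument is needed for them.
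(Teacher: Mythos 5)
Your proposal is correct, and while it follows the paper's overall organization (split into $n\ge 0$ and $n<0$, reduce via the Toeplitz structure \eqref{Toeplitz} to the single-input column at $m=0$, use the orthogonality identities of Remark \ref{R:zeros} for the vanishing blocks), its core computation for $n\ge 0$ takes a genuinely different route. The paper never computes the orbits in closed form: it shows abstractly that the compressions \eqref{define1} of the orbit of $k=i_{\cH_{0},0}h_{0}+i_{\widetilde\Delta_{*},0}\widetilde\delta_{*}$ satisfy the system equations \eqref{know1} of $U_{0}$, using the identity \eqref{U0-calU0}, and its delicate step is removing the projection $P_{\cH_{0}}$ in \eqref{compute1} by means of the orthogonality conditions \eqref{condition1}--\eqref{condition2}. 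You instead track the full orbit in the three block coordinates of \eqref{centralext*}, which makes the projection issue invisible (the shift summands simply never feed back into $\cH_{0}$) and yields the closed-form entries $s_{0}(n)=i_{\cG''}^{*}A_{0}^{n}i_{\cG'}$, $s_{1}(n)=C_{0}A_{0}^{n}i_{\cG'}$, $s_{2}(n)=i_{\cG''}^{*}A_{0}^{n-1}B_{0}$, $s(n)=C_{0}A_{0}^{n-1}B_{0}$, matched entry-by-entry against \eqref{sys-solve-explicit}; these formulas appear in the paper only later and in frequency-domain form (Remarks \ref{R:param} and \ref{R:AC}). For $n<0$ the two proofs essentially coincide: your $(1,1)$-entry argument is exactly the $\cD_{*}$-invariance induction behind \eqref{verify2}, re-run on the adjoint block form \eqref{centralext}, and the remaining blocks vanish by \eqref{zero-s2}, \eqref{zero-s1}, \eqref{zero-s}; your explicit treatment of the boundary power (where the exponent in the second row becomes $0$ and one must invoke the plain orthogonality of the three summands of $\cK_{0}$ rather than Remark \ref{R:zeros}) is a point the paper glosses over. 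What your computation buys is an elementary, self-contained verification with explicit formulas; what the paper's argument buys is coordinate-freeness, since it uses only \eqref{U0-calU0} plus orthogonality, which is the form of the argument that generalizes (see Remark \ref{R:LPscat}).
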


\begin{proof}
We first check that the formula \eqref{Redheffer-central} is correct
for $n< 0$.  By using the definitions \eqref{def-Redheffer-param}
to unravel formula \eqref{def-Redheffer}, we read off
 that, for $n < 0$,
 $$
 s_{0}(n) = i_{\cG'',0}^{*}\cU^{*n}i_{\cG',0}, \quad
 s_{1}(n) = 0, \quad s_{2}(n) = 0, \quad  s(n)
 = 0.
$$
The first formula matches with the upper left corner of
\eqref{Redheffer-central} for $n<0$. As $\cG'' \subset \cK''_{-}$
and $\cG' \subset \cK'_{+}$ by assumption, the other three blocks
match up for $n<0$ as a consequence of the identities
\eqref{zero-s2}, \eqref{zero-s1} and \eqref{zero-s}.

 We next verify \eqref{Redheffer-central} for $n \ge 0$.  From the
Toeplitz structure \eqref{Toeplitz} we see that the validity of
\eqref{Redheffer-central} for $n \ge 0$ is equivalent to showing
that
\begin{equation}   \label{toshow1}
 \begin{bmatrix} s_{0}^{+} & s_{2}^{+} \\ s_{1}^{+} & s^{+}
\end{bmatrix} \colon \begin{bmatrix} g' \\ i_{\widetilde
 \Delta_{*},0} \widetilde \delta_{*} \end{bmatrix} \mapsto
 \left\{ \begin{bmatrix} i^{*}_{\cG'',0} \\ i^{*}_{\widetilde
\Delta ,0} \cU_{0}^{*n} \end{bmatrix} \cU_{0}^{*}
\begin{bmatrix} i_{\cG',0} & i_{\widetilde \Delta_{*},0} \end{bmatrix}
\begin{bmatrix} g' \\ i_{\widetilde
\Delta_{*},0} \widetilde \delta_{*} \end{bmatrix}
 \right\}_{n \in {\mathbb Z}_{+}}.
\end{equation}
From the definition \eqref{def-Redheffer-param} we have
$$
\begin{bmatrix} s_{0}^{+} & s_{2}^{+} \\ s_{1}^{+} & s^{+}
\end{bmatrix}
	= \begin{bmatrix} \cI_{\cG''} & 0 \\ 0 & I \end{bmatrix}
\begin{bmatrix} S_{0}^{+} & S_{2}^{+} \\ S_{1}^{+} & S^{+}
\end{bmatrix}
\begin{bmatrix} i_{\cG'} & 0 \\ 0 & I \end{bmatrix}.
$$
Combining this with \eqref{toshow1}, we see that it suffices to
show that
 \begin{equation}  \label{toshow2}
\begin{bmatrix} S_{0}^{+} & S_{2}^{+}i_{\widetilde
 \Delta_{*}^{(0)}} \\ S_{1}^{+} & S^{+}
i_{\widetilde \Delta_{*}^{(0)}} \end{bmatrix} \colon
 \begin{bmatrix} h_{0} \\ \widetilde \delta_{*} \end{bmatrix}
\mapsto \left\{
 \begin{bmatrix} i_{\cH_{0},0}^{*} \\ i_{\widetilde
\Delta,0}^{*} \cU_{0}^{*} \end{bmatrix} \cU_{0}^{*n}
 \begin{bmatrix} i_{\cG',0} & i_{\widetilde
 \Delta_{*},0} \end{bmatrix}
 \begin{bmatrix} h_{0} \\ \widetilde \delta_{*} \end{bmatrix}
 \right\}_{n \in {\mathbb Z}_{+}}.
 \end{equation}
 From the definition of
 $\sbm{S_{0}^{+} & S_{2}^{+} \\ S_{1}^{+} & S^{+}}$ as the
forward-time augmented input-output map for the unitary colligation
 $U_{0}$, we know that
$$
 \begin{bmatrix} S_{0}^{+} & S_{2}^{+} i_{\widetilde \Delta_{*}^{(0)}}
 \\ S_{1}^{+} & S^{+} i_{\widetilde \Delta_{*}^{(0)}}
\end{bmatrix} \begin{bmatrix} h_{0} \\ \widetilde \delta_{*}
\end{bmatrix} = \left\{ \begin{bmatrix} h_{0}(n) \\ \widetilde
 \delta(n) \end{bmatrix} \right\}_{n \in {\mathbb Z}_{+}}
 $$
 means that
 \begin{equation}  \label{know1}
 h_{0}(0) = h_{0},  \quad
 \begin{bmatrix} h_{0}(1) \\ \widetilde \delta(0) \end{bmatrix} =
 U_{0} \begin{bmatrix} h_{0} \\ \widetilde \delta_{*}
\end{bmatrix}, \quad
 \begin{bmatrix} h_{0}(n+1) \\ \widetilde \delta(n) \end{bmatrix} =
	 U_{0} \begin{bmatrix} h_{0}(n) \\ 0 \end{bmatrix} \text{ for } n
	 > 0.
 \end{equation}
Given $h_{0} \in \cH_{0}$ and $\widetilde \delta_{*} \in \widetilde
\Delta_{*}$,
 let us define $h_{0}(n) \in \cH_{0}$ and $\delta(n) \in \widetilde
 \Delta$ by
 \begin{equation}   \label{define1}
 \begin{bmatrix} h_{0}(n) \\ \delta(n) \end{bmatrix} =
 \begin{bmatrix} i_{\cH_{0},0}^{*} \\ i_{\widetilde \Delta,0}^{*}
\cU_{0}^{*} \end{bmatrix} \cU_{0}^{*n} k
\text{ where } k = i_{\cH_{0},0} h_{0} + i_{\widetilde \Delta_{*},0}
\widetilde \delta_{*}.
\end{equation}
Then \eqref{Redheffer-central} follows if we can show that
 $\{h_{0}(n), \widetilde \delta(n)\}_{n \in {\mathbb Z}_{+}}$ so
defined satisfies \eqref{know1}.

 The first  equality in \eqref{know1} is immediate from the fact that
 $\operatorname{im} i_{\cH_{0,0}}$ is orthogonal to $\operatorname{im}
i_{\widetilde \Delta_{*},0}$ in $\cK_{0}$.

The second equality in \eqref{know1} is an easy consequence of the
general identity
 \begin{equation}  \label{U0-calU0}
 U_{0} = \begin{bmatrix} i_{\cH_{0},0}^{*} \\ i_{\widetilde
\Delta,0}^{*} \end{bmatrix} \cU_{0}^{*} \begin{bmatrix}
i_{\cH_{0}, 0} & i_{\widetilde \Delta_{*},0} \end{bmatrix},
\end{equation}
connecting $U_{0}$ and $\cU_{0}$.  This identity in turn is
an easy consequence of  the formula \eqref{centralext*} for
$\cU_{0}^{*}$ and is
an analogue of the formula \eqref{free-param-col} connecting
$U_{1}$ and $\cU$ in a more general context.

The third equality in \eqref{know1} can also be seen as a
 consequence of \eqref{U0-calU0} as follows.  For $n>0$ we compute
 \begin{align}
U_{0} \begin{bmatrix}  h_{0}(n) \\ 0 \end{bmatrix}
 & = U_{0} \begin{bmatrix} i_{\cH_{0},0}^{*} \cU_{0}^{*n} k \\ 0
\end{bmatrix} \notag \\
 & = \begin{bmatrix} i_{\cH_{0},0}^{*} \\ i_{\widetilde \Delta,0}^{*}
\end{bmatrix}
 \cU_{0}^{*} i_{\cH_{0},0} i_{\cH_{0},0}^{*} \cU_{0}^{*n} k
\text{ (by \eqref{U0-calU0}) }\notag \\
 & = \begin{bmatrix} i_{\cH_{0},0}^{*} \\ i_{\widetilde \Delta,0}^{*}
\end{bmatrix}
\cU_{0}^{*} P_{\cH_{0}} \cU_{0}^{*n} k
\label{compute1}
\end{align}
where $k = i_{\cH_{0},0} h_{0} + i_{\widetilde \Delta_{*},0}
 \widetilde \delta_{*}$.
 and where $P_{\cH_{0}}$ is the orthogonal projection of $\cK_{0}$
 onto $\operatorname{im} i_{\cH_{0},0}$.  To verify the third
equality in \eqref{know1} it remains only to show that the
projection $P_{\cH_{0}}$ is removable in the last expression in
 \eqref{compute1}.  To this end, use the orthogonal decomposition
$$
 \cK_{0} = \vec i_{\widetilde \Delta,0} (\ell^{2}_{\widetilde
 \Delta}({\mathbb Z}_{-})) \oplus \operatorname{im} i_{\cH_{0},0}
\oplus \vec i_{\widetilde \Delta_{*},0}(\ell^{2}_{\widetilde
\Delta_{*},0}({\mathbb Z}_{+})).
$$
 Note that $\cU_{0}^{*n} k \perp \vec i_{\widetilde
 \Delta_{*},0}(\ell^{2}_{\widetilde \Delta_{*}}({\mathbb Z}_{+}))$,
i.e.,
 \begin{equation}  \label{condition1}
\cU_{0}^{*n} k \in \vec i_{\widetilde \Delta,0}(\ell^{2}_{\widetilde
\Delta}
({\mathbb Z}_{-}))
  \oplus \operatorname{im} i_{\cH_{0},0}
\end{equation}
  for $n > 0$. Moreover it is easily checked
\begin{equation}  \label{condition2}
\cU_{0}^{*} \vec i_{\widetilde \Delta,0}(\ell^{2}_{\widetilde
\Delta}({\mathbb Z}_{-})) \perp \operatorname{im}
	i_{\widetilde \Delta,0} \oplus \operatorname{im} i_{\cH_{0},0}.
 \end{equation}
From conditions \eqref{condition1} and \eqref{condition2} we see
 that indeed the projection $P_{\cH_{0}}$ is removable in
 \eqref{compute1} and the third equation in \eqref{know1} follows as
  required.

 Now that the validity of \eqref{Redheffer-central} is
 established, we see that $s_{2}(0) = 0$ as a consequence of
 \eqref{zero-s2} for the case $n=0$.

It remains to verify that $s(0) =i_{\widetilde \Delta,0}^{*}
 \cU_{0} i_{\widetilde \Delta_{*},0} = 0$, or equivalently,
 $$
\im i_{\widetilde \Delta,0} \perp \cU_{0}^{*} \im i_{\widetilde
 \Delta_{*},0}.
 $$
This can be seen as a direct consequence of the definition of
  $\cU_{0}^{*}$ in \eqref{centralext*}.
 \end{proof}

 \begin{remark} \label{R:LPscat} {\em In the proof of Theorem
 \ref{T:Redheffer-central} it is shown that, given that
 $\cU_{0}$ and $U_{0}$ are related as in \eqref{U0-calU0}, then
\eqref{define1} implies \eqref{know1}.  This observation can be
seen as a special case of the following general result.}
 Given a unitary operator $\cU$ on
  $\cK$, a unitary colligation $U$ of
 the form
 $$
 U = \begin{bmatrix} A & B \\ C & D \end{bmatrix} \colon
 \begin{bmatrix} \cH \\ \cE \end{bmatrix} \to \begin{bmatrix}
  \cH \\ \cE_{*} \end{bmatrix}.
$$
such that
\begin{equation}  \label{U-calU}
U = \begin{bmatrix} i_{\cH}^{*} \\ i_{\cE_{*}}^{*}
\end{bmatrix} \cU^{*} \begin{bmatrix} i_{\cH} & i_{\cE} \end{bmatrix}.
 \end{equation}
 where
 $$
 i_{\cH} \colon \cH \to \cK, \quad i_{\cE} \colon \cE \to \cK, \quad
 i_{\cE_{*}} \colon \cE_{*} \to \cK
 $$
 are isometric embedding operators with
\begin{align*}
 & \im i_{\cH} \perp \im i_{\cE} \text{ so } \begin{bmatrix}
  i_{\cH} & i_{\cE} \end{bmatrix}  \colon \begin{bmatrix} \cH \\
  \cE \end{bmatrix} \to \cK \text{ is isometric,} \\
	 & \im i_{\cH} \perp \im i_{\cE_{*}} \text{ so } \begin{bmatrix}
	   i_{\cH} & i_{\cE_{*}} \end{bmatrix}  \colon \begin{bmatrix} \cH \\
	   \cE_{*} \end{bmatrix} \to \cK \text{ is isometric,}
 \end{align*}
 then,
for any $k \in \cK$, if $(\vec e, \vec h,
	\vec e_{*})$ of $\ell_{\cE}({\mathbb Z}) \times
\ell_{\cH}({\mathbb Z}) \times \ell_{\cE_{*}}({\mathbb	Z})$ is given
by
	\begin{align}
e(n) & = i_{\cE}^{*} \cU^{*n} k, \notag \\
 h(n) & = i_{\cH}^{*} \cU^{*n} k, \notag \\
  e_{*}(n) & = i_{\cE_{*}}^{*} \cU^{*n+1} k,
 \label{calUorbit}
 \end{align}
 then $(\vec e, \vec h, \vec e_{*})$ is a $U$-system
 trajectory, i.e., the system equations
\begin{equation}  \label{syseq}
	\begin{bmatrix} h(n+1) \\ e_{*}(n) \end{bmatrix} = U
   \begin{bmatrix} h(n) \\ e(n) \end{bmatrix}
\end{equation}
hold for all $n \in {\mathbb Z}$.
 {\em Under these assumptions there is no a priori way to
characterize which system trajectories $(\vec e, \vec h,
 \vec e_{*})$ arise from a $k \in \cK$ via formula
 \eqref{calUorbit}.  If we impose the additional
 structure:
\begin{enumerate}
 \item[]
 $\im i_{\cE}$ and $\im
 i_{\cE_{*}}$ are wandering subspaces for $\cU$, so there
 exist uniquely determined isometric embedding operators
 $$
 \vec i_{\cE} \colon \ell^{2}_{\cE}({\mathbb Z}) \to \cK,
 \quad
\vec i_{\cE_{*}} \colon \ell^{2}_{\cE_{*}}({\mathbb Z}) \to
  \cK
 $$
 which extend $i_{\cE}$ and $i_{\cE_{*}}$ in the sense that
 $$
 i_{\cE} = \vec i_{\cE} \circ i_{\cE}^{(0)}, \quad
 i_{\cE_{*}} = \vec i_{\cE_{*}} \circ i_{\cE_{*}}^{(-1)},
$$
  and $\cK$ has the orthogonal decomposition
 \begin{equation}  \label{ortho-decom}
 \cK = \im i_{\cH} \oplus \vec
 i_{\cE_{*}}(\ell^{2}_{\cE_{*}}({\mathbb Z}_{-})) \oplus \vec
 i_{\cE}(\ell^{2}_{\cE}({\mathbb Z}_{+})),
\end{equation}
 \end{enumerate}
 then one can characterize the system trajectories of the
form \eqref{calUorbit} as exactly those of finite-energy
in the sense that}
 \begin{equation}  \label{FE1}
\vec e \in \ell^{2}_{\cE}({\mathbb Z}) \text{ and } \vec
 e_{*} \in \ell^{2}_{\cE_{*}}({\mathbb Z}),
 \end{equation}
	 {\em or, equivalently, in the sense that}
	  \begin{equation}  \label{FE2}
	 \vec e|_{{\mathbb Z}_{+}} \in \ell^{2}_{\cE}({\mathbb
	Z}_{+}) \text{ and } \vec e_{*}|_{{\mathbb Z}_{-}} \in
	 \ell^{2}_{\cE_{*}}({\mathbb Z}_{-}).
	\end{equation}
 With the additional wandering-subspace assumption and
orthogonal-decomposition
 assumption \eqref{ortho-decom} given above in
 place, then the map $k \mapsto (e(n), h(n), e_{*}(n))$ defined by
\eqref{calUorbit} gives a one-to-one correspondence between
  elements $k$ of $\cK$ and finite-energy $U$-system trajectories
$(\vec e, \vec h, \vec e_{*})$. {\em This last statement
 is essentially Lemma 2.3 in \cite{BCU} and is the main
ingredient in the coordinate-free approach in embedding
a unitary colligation into a (discrete-time)
 Lax-Phillips scattering system.  The reader can check
 that the situation in Theorem \ref{T:Redheffer-central}
meets all these assumptions (with $\widetilde \Delta$ in
place of $\cE_{*}$ and $\widetilde \Delta_{*}$ in place of
 $\cE$); the computation in the
 proof of Theorem \ref{T:Redheffer-central} exhibits the $k
 = i_{\cH_{0}} h_{0} + i_{\widetilde \Delta_{*},0}
  \widetilde \delta_{*}$ corresponding to the
 finite-energy system
 trajectory supported on ${\mathbb Z}_{+}$ with initial
  condition  $h_{0}$ and impulse input supported at time $n=0$ equal
 to $\widetilde \delta_{*}$.
 We invite the reader to consult \cite{BSV} for an
extension of these ideas to a several-variable context.
}\end{remark}

\section{The characteristic measure of the universal scattering
system and associated Hellinger-space models} \label{S:charmeas}

In the sequel we assume that the subspaces $\cG' \subset \cK'_{+}$
and $\cG'' \subset \cK''_{-}$ are chosen to be
$$
 \cG' = \cK'_{+}, \quad \cG'' = \cK''_{-}.
$$
Given the scattering system ${\mathfrak S}_{0}$ \eqref{S0} arising
from the central extension $\cU_{0}$ associated with the data set
\begin{equation}  \label{CLdata}
X, \quad (\cU', \cK'), \quad  (\cU'', \cK''), \quad  \cK'_{+}
\subset \cK', \quad \cK''_{-}
\subset \cK''
\end{equation}
for a  Lifting Problem,  following the discussion in
Section \ref{S:systems}  (see \eqref{charmeas}) with a minor
adjustment, we   define the
{\em central characteristic measure} $\widehat \Sigma$  for the
Lifting-Problem data
set \eqref{CLdata} by
\begin{equation}  \label{central-charmeas}
    \widehat \Sigma_{0}(dt) =
    \begin{bmatrix} i_{\widetilde \Delta,0}^{ *} \cU_{0}^{*}\\
	i_{\widetilde \Delta_{*},0}^{ *} \\ i_{\cK''_{-},0}^{*}
	\\ i_{\cK'_{+},0}^{*} \end{bmatrix} E_{\cU_{0}}(dt)
	\begin{bmatrix} \cU_{0} i_{\widetilde \Delta,0} &
	    i_{\widetilde \Delta_{*},0} & i_{\cK''_{-},0} &
	    i_{\cK'_{+},0} \end{bmatrix}
\end{equation}
where $E_{\cU_{0}}(dt)$ is the spectral measure for the unitary
operator $\cU_{0}$.  The next theorem lists some special properties
of the central characteristic measure $\widehat \Sigma_{0}(dt)$.

\begin{theorem}  \label{T:central-charmeas}
    Suppose that $\widehat \Sigma_{0}(dt)$ is the central
    characteristic measure associated with a Lifting-Problem data
    set \eqref{CLdata} as in \eqref{central-charmeas}.  Then the
    following properties hold:
    \begin{enumerate}
	\item $\widehat \Sigma_{0}$ is a positive operator measure
	of the form
	\begin{equation}  \label{Sigma0form}
	\widehat \Sigma_{0} = \begin{bmatrix} mI_{\widetilde
	\Delta} & \widehat s & 0 & \widehat s_{1} \\
	\widehat s^{*} & m I_{\widetilde \Delta_{*}} & \widehat
	s_{2}^{*} & 0 \\ 0 & \widehat s_{2} & \sigma'' & \widehat
	s_{0} \\ \widehat s_{1}^{*} & 0 & \widehat s_{0}^{*} &
	\sigma' \end{bmatrix}
	\end{equation}
	where $m$ is Lebesgue measure on the unit circle ${\mathbb
	T}$.

 \item The Redheffer coefficient-matrix symbol
	$\left\{\sbm{ s_{0}(m) & s_{2}(m) \\ s_{1}(m) &
	s(m) } \right\}_{m \in {\mathbb Z}}$  for the case $\cG' =
	\cK'_{+}$, $\cG'' = \cK''_{-}$ (see Theorem
	\ref{T:symbol-param}) is the moment sequence of the
	corresponding measures $\widehat s_{0}$, $\widehat s_{2}$,
	$\widehat s_{1}$ and $\widehat s$ appearing in $\widehat
	\Sigma_{0}$:
	 \begin{equation}  \label{12.4}
	    \begin{bmatrix} s_{0}(m) & s_{2}(m) \\ s_{1}(m) & s(m)
\end{bmatrix}
		= \begin{bmatrix} i_{\cK''_{-},0}^{*} \\
		i_{\widetilde \Delta,0}^{*} \cU_{0}^{*} \end{bmatrix}  \cU_{0}^{*m}
		\begin{bmatrix} i_{\widetilde \Delta_{*},0}  &
		    i_{\cK'_{+},0} \end{bmatrix} =
		    \int_{{\mathbb T}} t^{-m} \begin{bmatrix}
		    \widehat s_{0} & \widehat s_{2} \\ \widehat
		    s_{1} & \widehat s \end{bmatrix} (dt).
      \end{equation}

 \item
 The measures $\widehat s$, $\widehat s_{2}$,
       $\widehat s_{1}$ are analytic operator-valued measures in
       the sense that
       \begin{equation}  \label{analytic-meas}
	   \int_{{\mathbb T}} t^{-m} \widehat s(dt) = 0 \text{ and }
	   \int_{{\mathbb T}} t^{-m} \widehat s_{1}(dt) = 0 \text{
	   for } m < 0, \quad
	   \int_{{\mathbb T}} t^{-m} \widehat s_{2}(dt) = 0 \text{
	   for } m \le 0
       \end{equation}
      and moreover
       \begin{equation}  \label{analytic-meas'}
	   \int_{{\mathbb T}} \widehat s(dt) = 0.
	   \end{equation}
	\end{enumerate}
 \end{theorem}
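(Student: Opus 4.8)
The plan is to deduce all three statements from the single fact that, by its definition \eqref{central-charmeas}, the measure $\widehat\Sigma_0$ is the characteristic measure of a unitary scattering system whose scale operator is $\Psi_0 = \sbm{\cU_0 i_{\widetilde\Delta,0} & i_{\widetilde\Delta_{*},0} & i_{\cK''_{-},0} & i_{\cK'_{+},0}}$ with coefficient space $\widetilde\Delta\oplus\widetilde\Delta_{*}\oplus\cK''_{-}\oplus\cK'_{+}$, so that $\widehat\Sigma_0 = \Psi_0^* E_{\cU_0}(\cdot)\Psi_0$. Positivity in (1) is then immediate, since $\widehat\Sigma_0$ is of the form \eqref{charmeas}. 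For the block structure I would use that, as in \eqref{char-moment-sequence}, the $(i,j)$ block of $\widehat\Sigma_0$ has $\int_{\mathbb T} t^{-n}(\cdot)(dt) = \psi_i^*\cU_0^{*n}\psi_j$, where I abbreviate $\psi_1 = \cU_0 i_{\widetilde\Delta,0}$, $\psi_2 = i_{\widetilde\Delta_{*},0}$, $\psi_3 = i_{\cK''_{-},0}$, $\psi_4 = i_{\cK'_{+},0}$. Since a positive operator measure on $\mathbb T$ is determined by its moment sequence, identifying each block reduces to evaluating these moments.

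For the diagonal blocks I would argue as follows. The $(1,1)$ block simplifies, via the spectral invariance $\cU_0^* E_{\cU_0}(\cdot)\cU_0 = E_{\cU_0}(\cdot)$, to $i_{\widetilde\Delta,0}^* E_{\cU_0}(\cdot) i_{\widetilde\Delta,0}$; because $\vec i_{\widetilde\Delta,0}$ embeds $\ell^2_{\widetilde\Delta}(\mathbb Z)$ as a reducing subspace on which $\cU_0$ acts as the bilateral shift $J$ (the intertwinings established just before Theorem \ref{T:scatsys-properties}), the single-coordinate compression $i_{\widetilde\Delta,0}^*\cU_0^{*n} i_{\widetilde\Delta,0} = (i_{\widetilde\Delta}^{(-1)})^* J^{*n} i_{\widetilde\Delta}^{(-1)}$ equals $\delta_{n,0}I_{\widetilde\Delta}$, whose generating measure is normalized Lebesgue measure $mI_{\widetilde\Delta}$; the $(2,2)$ block is $mI_{\widetilde\Delta_{*}}$ by the identical computation with $\vec i_{\widetilde\Delta_{*},0}$ and $i_{\widetilde\Delta_{*}}^{(0)}$. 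For the $(3,3)$ and $(4,4)$ blocks I would use $i_{\cK'',0}\cU'' = \cU_0 i_{\cK'',0}$, $i_{\cK',0}\cU' = \cU_0 i_{\cK',0}$ together with $i_{\cK''_{-},0} = i_{\cK'',0}|_{\cK''_{-}}$, $i_{\cK'_{+},0} = i_{\cK',0}|_{\cK'_{+}}$, so the moments collapse to $i_{\cK''_{-}}^*\cU''^{*n} i_{\cK''_{-}}$ and $i_{\cK'_{+}}^*\cU'^{*n} i_{\cK'_{+}}$, which are exactly the moments of $\sigma''$ and $\sigma'$ from \eqref{sigma-primes}.

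The four off-diagonal blocks $(1,3)$, $(3,1)$, $(2,4)$, $(4,2)$ then vanish because all of their moments vanish: the $(1,3)$-moments are $\psi_1^*\cU_0^{*n}\psi_3 = i_{\widetilde\Delta,0}^*\cU_0^{*(n+1)} i_{\cK''_{-},0} = 0$ for all $n$ by \eqref{zero13}, the $(2,4)$-moments are $i_{\widetilde\Delta_{*},0}^*\cU_0^{*n} i_{\cK'_{+},0}=0$ by \eqref{zero24}, and $(3,1)$, $(4,2)$ are their adjoints. The remaining four off-diagonal blocks I would simply name $\widehat s$ (the $(1,2)$ block), $\widehat s_1$ (the $(1,4)$ block), $\widehat s_2$ (the $(3,2)$ block) and $\widehat s_0$ (the $(3,4)$ block); self-adjointness of the positive measure $\widehat\Sigma_0$ puts their adjoints in the symmetric positions, which completes \eqref{Sigma0form} and hence statement (1).

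With (1) in hand, statement (2) is essentially tautological: the moments of $\widehat s_0,\widehat s_2,\widehat s_1,\widehat s$ are $\psi_3^*\cU_0^{*m}\psi_4$, $\psi_3^*\cU_0^{*m}\psi_2$, $\psi_1^*\cU_0^{*m}\psi_4$, $\psi_1^*\cU_0^{*m}\psi_2$, and assembling these into a $2\times2$ array reproduces exactly the right-hand side of \eqref{Redheffer-central} specialized to $\cG'=\cK'_{+}$, $\cG''=\cK''_{-}$, which is \eqref{12.4}. Statement (3) is then an immediate consequence: since $\int_{\mathbb T} t^{-m}\widehat s(dt)=s(m)$, $\int_{\mathbb T} t^{-m}\widehat s_1(dt)=s_1(m)$ and $\int_{\mathbb T} t^{-m}\widehat s_2(dt)=s_2(m)$, the analyticity relations \eqref{analytic-meas} and \eqref{analytic-meas'} are precisely the vanishing relations \eqref{zeros} and \eqref{zeros'} already proved in Theorem \ref{T:Redheffer-central}. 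The only genuinely computational step is the identification of the two Lebesgue-measure diagonal blocks, and the main obstacle there is the careful bookkeeping of the single-coordinate compression of the bilateral shift's spectral measure; this rests on the standard identification of $\ell^2_{\widetilde\Delta}(\mathbb Z)$ with $J$ as the multiplication model on $L^2_{\widetilde\Delta}(\mathbb T,m)$.
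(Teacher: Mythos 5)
Your proof is correct and follows essentially the same route as the paper's: identification of each block of $\widehat\Sigma_0$ through its moment sequence via the spectral theorem, the wandering-subspace property of $\im i_{\widetilde\Delta,0}$ and $\im i_{\widetilde\Delta_{*},0}$ for the Lebesgue diagonal blocks, the strengthened orthogonality relations \eqref{zero13} and \eqref{zero24} for the vanishing $(1,3)$ and $(2,4)$ blocks, and formula \eqref{Redheffer-central} together with \eqref{zeros} and \eqref{zeros'} for statements (2) and (3). The only (harmless) difference is that you spell out the verification of the $(3,3)$ and $(4,4)$ blocks as $\sigma''$ and $\sigma'$ via the intertwining relations, a step the paper leaves implicit.
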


 \begin{proof}
   By the spectral theorem we see that the moment sequence of
$\widehat \Sigma_{0}$ is
   given by
 $$
 \widehat \Sigma_{0,m} = \begin{bmatrix} i_{\widetilde
 \Delta,0}^{*} \cU_{0}^{*}
 \\ i_{\widetilde \Delta_{*},0}^{*} \\ i_{\cK''_{-},0}^{*} \\
 i_{\cK'_{+},0}^{*} \end{bmatrix} \cU_{0}^{*m}
 \begin{bmatrix} \cU_{0} i_{\widetilde \Delta,0}
   & i_{\widetilde \Delta_{*},0} & i_{\cK''_{-},0} &
   i_{\cK'_{+},0}^{*} \end{bmatrix}.
 $$
 In particular,
 $$
 [ \widehat \Sigma_{0,m} ]_{1,1} = i_{\widetilde \Delta,0}^{*}
 \cU_{0}^{*m} i_{\widetilde \Delta,0} = \delta_{m,0} I_{\widetilde
 \Delta}
 $$
 (where $\delta_{m,0}$ is the Kronecker delta symbol equal to $1$
 for $m = 0$ and $0$ for $m \ne 0$) since $\im i_{\widetilde
 \Delta,0}$ is wandering for $\cU_{0}$.  We conclude that the
 $(1,1)$-entry  $[\widehat \Sigma_{0}]_{1,1}$ of $\widehat
 \Sigma_{0}$ is indeed $m I_{\widetilde \Delta}$ where $m$ is
 Lebesgue measure.  Similarly, the $(2,2)$-entry $[\widehat
 \Sigma_{0}]_{2,2}$ is equal to $m I_{\widetilde \Delta_{*}}$.

 From \eqref{Redheffer-central} we read off  that the Redheffer
 coefficient-matrix symbol \eqref{Redheffer-symbol} (for the case
 here where $\cG' = \cK'_{+}$ and $\cG'' = \cK''_{-}$) is given by
 \begin{align*}
     \begin{bmatrix} s_{0}(m) & s_{2}(m) \\ s_{1}(m) & s(m)
     \end{bmatrix} & = \begin{bmatrix} i_{\cK''_{-}}^{*} \\
     i_{\widetilde \Delta,0}^{*}  \cU_{0}^{*}\end{bmatrix}
\cU_{0}^{*m}
     \begin{bmatrix} i_{\cK'_{+}} & i_{\widetilde \Delta_{*},0}
     \end{bmatrix} \\
      & = \begin{bmatrix} \left[ \widehat \Sigma_{0,m}\right]_{3,4} &
     \left[ \widehat \Sigma_{0,m}\right]_{3,2} \\ \left[ \widehat
     \Sigma_{0,m} \right]_{1,4} & \left[ \widehat
     \Sigma_{0,m}\right]_{1,2} \end{bmatrix} \\
     & = \begin{bmatrix} \widehat s_{0,m} & \widehat s_{2,m} \\
     \widehat s_{1,m} & \widehat s_{m} \end{bmatrix} \\
     & =  \int_{{\mathbb T}} t^{-m} \begin{bmatrix}
		    \widehat s_{0} & \widehat s_{2} \\ \widehat
		    s_{1} & \widehat s \end{bmatrix} (dt)
 \end{align*}
 from which \eqref{12.4} follows immediately.
 The analyticity properties \eqref{analytic-meas} and
 \eqref{analytic-meas'} can now be seen as a
 consequence of the analyticity properties \eqref{zeros} and
 \eqref{zeros'} of the
 Redheffer coefficient-matrix symbol, or, equivalently, as a
 consequence of the orthogonality relations \eqref{analytic} and
 the fact that $\cU_{0}^{*} \im i_{\widetilde \Delta_{*},0} \perp
 \im i_{\widetilde \Delta,0}$.

 It remains only to verify that $[\widehat \Sigma_{0}]_{1,3} = 0$
 and $[\widehat \Sigma_{0}]_{2,4} = 0$.  In terms of moments, we
 must verify that
 $$
 i_{\widetilde \Delta,0}^{*} \cU_{0}^{*m+1} i_{\cK''_{-},0} = 0 \text{
 and } i_{\widetilde \Delta_{*},0}^{*} \cU_{0}^{*m} i_{\cK'_{+}} = 0
 \text{ for all } m \in {\mathbb Z}.
 $$
 But these conditions are the strengthened versions \eqref{zero13} and
 \eqref{zero24} of the
 orthogonality relations \eqref{orthogonal} in Theorem
 \ref{T:scatsys-properties}.  This concludes the proof of Theorem
 \ref{T:central-charmeas}.
     \end{proof}

     We now use the central characteristic measure to provide a
     Hellinger space model for the central extension $\cU_{0}$ and
     the associated (slightly adjusted) scattering space
     ${\mathfrak S}_{0}$ and four-fold AA-unitary coupling
     ${\mathfrak S}_{0,AA}$ as follows.  Consider the central
     scattering system with adjusted scale operator (still denoted
     ${\mathfrak S}_{0}$)
     $$
     \left(\cU_{0}, \quad \begin{bmatrix} \cU_{0} i_{\widetilde
     \Delta,0} & i_{\widetilde \Delta_{*},0} &
     i_{\cK''_{-},0} & i_{\cK'_{+},0} \end{bmatrix};
     \quad \cK_{0}, \quad \widetilde \Delta \oplus \widetilde
     \Delta_{*} \oplus \cK''_{-} \oplus \cK'_{+} \right).
     $$
   As $\im \begin{bmatrix} i_{\cK''_{-},0} & i_{\cK'_{+},0}
 \end{bmatrix}$ is $*$-cyclic for $\cU_{0}$, certainly $\im
 \begin{bmatrix} \cU_{0} i_{\widetilde
     \Delta,0} & i_{\widetilde \Delta_{*},0} & i_{\cK''_{-},0} &
     i_{\cK'_{+},0} \end{bmatrix}$ is
     $*$-cyclic for $\cU_{0}$ and the Fourier representation
     operator (see \eqref{Fourierrep})
     $$
      \cF_{0} \colon k_{0} \mapsto
      \begin{bmatrix} i_{\widetilde \Delta,0}^{ *} \cU_{0}^{*}\\
		i_{\widetilde \Delta_{*},0}^{ *} \\ i_{\cK''_{-},0}^{*}
		\\ i_{\cK'_{+},0}^{*} \end{bmatrix} E_{\cU_{0}}(dt)
		k_{0}
      $$
      maps $\cK_{0}$ unitarily onto the Hellinger space
      $\cL^{\widehat \Sigma_{0}}$ (see
      \cite{Kheifets-Dubov} for complete details).  Inside
      $\cL^{\widehat \Sigma_{0}}$ we consider the following
      subspaces:
      \begin{align}
	&  \bcK'':= \cF_{0}(\im i_{\cK'',0}) = \widehat \Sigma_{0}
	  \begin{bmatrix} 0 \\ 0 \\ \sigma^{\prime \prime [-1]}
	      \cL^{\sigma''} \\ 0 \end{bmatrix}, \quad
      \bcK':= \cF_{0}(\im i_{\cK',0}) = \widehat \Sigma_{0}
	      \begin{bmatrix} 0 \\ 0 \\ 0 \\ \sigma^{\prime [-1]}
		  \cL^{\sigma'} \end{bmatrix}, \notag \\
& \bcK''_{-}: = \cF_{0}(\im i_{\cK_{-}'', 0}) = \widehat \Sigma_{0}
\begin{bmatrix} 0 \\ 0 \\ \cK''_{-} \\ 0 \end{bmatrix},  \quad
  \bcK'_{+}: = \cF_{0}(\im \cK'_{+}) = \widehat \Sigma_{0}
  \begin{bmatrix} 0 \\ 0 \\ 0 \\ \cK'_{+} \end{bmatrix}, \notag \\
      & \bcH_{0}: = \cF_{0}(\im i_{\cH_{0},0}) =
      \operatorname{clos} \widehat \Sigma_{0} \begin{bmatrix} 0 \\
      0 \\ \cK''_{-} \\ \cK'_{+} \end{bmatrix}, \quad
 \bcD: = \cF_{0}(i_{\cH_{0},0}(\cD)) = \operatorname{clos}
\widehat \Sigma_{0} \begin{bmatrix} 0 \\ 0 \\ \cK''_{-} \\ \cU'
\cK'_{+} \end{bmatrix}, \notag \\
 &  \bcD_{*}: = \cF_{0}(i_{\cH_{0},0} (\cD_{*})) = \operatorname{clos}
\widehat \Sigma_{0} \begin{bmatrix} 0 \\ 0 \\ \cU^{\prime \prime *}
\cK''_{-} \\
\cK'_{+} \end{bmatrix}, \notag \\
& \widetilde \bDelta_{*}^{(0)}: =\cF_{0}(\im i_{\widetilde
\Delta_{*},0}) = \widehat \Sigma_{0}
\begin{bmatrix} 0 \\ \widetilde \Delta_{*} \\ 0 \\ 0 \end{bmatrix},
    \quad
    \widetilde \bDelta^{(-1)}:  = \cF_{0}(\im i_{\widetilde
    \Delta,0}) = t^{-1} \widehat \Sigma_{0} \begin{bmatrix}
    \widetilde \Delta \\ 0 \\ 0 \\ 0 \end{bmatrix}.
    \label{boldspaces}
      \end{align}
 As a translation of the conditions \eqref{10.7} we see that then we
 also have
 \begin{align}
  \widehat \Sigma_{0} \begin{bmatrix} \widetilde \Delta \\ 0 \\ 0
  \\ 0 \end{bmatrix} & = \textup{clos } \widehat \Sigma_{0}
\begin{bmatrix} 0
  \\ 0 \\ \cK''_{-} \\ \cK'_{+} \end{bmatrix} \ominus \textup{clos }
  \widehat \Sigma_{0} \begin{bmatrix} 0 \\ 0 \\ \cK''_{-} \\
   \cU' \cK'_{+} \end{bmatrix}, \notag \\
   t^{-1} \widehat \Sigma_{0} \begin{bmatrix} 0 \\ \widetilde
  \Delta_{*} \\ 0 \\ 0 \end{bmatrix} & = \textup{clos } \widehat
\Sigma_{0}
  \begin{bmatrix} 0 \\ 0 \\ \cK''_{-} \\ \cK'_{+} \end{bmatrix}
      \ominus \textup{clos } \widehat \Sigma_{0} \begin{bmatrix} 0
      \\ 0 \\ \cU^{\prime \prime *} \cK''_{-} \\ \cK'_{+}
\end{bmatrix}
      \label{10.7'}
 \end{align}
and as a consequence of \eqref{cK0decom} we have
\begin{equation}  \label{cK0decom-func}
    \cL^{\widehat \Sigma_{0}} = \widehat \Sigma_{0} \begin{bmatrix}
    H^{2 \perp}_{\widetilde \Delta} \\ 0 \\ 0 \\ 0 \end{bmatrix}
    \oplus \operatorname{clos} \widehat \Sigma_{0}\begin{bmatrix} 0
\\ 0 \\ \cK''_{-}
    \\ \cK'_{+} \end{bmatrix} \oplus \widehat \Sigma_{0}
    \begin{bmatrix} 0 \\ H^{2}_{\widetilde \Delta_{*}}  \\ 0 \\ 0
	\end{bmatrix}.
\end{equation}
In addition we have the following result which generalizes a result
of Adamjan-Arov-Kre\u{\i}n \cite{AAK68}.

\begin{theorem}  \label{T:Schur-com}  If $\widehat \Sigma_{0}$ as
    in \eqref{central-charmeas} is the central characteristic
    measure for a Lifting Problem, then
    \begin{equation}\label{10.6'}
    \begin{bmatrix} m I_{\widetilde
    \Delta} & \widehat s  \\
	  \widehat s^{\ *} & m I_{\widetilde \Delta_{*}}\end{bmatrix}=
	  \begin{bmatrix}0 &
	  \widehat s_{1}\\
	  \widehat s_{2}^{\ *} & 0 \end{bmatrix}
    \begin{bmatrix}
	  \sigma'' & \widehat s_{0} \\
	  \widehat s_{0}^{\ *} & \sigma '
	 \end{bmatrix}^{[-1]}
	 \begin{bmatrix}
	 0\ & \widehat s_{2}\\
    \widehat s_{1}^{\ *} & 0 \end{bmatrix}
    \end{equation}
    in the sense of measure Schur-complements as in
    \cite{Kheifets-Dubov}.
   \end{theorem}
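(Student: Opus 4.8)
The plan is to recognize the asserted identity \eqref{10.6'} as the vanishing of a \emph{measure Schur complement} and then to deduce that vanishing from the density property \eqref{10.6} via the Hellinger-space decomposition Theorem \ref{T:Hellinger-decom}. Concretely, I would regard the coefficient space of the central characteristic measure \eqref{central-charmeas} as $\cE = \cE_{1} \oplus \cE_{2}$, with the \emph{data block} $\cE_{1} := \cK''_{-} \oplus \cK'_{+}$ listed first and the \emph{shift block} $\cE_{2} := \widetilde \Delta \oplus \widetilde \Delta_{*}$ listed second. After this reordering the $4\times 4$ form \eqref{Sigma0form} of $\widehat \Sigma_{0}$ becomes a $2\times 2$ block measure $\sbm{ \sigma_{11} & \sigma_{12} \\ \sigma_{21} & \sigma_{22}}$ with
\[
\sigma_{11} = \begin{bmatrix} \sigma'' & \widehat s_{0} \\ \widehat s_{0}^{*} & \sigma' \end{bmatrix}, \quad
\sigma_{22} = \begin{bmatrix} m I_{\widetilde \Delta} & \widehat s \\ \widehat s^{*} & m I_{\widetilde \Delta_{*}} \end{bmatrix}, \quad
\sigma_{21} = \begin{bmatrix} 0 & \widehat s_{1} \\ \widehat s_{2}^{*} & 0 \end{bmatrix} = \sigma_{12}^{*}.
\]
A direct comparison then shows that the measure Schur complement $\sigma_{11}^{\perp} = \sigma_{22} - \sigma_{21} \sigma_{11}^{[-1]} \sigma_{12}$ of Theorem \ref{T:Hellinger-decom} is exactly the difference of the two sides of \eqref{10.6'}; hence \eqref{10.6'} is equivalent to $\sigma_{11}^{\perp} = 0$.

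By \eqref{L1=L} in Theorem \ref{T:Hellinger-decom}, the condition $\sigma_{11}^{\perp} = 0$ holds if and only if $\cL^{\widehat \Sigma_{0}}_{1} = \cL^{\widehat \Sigma_{0}}$, where $\cL^{\widehat \Sigma_{0}}_{1}$ is the subspace associated with the data block $\cE_{1}$. So the whole theorem reduces to proving that equality. For this I would pass to the Hellinger model through the unitary Fourier representation $\cF_{0} \colon \cK_{0} \to \cL^{\widehat \Sigma_{0}}$, which is well defined and unitary since $\im \sbm{ i_{\cK''_{-},0} & i_{\cK'_{+},0}}$ is $*$-cyclic for $\cU_{0}$, by Theorem \ref{T:Lsigmamodel}. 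The density hypothesis \eqref{10.6}, namely $\overline{\im i_{\cK',0} + \im i_{\cK'',0}} = \cK_{0}$, transfers under the unitary $\cF_{0}$ to $\overline{\bcK' + \bcK''} = \cL^{\widehat \Sigma_{0}}$, where $\bcK' = \cF_{0}(\im i_{\cK',0})$ and $\bcK'' = \cF_{0}(\im i_{\cK'',0})$ are the model subspaces from \eqref{boldspaces}.

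It then remains to observe that both $\bcK'$ and $\bcK''$ are contained in $\cL^{\widehat \Sigma_{0}}_{1}$. This I would read off directly from the explicit descriptions in \eqref{boldspaces}: the generating densities of $\bcK''$ and $\bcK'$ are supported in the $\cK''_{-}$ and $\cK'_{+}$ slots respectively, i.e.\ in the data block $\cE_{1}$, so these measures have the form $\widehat \Sigma_{0} \sbm{ f \\ 0}$ with $f$ an $\cE_{1}$-valued function and therefore lie in the subspace $\cL^{\widehat \Sigma_{0}}_{1}$ of Theorem \ref{T:Hellinger-decom}(1), which is the $\cL^{\widehat \Sigma_{0}}$-closure of the measures $\widehat \Sigma_{0} \sbm{ I_{\cE_{1}} \\ 0} p$ with $p \in \cE_{1}[t, t^{-1}]$. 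Hence $\overline{\bcK' + \bcK''} \subseteq \cL^{\widehat \Sigma_{0}}_{1} \subseteq \cL^{\widehat \Sigma_{0}}$, and combining with the transferred density gives $\cL^{\widehat \Sigma_{0}}_{1} = \cL^{\widehat \Sigma_{0}}$, as required.

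The main obstacle I anticipate is this last step: making rigorous the containment $\bcK', \bcK'' \subseteq \cL^{\widehat \Sigma_{0}}_{1}$. The subtlety is that $\cL^{\widehat \Sigma_{0}}_{1}$ is characterized in Theorem \ref{T:Hellinger-decom}(1) as the closure of measures generated by \emph{trigonometric polynomials} valued in $\cE_{1}$, whereas the elements of $\bcK'$ and $\bcK''$ arise from general $\cL^{\sigma'}$- and $\cL^{\sigma''}$-densities. Bridging the two requires the density of $\{ \sigma_{11} p \colon p \in \cE_{1}[t, t^{-1}] \}$ in $\cL^{\sigma_{11}}$ together with the isometric-embedding argument used in the proof of Theorem \ref{T:Hellinger-decom}, and one must check norm preservation so that the limiting densities remain inside the closed subspace $\cL^{\widehat \Sigma_{0}}_{1}$ rather than merely in $\cL^{\widehat \Sigma_{0}}$.
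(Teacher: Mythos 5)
Your proof is correct and is essentially the paper's own argument: both identify \eqref{10.6'} with the vanishing of the measure Schur complement of the data block $\sbm{\sigma'' & \widehat s_{0} \\ \widehat s_{0}^{*} & \sigma'}$ inside $\widehat\Sigma_{0}$, transfer the density hypothesis to $\cL^{\widehat\Sigma_{0}}$ through the unitary Fourier representation $\cF_{0}$, and conclude via \eqref{L1=L} of Theorem \ref{T:Hellinger-decom}. The obstacle you flag at the end is not actually there: since $\cK'_{+}$ is $*$-cyclic for $\cU'$ and $\cK''_{-}$ for $\cU''$, the spaces $\bcK'$ and $\bcK''$ are the closed spans of $t^{n}\,\widehat\Sigma_{0}\sbm{0\\0\\0\\ \cK'_{+}}$ and $t^{n}\,\widehat\Sigma_{0}\sbm{0\\0\\ \cK''_{-}\\0}$ over $n\in{\mathbb Z}$, i.e., closures of polynomial-generated elements, so they lie inside the closed subspace $\cL^{\widehat\Sigma_{0}}_{1}$ by the very statement of Theorem \ref{T:Hellinger-decom}(1) --- which is precisely why the paper invokes the $*$-cyclicity of $\im i_{\cK''_{-},0}+\im i_{\cK'_{+},0}$ directly rather than starting from \eqref{10.6}.
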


    \begin{proof}  We are given that $\im i_{\cK''_{-}} + \im
	i_{\cK'_{+}}$ is $*$-cyclic for $\cU_{0}$ in $\cK_{0}$.
      By transforming this condition to the space $\cL^{\widehat
      \Sigma_{0}}$ under the unitary
      transformation $\cF_{0}$, we see that the space $\bcK'_{+} +
      \bcK''_{-}$ is weak-$*$ dense in $\cL^{\widehat \Sigma_{0}}$.
      As a consequence of statement \eqref{L1=L} in Theorem
      \ref{T:Hellinger-decom}, we see that this is equivalent
      to the property that the Schur complement of the block $\sbm{
      \sigma'' & \widehat s_{0} \\ \widehat s_{0}^{*} & \sigma'}$
      in the matrix-measure $\widehat \Sigma_{0}$ is zero, i.e.,
      condition \eqref{10.6'} holds.
\end{proof}

The next result says that, conversely, the properties
\eqref{analytic-meas}, \eqref{analytic-meas'}, \eqref{10.7'} and
\eqref{10.6'} can be used to characterize central characteristic
measures for a Lifting Problem.

\begin{theorem}  \label{T:converse2}  Suppose that we are given
    unitary operators $(\cU'', \cK'')$ and $(\cU', \cK')$ together
    with $*$-cyclic subspaces $\cK''_{-} \subset \cK''$ and
    $\cK'_{+} \subset \cK'$ invariant under $\cU^{\prime \prime *}$
    and $\cU'$ respectively.  Suppose that $\widetilde \Delta_{*}$
    and $\widetilde \Delta$ are two coefficient Hilbert spaces and
    that $\widehat \Sigma_{0}$ is an $\cL(\widetilde \Delta \oplus
    \widetilde \Delta_{*} \oplus \cK''_{-} \oplus \cK'_{+})$-valued
    measure of the form \eqref{Sigma0form} satisfying the following
    properties:
    \begin{enumerate}

	\item The measures $\sigma''$ and $\sigma'$ are given by
	\begin{equation}  \label{sigma-primes'}
	\sigma''(dt) = i_{\cK''_{-} \to \cK''}^{*}E_{\cU''}(dt)
	i_{\cK''_{-} \to \cK''}, \quad
	\sigma'(dt) = i_{\cK'_{+} \to \cK'}^{*} E_{\cU'}(dt)
	i_{\cK'_{+} \to \cK'}
	\end{equation}
	where $i_{\cK'' \to \cK''}$ and $i_{\cK'_{+} \to \cK'}$ are
	the inclusions of $\cK''_{-}$ into $\cK''$ and of
	$\cK'_{+}$ into $\cK'$ respectively, and where $E_{\cU''}$
	and $E_{\cU'}$ are the spectral measures for $\cU''$ and
	$\cU'$ respectively.

	\item The measure Schur-complement condition \eqref{10.6'}
	is satisfied.

	\item The measures $\widehat s$, $\widehat s_{1}$ and
	$\widehat s_{2}$ satisfy the analyticity conditions
	\eqref{analytic-meas}.

	\item Conditions \eqref{10.7'} hold.
  \end{enumerate}
  Then there is a Lifting Problem so that
  $(M_{t}, \cL^{\widehat \Sigma_{0}})$ is the associated central lift,
  the collection
    \begin{equation}  \label{model-scatsys}
	\bfrakS_{0} =
  (M_{t}, \quad  M_{\widehat \Sigma_{0}}; \quad \cL^{\widehat
  \Sigma_{0}}, \quad \widetilde \Delta \oplus \widetilde \Delta_{*}
  \oplus \cK''_{-} \oplus \cK'_{+})
  \end{equation}
  is the associated central scattering system, and
  $$
  \begin{bmatrix} s_{0}(m) & s_{2}(m) \\ s_{1}(m) & s(m)
  \end{bmatrix} : = \int_{{\mathbb T}} t^{-m} \begin{bmatrix}
  \widehat s_{0} & \widehat s_{2} \\ \widehat s_{1} & \widehat s
  \end{bmatrix} (dt)
  $$
  is the associated Redheffer coefficient-matrix symbol.
  Here $M_{t}$ is the operator of multiplication
  by the coordinate function $t$ on $\cL^{\widehat \Sigma_{0}}$ and
  $$
  M_{\widehat \Sigma_{0}} \colon \widetilde \Delta \oplus
  \widetilde \Delta_{*} \oplus \cK''_{-} \oplus \cK'_{+} \to
  \cL^{\widehat \Sigma_{0}}
  $$
  is the model scale operator of multiplication on the left by the
  measure $\widehat \Sigma_{0}$.
  \end{theorem}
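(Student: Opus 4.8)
The plan is to realize everything inside the Hellinger space $\cL^{\widehat\Sigma_0}$ and then reduce to the already-proved coordinate-free characterization, Theorem \ref{T:converse1}. First I would build the candidate central scattering system \eqref{model-scatsys} and its extension to a four-fold AA-unitary coupling directly from $\widehat\Sigma_0$. Property (1), condition \eqref{sigma-primes'}, together with the $*$-cyclicity of $\cK''_-$ and $\cK'_+$ lets me invoke Theorem \ref{T:Lsigmamodel} to identify $\cL^{\sigma''}$ with $(\cU'',\cK'')$ and $\cL^{\sigma'}$ with $(\cU',\cK')$ unitarily, intertwining $M_t$ with $\cU''$ and with $\cU'$; while the form \eqref{Sigma0form} shows the two diagonal corners $[\widehat\Sigma_0]_{11}=mI_{\widetilde\Delta}$ and $[\widehat\Sigma_0]_{22}=mI_{\widetilde\Delta_*}$ are Lebesgue measure, so that $\cL^{mI_{\widetilde\Delta}}\cong\ell^2_{\widetilde\Delta}({\mathbb Z})$ and $\cL^{mI_{\widetilde\Delta_*}}\cong\ell^2_{\widetilde\Delta_*}({\mathbb Z})$ carry the bilateral shift $J$. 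Using Theorem \ref{T:Hellinger-decom}(1) (after permuting the coefficient space so that the relevant block sits in the corner) I define the block-column embeddings $\vec i_{\widetilde\Delta,0}$, $\vec i_{\widetilde\Delta_*,0}$, $i_{\cK'',0}$, $i_{\cK',0}$ of each diagonal-block Hellinger space into $\cL^{\widehat\Sigma_0}$; each is isometric and, since it commutes with multiplication by $t$, intertwines the corresponding unitary ($J$, $J$, $\cU''$, $\cU'$) with $M_t$. The scattering-level embeddings are then recovered by restriction exactly as in \eqref{SAA0extends}, matching the definitions \eqref{boldspaces}.

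Next I would translate the four measure hypotheses into the coordinate-free conditions required by version (1) of Theorem \ref{T:converse1}, namely \eqref{10.6}, \eqref{analytic}, \eqref{orthogonal} and \eqref{10.7}. The vanishing off-diagonal blocks $[\widehat\Sigma_0]_{13}=0$, $[\widehat\Sigma_0]_{24}=0$ in \eqref{Sigma0form}, together with the analyticity conditions \eqref{analytic-meas}, \eqref{analytic-meas'} of Property (3), are, through the moment reformulation recorded in Remark \ref{R:zeros}, precisely the orthogonality relations \eqref{analytic} and \eqref{orthogonal} (indeed their strong form \eqref{orthogonal-strong}). Property (4), condition \eqref{10.7'}, is by the very definition of the spaces \eqref{boldspaces} the verbatim image under $\cF_0$ of the subspace identities \eqref{10.7}. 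Finally, reading the proof of Theorem \ref{T:Schur-com} backwards, Property (2), the Schur-complement condition \eqref{10.6'}, is equivalent via statement \eqref{L1=L} of Theorem \ref{T:Hellinger-decom} to the weak-$*$ density of $\widehat\Sigma_0\sbm{0\\0\\\cK''_-\\\cK'_+}$ in $\cL^{\widehat\Sigma_0}$, i.e. to $*$-cyclicity of $\im i_{\cK''_-,0}+\im i_{\cK'_+,0}$ for $M_t$; since $\im i_{\cK'',0}$ and $\im i_{\cK',0}$ are the smallest $M_t$-reducing subspaces containing $\im i_{\cK''_-,0}$ and $\im i_{\cK'_+,0}$, this yields the density condition \eqref{10.6}.

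With \eqref{10.6}, \eqref{analytic}, \eqref{orthogonal} and \eqref{10.7} established in the model, version (1) of Theorem \ref{T:converse1} applies and identifies $(M_t,\cL^{\widehat\Sigma_0})$, together with the constructed scattering system $\bfrakS_0$ of \eqref{model-scatsys} and its four-fold coupling, as the universal extension of the Lifting Problem with data $X=i_{\cK''_-,0}^* i_{\cK'_+,0}$, $(\cU'',\cK'')$, $(\cU',\cK')$, $\cK'_+\subset\cK'$, $\cK''_-\subset\cK''$. The identification of the moment sequence $\int_{{\mathbb T}}t^{-m}\sbm{\widehat s_0 & \widehat s_2\\\widehat s_1 & \widehat s}(dt)$ with the associated Redheffer coefficient-matrix symbol is then immediate from formula \eqref{Redheffer-central} of Theorem \ref{T:Redheffer-central}, read together with \eqref{12.4} of Theorem \ref{T:central-charmeas}.

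The main obstacle I expect is Step~1: verifying that the block-column maps furnished by Theorem \ref{T:Hellinger-decom}(1) really are simultaneously isometric onto mutually compatible subspaces of $\cL^{\widehat\Sigma_0}$, that they extend to the full unitaries $\cU''$, $\cU'$ and the two bilateral shifts (which uses the $*$-cyclicity of $\cK''_-$, $\cK'_+$ and the Lebesgue/wandering structure of the $\widetilde\Delta$, $\widetilde\Delta_*$ corners), and that the scattering- and coupling-level embeddings satisfy the compatibility relations \eqref{SAA0extends}. Once this bookkeeping is in place, Steps~2 and~3 are a dictionary translation followed by a direct appeal to Theorem \ref{T:converse1}.
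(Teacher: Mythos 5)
Your proposal is correct and follows essentially the same route as the paper's own proof: construct the model four-fold AA-unitary coupling inside $\cL^{\widehat\Sigma_{0}}$, translate hypotheses (2)--(4) together with the zero blocks of \eqref{Sigma0form} into the coordinate-free conditions \eqref{10.6}, \eqref{analytic}, \eqref{orthogonal}, \eqref{10.7}, apply version (1) of Theorem \ref{T:converse1}, and read off the Redheffer coefficient-matrix symbol from \eqref{Redheffer-central}. The only (cosmetic) differences are that the paper builds the embeddings $\bi_{\cK'}$, $\bi_{\cK''}$ by the wave-operator construction rather than via Theorems \ref{T:Lsigmamodel} and \ref{T:Hellinger-decom}(1) --- the two constructions agree here --- and that \eqref{analytic-meas'} is not part of Property (3) as you state but is a consequence of the hypotheses (see Remark \ref{R:Tconverse2}); fortunately your translation of \eqref{analytic} and \eqref{orthogonal-strong} never actually needs it.
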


  \begin{remark}  \label{R:Tconverse2}  {\em Part of the content of
      Theorem \ref{T:converse2} is that condition
      \eqref{analytic-meas'} is a consequence of the hypotheses
      given in the statement of the theorem, in particular, of the
       condition \eqref{10.7'}.
      } \end{remark}

  \begin{proof}
      Let $\widehat \Sigma_{0}$ be as in the statement of the
      theorem.  If we ignore the middle expressions involving
      subspaces of $\cK_{0}$ and the Fourier representation
      operator $\cF_{0}$, we may use formulas \eqref{boldspaces} to
      define subspaces $\bcK''$, $\bcK'$, $\bcK''_{-}$,
      $\bcK'_{+}$, $\bcH_{0}$, $\bcD$, $\bcD_{*}$, $\widetilde
      \bDelta_{*}^{(0)}$, $\widetilde \bDelta_{*}^{(0)}$,
      $\widetilde \bDelta^{(-1)}$ of $\cL^{\widehat \Sigma_{0}}$.

      We wish to apply the first version of Theorem \ref{T:converse1}
to the case where
      ${\mathfrak S}_{0}$ is equal to the Hellinger-model
      scattering system $\bfrakS_{0}$. We first verify that the
      model scattering system $\bfrakS_{0}$ \eqref{model-scatsys}
      extends to a model four-fold AA-unitary coupling
      \begin{equation}  \label{model-AA}
	  \bfrakS_{AA,0} = ( M_{t},\quad \vec \bi_{\widetilde
	  \Delta}, \quad \vec \bi_{\widetilde \Delta_{*}}, \quad
	  \bi_{\cK''}, \quad \bi_{\cK'}; \quad \cL^{\widehat
	  \Sigma_{0}})
       \end{equation}
       of the unitary operators
       \begin{equation} \label{4unitaries}
       (J, \ell^{2}_{\widetilde \Delta}({\mathbb Z})), \quad
       (J, \ell^{2}_{\widetilde \Delta_{*}}({\mathbb Z})), \quad
       (\cU'', \cK''), \quad (\cU', \cK')
       \end{equation}
       as required in Theorem \ref{T:converse1}.  For $\vec
       \delta \in \ell^{2}_{\widetilde \Delta}({\mathbb
       Z})$ of the form $\vec  \delta = J^{n}
       i_{\widetilde \Delta}^{(0)} \widetilde \delta$ for some $n
       \in {\mathbb Z}$ and $\widetilde \delta \in \widetilde
       \Delta$, we define
       $ \vec \bi_{\widetilde \Delta} \vec \delta = t^{n}
       \widehat \Sigma_{0}(dt)
       \sbm{  \widetilde \delta \\ 0 \\
       0 \\ 0}$.
       Since the $(1,1)$-entry of $\widehat \Sigma_{0}$ is $m \cdot
       I_{\widetilde \Delta}$, it follows that $\vec
       \bi_{\widetilde \Delta}$ extends to a well-defined isometry
       mapping $\ell^{2}_{\widetilde \Delta}({\mathbb Z})$ into
       $\cL^{\widehat \Sigma_{0}}$ with the additional property that
       $\bi_{\widetilde \Delta}:= t^{-1} \widehat \Sigma_{0}
       \sbm{ I \\ 0 \\ 0 \\ 0 } = \vec
	   \bi_{\widetilde \Delta} \circ i_{\widetilde
	   \Delta}^{(-1)} \colon \widetilde \Delta \to \widetilde
	   \bDelta^{(-1)}$.
       Similarly, the formula
       $
       \vec \bi_{\widetilde \Delta_{*}} \colon J^{n} i_{\widetilde
       \Delta_{*}}^{(0)} \widetilde \delta_{*} \mapsto t^{n}
       \widehat \Sigma_{0}(dt) \sbm{ 0 \\ \widetilde
       \delta_{*} \\ 0 \\ 0 }$
       extends to an isometric embedding of $\ell^{2}_{\widetilde
       \Delta_{*}}({\mathbb Z})$ into $\cL^{\widehat \Sigma_{0}}$
       with the extension property
       $\bi_{\widetilde \Delta_{*}} := \widehat \Sigma_{0}
       \sbm{ 0 \\ I \\ 0 \\ 0 } = \vec
	   \bi_{\widetilde \Delta_{*}} \circ i_{\widetilde
	   \Delta_{*}}^{(0)} \colon \widetilde \Delta_{*} \to
	   \widetilde \bDelta_{*}^{(0)}$.
       Moreover, the definition of $\sigma'$ and $\sigma''$ via
       \eqref{sigma-primes'} implies that $M_{t} M_{\sigma'} =
       M_{\sigma'} \cU'$ in $\cL(\cK'_{+}, \cL^{\sigma'})$ and
       $M_{t^{-1}} M_{\sigma''} = M_{\sigma''} \cU^{\prime \prime
       *}$ in $\cL(\cK''_{-}, \cL^{\sigma''})$.  This implies that
       we can use the wave-operator construction to construct
       isometric embeddings
       $$
       \bi_{\cK'} \colon \cK' \to \cL^{\widehat \Sigma_{0}}, \quad
       \bi_{\cK''} \colon \cK'' \to \cL^{\widehat \Sigma_{0}}
       $$
       with the extension properties
       \begin{align*}
	   & \bi_{\cK'}|_{\cK'_{+}} = \widehat \Sigma_{0}
	   \sbm{ 0 \\ 0 \\ 0 \\ I_{\cK'_{+}} }
	   =:\bi_{\cK'_{+}} \colon \cK'_{+} \to
	   \bcK'_{+},  \quad
	    \bi_{\cK''}|_{\cK''_{-}} = \widehat \Sigma_{0}
	   \sbm{ 0 \\ 0 \\ I_{\cK''_{-}} \\ 0 } =:\bi_{\cK''_{-}} \colon
\cK''_{-} \to
	   \bcK''_{-}.
       \end{align*}
       A consequence of the wave-operator construction is that
       $$
       \im \bi_{\cK''} = \bcK'', \quad \im \bi_{\cK'} = \bcK'.
       $$
       We also note the subspace identifications
       $$
       \vec \bi_{\widetilde \Delta}(\ell^{2}_{\widetilde
       \Delta}({\mathbb Z}_{-})) = \widehat \Sigma_{0}
       \sbm{ H^{2 \perp}_{\widetilde \Delta} \\ 0 \\ 0 \\
	   0 }, \quad
	   \vec \bi_{\widetilde \Delta_{*}} (\ell^{2}_{\widetilde
	   \Delta_{*}}({\mathbb Z}_{+})) = \widehat \Sigma_{0}
	   \sbm{ 0 \\ H^{2}_{\widetilde \Delta} \\ 0 \\ 0}.
      $$
It is now clear that \eqref{model-AA} is a four-fold AA-unitary
coupling of the four unitary operators \eqref{4unitaries} which
extends the scattering system $\bfrakS_{0}$ as required in Theorem
\ref{T:converse1}.

To apply the first version of Theorem \ref{T:converse1}, it remains
to check the
conditions \eqref{10.6}, \eqref{analytic}, \eqref{orthogonal} and
\eqref{10.7}.
As noted in the proof of
      Theorem \ref{T:Schur-com}, the vanishing of the measure
      Schur-complement \eqref{10.6'} is the functional-model version
of the
      condition \eqref{10.6}.  The analyticity conditions
      \eqref{analytic-meas} imply that
      $$\bcK'_{-} \perp \widehat \Sigma_{0} \sbm{
      H^{2 \perp}_{\widetilde \Delta} \\ 0 \\ 0 \\ 0 }, \quad
      \bcK'_{+} \perp \widehat \Sigma_{0} \sbm{ 0 \\
      H^{2}_{\widetilde \Delta_{*}} \\ 0 \\ 0 }, \quad
      \bcK''_{-} \perp \bcK'_{+}
      $$
     where $\bcK'_{+}$ and $\bcK''_{-}$ are
      defined as in \eqref{boldspaces}; one can check that these
      conditions are just the functional-model version of the
      orthogonality conditions \eqref{analytic}.  The presence of
      the $0$ in the $(1,3)$ and $(2,4)$ locations of $\widehat
      \Sigma_{0}$ is equivalent to the orthogonality conditions
      $$
      \widehat \Sigma_{0} \sbm{ L^{2}_{\widetilde
      \Delta} \\ 0 \\ 0 \\ 0 } \perp \bcK'', \quad
      \widehat \Sigma_{0} \sbm{ 0 \\ L^{2}_{\widetilde \Delta_{*}}
      \\ 0 \\ 0 } \perp 	\bcK'
      $$
      which is the functional-model equivalent of condition
      \eqref{orthogonal} in the stronger form
      \eqref{orthogonal-strong}.  As we have already noted, the
      assumption \eqref{10.7'} is the functional-model equivalent
      of conditions \eqref{10.7}.

      By Theorem \ref{T:converse1} (first version) we conclude that
$(M_{t}, \cL^{\widehat
      \Sigma_{0}})$ is the central extension, $\bfrakS_{0}$ is the
      central scattering system, and $\bfrakS_{AA,0}$ is the
      four-fold AA-unitary coupling associated with the
      Lifting Problem with data set
      \begin{equation}  \label{model-data}
	   X = \bi_{\cK''}^{*} \bi_{\cK'} = \widehat
	  s_{0}({\mathbb T}), \quad (\cU'', \cK''), \quad
	  (\cU', \cK'), \quad \cK''_{-} \subset \cK'', \quad
	  \cK'_{+} \subset \cK'.
      \end{equation}

      Finally, application of the
      formula \eqref{Redheffer-central} to the present setting
      tells us that the associated Redheffer coefficient-matrix
      symbol is given by
      \begin{align*}
	  \begin{bmatrix} s_{0}(m) & s_{2}(m) \\ s_{1}(m) & s(m)
	  \end{bmatrix}& =
	  \int_{{\mathbb T}} \begin{bmatrix} 0 & 0 & I & 0 \\ I & 0
	  & 0 & 0 \end{bmatrix} t^{-m} \widehat \Sigma_{0}(dt)
	  \begin{bmatrix} 0 & 0 \\ 0 & I \\ 0 & 0 \\  I & 0
	  \end{bmatrix} \\
	  & = \int_{{\mathbb T}} t^{-m} \begin{bmatrix} \widehat
	  s_{0} & \widehat s_{2} \\ \widehat s_{1} & \widehat s
      \end{bmatrix} (dt).
      \end{align*}
      This concludes the proof of Theorem \ref{T:converse2}.
      \end{proof}

      \begin{remark}\label{1013}{\em
      S.~ter Horst \cite{tH-pre} has obtained a
       solution of the inverse Relaxed
       Commutant Lifting (RCL) problem. Although, RCL is a generalization
       of the Commutant Lifting considered in the present paper,
       the setting of the inverse problem of \cite{tH-pre} is quite
       different from our inverse problem even in the context of the
       classical Commutant Lifting.
       We discuss the difference in this remark.

       If we use the formulation in Remark \ref{R:param}, we see that
       lifts $Y$ have a representation of the form
       $$
        Y = \begin{bmatrix} X \\ \Gamma_{H} D_{X} \end{bmatrix}
  $$
  where $\Gamma_{H}$ is the operator of multiplication by the
  function $H(\zeta): = Y_{0+}(\zeta)$ as in formula \eqref{FFparam'}.  The set of
  such functions $H(\zeta)$ is in turn parametrized by the Redheffer
  linear-fractional map $H(\zeta) = \Phi_{11}(\zeta) + \Phi_{12}(\zeta)
  (I - V(\zeta) \Phi_{22}(\zeta))^{-1} \Phi_{21}(\zeta)$ where $V$ is
  a free-parameter operator-valued Schur-class function of
  appropriate size and $\Phi = \left[ \begin{smallmatrix} \Phi_{11} &
  \Phi_{12} \\ \Phi_{21} & \Phi_{22} \end{smallmatrix} \right]$ is
  the Redheffer coefficient matrix. The inverse problem as formulated
  in \cite{tH-pre} is to characterize which Redheffer coefficient
  matrices arise in this way.  The result is that {\em any}
  coefficient matrix $\Phi \colon {\mathbb D} \to \cL(\cU_{1} \oplus
  \cU_{2}, \cY_{1} \oplus \cY_{2})$ such that the multiplication
  operator
  $$  \begin{bmatrix} M_{\Phi_{11}} & \Gamma_{\Phi_{12}} \\
  M_{\Phi_{21}} & \Gamma_{\Phi_{22}} \end{bmatrix} \colon
  \begin{bmatrix} H^{2}_{\cU_{1}} \\ \cU_{2} \end{bmatrix} \to
      \begin{bmatrix} H^{2}_{\cY_{1}} \\ H^{2}_{\cY_{2}} \end{bmatrix}
  $$
  is coisometeric arises in this way.  In this formulation of the inverse problem,
  there is flexibility in the construction of the underlying
  contraction $\rho$, i.e., a contraction
       $\rho = \left[ \begin{smallmatrix} \rho_{1} \\ \rho_{2} \end{smallmatrix}
       \right]$ so that the analogue of \eqref{intertwine2} is satisfied
       for all $Y_{0+} = \cF_{\Psi}[\omega]$, as well as in the
       choice of a compatible RCL data set.

 Our Theorems \ref{T:converse1} and  \ref{T:converse2} can also be
 considered as solutions of inverse problems.  In Theorem
 \ref{T:converse2} we specify not only the Redheffer coefficient
 matrix but also the measures $\sigma'$ and $\sigma''$ giving
 Hellinger-space models for the unitary operators $\cU'$ and $\cU''$
 as well as the subspaces $\cK'_+\subset \cK'$ and $\cK''_-\subset \cK''$
 and the intertwining operator $X$, i.e., the whole data set for the
 Lifting Problem.  It is then automatic that the given Redheffer
 coefficient matrix parametrizes some subset of the set of all
 solutions of the associated Lifting Problem for this data set.  The
 only remaining issue is to characterize the structure required which
 guarantees that the image of the Redheffer linear-fractional map
 gives rise to the set of {\em all} solutions of the given Lifting
 Problem.  Theorem \ref{T:converse1} is just a coordinate-free
 version of the same result: one specifies a four-fold AA-unitary coupling
 and seeks to characterize when it is the four-fold AA-unitary coupling
 coming from a Lifting Problem.  Another version of the inverse
 theorem is Theorem \ref{T:converse3} below where one is given just
 the Redheffer coefficient matrix along with the measures $\sigma''$
 and $\sigma'$.
 }\end{remark}

      We close this section with some observations concerning the
      absolute continuity of the measures $\widehat s_{1}$,
      $\widehat s_{2}$ and $\widehat s$ inside the central
      characteristic measure coming from a  Lifting Problem; these
      results generalize similar results for the case of the Nehari
      problem in \cite{AAK68}.

  \begin{theorem}  \label{T:AC}
   Suppose that $\widehat \Sigma_{0}$ is a positive strong
operator-measure of the form \eqref{Sigma0form}.  Then
the measure $\widehat s$ is uniformly absolutely
continuous with respect to Le\-besgue measure and the
 measures $\widehat s_{1}$ and $\widehat s_{2}$ are
 strongly absolutely continuous with respect to Lebesgue
measure.
\end{theorem}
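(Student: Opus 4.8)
The plan is to derive everything from the positivity of $\widehat\Sigma_{0}$ together with the single structural feature that its $(1,1)$- and $(2,2)$-diagonal blocks are $mI_{\widetilde\Delta}$ and $mI_{\widetilde\Delta_{*}}$ with $m$ Lebesgue measure; the analyticity hypotheses play no role in this particular statement. Since $\widehat\Sigma_{0}(\Delta)\ge0$ for every Borel $\Delta$, each principal $2\times2$ compression of $\widehat\Sigma_{0}(\Delta)$ to a pair of the four coordinate blocks is again a positive operator, and the three entries $\widehat s$, $\widehat s_{1}$, $\widehat s_{2}$ are precisely the off-diagonal corners of three such compressions. Throughout I would invoke the elementary criterion that $\sbm{A & B \\ B^{*} & C}\ge0$ forces $B=A^{1/2}KC^{1/2}$ for some contraction $K$, equivalently $|\langle Bg,a\rangle|^{2}\le\langle Aa,a\rangle\,\langle Cg,g\rangle$.

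First I would treat $\widehat s$. Compressing to the index pair $\{1,2\}$ gives $\sbm{m(\Delta)I & \widehat s(\Delta) \\ \widehat s(\Delta)^{*} & m(\Delta)I}\ge0$ for every $\Delta$, whence $\|\widehat s(\Delta)\|\le m(\Delta)$. This bound is exactly uniform absolute continuity with respect to $m$: the measure $\widehat s$ has a Radon--Nikodym derivative whose operator norm is bounded by $1$ almost everywhere.

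Next, for $\widehat s_{1}$ (and symmetrically $\widehat s_{2}$), compressing to the pair $\{1,4\}$ gives $\sbm{m(\Delta)I & \widehat s_{1}(\Delta) \\ \widehat s_{1}(\Delta)^{*} & \sigma'(\Delta)}\ge0$, so that $\|\widehat s_{1}(\Delta)g'\|^{2}\le m(\Delta)\,\langle\sigma'(\Delta)g',g'\rangle$ for every $g'\in\cK'_{+}$. In particular $m(\Delta)=0$ forces $\widehat s_{1}(\Delta)g'=0$, so that each vector measure $\widehat s_{1}(\cdot)g'$ is absolutely continuous with respect to $m$; this already yields strong absolute continuity of $\widehat s_{1}$. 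To pin down the density I would sum over a finite partition $\mathbb T=\dot\cup_{k}\Delta_{k}$ and use additivity of $\sigma'$ to get $\sum_{k}m(\Delta_{k})^{-1}\|\widehat s_{1}(\Delta_{k})g'\|^{2}\le\langle\sigma'(\mathbb T)g',g'\rangle$. Comparing with the Hellinger-norm formula \eqref{munu}, this says exactly that $\widehat s_{1}(\cdot)g'$ belongs to the Hellinger space $\cL^{mI_{\widetilde\Delta}}$ with $\|\widehat s_{1}(\cdot)g'\|^{2}_{\cL^{mI}}\le\langle\sigma'(\mathbb T)g',g'\rangle$; since for $\sigma=mI$ finiteness of the Hellinger integral is equivalent to absolute continuity with respect to $m$ together with an $L^{2}(m)$-density (Section \ref{S:Hellinger}, cf.\ Proposition \ref{L:1}), we obtain a genuine $L^{2}(m)$-density for $\widehat s_{1}(\cdot)g'$. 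The identical computation on the pair $\{2,3\}$ handles $\widehat s_{2}$.

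The one point requiring care is the contrast between the two conclusions, which is where the Hellinger-space step earns its keep. For $\widehat s$ the two neighbouring diagonal blocks are both Lebesgue, which linearizes the estimate to $\|\widehat s(\Delta)\|\le m(\Delta)$ and hence an $L^{\infty}$-density (uniform continuity); for $\widehat s_{1},\widehat s_{2}$ one neighbour is the possibly singular measure $\sigma'$ (resp.\ $\sigma''$), so the estimate is only of square-root type and produces an $L^{2}$, not an $L^{\infty}$, density. This is precisely why these two entries are strongly but not necessarily uniformly absolutely continuous, and converting the partition inequality into a bona fide density statement via \eqref{munu} is the only nonroutine ingredient.
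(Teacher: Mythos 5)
Your proposal is correct, and its core coincides with the paper's own proof: both arguments extract exactly the same two inequalities from positivity of principal $2\times 2$ compressions of $\widehat\Sigma_{0}(\Delta)$, namely $\|\widehat s(\Delta)\|\le m(\Delta)$ (giving uniform absolute continuity at once) and $\|\widehat s_{1}(\Delta)g'\|^{2}\le m(\Delta)\,\langle\sigma'(\Delta)g',g'\rangle$. Where you part ways is the finishing step for $\widehat s_{1}$ and $\widehat s_{2}$: the paper sums the square-root form of this inequality over partitions (Cauchy--Schwarz) to bound the \emph{variation} of the vector measure $\widehat s_{1}(\cdot)k'_{+}$ by $\sqrt{m(B)}\,\sqrt{\langle\sigma'(B)k'_{+},k'_{+}\rangle}$ and then invokes the variation-based criterion for absolute continuity from \cite{Kheifets-Dubov} (Theorem 3.4 there), whereas you certify absolute continuity by the null-set/$\varepsilon$-$\delta$ condition directly and then upgrade it, via the partition bound $\sum_{k}m(\Delta_{k})^{-1}\|\widehat s_{1}(\Delta_{k})g'\|^{2}\le\langle\sigma'({\mathbb T})g',g'\rangle$ and \eqref{munu}, to membership of $\widehat s_{1}(\cdot)g'$ in $\cL^{mI_{\widetilde\Delta}}$, i.e.\ an $L^{2}(m)$-density. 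Both finishes are routine consequences of the same inequality; yours yields slightly more (an explicit $L^{2}$-density, consistent with the identifications in Remark \ref{R:AC}), while the paper's is shorter because it leans on the cited machinery of \cite{Kheifets-Dubov}. The only point to flag is that your Hellinger step implicitly uses the converse direction of \eqref{munu} --- that finiteness of the supremum over partitions yields a dominating scalar measure and hence membership in the Hellinger space --- which is standard but not spelled out in Section \ref{S:Hellinger}; note also that your null-set observation is exactly what makes the Moore--Penrose expression in \eqref{munu} legitimate on the partition cells with $m(\Delta_{k})=0$, so the order of your two steps matters.
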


\begin{proof}
    The positivity of $\widehat \Sigma_{0}$ (see \eqref{Sigma0form})
implies that
$$
 \|\widehat s(B)\| \le m(B)
 $$
 for every Borel set $B\subseteq\mathbb T$; this in turn means that
 $\widehat s$ is uniformly absolutely continuous with
respect to $m$ (see Section 3 in
\cite{Kheifets-Dubov} for more details about absolute continuity,
in particular, definitions (3.1), (3.5), (3.23), (3.24) and
 formulas (3.6), (3.26) appearing there). The positivity of $\widehat
 \Sigma_{0}$
 also implies that
 $$
 \Vert\widehat s_1(B)k'_+\Vert\le\sqrt{m(B)}\ \sqrt{\langle
 \sigma'(B)k'_+, k'_+\rangle}.
 $$
 Then (see (3.1) of Section 3 in \cite{Kheifets-Dubov} for
 definition of the variation of a vector measure and also Theorem
 3.4 ibidem for more details)
 $$
\text{var\ }_{\widehat s_1 k'_+}(B)\le \sqrt{m(B)}\ \sqrt{\langle
 \sigma'(B)k'_+, k'_+\rangle}.
 $$
This means that the measure $\widehat s_1 k'_+$ is absolutely
 continuous with respect to Lebesgue measure.
 \end{proof}

 \begin{remark}  \label{R:AC}  {\em
     Here we give a couple of alternate routes
to results on the absolute continuity of $\widehat s$, $\widehat
s_{1}$ and
$\widehat s_{2}$ which handle various special cases.
	
If we in addition impose
the analyticity conditions \eqref{analytic-meas} on
 the entries $\widehat s_{1}$, $\widehat s_{2}$ and
 $\widehat s$, then at least weak absolute continuity
of $\widehat s$, $\widehat s_{1}$ and $\widehat s_{2}$
 with respect to Lebesgue measure is an immediate
 consequence of the F.~and M.~Riesz theorem (see
 e.g.~\cite[page 47]{Hoffman}).
	
 Alternatively, if we assume all the conditions in Theorem
\ref{T:converse2} so that
$\widehat \Sigma_{0}$ is the universal characteristic
measure coming from a  Lifting Problem, by
using the connection \eqref{Redheffer-central}
between $\widehat \Sigma_{0}$ and the Redheffer
coefficient-matrix symbol together with the explicit
formula (see \eqref{def-Redheffer-param} and
\eqref{def-Redheffer}) for the Redheffer coefficient
 matrix, we see that
$$
\widehat s(dt) = s(t) \cdot m(dt), \quad
\widehat s_{1}(t) = s_{1}(t) \cdot m(dt), \quad
\widehat s_{2}(dt) = s_{2}(t)  \cdot m(dt)
 $$
where $s(t)$ is the boundary-value function for the
Schur-class function
$$
s(\zeta) = \zeta (i_{\cK''_{-} \to \cH_{0}})^{*}
C_{0} (I - \zeta A_{0})^{-1}B_{0} i_{\cK'_{+} \to
\cH_{0}}
 $$
 and $s_{1}(t)$ and $s_{2}(t)$ are the boundary-value
 functions for the strong operator-valued $H^{2}$-functions
\begin{align*}
 & s_{1}(\zeta) = (i_{\cK''_{-} \to \cH_{0}})^{*}
C_{0}(I - \zeta A_{0})^{-1} i_{\cK'_{+} \to
\cH_{0}} \\
 & s_{2}(\zeta) = (i_{\cK''_{-}\to \cH_{0}})^{*}
 (I - \zeta A_{0})^{-1} B_{0} i_{\cK'_{+} \to \cH_{0}}
 \end{align*}
 where $U_{0} = \sbm{A_{0} & B_{0} \\ C_{0} & 0 }$ is
the universal colligation constructed from the
 Lifting Problem data (see \eqref{U0-1}, \eqref{U0-2},
 \eqref{U0-3}).
 }\end{remark}

 \section{A more compact Hellinger-space model and maximal
 factorable minorants}  \label{S:compactHel}

 Suppose that $\widehat \Sigma_{0}$ is the central
 characteristic measure \eqref{central-charmeas} coming from a
  Lifting Problem.  As we have seen, the subspace
 \begin{equation}  \label{dense-space}
 \widehat \Sigma_{0} \begin{bmatrix} 0 \\ 0 \\ \cK''_{-}[t] \\
 \cK'_{+}[t^{-1}] \end{bmatrix}
 \end{equation}
 (where $\cK''_{-}[t]$ is the space of analytic trigonometric
 polynomials with coefficients in $\cK''_{-}$ and similarly
 $\cK'_{+}[t^{-1}]$ is the space of conjugate-analytic
 trigonometric polynomials with coefficients from $\cK'_{+}$)
 is dense in $\cL^{\widehat \Sigma_{0}}$ as a consequence of
 the condition \eqref{10.6} (or, in measure-theoretic terms,
 of \eqref{10.6'}).  By applying Theorem
 \ref{T:Hellinger-decom} with $\widehat \sigma_{0}: = \sbm{
 \sigma'' & \widehat s_{0} \\ \widehat s_{0}^{*} & \sigma' }$
 in place of $\sigma_{11}$, we see that the closure of the
 space \eqref{dense-space} can be identified with
 $\cL^{\widehat \sigma_{0}}$. Moreover, as long as the condition
\eqref{10.6'} is
 in force, as a consequence of the observation \eqref{L1=L} we
 see that the map
 \begin{equation}  \label{sigma0Sigma0}
 U_{\widehat  \sigma_{0}, \widehat \Sigma_{0}}\colon\widehat
\sigma_{0}
 \begin{bmatrix} p'' \\ p' \end{bmatrix}
 \mapsto \widehat \Sigma_{0} \begin{bmatrix} 0 \\ 0 \\ p'' \\
 p' \end{bmatrix}
 \end{equation}
 (where $\sbm{p'' \\p' } \in \sbm{ \cK''_{-} \\ \cK'_{+}}[t,
 t^{-1}]$ is a trigonometric polynomial with coefficients in
 $\sbm{ \cK''_{-} \\ \cK'_{+}}$) extends to define a unitary
 transformation from $\cL^{\widehat \sigma_{0}}$ onto
 $\cL^{\widehat \Sigma_{0}}$.  Alternatively, we may construct
 directly a second model central scattering system with
 ambient space $\cL^{\widehat \sigma_{0}}$ as follows.

 Define maps
 $$
 \bi_{\cK'_{+},0} \colon \cK'_{+} \to \cL^{\widehat
 \sigma_{0}}, \quad
 \bi_{\cK''_{-},0} \colon \cK''_{-} \to \cL^{\widehat \sigma_{0}}
 $$
 by
 $$
 \bi_{\cK'_{+},0} \colon k'_{+} \mapsto \widehat \sigma_{0}
 \begin{bmatrix} 0 \\ k'_{+} \end{bmatrix}, \quad
     \bi_{\cK''_{-},0} \colon k''_{-} \mapsto \widehat \sigma_{0}
     \begin{bmatrix} k''_{-} \\ 0 \end{bmatrix}
 $$
 and extend them to
 $$
 \bi_{\cK',0} \colon \cK' \to \cL^{\widehat \sigma_{0}}, \quad
 \bi_{\cK'',0} \colon  \cK'' \to \cL^{\widehat \sigma_{0}}
 $$
 by
 $$
 \bi_{\cK',0} \colon \cU^{\prime * n} k'_{+} \mapsto \widehat
 \sigma_{0} \begin{bmatrix} 0 \\ t^{-n} k'_{+} \end{bmatrix},
 \quad
 \bi_{\cK'',0} \colon \cU^{\prime \prime n} k''_{-} \mapsto
 \widehat \sigma_{0} \begin{bmatrix} t^{n} k''_{-} \\ 0
    \end{bmatrix}
    $$
    where $k'_{+} \in \cK'_{+}$ and $k''_{-} \in \cK''_{-}$. We also
    define subspaces
    $$
    \widetilde \bDelta_{0}^{(-1)}: = \begin{bmatrix} 0 \\ t^{-1}
    \widehat s_{1}^{*} \widetilde \Delta \end{bmatrix}, \quad
    \widetilde \bDelta_{*0}^{(0)}: = \begin{bmatrix} \widehat s_{2}
    \widetilde \Delta_{*} \\ 0  \end{bmatrix}
    $$
    and define isometric embedding operators
    $$
    \bi_{\widetilde \Delta,0} \colon \widetilde \Delta \to \widetilde
    \bDelta_{0}^{(-1)}, \quad
    \bi_{\widetilde \Delta_{*},0} \colon \widetilde \Delta_{*} \to
    \widetilde \bDelta_{0}^{(0)}
    $$
    by
    $$
    \bi_{\widetilde \Delta,0} \colon \widetilde \delta \mapsto
    \begin{bmatrix} 0 \\ t^{-1} \widehat s_{1}^{*} \widetilde \delta
 \end{bmatrix}, \quad
 \bi_{\widetilde \Delta_{*},0} \colon \widetilde \delta_{*}
 \mapsto \begin{bmatrix} \widehat s_{2} \widetilde \delta_{*}
 \\ 0 \end{bmatrix}
   $$
  with extensions
  $$
  \vec \bi_{\widetilde \Delta,0} \colon \ell^{2}_{\widetilde
  \Delta}({\mathbb Z}) \to \cL^{\widehat \sigma_{0}}, \quad
  \vec \bi_{\widetilde \Delta_{*},0} \colon \ell^{2}_{\widetilde
  \Delta_{*}}({\mathbb Z}) \to \cL^{\widehat \sigma_{0}}
  $$
  given by
  \begin{align*}
  & \vec \bi_{\widetilde \Delta,0} \colon \{ \widetilde \delta(
n)\}_{n
  \in {\mathbb Z}} \mapsto \begin{bmatrix} 0 \\ \widehat s_{1}^{*}
\left(
  \sum_{n \in {\mathbb Z}} \widetilde \delta(n) t^{n-1} \right)
  \end{bmatrix}, \\
   & \vec \bi_{\widetilde \Delta_{*},0} \colon \{ \widetilde
  \delta_{*}(n)\}_{n \in {\mathbb Z}} \mapsto \begin{bmatrix}
  \widehat s_{2} \left( \sum_{n \in {\mathbb Z}} \widetilde
\delta_{*}(n) t^{n}
  \right) \\ 0 \end{bmatrix}.
  \end{align*}
  The fact that all these maps are isometries is a consequence of the
  identity \eqref{10.6'}.  One can check (with occasional use of
  various tools for manipulation of Hellinger spaces from
  \cite{Kheifets-Dubov} to complete the details) that
  $$
  \bfrakS_{00}:= (M_{t}, \quad
  \begin{bmatrix} \bi_{\widetilde \Delta,0} & \bi_{\widetilde
      \Delta_{*},0} & \bi_{\cK''_{-},0} & \bi_{\cK'_{+},0}
  \end{bmatrix}; \quad \cL^{\widehat \sigma_{0}}, \quad
  \widetilde \Delta \oplus \widetilde \Delta_{*} \oplus \cK''_{-}
  \oplus \cK'_{+} )
  $$
  is also a central scattering system associated with the same
   Lifting-Problem data set \eqref{CLdata} which extends (in the
  sense of Theorem \ref{T:converse1}) to the four-fold AA-unitary
  coupling
  $$
  \bfrakS_{AA,00} : = ( M_{t}, \quad  \vec \bi_{\widetilde \Delta,0},
  \quad \vec \bi_{\widetilde \Delta_{*},0}, \quad \bi_{\cK'',0} \quad,
  \bi_{\cK',0}; \quad \cL^{\widehat \sigma_{0}})
  $$
  of the unitary operators \eqref{4unitaries}.
  We mention that the property \eqref{MFM-ortho} assumes the
  following form for the $\cL^{\widehat \sigma_{0}}$ functional model:
  \begin{align}
     & \operatorname{clos} \widehat \sigma_{0} \begin{bmatrix}
      \cK''_{-}[t] \\ \cK'_{+} \end{bmatrix} = \im \bi_{\cH_{0},0}
      \oplus \begin{bmatrix} \widehat s_{2} H^{2}_{\widetilde
      \Delta_{*}} \\ 0 \end{bmatrix} = \cL^{\widehat \sigma_{0}}
      \ominus \begin{bmatrix} 0 \\ \widehat s_{1}^* H^{2
      \perp}_{\widetilde \Delta} \end{bmatrix}, \notag \\
      & \operatorname{clos} \widehat \sigma_{0} \begin{bmatrix}
      \cK''_{-} \\ \cK'_{+}[t^{-1}] \end{bmatrix} = \im
\bi_{\cH_{0},0}
      \oplus \begin{bmatrix} 0 \\ \widehat s_{1}^*
H^{2\perp}_{\widetilde \Delta}
  \end{bmatrix} = \cL^{\widehat \sigma_{0}} \ominus \begin{bmatrix}
  \widehat s_{2} H^{2}_{\widetilde \Delta_{*}} \\ 0 \end{bmatrix}.
\label{MFM-ortho-funcmodel}
    \end{align}

  We note that construction of the space $\cL^{\widehat
  \sigma_{0}}$ does not explicitly make use of $\widehat s_{1}$,
  $\widehat s_{2}$, $\widehat s$ which appear in $\widehat
  \Sigma_{0}$; hence it must be the case that $\widehat s_{1}$,
  $\widehat s_{2}$ and $\widehat s$ are already somehow encoded in
  the ingredients $\sigma'$, $\sigma''$ and $\widehat s_0$ of
$\widehat \Sigma_{0}$.  The goal of this final section is to make this
idea precise.  The first step is the following result concerning maximal
  factorable minorants.

  \begin{theorem}  \label{T:MFM}
      Suppose that $\widehat \Sigma_{0}$ of the form
      \eqref{Sigma0form} is the central
      characteristic measure \eqref{central-charmeas}
      for some Lifting Problem.   Then the following hold:
      \begin{enumerate}
   \item The domination property
   \begin{equation}  \label{dom1}
       \sigma' - \widehat s_{0}^{*} \sigma^{\prime \prime
       [-1]} \widehat s_{0} \ge \widehat s_{1}^{*} \frac{1}{m}
       \widehat s_{1}
    \end{equation}
holds between $\widehat s_{0}$ and $\widehat s_{1}$.  Moreover,
$\widehat s_{1}^{*} \frac{1}{m} \widehat s_{1}$ is a {\em right
maximal
  factorable minorant} of $\sigma' - \widehat s_{0}^{*} \sigma^{\prime
  \prime [-1]} \widehat s_{0}$ in the following sense:  if
 $r_{1}^*$ is a strong
    $\cL(\cK'_{+}, \cN)$-valued conjugate-analytic
    measure such that
    \begin{equation}\label{factorable1}
     \sigma' - s_{0}^*\sigma^{\prime \prime [-1]} s_{0} \ge
    r_{1}^*\frac{1}{m}
    r_{1},
    \end{equation}
      then there is a contractive strongly conjugate-analytic
    $\cL(\cN , \widetilde\Delta)$-valued function
    $\theta_1^{*}$ such that
    \begin{equation}  \label{factor1}
    r_{1}^{*} =  \widehat s_{1}^{*} \theta_{1}^{*}
    \end{equation}
  (where the equality between measures holds in the  strong sense).

  \item The domination property
  \begin{equation}  \label{dom2}
	 \sigma'' - \widehat s_{0} \sigma^{\prime
	 [-1]} \widehat s_{0}^{*} \ge (t^{-1} \widehat s_{2}) \frac{1}{m}
	 (t^{-1}\widehat s_{2})^{*}
      \end{equation}
  holds between $\widehat s_{0}$ and $\widehat s_{2}$.  Moreover,
  $(t^{-1}\widehat s_{2}) \frac{1}{m}(t^{-1} \widehat s_{2})^{*}$ is
a {\em left maximal
  factorable minorant} of $\sigma'' - \widehat s_{0}\sigma^{ \prime
[-1]}
  \widehat s_{0}^{*}$ in the following sense:
  if
   $r_{2}$ is a strong
      $\cL(\cN_{*}, \cK''_{-})$-valued analytic
      measure such that
      \begin{equation}\label{factorable2}
\sigma'' - s_{0}\sigma^{\prime [-1]} s_{0}^{*} \ge
      r_{2}\frac{1}{m}
      r_{2}^{*},
      \end{equation}
 then there is a contractive strongly analytic
      $\cL(\cN_{*} , \widetilde\Delta_{*})$-valued function
      $\theta_{2}$ such that
      \begin{equation}  \label{factor2}
      r_{2} =  (t^{-1}\widehat s_{2}) \theta_{2}.
      \end{equation}
      \end{enumerate}
      Thus, given  a measure $\widehat s_{0}$ so that $\sbm{\sigma'' &
\widehat s_{0} \\ \widehat s_{0}^{*} & \sigma'} \ge 0$, the
remaining nonzero entries $\widehat s_{1}$ and $\widehat s_{2}$ of
a central measure $\widehat \Sigma_{0}$ for a lifting problem are
uniquely determined up to unitary-constant left/right-factor
normalizations of
$\widehat s_{1}$ and
$\widehat s_{2}$.
      \end{theorem}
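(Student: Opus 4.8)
The plan is to read everything off the positivity of the single matrix-measure $\widehat\Sigma_{0}$ of the form \eqref{Sigma0form}, together with the geometric maximality of the analytic channels recorded in \eqref{MFM-ortho} (equivalently \eqref{MFM-ortho-funcmodel}). First I would dispose of the two domination inequalities, which are pure Schur-complement facts. Since $\widehat\Sigma_{0}\ge0$, each of its principal $3\times3$ sub-measures is positive; taking the one on the rows/columns indexed by $\widetilde\Delta,\cK''_{-},\cK'_{+}$, namely $\sbm{mI_{\widetilde\Delta} & 0 & \widehat s_{1}\\ 0 & \sigma'' & \widehat s_{0}\\ \widehat s_{1}^{*} & \widehat s_{0}^{*} & \sigma'}\ge0$, and forming the measure Schur complement onto the $\cK'_{+}$-block (in the sense of \cite{Kheifets-Dubov}) gives $\sigma'-\widehat s_{1}^{*}\frac1m\widehat s_{1}-\widehat s_{0}^{*}\sigma^{\prime\prime[-1]}\widehat s_{0}\ge0$, which is exactly \eqref{dom1}. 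The same computation on the block $\widetilde\Delta_{*},\cK''_{-},\cK'_{+}$ yields $\sigma''-\widehat s_{2}\frac1m\widehat s_{2}^{*}-\widehat s_{0}\sigma^{\prime[-1]}\widehat s_{0}^{*}\ge0$; and because $|t|=1$ on ${\mathbb T}$ we have $(t^{-1}\widehat s_{2})\frac1m(t^{-1}\widehat s_{2})^{*}=\widehat s_{2}\frac1m\widehat s_{2}^{*}$, so this is \eqref{dom2} (the $t^{-1}$ serving only to make the left factor analytic, not to change the value of the minorant). Parts (1) and (2) are dual under swapping the roles of $(\cU',\cK'_{+},\widetilde\Delta)$ and $(\cU'',\cK''_{-},\widetilde\Delta_{*})$ and passing to adjoints, so I would carry out only (1) in detail.

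The substance of the theorem is the maximality, and here I would work in the ambient coupling space $\cK_{0}$ and use \eqref{MFM-ortho}. The key point is that the incoming channel $\bcG_{-}=\vec i_{\widetilde\Delta,0}(\ell^{2}_{\widetilde\Delta}({\mathbb Z}_{-}))$ is not merely orthogonal to $\bcH_{0+}=\overline{\operatorname{span}}\{\im i_{\cK'_{+},0},\im i_{\cK'',0}\}$ but is its entire orthogonal complement; this is precisely the word ``maximal'' in maximal factorable minorant. Concretely, I would argue that a competing right-factorable minorant, i.e.\ a conjugate-analytic $r_{1}^{*}$ with $\sigma'-\widehat s_{0}^{*}\sigma^{\prime\prime[-1]}\widehat s_{0}\ge r_{1}^{*}\frac1m r_{1}$, produces (via the Hellinger-space description of dominated measures in Theorem \ref{T:Hellinger-decom}(2) and the conjugate-analyticity of $r_{1}^{*}$) a shift-intertwining isometric embedding of an auxiliary $\ell^{2}_{\cN}({\mathbb Z}_{-})$ into $\cK_{0}$ whose range lies in $\cK_{0}\ominus\bcH_{0+}=\bcG_{-}=\im\vec i_{\widetilde\Delta,0}$. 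Because both embeddings intertwine the bilateral shift, the Lax--Halmos/Beurling representation of such maps forces the competitor to factor through $\vec i_{\widetilde\Delta,0}$ by a contractive conjugate-analytic multiplier $\theta_{1}^{*}$, and translating back to densities gives $r_{1}^{*}=\widehat s_{1}^{*}\theta_{1}^{*}$ as in \eqref{factor1}; the contractivity of $\theta_{1}^{*}$ is exactly the inequality $r_{1}^{*}\frac1m r_{1}\le\widehat s_{1}^{*}\frac1m\widehat s_{1}$ encoded in the embedding. Part (2) is the mirror image, realizing the outgoing channel $\bcG_{*+}$ as the full complement of $\bcH_{0-}$, the normalization $t^{-1}\widehat s_{2}$ ensuring analyticity of the left factor.

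The step I expect to be the main obstacle is precisely this passage from ``the analytic channel is the full orthogonal complement'' to ``every dominated factorable minorant factors contractively through it.'' The naive attempt to stack the competing channel $r_{1}$ alongside $\widehat s_{1}$ into a larger positive measure fails, since there is no reason for $\widehat s_{1}^{*}\frac1m\widehat s_{1}+r_{1}^{*}\frac1m r_{1}$ to remain below $\sigma'-\widehat s_{0}^{*}\sigma^{\prime\prime[-1]}\widehat s_{0}$; the maximality is genuinely a statement about the single distinguished channel, and I would therefore handle it with the maximal-factorable-minorant machinery of \cite{Kheifets-Dubov} rather than by a direct positivity manipulation. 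The care needed in matching the Hellinger-space membership conditions with the conjugate-analyticity of $r_{1}^{*}$, so that the embedding really lands in $\bcG_{-}$ and not merely in the one-moment-orthogonal subspace, is where the delicate bookkeeping lives.

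Finally, for the concluding uniqueness assertion I would invoke uniqueness of the maximal factorable minorant: the minorant $\widehat s_{1}^{*}\frac1m\widehat s_{1}$ depends on $\sigma',\sigma'',\widehat s_{0}$ only through $\sigma'-\widehat s_{0}^{*}\sigma^{\prime\prime[-1]}\widehat s_{0}$, and two maximal conjugate-analytic factors of one factorable minorant coincide up to a left unitary constant on $\widetilde\Delta$ (the Beurling/Arov inner-factor uniqueness). Symmetrically $\widehat s_{2}$ is the left factor of the maximal factorable minorant of $\sigma''-\widehat s_{0}\sigma^{\prime[-1]}\widehat s_{0}^{*}$ and hence is determined up to a right unitary constant on $\widetilde\Delta_{*}$, and then $\widehat s$ is pinned down from $\widehat s_{1},\widehat s_{2},\sigma',\sigma'',\widehat s_{0}$ by the Schur-complement identity \eqref{10.6'}. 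This yields the stated determination of $\widehat s_{1}$ and $\widehat s_{2}$ from $\widehat s_{0}$ up to the unitary-constant left/right normalizations, completing the proof.
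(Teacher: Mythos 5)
Your proposal is correct and, at its core, follows the same route as the paper: the two domination inequalities are obtained exactly as in the paper from positivity of the $3\times 3$ principal sub-measures of $\widehat\Sigma_{0}$ (the paper's \eqref{3by3principal}) via measure Schur complements, and the maximality rests on the same two pillars the paper uses --- the defining property \eqref{majorize} of Hellinger spaces, which turns the augmented positivity with $mI_{\cN}$ in the corner into the membership $\sbm{0 \\ r_{1}^{*}n}\in\cL^{\widehat\sigma_{0}}$, and the identity \eqref{MFM-ortho} (in the model, \eqref{MFM-ortho-funcmodel}), which says the conjugate-analytic channel is the \emph{full} orthogonal complement of $\operatorname{clos}\widehat\sigma_{0}\sbm{\cK''_{-}[t] \\ \cK'_{+}}$. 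Where you genuinely differ is in packaging the factorization and contractivity: the paper extracts, for each $n$, an element $g_{-,n}\in H^{2\perp}_{\widetilde\Delta}$ with $t^{-1}\sbm{0 \\ r_{1}^{*}n}=\sbm{0 \\ \widehat s_{1}^{*}}g_{-,n}$, assembles $\theta_{1}^{*}$ from linearity, and proves $I_{\cN}-\theta_{1}\theta_{1}^{*}\ge 0$ by a separate Schur-complement computation using the consequence $\begin{bmatrix}0 & \widehat s_{1}\end{bmatrix}\widehat\sigma_{0}^{[-1]}\begin{bmatrix}0 \\ \widehat s_{1}^{*}\end{bmatrix}=mI_{\widetilde\Delta}$ of \eqref{10.6'}; you instead extend $n\mapsto\sbm{0 \\ r_{1}^{*}n}$ by shifts to a shift-intertwining contraction $\ell^{2}_{\cN}({\mathbb Z})\to\cK_{0}$, factor it through the isometric channel embedding $\vec i_{\widetilde\Delta,0}$, and invoke the commutant of the bilateral shift to produce the contractive conjugate-analytic multiplier in one stroke. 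The two are equivalent in substance --- the isometricity of $\vec i_{\widetilde\Delta,0}$ in the model \emph{is} the same \eqref{10.6'} identity --- but your version buys contractivity for free (a contraction factored through an isometry), at the cost of checking that the competing embedding is well defined and contractive, which again comes from the same Schur-complement positivity. One correction of phrasing: contractivity of $\theta_{1}^{*}$ is not "exactly the inequality $r_{1}^{*}\frac1m r_{1}\le\widehat s_{1}^{*}\frac1m\widehat s_{1}$ encoded in the embedding" --- that inequality is a \emph{consequence} of \eqref{factor1} with $\theta_{1}^{*}$ contractive, not an available input; the correct input is $mI_{\cN}\ge\begin{bmatrix}0 & r_{1}\end{bmatrix}\widehat\sigma_{0}^{[-1]}\begin{bmatrix}0 \\ r_{1}^{*}\end{bmatrix}$, i.e.\ the contraction property itself. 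With that reading your argument is sound; and your explicit handling of the closing uniqueness assertion (mutual factorization of two maximal factors forcing the product of the contractive inner-type factors to be the identity, hence constant unitaries), together with recovering $\widehat s$ from \eqref{10.6'}, supplies a step the paper states but leaves implicit.
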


      \begin{proof}  We prove only the first statement as the second
   is completely analogous.  From the positivity of $\widehat
   \Sigma_{0}$ we deduce the positivity of any of the  $3 \times
   3$ principal submatrices, in particular:
   \begin{equation}  \label{3by3principal}
     \begin{bmatrix} mI_{\widetilde \Delta} & 0 & \widehat
	 s_{1} \\ 0 & \sigma'' & \widehat s_{0} \\ \widehat
	 s_{1}^{*} & \widehat s_{0}^{*} & \sigma'
     \end{bmatrix} \ge 0.
  \end{equation}
  By a standard Schur-complement argument, this is equivalent
  to
  $$
    \sigma' - \begin{bmatrix} \widehat s_{1}^{*} & \widehat
    s_{0}^{*} \end{bmatrix} \begin{bmatrix} m I_{\widetilde
    \Delta} & 0 \\ 0 & \sigma'' \end{bmatrix} ^{[-1]}
    \begin{bmatrix} \widehat s_{1} \\ \widehat s_{0}
    \end{bmatrix} \ge 0
  $$
  from which we get \eqref{dom1}.

  Next, suppose that $r_{1}$ is as in the hypotheses of the
  theorem.  By the same Schur-complement argument one can see
  that \eqref{factorable1} is equivalent to
  $$
  \widehat \sigma_{0} : = \begin{bmatrix} \sigma'' & \widehat
  s_{0} \\ \widehat s_{0}^{*} & \sigma' \end{bmatrix} \ge
  \begin{bmatrix} 0 \\ r_{1}^{*} \end{bmatrix} \frac{1}{m}
      \begin{bmatrix} 0 & r_{} \end{bmatrix}.
  $$
  Another Schur-complement argument converts this to
  $$ \begin{bmatrix} m I_{\cN} & 0 & r_{1} \\ 0 & \sigma'' &
      \widehat s_{0} \\ r_{1}^{*} & \widehat s_{0}^{*} &
      \sigma' \end{bmatrix} \ge 0.
   $$
   By the definition of Hellinger spaces (see Section
   \ref{S:Hellinger} and especially \eqref{majorize}), it
   now follows that $\sbm{ 0 \\ r_{1}^{*} } n \in
   \cL^{\widehat \sigma_{0}}$ for each $n \in \cN$.  Due to
   the conjugate-analyticity of $r_{1}^{*}n$ we then have that
   \begin{equation}  \label{have1}
   t^{-1} \begin{bmatrix} 0 \\ r_{1}^{*} n \end{bmatrix} \perp
   \widehat \sigma_{0} \begin{bmatrix} \cK''_{-}[t] \\
   \cK'_{+} \end{bmatrix}.
   \end{equation}
   As a consequences of \eqref{MFM-ortho-funcmodel},
   the closure of the space $\widehat
   \sigma_{0} \sbm{\cK''_{-}[t] \\ \cK'_{+}}$ can be
   identified exactly as the orthogonal complement of the
   space $\sbm{0 \\ \widehat s_{1}^{*} H^{2 \perp}_{\widetilde
   \Delta}}$ in $\cL^{\widehat \sigma_{0}}$.  We conclude that
   there is an element $g_{-,n}$ of $H^{2 \perp}_{\widetilde
   \Delta}$ so that
   $$
   t^{-1} \begin{bmatrix} 0 \\ r_{1}^{*} n \end{bmatrix} =
   \begin{bmatrix} 0 \\ \widehat s_{1}^{*} \end{bmatrix} g_{-,n}.
    $$
    As the correspondence $n \mapsto g_{-,n}$ is linear, it
    follows that there is a strongly conjugate-analytic
    function $\theta_{1}^{*}$ so that $ g_{-,n} =
    t^{-1} \theta_{1}^{*} n$.   From \eqref{have1} we now arrive at
    \eqref{factor1}.  Taking the Schur complement of $\sbm{
    \sigma'' & \widehat s_{0} \\ \widehat s_{0}^{*} &
    \sigma'}$ in \eqref{3by3principal} then gives
    $$
	m I_{\cN} - \begin{bmatrix} 0 & r_{1} \end{bmatrix}
       \begin{bmatrix} \sigma'' & \widehat s_{0} \\ \widehat
s_{0}^{*} &
	\sigma' \end{bmatrix} ^{[-1]} \begin{bmatrix} 0 \\
	r_{1}^{*} \end{bmatrix} =
	m I_{\cN} - \theta_{1} \begin{bmatrix} 0 & \widehat
	s_{1} \end{bmatrix} \begin{bmatrix} \sigma'' &
	\widehat s_{0} \\ \widehat s_{1}^{*} & \sigma'
    \end{bmatrix} ^{[-1]} \begin{bmatrix} 0 \\ \widehat
    s_{1}^{*} \end{bmatrix} \theta_{1}^{*} \ge 0.
    $$
But a consequence of \eqref{10.6'} is that
   $$
   \begin{bmatrix} 0 & \widehat s_{1} \end{bmatrix}
\begin{bmatrix} \sigma'' & \widehat s_{0} \\ \widehat
    s_{0}^{*} & \sigma' \end{bmatrix}^{[-1]} \begin{bmatrix} 0
    \\ \widehat s_{1} \end{bmatrix} = m I_{\widetilde \Delta}
  $$
  and it follows that $I_{\cN} - \theta_{1} \theta_{1}^{*} \ge 0$ and
  $\theta$ is contractive as required.  This completes the proof of
  Theorem \ref{T:MFM}.
  \end{proof}

  In a similar vein one can obtain another positivity property for
  which the maximal factorable minorant is zero.

  \begin{theorem} \label{T:MFM'}
      Suppose that $\widehat \Sigma_{0}$ of the form
      \eqref{Sigma0form} is the central characteristic measure
      \eqref{central-charmeas} for some Lifting Problem.  Let
      $$
\widehat S = \begin{bmatrix} \widehat s_{0} & \widehat s_{2}
\\ \widehat s_{1} & \widehat s \end{bmatrix}
      $$
      be the Fourier transform of the associated Redheffer
coefficient-matrix symbol.  Then
      the following hold true:
      \begin{enumerate}
   \item
   \begin{equation}  \label{pos1}
       \begin{bmatrix} \sigma'' & 0 \\ 0 & m I_{\widetilde
	   \Delta_{*}} \end{bmatrix} - \widehat S^{*}
	   \begin{bmatrix} \sigma' & 0 \\ 0 & m I_{\widetilde
	       \Delta_{*}} \end{bmatrix} ^{[-1]} \widehat S
	       \ge 0.
 \end{equation}
 Moreover,  $0$ is the right maximal factorable minorant for
 \eqref{pos1} in the following sense: if $\Phi^{*}$ is a strongly
conjugate-analytic
 $\cL\left(\cN, \sbm{ \cK'_{+} \\ \widetilde
\Delta_{*}}\right)$-valued
 measure so that
 \begin{equation}  \label{FacMin}
 \begin{bmatrix} \sigma'' & 0 \\ 0 & m I_{\widetilde
     \Delta_{*}} \end{bmatrix} - \widehat S^{*}
	   \begin{bmatrix} \sigma' & 0 \\ 0 & m I_{\widetilde
	       \Delta_{*}} \end{bmatrix} ^{[-1]} \widehat S
	       \ge \Phi^{*} \frac{1}{m} \Phi,
 \end{equation}
 then $\Phi = 0$.

 \item
 \begin{equation}  \label{pos2}
     \begin{bmatrix} \sigma'' & 0 \\ 0 & mI_{\widetilde
     \Delta_{*}} \end{bmatrix} - \widehat S \begin{bmatrix}
     \sigma'' & 0 \\ 0 & m I_{\widetilde \Delta} \end{bmatrix}
     ^{[-1]} \widehat S^{*} \ge 0.
  \end{equation}
  Moreover, $0$ is the left maximal factorable minorant for
  \eqref{pos2} in the following sense:  if
  If $\Phi_{*}$ is a strongly analytic $\cL\left(\cN_{*},
  \sbm{\cK''_{-} \\ \widetilde \Delta } \right)$-valued measure
  such that
  $$
  \begin{bmatrix} \sigma'' & 0 \\ 0 & mI_{\widetilde  \Delta_{*}}
\end{bmatrix}
 - \widehat S \begin{bmatrix}  \sigma'' & 0 \\ 0 & m I_{\widetilde
\Delta}
   \end{bmatrix}^{[-1]} \widehat S^{*} \ge \Phi_{*} \frac{1}{m}
	     \Phi_{*}^{*},
 $$
 then $\Phi_{*} = 0$.
  \end{enumerate}

  \end{theorem}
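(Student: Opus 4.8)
The plan is to obtain both positivity statements as measure Schur complements of the single positive measure $\widehat\Sigma_{0}$, and then to deduce the two ``maximal factorable minorant $=0$'' assertions from the Hellinger-space geometry recorded in \eqref{cK0decom-func} and \eqref{MFM-ortho-funcmodel}, reducing the final step to the maximality already established in Theorem \ref{T:MFM}. For the positivity, I would first permute the four coordinate blocks of \eqref{Sigma0form} into the order $\cK'_{+},\widetilde\Delta_{*},\cK''_{-},\widetilde\Delta$; since the $(1,3)$- and $(2,4)$-entries of \eqref{Sigma0form} vanish (this is \eqref{zero13} and \eqref{zero24}), the measure $\widehat\Sigma_{0}$ becomes the $2\times2$ block measure $\sbm{A & \widehat S^{*}\\ \widehat S & C}$ with $A=\sbm{\sigma'&0\\0&mI_{\widetilde\Delta_{*}}}$, $C=\sbm{\sigma''&0\\0&mI_{\widetilde\Delta}}$ and $\widehat S=\sbm{\widehat s_{0}&\widehat s_{2}\\ \widehat s_{1}&\widehat s}$. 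As $\widehat\Sigma_{0}\ge0$, the two Schur complements $A-\widehat S^{*}C^{[-1]}\widehat S$ and $C-\widehat S A^{[-1]}\widehat S^{*}$ are positive by the same Schur-complement manipulation used in the proof of Theorem \ref{T:MFM}; these are precisely \eqref{pos1} and \eqref{pos2}.

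For the maximality in (1), suppose $\Phi^{*}$ is a strongly conjugate-analytic $\cL(\cN,\cK'_{+}\oplus\widetilde\Delta_{*})$-valued measure satisfying \eqref{FacMin}. Reversing the Schur complement exactly as in the passage from \eqref{factorable1} to \eqref{3by3principal} converts \eqref{FacMin} into the domination $\widehat\Sigma_{0}\ge\sbm{0\\ \Phi^{*}}\frac{1}{m}\sbm{0 & \Phi}$, the displayed $0$ occupying the $\cK''_{-}\oplus\widetilde\Delta$ block. By the defining inequality \eqref{majorize} of the Hellinger space this says that, for every $n\in\cN$, the vector measure $\nu_{n}\in\cL^{\widehat\Sigma_{0}}$ whose $\cK''_{-}$- and $\widetilde\Delta$-slots vanish and whose $\cK'_{+}\oplus\widetilde\Delta_{*}$-slot equals $\Phi^{*}n$ is conjugate-analytic. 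The whole statement reduces to showing $\nu_{n}=0$.

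To kill $\nu_{n}$ I would pass to $t^{-1}\nu_{n}$, which is strictly conjugate-analytic, and locate it inside the orthogonal decomposition \eqref{cK0decom-func}. Pairing $t^{-1}\nu_{n}$ against the analytic generators $\widehat\Sigma_{0}\sbm{0\\ h\\0\\0}$ ($h\in H^{2}_{\widetilde\Delta_{*}}$) of the third summand by means of \eqref{Lsigmapreinnerprod} isolates the $\widetilde\Delta_{*}$-slot of $t^{-1}\nu_{n}$, which is strictly conjugate-analytic, so this pairing vanishes and $t^{-1}\nu_{n}\perp\widehat\Sigma_{0}\sbm{0\\ H^{2}_{\widetilde\Delta_{*}}\\0\\0}$; the vanishing of the $\widetilde\Delta$-slot of $\nu_{n}$ gives likewise $t^{-1}\nu_{n}\perp\widehat\Sigma_{0}\sbm{H^{2\perp}_{\widetilde\Delta}\\0\\0\\0}$. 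Hence $t^{-1}\nu_{n}\in\operatorname{clos}\widehat\Sigma_{0}\sbm{0\\0\\\cK''_{-}\\ \cK'_{+}}$, and because its $\cK''_{-}$-slot vanishes it is identified by the unitary \eqref{sigma0Sigma0} with a strictly conjugate-analytic element $\sbm{0\\ \widetilde q}$ of $\cL^{\widehat\sigma_{0}}$. Now the argument in the proof of Theorem \ref{T:MFM} (the step producing \eqref{factor1}) shows $\sbm{0\\ \widetilde q}\in\sbm{0\\ \widehat s_{1}^{*}H^{2\perp}_{\widetilde\Delta}}$, which by the first line of \eqref{MFM-ortho-funcmodel} is orthogonal to $\operatorname{clos}\widehat\sigma_{0}\sbm{\cK''_{-}[t]\\ \cK'_{+}}$; but $\sbm{0\\ \widetilde q}$ itself lies in $\operatorname{clos}\widehat\sigma_{0}\sbm{\cK''_{-}\\ \cK'_{+}}$, a subspace of $\operatorname{clos}\widehat\sigma_{0}\sbm{\cK''_{-}[t]\\ \cK'_{+}}$. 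Thus $\sbm{0\\ \widetilde q}$ is orthogonal to itself, so $\widetilde q=0$, $\nu_{n}=0$, and $\Phi=0$. Statement (2) is the mirror image, obtained by interchanging analytic with conjugate-analytic, $\widetilde\Delta\leftrightarrow\widetilde\Delta_{*}$, $\cK''_{-}\leftrightarrow\cK'_{+}$ and using the second line of \eqref{MFM-ortho-funcmodel}.

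The main obstacle is making the orthogonality steps of the third paragraph rigorous: in a Hellinger space the coordinate measures of $\nu_{n}$ may be mutually singular, so the ``conjugate-analytic $\perp$ analytic'' assertions cannot be argued by manipulating densities but must be obtained from the pre-inner-product formula \eqref{Lsigmapreinnerprod} together with Proposition \ref{sigmabyf} and the strong/uniform absolute-continuity facts of Theorem \ref{T:AC}; likewise the Moore--Penrose inverses $C^{[-1]},A^{[-1]}$ require care when $\sigma',\sigma''$ are not boundedly invertible. Once these routine but delicate Hellinger-space computations are in place, the reduction to \eqref{cK0decom-func}, \eqref{MFM-ortho-funcmodel} and the maximality of Theorem \ref{T:MFM} makes the conclusion automatic.
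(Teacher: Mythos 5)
Your proposal is correct and follows essentially the same route as the paper's proof. The positivity part is identical: permute the four blocks of \eqref{Sigma0form} (the zeros in the $(1,3)$ and $(2,4)$ entries making the off-diagonal block equal to $\widehat S$) and take the two measure Schur complements of the resulting $2\times2$ block measure; in fact your complements $A-\widehat S^{*}C^{[-1]}\widehat S$ and $C-\widehat S A^{[-1]}\widehat S^{*}$ are the type-consistent form of \eqref{pos1} and \eqref{pos2} (as printed, those displays have the diagonal weights $\sigma'$, $\sigma''$, $mI_{\widetilde\Delta}$, $mI_{\widetilde\Delta_{*}}$ misplaced, evidently a typo, since $\widehat S$ maps $\cK'_{+}\oplus\widetilde\Delta_{*}$ into $\cK''_{-}\oplus\widetilde\Delta$). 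The maximality argument also begins exactly as in the paper: reversing the Schur complement turns \eqref{FacMin} into the statement that each $\nu_{n}$ (zero $\cK''_{-}$- and $\widetilde\Delta$-slots, $\cK'_{+}\oplus\widetilde\Delta_{*}$-slot equal to $\Phi^{*}n$) lies in $\cL^{\widehat\Sigma_{0}}$. You diverge only in the last step. The paper finishes in one stroke: the same pairing computation you use for the outer two summands of \eqref{cK0decom-func} also gives orthogonality of $t^{-1}\nu_{n}$ to the \emph{middle} summand (its $\cK''_{-}$-slot vanishes, and its $\cK'_{+}$-slot, being strictly conjugate-analytic, integrates to zero against constants), so $t^{-1}\nu_{n}$ is orthogonal to a dense linear manifold of $\cL^{\widehat\Sigma_{0}}$ and hence is zero. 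Your detour---locating $t^{-1}\nu_{n}$ in the middle summand, transporting it to $\cL^{\widehat\sigma_{0}}$ via the unitary \eqref{sigma0Sigma0}, and using \eqref{MFM-ortho-funcmodel} to make it orthogonal to itself---is valid (the slot preservation you need for the transfer holds because $\cL^{\widehat\Sigma_{0}}$-norm convergence implies setwise convergence of the component measures, the point you correctly flag), but it is longer and makes part (1) rest on Theorem \ref{T:MFM} and the geometry \eqref{MFM-ortho-funcmodel}, which the paper's argument does not need.
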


  \begin{proof}
      We prove only part (1) as part (2) is similar.
      After interchanging the second and third rows and then the
      second and third columns in \eqref{Sigma0form}, we get
      $$
     \widetilde\Sigma'_0:=\begin{bmatrix}
     \begin{bmatrix} mI_{\widetilde \Delta_{}} & 0 \\ 0 &
    \sigma''\end{bmatrix} & \widehat S' \\
     \widehat S^{\prime *} & \begin{bmatrix} m I_{\widetilde
\Delta_{*}} & 0 \\ 0 &
     \sigma' \end{bmatrix} \end{bmatrix} : =
     \begin{bmatrix}
    \begin{matrix} m I_{\widetilde \Delta_{}} & 0 \\ 0 &
      \sigma''\end{matrix} &
      \begin{matrix} \widehat s & \widehat s_{1} \\ \widehat s_{2} &
\widehat s_{0}
      \end{matrix} \\
      \begin{matrix} \widehat  s^{*} & \widehat s_{2}^{*} \\ \widehat
s_{1}^{*} &
      \widehat s_{0}^{*} \end{matrix}
       & \begin{matrix} m I_{\widetilde \Delta_{*}} & 0 \\ 0 &
      \sigma' \end{matrix} \end{bmatrix} \ge 0,
     $$
     If we then interchange the first two rows and then the first two
     columns and then the last two rows followed by the last two
     columns, we arrive at
     $$
      \widetilde \Sigma_{0} =
     \begin{bmatrix} \begin{bmatrix} \sigma'' & 0 \\ 0 & m
  I_{\widetilde \Delta} \end{bmatrix} &  \widehat S \\ \widehat S^{*}
&
  \begin{bmatrix} \sigma' & 0  \\ 0 & m I_{\widetilde \Delta_{*}}
\end{bmatrix}
     \end{bmatrix} : =
   \begin{bmatrix} \sigma'' & 0 & \widehat s_{0} & \widehat s_{2}
  \\ 0 & m I_{\widetilde \Delta} & \widehat s_{1} & \widehat s
  \\
  \widehat s_{0}^{*} & \widehat s_{1}^{*} & \sigma' & 0 \\
  \widehat s_{2}^{*} & \widehat s^{*} & 0 & m I_{\widetilde
  \Delta_{*}} \end{bmatrix}.
   $$
   Clearly, the positivity of $\widetilde \Sigma_{0}$ is equivalent
   to the positivity of $\widehat \Sigma_{0}$.  By  a standard
   Schur-complement argument, the positivity of $\widetilde
   \Sigma_{0}$ in turn is equivalent to \eqref{pos1}.  Clearly, $0$
   is a left maximal factorable minorant for \eqref{pos1} if and
   only if $0$ is a maximal factorable minorant for the Schur
   complement in $\widetilde \Sigma'_{0}$:
   \begin{equation}  \label{pos1'}
\begin{bmatrix} m I_{\widetilde \Delta_{*}} & 0 \\ 0 & \sigma'
    \end{bmatrix} - \widehat S^{\prime *} \begin{bmatrix} m
    I_{\widetilde \Delta} & 0 \\ 0 & \sigma''
\end{bmatrix}^{[-1]}\widehat S' \ge 0.
    \end{equation}
    By a Schur-complement argument, if in fact \eqref{FacMin} holds
    for a strongly conjugate-analytic $\Phi^{*}$, then
    $$
    \begin{bmatrix}
 m I_{\cN} & \begin{bmatrix} 0 & 0 \end{bmatrix}  & \Phi \\
 \begin{bmatrix} 0 \\ 0 \end{bmatrix} & \begin{bmatrix}
 mI_{\widetilde \Delta} & 0 \\ 0 & \sigma'' \end{bmatrix} &
 \widehat S'  \\
 \Phi^{*} & \widehat S^{\prime *} & \begin{bmatrix}
 mI_{\widetilde \Delta_{*}} & 0 \\ 0 & \sigma' \end{bmatrix}
 \end{bmatrix} \ge 0.
   $$
   By the definition of the Hellinger space, this in turn implies
   that $\sbm{ \sbm{0 \\ 0 } \\ \Phi^{*} } n \in \cL^{\widetilde
   \Sigma'_{0}}$ for every $n \in \cN$.  Since by assumption
   $\Phi^{*}$ is strongly conjugate-analytic, then, as in the proof
   of Theorem \ref{T:MFM}, we have
   $$
   t^{-1} \begin{bmatrix} \begin{bmatrix} 0 \\ 0 \end{bmatrix} \\
   \Phi^{*}n \end{bmatrix} \perp \begin{bmatrix} \begin{bmatrix}
   mI_{\widetilde \Delta} & 0 \\ 0 & \sigma'' \end{bmatrix} &
   \widehat S' \\ \widehat S^{\prime *} & \begin{bmatrix}
   mI_{\widetilde \Delta_{*}} & 0 \\ 0 & \sigma' \end{bmatrix}
\end{bmatrix} \begin{bmatrix} L^{2}_{\widetilde \Delta} \\
\cK''_{-}[t] \\ H^{2}_{\widetilde \Delta_{*}} \\ \cK'_{+}
\end{bmatrix}.
$$
But by \eqref{cK0decom-func}, we know that the linear manifold
$$
\begin{bmatrix} \begin{bmatrix} m I_{\widetilde \Delta} & 0 \\ 0 &
    \sigma'' \end{bmatrix} & \widehat S' \\ \widehat S^{\prime *} &
    \begin{bmatrix} m I_{\widetilde \Delta_{*}} & 0 \\ 0 & \sigma'
 \end{bmatrix} \end{bmatrix} \begin{bmatrix} H^{2
 \perp}_{\widetilde \Delta} \\ \cK''_{-}  \\ H^{2}_{\widetilde
 \Delta_{*}} \\ \cK'_{+} \end{bmatrix}
$$
is already dense in $\cL^{\widetilde \Sigma'_{0}}$.  Therefore
necessarily $\Phi = 0$, completing the promised proof of part (1) of
the Theorem.
  \end{proof}

  We next show that Theorem \ref{T:MFM} leads to the following
characterization of
  central characteristic measures; this version is more intrinsically
  function-theoretic than the characterization given by Theorem
\ref{T:converse2}.

  \begin{theorem} \label{T:converse3}  Suppose that $\widehat
\Sigma_{0}$
     is a strong $\cL(\widetilde \Delta \oplus \widetilde \Delta_{*}
      \oplus \cK''_{-} \oplus \cK'_{+})$-valued measure of the form
      \eqref{Sigma0form}.  Suppose that  $\widehat \sigma_{0} : =
\sbm{ \sigma'' & \widehat
s_{0} \\ \widehat s_{0}^{*} & \sigma' }$ is a positive
      $\cL(\cK''_{-} \oplus \cK'_{+})$-valued measure which determines
      the remaining entries $\widehat s_{1}$,
      $\widehat s_{2}$, $\widehat s$ in $\widehat \Sigma_{0}$
      according to the following procedure:
\begin{enumerate}
   \item $\widehat s_{1}$ is an analytic measure such that
   $\widehat s_{1}^{*} \frac{1}{m} \widehat s_{1}$ is a
   maximal right factorable minorant for $\sigma' - \widehat
   s_{0}^{*} \sigma^{\prime \prime [-1]} \widehat s_{0}$.

   \item $\widehat s_{2}$ is an analytic measure with
   $\widehat s_{2}({\mathbb T}) = 0$ such that
   $(t^{-1}\widehat s_{2}) \frac{1}{m} (t^{-1} \widehat s_{2})^{*}$
is a
   maximal left factorable minorant for $\sigma'' - \widehat
   s_{0} \sigma^{\prime [-1]} \widehat s_{0}^{*}$.

   \item The formula
   \begin{equation}  \label{shat}
   \begin{bmatrix} m I_{\widetilde \Delta} & \widehat s \\
       \widehat s^{*} & m I_{\widetilde \Delta_{*}}
   \end{bmatrix} =
   \begin{bmatrix} 0 &\widehat s_{1} \\ \widehat s_{2}^{*} & 0
       \end{bmatrix} \begin{bmatrix} \sigma'' & \widehat s_{0}
       \\ \widehat s_{0}^{*} & \sigma' \end{bmatrix} ^{[-1]}
       \begin{bmatrix} 0 & \widehat s_{2} \\  \widehat
	   s_{1}^{*} & 0 \end{bmatrix}
   \end{equation}
   holds true for the analytic measure $\widehat s$ which in
   addition satisfies
   \begin{equation}  \label{hats0=0}
     \widehat s({\mathbb T}) = \int_{{\mathbb T}} \widehat
     s(dt) = 0.
   \end{equation}
    \end{enumerate}
  Assume in addition that
  \begin{equation}  \label{MFM-ortho''}
 \left( \cL^{\widehat \sigma_{0}}-\textup{clos } \widehat \sigma_{0}
  \begin{bmatrix}  \cK''_{-}[t] \\ \cK'_{+}
  \end{bmatrix}\right) \cap
  \left(\cL^{\widehat \sigma_{0}}-\textup{clos } \widehat \sigma_{0}
 \begin{bmatrix} \cK''_{-} \\ \cK'_{+}[t^{-1}] \end{bmatrix}
     \right)
     = \cL^{\widehat \sigma_{0}}-\textup{clos } \widehat
     \sigma_{0} \begin{bmatrix} \cK''_{-} \\ \cK'_{+}
 \end{bmatrix}.
  \end{equation}
  Then $\widehat \Sigma_{0}$ is the central characteristic
  measure arising from some Lifting Problem.
  \end{theorem}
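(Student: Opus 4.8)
The plan is to verify that the data of the theorem meet all the hypotheses needed to invoke the inverse theorems already proved, and then to read off the Lifting Problem. Concretely I would build the compact Hellinger-space model of Section \ref{S:compactHel} on $\cL^{\widehat\sigma_{0}}$ and apply the \emph{second} version of Theorem \ref{T:converse1} (this is the version whose hypotheses match what is actually supplied here); an equivalent route is to check the four conditions of Theorem \ref{T:converse2} directly. Two of those conditions come essentially for free. The Schur-complement identity \eqref{10.6'} is \emph{literally} the formula \eqref{shat} imposed in condition (3). The analyticity relations \eqref{analytic-meas} are immediate from the assumptions that $\widehat s_{1}$ and $\widehat s$ are analytic, that $\widehat s_{2}$ is analytic with $\widehat s_{2}({\mathbb T})=0$, and that $\widehat s({\mathbb T})=0$; in particular \eqref{analytic-meas'} holds by \eqref{hats0=0}. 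The remaining setup hypothesis \eqref{sigma-primes'} is a preliminary construction: I would realize $\sigma'$ and $\sigma''$ as compressed spectral measures of unitaries $\cU'$, $\cU''$ carrying $*$-cyclic subspaces $\cK'_{+}$ (invariant under $\cU'$) and $\cK''_{-}$ (invariant under $\cU''^{*}$), for instance by taking $\cU'$ to be the minimal unitary dilation of the isometry $\int_{{\mathbb T}}t\,\sigma'(dt)$ on $\cK'_{+}$ and $\cU''^{*}$ that of $\int_{{\mathbb T}}\overline t\,\sigma''(dt)$ on $\cK''_{-}$, so that \eqref{sigma-primes'} then holds by construction.

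With the unitaries in hand I would assemble, exactly as in Section \ref{S:compactHel}, the central scattering system $\bfrakS_{00}$ and its extension to the four-fold AA-unitary coupling $\bfrakS_{AA,00}$ of the four unitaries \eqref{4unitaries}, using the embeddings $\bi_{\cK'_{+},0}$, $\bi_{\cK''_{-},0}$, $\bi_{\widetilde\Delta,0}$, $\bi_{\widetilde\Delta_{*},0}$ defined there through $\widehat s_{1}$ and $\widehat s_{2}$; that all these maps are isometric is precisely the identity \eqref{10.6'}$=$\eqref{shat}. The orthogonality relations \eqref{analytic} and the strong orthogonality \eqref{orthogonal-strong} (hence \eqref{orthogonal}) then reduce to the conjugate-analyticity of $t^{-1}\widehat s_{1}^{*}$, the analyticity of $\widehat s_{2}$, and the vanishing of the $(1,3)$- and $(2,4)$-blocks of $\widehat\Sigma_{0}$. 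The weak condition \eqref{10.7weak} is the assertion $s(0)=0$, i.e.\ $\widehat s({\mathbb T})=0$, which is imposed as \eqref{hats0=0}. Hypothesis \eqref{MFM-ortho'} is supplied directly by the standing assumption \eqref{MFM-ortho''}.

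The key step, and the main obstacle, is the density/orthogonality relation \eqref{MFM-ortho} in its functional-model form \eqref{MFM-ortho-funcmodel}. This is exactly where the \emph{maximality} of the factorable minorants must be used: I must show that maximality of $\widehat s_{1}^{*}\tfrac1m\widehat s_{1}$ as a right factorable minorant of $\sigma'-\widehat s_{0}^{*}\sigma^{\prime\prime[-1]}\widehat s_{0}$ (condition (1)) forces $\sbm{0\\\widehat s_{1}^{*}H^{2 \perp}_{\widetilde\Delta}}$ to be the \emph{entire} orthogonal complement of $\operatorname{clos}\widehat\sigma_{0}\sbm{\cK''_{-}[t]\\\cK'_{+}}$ in $\cL^{\widehat\sigma_{0}}$, and symmetrically for $\widehat s_{2}$ via condition (2). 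This is the converse of the reasoning in Theorem \ref{T:MFM}, where centrality \emph{produced} \eqref{MFM-ortho-funcmodel} and hence the maximality; here the implication is run backwards. The mechanism is the description of orthogonal complements of Hellinger subspaces by measure Schur-complements in Theorem \ref{T:Hellinger-decom}: any element of the complement of the analytic-polynomial subspace is conjugate-analytic and, lying in $\cL^{\widehat\sigma_{0}}$, yields by \eqref{majorize} a factorable minorant of $\sigma'-\widehat s_{0}^{*}\sigma^{\prime\prime[-1]}\widehat s_{0}$; maximality then forces it to factor through $\widehat s_{1}^{*}$ exactly as in \eqref{factor1}, placing it in $\sbm{0\\\widehat s_{1}^{*}H^{2 \perp}_{\widetilde\Delta}}$. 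Establishing this equivalence cleanly is the genuinely delicate point of the whole argument.

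Once \eqref{MFM-ortho}, \eqref{MFM-ortho'}, \eqref{analytic}, \eqref{orthogonal} and \eqref{10.7weak} are in place, the second version of Theorem \ref{T:converse1} identifies $(M_{t},\cL^{\widehat\sigma_{0}})$ as the central extension, $\bfrakS_{00}$ as the central scattering system, and $\bfrakS_{AA,00}$ as the four-fold AA-unitary coupling of a Lifting Problem with data $X=\bi_{\cK''}^{*}\bi_{\cK'}=\widehat s_{0}({\mathbb T})$, $(\cU'',\cK'')$, $(\cU',\cK')$, $\cK''_{-}\subset\cK''$, $\cK'_{+}\subset\cK'$; in particular \eqref{10.6} and \eqref{10.7} (hence \eqref{10.7'}) hold. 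Finally I would transport the conclusion from $\cL^{\widehat\sigma_{0}}$ to $\cL^{\widehat\Sigma_{0}}$ through the unitary $U_{\widehat\sigma_{0},\widehat\Sigma_{0}}$ of \eqref{sigma0Sigma0}, which is well defined precisely by \eqref{10.6'} and \eqref{L1=L}, and then read off from \eqref{Redheffer-central} that the Redheffer coefficient-matrix symbol of this Lifting Problem has Fourier transform $\sbm{\widehat s_{0}&\widehat s_{2}\\\widehat s_{1}&\widehat s}$. Thus $\widehat\Sigma_{0}$ is exactly its central characteristic measure, which is the desired conclusion.
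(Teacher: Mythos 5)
Your strategy coincides with the paper's own proof of Theorem \ref{T:converse3}: work in the Hellinger models, identify \eqref{shat} with the Schur-complement/density condition \eqref{10.6'}, read \eqref{analytic} and \eqref{orthogonal} off from the analyticity hypotheses and the zero $(1,3)$, $(2,4)$ blocks of $\widehat\Sigma_{0}$, match \eqref{hats0=0} with \eqref{10.7weak} and \eqref{MFM-ortho''} with \eqref{MFM-ortho'}, use maximality of the two factorable minorants to obtain \eqref{MFM-ortho} in its model form \eqref{MFM-ortho-funcmodel} (the paper's step \eqref{MFM1}$\to$\eqref{MFM2}), pass between $\cL^{\widehat\sigma_{0}}$ and $\cL^{\widehat\Sigma_{0}}$ via the unitary \eqref{sigma0Sigma0}, and invoke the second version of Theorem \ref{T:converse1}. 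You correctly single out the minorant step as the crux; the paper asserts the implication ``maximality $\Rightarrow$ \eqref{MFM1}'' in one sentence, and your sketch (run Theorem \ref{T:MFM} backwards) is the intended mechanism, though it is incomplete as stated: an element of the orthogonal complement of $\operatorname{clos}\,\widehat\sigma_{0}\sbm{\cK''_{-}[t] \\ \cK'_{+}}$ has a $\cK''_{-}$-component as well as a $\cK'_{+}$-component, and one must first show the former vanishes before the rank-one factorization argument of \eqref{factor1} can be applied.

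The genuine gap is the step you add at the outset: the dilation construction of $(\cU',\cK')$ and $(\cU'',\cK'')$. For a general positive operator measure $\sigma'$, the operator $T:=\int_{{\mathbb T}} t\,\sigma'(dt)$ is merely a contraction, not an isometry; and even when $T$ is an isometry, the compressed spectral measure of its minimal unitary extension has moments $T^{*n}$ for $n\ge 0$ and $T^{|n|}$ for $n<0$, so it reproduces $\sigma'$ (i.e.\ makes \eqref{sigma-primes'} ``hold by construction'') only if the moments of $\sigma'$ are already the powers of a single isometry. That multiplicative moment structure is exactly what it means for $\sigma'$ to arise from a unitary with an invariant $*$-cyclic subspace, and it does not follow from the stated hypotheses (positivity, the two minorant conditions, \eqref{shat}, \eqref{hats0=0}, \eqref{MFM-ortho''}): already $\sigma'=\tfrac12(\delta_{1}+\delta_{-1})$ on $\cK'_{+}={\mathbb C}$ meets the relevant positivity and zero-minorant requirements while $\int t\,\sigma'(dt)=0$ is not an isometry and the moments are not powers of a unimodular constant. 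The paper never constructs the unitaries at all: Theorem \ref{T:converse3} is to be read (compare hypothesis (1) of Theorem \ref{T:converse2}, Remark \ref{1013}, and Corollary \ref{C:MFM}) with $\sigma''$, $\sigma'$ understood as compressed spectral measures \eqref{sigma-primes'} of unitaries carrying the invariant $*$-cyclic subspaces $\cK''_{-}$, $\cK'_{+}$; this standing structure is also what provides the wave-operator embeddings of $\cK'$ and $\cK''$ (hence the four-fold coupling demanded by Theorem \ref{T:converse1}) and the $M_{t}$-invariance needed in the minorant step. So either carry that assumption along, as the paper implicitly does, or supply it as an extra hypothesis on the moments of $\sigma'$, $\sigma''$; as written, your construction step fails.
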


  \begin{proof}
     Assume that $\widehat \Sigma_{0}$ has the form
      \eqref{Sigma0form} with $\widehat s_{1}$, $\widehat s_{2}$ and
      $\widehat s_{0}$ determined from $\widehat \sigma_{0} = \sbm{
      \sigma'' & \widehat s_{0} \\ \widehat s_{0}^{*} & \sigma'}$ as
      in the statement of the theorem. We wish to apply the second
      version of Theorem \ref{T:converse1} to conclude that $\widehat
      \Sigma^{0}$ is the central characteristic measure for a Lifting
      Problem.

      From the hypothesis \eqref{shat} combined with the observation
      \eqref{L1=L}, we see that the first hypothesis \eqref{10.6}
      required for application of the second version of Theorem
      \ref{T:converse1} holds.

      The maximal-factorable-minorant properties of $\widehat s_{1}$
      and $\widehat s_{2}$ imply that
      \begin{align}
  & \operatorname{clos} \widehat \sigma_{0} \begin{bmatrix}
   \cK''_{-}[t] \\ \cK'_{+} \end{bmatrix} = \cL^{\widehat
   \sigma_{0}} \ominus \begin{bmatrix} 0 \\ \widehat s_{1}
   H^{2 \perp}_{\widetilde \Delta_{*}} \end{bmatrix}, \notag \\
 &  \operatorname{clos} \widehat \sigma_{0} \begin{bmatrix}
   \cK''_{-} \\ \cK'_{+}[t^{-1}] \end{bmatrix} = \cL^{\widehat
   \sigma_{0}} \ominus \begin{bmatrix} \widehat s_{2} H^{2}
   _{\widetilde \Delta} \\ 0 \end{bmatrix}.
   \label{MFM1}
 \end{align}
   Property \eqref{shat} implies that the map $U_{\widehat
   \sigma_{0}, \widehat \Sigma_{0}}$ given by \eqref{sigma0Sigma0}
   is unitary from $\cL^{\widehat \sigma_{0}}$ onto $\cL^{\widehat
   \Sigma_{0}}$ One can check that
   $$
U_{\widehat \sigma_{0}, \widehat \Sigma_{0}} \colon
\sbm{ 0 \\ \widehat s_{1} H^{2 \perp}_{\widetilde
    \Delta_{*}} } \mapsto \widehat \Sigma_{0}
    \sbm{ H^{2 \perp}_{\widetilde \Delta} \\ 0 \\ 0
	\\ 0 }, \quad
    U_{\widehat \sigma_{0}, \widehat \Sigma_{0}} \colon
    \sbm{ \widehat s_{2} H^{2}_{\widetilde \Delta}
	\\ 0 } \mapsto \widehat \Sigma_{0}
	\sbm{ 0 \\ H^{2}_{\widetilde \Delta_{*}} \\
	    0 \\ 0 }.
 $$
 Hence \eqref{MFM1} transforms to the equivalent condition in
 $\cL^{\Sigma_{0}}$:
 \begin{align}
   &  \operatorname{clos} \widehat \Sigma_{0} \sbm{ 0 \\ 0
     \\ \cK''_{-}[t] \\ \cK'_{+} } = \cL^{\widehat
     \Sigma_{0}} \ominus \widehat \Sigma_{0} \sbm{ H^{2
     \perp}_{\widetilde \Delta} \\ 0 \\ 0 \\ 0 }, \notag
     \\
     & \operatorname{clos} \widehat \Sigma_{0} \sbm{ 0 \\ 0
     \\ \cK''_{-} \\ \cK'_{+}[t^{-1}] } = \cL^{\widehat
     \Sigma_{0}} \ominus \widehat \Sigma_{0} \sbm{ 0 \\
     H^{2}_{\widetilde \Delta_{*}} \\ 0 \\ 0 }.
     \label{MFM2}
  \end{align}
  Conditions \eqref{MFM2} are just the functional-model equivalent of
  conditions \eqref{MFM-ortho}.

  It is easily checked that condition \eqref{MFM-ortho''} is just the
  translation of \eqref{MFM-ortho'} to this functional-model setting.

  Conditions \eqref{analytic} for this
  case can be read off from the analyticity of $t^{-1} \widehat
  s_{2}$, $\widehat s_{1}$ and $\widehat s$, respectively.
  Similarly, conditions \eqref{orthogonal} can be read off from the
  zero appearing in the $(1,3)$ and $(2,4)$ entries of $\widehat
  \Sigma_{0}$, respectively.
  Condition \eqref{hats0=0} can be seen to be equivalent to
  \eqref{10.7weak}. The second version of Theorem
  \ref{T:converse1} applies to lead us to the desired result.
  \end{proof}

  Combining Theorem \ref{T:converse3} with Theorem \ref{T:MFM} leads
  to the following corollary.  A result of this type was obtained by
  Adamjan-Arov-Kre\u{\i}n in the context of the Nehari problem in
  \cite{AAK71}, see also \cite{Kh-exposed}.

  \begin{corollary}  \label{C:MFM}  Suppose that we are given the
      data set
   $$ (\cU', \cK'), \quad (\cU'', \cK''), \quad \cK'_{+} \subset
   \cK', \quad \cK''_{-} \subset \cK''
  $$
  for a Lifting Problem, that we set
  $$
  \sigma'(dt) = (i_{\cK'_{+} \to \cK'})^{*} E_{\cU'}(dt) i_{\cK'_{+}
  \to \cK'}, \quad
  \sigma''(dt) = (i_{\cK''_{-} \to \cK''})^{*} E_{\cU''}(dt)
  i_{\cK''_{-}  \to \cK''}
  $$
  and that we let $\widehat s_{0}$ be an $\cL(\cK'_{+},
  \cK''_{-})$-valued measure so that
  $$
   \sigma_{0}: = \sbm{ \sigma'' & \widehat s_{0} \\ \widehat
   s_{0}^{*} & \sigma' } \ge 0.
  $$
  Construct measures $\widehat s_{1}$ and $\widehat s_{2}$ as maximal
  factorable minorants as in \eqref{factorable1}---\eqref{factor2} in
  Theorem \ref{T:MFM} and define $\widehat s$ as in \eqref{10.6'}.
  Then $\widehat s_{0}$ is the central-measure symbol for the Lifting
  Problem associated with the operator $X = \widehat s_{0}({\mathbb
  T}) \in \cL(\cK'_{+}, \cK''_{-})$ if and only if $\widehat s$ is
  analytic, $\widehat s({\mathbb T}) = 0$ and condition
  \eqref{MFM-ortho''} holds.  In this case $\widehat \Sigma^{0}$ as
  in \eqref{Sigma0form} is the associated central characteristic
  measure.
\end{corollary}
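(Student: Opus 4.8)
The plan is to obtain the corollary as a bookkeeping combination of the structural results already established, with Theorem~\ref{T:converse3} providing the sufficiency (``if'') direction and Theorems~\ref{T:central-charmeas}, \ref{T:Schur-com} and~\ref{T:MFM} providing the necessity (``only if'') direction. Throughout, $\sigma'$ and $\sigma''$ are held fixed as the compressed spectral measures of the given $\cU'$ and $\cU''$, so that condition~\eqref{sigma-primes'} is satisfied by construction; this is what guarantees that the Lifting Problem we reconstruct carries the prescribed operators $\cU'$, $\cU''$ (and subspaces $\cK'_{+}$, $\cK''_{-}$), with only $X = \widehat s_{0}(\mathbb{T})$ determined a posteriori.

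For sufficiency, assume $\widehat s$ is analytic, $\widehat s(\mathbb{T})=0$, and that \eqref{MFM-ortho''} holds. I would check that $\widehat\Sigma_{0}$ meets all the hypotheses of Theorem~\ref{T:converse3}. Hypotheses (1) and (2) hold by construction, since $\widehat s_{1}$ and $\widehat s_{2}$ were taken to be maximal right/left factorable minorants as in \eqref{factorable1}--\eqref{factor2}; writing the minorant in (2) in the factored form with analytic factor $t^{-1}\widehat s_{2}$ moreover forces $\widehat s_{2}(\mathbb{T})=0$ automatically. Hypothesis (3) is exactly the present assumption that the measure $\widehat s$ defined through \eqref{10.6'} (which coincides with \eqref{shat}) is analytic with $\widehat s(\mathbb{T})=0$; here I would also recall from the closing computation in the proof of Theorem~\ref{T:MFM} that the maximal-minorant property forces the diagonal blocks on the right of \eqref{10.6'} to equal $mI_{\widetilde\Delta}$ and $mI_{\widetilde\Delta_{*}}$, so that $\widehat\Sigma_{0}$ genuinely has the shape \eqref{Sigma0form}. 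Theorem~\ref{T:converse3} then yields that $\widehat\Sigma_{0}$ is the central characteristic measure of some Lifting Problem, necessarily the one with the prescribed data and $X=\widehat s_{0}(\mathbb{T})$, so $\widehat s_{0}$ is its central-measure symbol.

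For necessity, suppose $\widehat s_{0}$ is the central-measure symbol of the Lifting Problem with $X=\widehat s_{0}(\mathbb{T})$, and let $\widehat\Sigma_{0}$ be its genuine central characteristic measure \eqref{central-charmeas}. By Theorem~\ref{T:central-charmeas}, $\widehat\Sigma_{0}$ has the block shape \eqref{Sigma0form} with $(3,4)$-entry $\widehat s_{0}$ and with $\sigma'$, $\sigma''$ as in \eqref{sigma-primes'}, and its entry $\widehat s$ is analytic with $\widehat s(\mathbb{T})=0$ by \eqref{analytic-meas} and \eqref{analytic-meas'}. Theorem~\ref{T:Schur-com} supplies the Schur-complement identity \eqref{10.6'}, and the density property \eqref{MFM-ortho'} of the central scattering system (Theorem~\ref{T:scatsys-properties}) translates, under the Fourier representation $\cF_{0}$ onto $\cL^{\widehat\Sigma_{0}}$ exactly as in the proof of Theorem~\ref{T:converse3}, into \eqref{MFM-ortho''}. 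Finally, Theorem~\ref{T:MFM} identifies the genuine entries $\widehat s_{1}$, $\widehat s_{2}$ as maximal factorable minorants of $\sigma'-\widehat s_{0}^{*}\sigma^{\prime\prime[-1]}\widehat s_{0}$ and $\sigma''-\widehat s_{0}\sigma^{\prime[-1]}\widehat s_{0}^{*}$, which is precisely the recipe used to construct the corollary's $\widehat s_{1}$, $\widehat s_{2}$; hence all three asserted conditions hold.

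The one genuine wrinkle, and the step where I would concentrate the care, is the non-uniqueness in Theorem~\ref{T:MFM}: a maximal factorable minorant determines its analytic factor $\widehat s_{1}$ (resp.\ $\widehat s_{2}$) only up to a left (resp.\ right) unitary-constant normalization, so the constructed $\widehat s_{1}$, $\widehat s_{2}$ need not coincide on the nose with the entries of the genuine central characteristic measure. I would resolve this by noting that the three conditions in the statement are invariant under such normalizations: condition~\eqref{MFM-ortho''} refers only to $\widehat\sigma_{0}$, which is untouched, while replacing $(\widehat s_{1},\widehat s_{2})$ by $(v\,\widehat s_{1},\,\widehat s_{2}\,v_{*})$ with unitary constants $v$, $v_{*}$ merely conjugates the $\widehat s$ defined by \eqref{10.6'} to $v\,\widehat s\,v_{*}$, which preserves both analyticity and the vanishing of $\widehat s(\mathbb{T})$ (and leaves the diagonal blocks $mI$ intact). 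Thus the truth of the conjunction ``$\widehat s$ analytic, $\widehat s(\mathbb{T})=0$, and \eqref{MFM-ortho''}'' is independent of the choice of maximal factorable minorants, so the two directions are consistent and the equivalence is well-posed. Once this invariance is recorded, the corollary follows directly.
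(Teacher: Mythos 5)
Your proposal is correct and follows exactly the route the paper intends: the paper gives no separate argument for Corollary \ref{C:MFM} beyond the remark that it results from combining Theorem \ref{T:converse3} (your sufficiency direction) with Theorem \ref{T:MFM}, supplemented by Theorems \ref{T:central-charmeas}, \ref{T:Schur-com} and the translation of \eqref{MFM-ortho'} into \eqref{MFM-ortho''} (your necessity direction), which is precisely how you argue. Your closing observation that all three conditions are invariant under the unitary-constant left/right normalizations of the maximal factorable minorants is the right way to make the equivalence well-posed, and it nicely makes explicit a point the paper leaves implicit in the final uniqueness statement of Theorem \ref{T:MFM}.
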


\section{The classical Nehari problem}  \label{S:Nehari}
    In this section we use the classical Nehari problem as an
    illustration of our results for the general Lifting Problem.
   The classical Nehari problem
   (time-domain version) can be stated as follows: {\em  given
   a sequence $\{ \gamma_{n}\}_{n =-1, -2, \dots }$ of complex
   numbers indexed by the negative integers, characterize all
   continuations
   $\{ \gamma_{n}\}_{n=0,1,2, \dots}$ so
   that the bi-infinite Toeplitz matrix $\Gamma_{e} =
   [\gamma_{i-j}]_{i,j \in {\mathbb Z}}$ has $\|\Gamma_{e}\| \le 1$
   as an operator on $\ell^{2}({\mathbb Z})$}. This can be put
   in the form of a Lifting Problem with data set
   \begin{align*}
  & \cK' = \cK'' = \ell^{2}({\mathbb Z}), \, \cU' = \cU'' = J
   \text{ (the bilateral shift), } \cK'_{+} = \ell^{2}({\mathbb
   Z}_{+}), \, \cK''_{-} = \ell^{2}({\mathbb Z}_{-}), \\
   & X = [ \gamma_{i-j} ]_{i \in {\mathbb Z}_{-}, j \in {\mathbb
Z}_{+}}
   \colon \ell^{2}({\mathbb Z}_{+}) \to \ell^{2}({\mathbb Z}_{-}).
   \end{align*}
   The equivalent frequency-domain version is: {\em  given the
   complex sequence $\{\gamma_{n}\}_{n \in {\mathbb Z}_{-}}$,
   characterize all
   $L^{\infty}$-functions $\varphi$ on the unit circle ${\mathbb T}$
   with Fourier series representation $\varphi(t) =
   \sum_{n=-\infty}^{\infty} \varphi_{n} t^{n}$ so that}
   $$
   \| \varphi\|_{\infty} \le 1 \text{ and } \varphi_{n} = \gamma_{n}
   \text{ for } n = -1, -2, -3, \dots.
   $$

   If we follow the conventions of Remark \ref{R:param}, then
   the symbol $\{w_{Y}\}_{n \in {\mathbb Z}}$ for a solution $Y$ of
   the time-domain problem is connected with the corresponding
   solution $\varphi$ of the frequency-domain problem according to
   the formula
   $$
    \varphi(t) = \sum_{n = -\infty}^{\infty} w_{Y}(n+1) t^{n},
   $$
   i.e., if we let $\widehat w_{Y}(t) = \sum_{n=-\infty}^{\infty}
   w_{Y}(n) t^{n}$, then $\varphi(t) = t^{-1} \widehat w_{Y}(t)$.
   The Redheffer parametrization for the set of all solutions of the
   frequency-domain problem then has the form
   $$
   \varphi(t) = t^{-1} s_{0}(t) + t^{-1} s_{2}(t)(1 - \omega(t)
   s(t))^{-1} \omega(t) s_{1}(t)
   $$
   for a free-parameter Schur-class function  $\omega$,
   where $s_{0},s_{1},s_{2},s$ are as in \eqref{def-Redheffer}.  If
   we set
   \begin{equation} \label{adjust}
    \ts_{0}(t) = t^{-1}s_{0}(t), \quad \ts_{2}(t) = t^{-1} s_{2}(t),
\quad
   \ts(t) = s(t), \quad \ts_{1}(t) = s_{1}(t),
   \end{equation}
   then the formula
   \begin{equation}  \label{param-Nehari}
   \varphi(t) = \ts_{0}(t) + \ts_{2}(t) (1 - \omega(t) \ts(t))^{-1}
   \omega(t) \ts_{1}(t)
   \end{equation}
   becomes the parametrization of the set of all solutions of the
   frequency-domain classical Nehari problem associated with the
   sequence $\{ \gamma_{n} = [\ts_{0}]_{n}\}_{n = -1, -2, \dots}$
   where $\ts_{0}(t) = \sum_{n=-\infty}^{\infty} [\ts_{0}]_{n} t^{n}$
   is the {\em central solution}.  If we model $\cK'$ and $\cK''$ as
   $L^{2}$ with both scale operators given by $c \in {\mathbb C}
\mapsto
   c \in L^{2}$, then the central characteristic measure given by
   \eqref{central-charmeas} becomes a multiple of Lebesgue measure:
   $$
   \widehat \Sigma_{0} = \begin{bmatrix} 1 & \ts & 0 & \ts_{1} \\
   \ts^{*} & 1 & \ts_{2}^{*} & 0 \\
   0 & \ts_{2} & 1 & \ts_{0} \\ \ts_{1}^{*} & 0 & \ts_{0}^{*} & 1
\end{bmatrix} \cdot m.
$$
In this section we simplify notation and write simply $\widehat\Sigma_{0}$
 for the density against Lebesgue measure (i.e., we drop the $\cdot
m$ factor).

With all these conventions in order, the inverse problem can be formulated
simply as: {\em characterize which $2\times 2$ $L^{\infty}$-matrices
$\sbm{ \ts & \ts_{1} \\ \ts_{2}
  & \ts_{0} }$ arise as the Redheffer coefficient matrix for a
  frequency-domain classical Nehari problem.}  Obviously a
  necessary condition is that $\|\ts_{0}\|_{\infty} \le 1$.  In case
  $\log ( 1 - |\ts_{0}(t)|^{2})$ is not integrable with respect to
  Lebesgue measure over ${\mathbb T}$, then the solution of the
  associated Nehari problem is unique; hence, for $\ts_{0}$ to be the
  central solution for an indeterminate problem, it is also
  necessary that $\log( 1 - |\ts_{0}(t)|^{2})$ be integrable.  In
  this case, it is well known (see e.g.~\cite{Hoffman}) that there is
  a unique outer function $a$ with $a(0)>0$ so that
  \begin{equation} \label{a-fact} |a(t)|^{2} = 1 -
  |\ts_{0}(t)|^{2} \text{ a.e. for $t \in {\mathbb T}$.}
  \end{equation}
  From  Theorem
  \ref{T:MFM} (with the appropriate adjustments caused by
  \eqref{adjust}), we see that then necessarily $\ts_{0}$ determines
  $\ts_{1}$ and $\ts_{2}$ uniquely up to unimodular constant factors
  \begin{equation}  \label{a}
    \ts_{1} = \ts_{2} = a.
  \end{equation}
  According to formula \eqref{shat} in Theorem \ref{T:converse3},
  $\ts$ is then essentially uniquely determined by the condition
  \begin{align*}
  \begin{bmatrix} 1 & \ts \\ {\ts}^{*} & 1 \end{bmatrix} & =
     \begin{bmatrix} 0 & a \\ \overline{a} & 0 \end{bmatrix}
      \begin{bmatrix} 1 & \ts_{0} \\ \ts_{0}^{*} & 1 \end{bmatrix}
^{-1}
\begin{bmatrix} 0 & a \\ \overline{a} & 0 \end{bmatrix} \\
    & = \begin{bmatrix} 0 & a \\ \overline{a} & 0 \end{bmatrix}
      \begin{bmatrix} \frac{1}{\overline{a} a} &
-\frac{ \ts_{0}^{*}}{\overline{a} a} \\
-\frac{\ts_{0}}{\overline{a} a} & \frac{1}{\overline{a} a}
   \end{bmatrix}
  \begin{bmatrix} 0 & a \\ \overline a & 0 \end{bmatrix}
       \text{ (where we use $1 - |\ts_{0}|^{2} = \overline{a}
       a$)} \\
       & = \begin{bmatrix} 1 & -\frac{a}{\overline{a}} \ts_{0}^{*} \\
       - \frac{\overline{a}}{a} \ts_{0} & 1 \end{bmatrix}
 \end{align*}
  from which we read off that necessarily
 \begin{equation}   \label{s}
     \ts = - \frac{a}{\overline{a}} \ts_{0}^{*}=:b.
  \end{equation}
  For our setting here (with coefficient spaces chosen to be
  ${\mathbb C}$ rather than the whole spaces $\cK'_{+} =
  \ell^{2}({\mathbb Z}_{+})$ and $\cK''_{-} = \ell^{2}({\mathbb
  Z}_{-})$), condition \eqref{hats0=0} translates simply to
  \begin{equation}  \label{ts(0)=0}
      \ts(0) = 0.
   \end{equation}
   A pair $(a,b)$ that satisfies the above properties, namely
   \begin{enumerate}
   \item $a,b\in H^\infty$,
   \item $a$ is outer, $b(0)=0$, and
   \item $|a|^2+|b|^2=1$ almost everywhere,
   \end{enumerate}
   is called a $\gamma$-{\sl generating pair} in the sense that any
function $\varphi$
   of the form \eqref{param-Nehari}  with $\ts, \ts_{1}, \ts_{2},
   \ts_{0}$ given by
   \begin{equation}  \label{tS}
   \widetilde S=\begin{bmatrix} \ts & \ts_{1} \\ \ts_{2} & \ts_{0}
\end{bmatrix} =
       \begin{bmatrix} b & a \\ a
	   &  - \frac{a}{\overline{a}} \, \overline{b} \end{bmatrix}
  \end{equation}
   has $\| \varphi\|_{\infty} \le 1$
   with negative Fourier coefficients $[\varphi]_{-1}, [\varphi]_{-2},
   \dots $ independent of the choice of free-parameter Schur-class
   function $\omega$.

   If it is the case that the formula \eqref{param-Nehari}
   parametrizes the set of {\em all} solutions of a Nehari problem,
   it is then said that $(a,b)$ is a {\em Nehari pair}. It is known
that
   not every $\gamma$-generating pair is a Nehari pair---see
   \cite{Kh-IEOT95, Kh-regulariz, Kh-JFA96} for background on this
   and for some more refined results. The following characterization
of Nehari
pairs was obtained in \cite{Kh-regulariz, Kh-exposed}.  We now show
how the result can be obtained as a corollary of Theorem
\ref{T:converse2}.

\begin{theorem} \label{T:Nehari}
\cite{Kh-regulariz, Kh-exposed}
A $\gamma$-generating pair $(a,b)$ is
    a Nehari pair if and only if $($in addition to conditions
$(1)-(3)$
    above$)$ $(a,b)$ satisfies the fourth condition
    \begin{equation}   \label{4}
	\begin{bmatrix} \overline{b} \\ P_{H^{2 \perp}} \overline{a}
	\end{bmatrix} \in \operatorname{clos} \left\{ \begin{bmatrix}
	\overline{a} h \\ P_{H^{2 \perp}} \overline{\ts}_{0} h
    \end{bmatrix} \colon h \in H^{2 \perp} \right\}
    \end{equation}
    where $\overline{\ts}_{0}: = -\frac{a}{\overline{a}}\overline{b}$.
    \end{theorem}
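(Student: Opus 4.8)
The plan is to obtain Theorem~\ref{T:Nehari} by specializing the function-theoretic inverse theorem to the Nehari data, most efficiently through Corollary~\ref{C:MFM}, which packages Theorem~\ref{T:converse2} (equivalently its model form Theorem~\ref{T:converse3}) together with the maximal-factorable-minorant analysis of Theorem~\ref{T:MFM}. First I would record the Nehari problem as a Lifting Problem exactly as at the start of this section: $\cU'=\cU''=M_{t}$ on $L^{2}$, with $\cK'_{+}$, $\cK''_{-}$ the analytic and co-analytic subspaces and scale operators the constant embedding $c\mapsto c$, so that $\sigma'=\sigma''=m$ and the compact central measure has density $\sbm{1 & \ts_{0}\\ \overline{\ts}_{0} & 1}$ with $\ts_{0}=-\tfrac{a}{\overline a}\overline b$. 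Because $(a,b)$ is a $\gamma$-generating pair, $1-|\ts_{0}|^{2}=1-|b|^{2}=|a|^{2}$ a.e., so $\log(1-|\ts_{0}|^{2})$ is integrable and the problem is indeterminate.

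Next I would check that the three defining properties of a $\gamma$-generating pair furnish every hypothesis of Corollary~\ref{C:MFM} except the intersection condition~\eqref{MFM-ortho''}. Indeed, since $a$ is the outer function with $|a|^{2}=1-|\ts_{0}|^{2}$, the classical theory of maximal factorable minorants identifies $\ts_{1}=\ts_{2}=a$ (up to unimodular constants) as the maximal right/left factorable minorants required in Theorem~\ref{T:MFM}; the computation recorded in~\eqref{s} shows that the entry produced by formula~\eqref{shat} is exactly $\widehat s=b$; and clauses (1)--(2) of a $\gamma$-generating pair give that $b\in H^{\infty}$ is analytic with $\widehat s(\mathbb T)=b(0)=0$, which is~\eqref{hats0=0} (equivalently~\eqref{ts(0)=0}), while analyticity of $a$ forces $\widehat s_{2}(\mathbb T)=0$. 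Thus Corollary~\ref{C:MFM} collapses the assertion ``$(a,b)$ is a Nehari pair'' to the single geometric condition~\eqref{MFM-ortho''}, and both directions of the theorem will follow once I prove the equivalence of~\eqref{MFM-ortho''} with~\eqref{4}.

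The heart of the argument is this last translation. I would identify $\cL^{\widehat\sigma_{0}}$ isometrically with the weighted space of $\mathbb C^{2}$-valued functions $f$ with $\int f^{*}\sbm{1 & \ts_{0}\\ \overline{\ts}_{0} & 1}f\,dm<\infty$, under which $\widehat\sigma_{0}f\leftrightarrow f$ and the norm is this integral (Proposition~\ref{sigmabyf}). In this metric the two subspaces on the left of~\eqref{MFM-ortho''} become the closed spans of the analytic generators $\sbm{p\\ c}$ and of the co-analytic generators $\sbm{c\\ q}$, while the right-hand side is the closed span of the constant generators. Spelling out membership in the intersection that is orthogonal to the constant span, and using the factorization $|a|^{2}=1-|\ts_{0}|^{2}$ to convert the $\widehat\sigma_{0}$-weighted inner product into the ordinary $L^{2}$ pairing, I expect the requirement ``intersection no larger than the constant span'' to reduce precisely to the statement that $\sbm{\overline b\\ P_{H^{2\perp}}\overline a}$ lies in the closure of $\{\sbm{\overline a h\\ P_{H^{2\perp}}\overline{\ts}_{0}h}:h\in H^{2\perp}\}$, i.e.\ to~\eqref{4}. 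The main obstacle will be precisely this computation: the closures must be taken in the $\widehat\sigma_{0}$-weighted geometry rather than the flat $L^{2}$ one, the projection $P_{H^{2\perp}}$ in~\eqref{4} has to be manufactured out of the orthogonality relations rather than assumed, and the Hellinger-space bookkeeping of~\cite{Kheifets-Dubov} (Moore--Penrose inverses, Schur complements) is needed to keep the argument clean. With~\eqref{MFM-ortho''}$\Leftrightarrow$\eqref{4} in hand, the forward implication is immediate, since for a genuine Nehari pair $\widehat\Sigma_{0}$ is an honest central characteristic measure and hence satisfies~\eqref{MFM-ortho'} (the abstract form of~\eqref{MFM-ortho''}) by Theorem~\ref{T:scatsys-properties}; the converse is exactly the application of Corollary~\ref{C:MFM}.
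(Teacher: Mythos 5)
Your first step is sound and runs parallel to the paper's own reduction: for a $\gamma$-generating pair the outer function $a$ is indeed the maximal factorable minorant required in Theorem \ref{T:MFM} (so $\ts_1=\ts_2=a$ up to unimodular constants), formula \eqref{shat} forces $\ts=b$, which is analytic and vanishes at the origin, giving \eqref{hats0=0}; hence Corollary \ref{C:MFM} (equivalently Theorem \ref{T:converse3}) reduces ``$(a,b)$ is a Nehari pair'' to the single geometric condition \eqref{MFM-ortho''}. The paper does the analogous thing with Theorem \ref{T:converse2}, whose one outstanding hypothesis is instead the defect-space identities \eqref{10.7'}, specialized as \eqref{10.7''}--\eqref{10.7'''}; by Remark \ref{R:converse1} the two starting points are interchangeable, so your reduction is legitimate. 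The genuine gap is in the second half, which you only ``expect'' to come out right: the equivalence \eqref{MFM-ortho''} $\Leftrightarrow$ \eqref{4} \emph{is} the content of the theorem, and your sketch of that step points at the wrong computation.

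Indeed, if you carry out the unraveling you describe (the part of the intersection orthogonal to the constant span), for instance by parametrizing $\cL^{\widehat\sigma_0}$ isometrically by pairs $(\phi_1,\psi)\in L^2\oplus L^2$ via $\phi=\sbm{\phi_1\\ \overline{\ts}_0\phi_1+\overline a\psi}$, what drops out is: \eqref{MFM-ortho''} holds if and only if the only $\sbm{u\\ v}\in H^2\oplus H^2$ with $\widetilde S^{*}\sbm{u\\ v}\in H^{2\perp}\oplus H^{2\perp}$ is the trivial one. That is precisely the criterion \eqref{dense-Nehari} of Theorem \ref{T:Nehari1}, \emph{not} condition \eqref{4}. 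Condition \eqref{4} is a statement of a different kind: it asserts that the specific vector $\sbm{1\\ \overline b\\ 0\\ \overline a}=\widehat\Sigma_0\sbm{1\\ 0\\ 0\\ 0}$, spanning the model image of the defect space $\widetilde\Delta$, lies in $\operatorname{clos}\,\widehat\Sigma_0\sbm{0\\ 0\\ H^{2\perp}\\ H^2}$ (this is \eqref{1000}), and that membership statement is naturally tied to the identities \eqref{10.7''}, not to the intersection condition. Bridging the two requires exactly what the paper's proof supplies: the orthogonality check and one-dimensionality argument of Remark \ref{R:Neharipair-equiv1}, together with the flat norm identities \eqref{LSigma0-id} available in the $4\times4$ model $\cL^{\widehat\Sigma_0}$ (whose density has the unitary-graph structure \eqref{f-in-space}) but not in the genuinely weighted $2\times2$ model $\cL^{\widehat\sigma_0}$ in which you propose to compute. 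Until that bridge is supplied --- or the translation is redone starting from \eqref{10.7'}/\eqref{10.7''} as in the paper --- your argument establishes Theorem \ref{T:Nehari1} rather than the stated Theorem \ref{T:Nehari}.
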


    \begin{proof}
   By Theorem \ref{T:converse2} above, properties
   \eqref{10.7'} complete the list of necessary and sufficient
conditions
   for a given $\gamma$-generating pair $(a,b)$ to be a Nehari pair.
   When specialized to the Nehari problem, they read as follows:
   a $\gamma$-generating pair $(a,b)$ is a Nehari pair if and only if
   \begin{equation}\label{10.7''}
\begin{bmatrix} 1  \\
   \overline b  \\
   0  \\ \overline a
\end{bmatrix}\mathbb C
=   \operatorname{clos} \begin{bmatrix} 1 & b & 0 & a \\
   \overline b & 1 & \overline a & 0 \\
   0 & a & 1 & \ts_{0} \\ \overline a & 0 & \overline\ts_{0} & 1
\end{bmatrix}
\begin{bmatrix} 0 \\ 0\\ H^{2\perp} \\ H^2 \end{bmatrix}  \ominus
 \operatorname{clos} \begin{bmatrix} 1 & b & 0 & a \\
   \overline b & 1 & \overline a & 0 \\
   0 & a & 1 & \ts_{0} \\ \overline a & 0 & \overline\ts_{0} & 1
\end{bmatrix}
\begin{bmatrix} 0 \\ 0\\ H^{2\perp} \\ t H^2 \end{bmatrix}
\end{equation}
and
\begin{equation}\label{10.7'''}
\overline t \begin{bmatrix} b\\ 1 \\ a \\ 0 \end{bmatrix}\mathbb C =
    \operatorname{clos} \begin{bmatrix} 1 & b & 0 & a \\
     \overline b & 1 & \overline a & 0 \\
      0 & a & 1 & \ts_{0} \\ \overline a & 0 & \overline\ts_{0} & 1
\end{bmatrix}
\begin{bmatrix} 0 \\ 0\\ H^{2\perp} \\ H^2 \end{bmatrix}
\ominus
 \operatorname{clos} \begin{bmatrix} 1 & b & 0 & a \\
   \overline b & 1 & \overline a & 0 \\
   0 & a & 1 & \ts_{0} \\ \overline a & 0 & \overline\ts_{0} & 1
\end{bmatrix}
\begin{bmatrix} 0 \\ 0\\ \overline t H^{2\perp} \\  H^2 \end{bmatrix}.
\end{equation}
 Since for every $\gamma$-generating pair both sides of \eqref{10.7''}
 are of dimension one and
\begin{equation*} 
\begin{bmatrix} 1  \\
   \overline b  \\
   0  \\ \overline a
\end{bmatrix}
\perp
\begin{bmatrix} 1 & b & 0 & a \\
   \overline b & 1 & \overline a & 0 \\
   0 & a & 1 & \ts_{0} \\ \overline a & 0 & \overline\ts_{0} & 1
\end{bmatrix}
\begin{bmatrix} 0 \\ 0\\ H^{2\perp} \\ t H^2 \end{bmatrix},
\end{equation*}
then (for $\gamma$-generating pairs) \eqref{10.7''} is equivalent to
\begin{equation} \label{1000}
\begin{bmatrix} 1  \\
   \overline b  \\
   0  \\ \overline a
\end{bmatrix}
\in
\operatorname{clos}
\begin{bmatrix} 1 & b & 0 & a \\
   \overline b & 1 & \overline a & 0 \\
   0 & a & 1 & \ts_{0} \\ \overline a & 0 & \overline\ts_{0} & 1
\end{bmatrix}
\begin{bmatrix} 0 \\ 0\\ H^{2\perp} \\ H^2 \end{bmatrix}.
\end{equation}
By a similar argument we see that \eqref{10.7'''} is equivalent to
\begin{equation}  \label{1000'}
    \overline{t} \begin{bmatrix}  b \\ 1 \\ a \\ 0 \end{bmatrix} \in
    \operatorname{clos} \begin{bmatrix} 1 & b & 0 & a \\
   \overline b & 1 & \overline a & 0 \\
   0 & a & 1 & \ts_{0} \\ \overline a & 0 & \overline\ts_{0} & 1
\end{bmatrix}
\begin{bmatrix} 0 \\ 0\\ H^{2\perp} \\ H^2 \end{bmatrix}.
\end{equation}

We first note some general principles concerning the space
$\cL^{\widehat \Sigma_{0}}$. Since $\widetilde S=\left[
\begin{smallmatrix} b & a
\\ a & \ts_{0} \end{smallmatrix}\right](t)$ is unitary for almost all
$t \in {\mathbb T}$, it can be seen as a consequence of
Theorem~\ref{T:Hellinger-decom}
and some Schur-complement computations that
$f=\left[\begin{smallmatrix} f_1 \\ f_2\\ f_3\\ f_4
\end{smallmatrix} \right]$ is in $\cL^{ \widehat \Sigma_{0}}$
if and only if
\begin{equation}\label{f-in-space}
f\in L^2 \text{\ and\ }
\begin{bmatrix} f_1\\ f_3 \end{bmatrix}=
\widetilde S\begin{bmatrix} f_2 \\ f_4 \end{bmatrix}.
\end{equation}
This, in particular, implies that for every $f\in \cL^{ \widehat
\Sigma_{0}}$
we have
$$
f = \begin{bmatrix} f_{1} \\ f_{2} \\ f_{3} \\
 f_{4} \end{bmatrix}
 = \widehat \Sigma_{0}
\begin{bmatrix} 0 \\ f_{2} \\ 0 \\
 f_{4} \end{bmatrix}
= \widehat \Sigma_{0}
 \begin{bmatrix} f_{1} \\ 0 \\ f_{3} \\
 0 \end{bmatrix}
.
$$
 It then follows that
 \begin{equation}  \label{LSigma0-id}
  \|f \|_{\cL^{ \widehat \Sigma_{0}}}=
 \left\| \begin{bmatrix} f_2\\ f_4 \end{bmatrix} \right\|_{L^2}
 \text{ and }
 \|f \|_{\cL^{ \widehat \Sigma_{0}}}=
 \left\| \begin{bmatrix} f_1\\ f_3 \end{bmatrix} \right\|_{L^2}.
 \end{equation}

 From the first equality in \eqref{LSigma0-id} we see that
\eqref{1000} is equivalent to
 \begin{equation} \label{1001}
 \begin{bmatrix}
   \overline b  \\
   \overline a
\end{bmatrix}
\in \operatorname{clos}
\begin{bmatrix}
   \overline a & 0 \\
 \overline\ts_{0} & 1
\end{bmatrix}
\begin{bmatrix}  H^{2\perp} \\ H^2 \end{bmatrix}.
\end{equation}
where now this closure and all closures to follow are computed in the
$L^2$-metric.
The latter condition is equivalent to
\begin{equation} \label{1002}
 \begin{bmatrix}
   \overline b  \\ \overline a - \overline {a (0}) \end{bmatrix} \in
\operatorname{clos}
P_{H^{2\perp}}
\begin{bmatrix}
   \overline a \\
 \overline\ts_{0}
\end{bmatrix}
H^{2\perp}.
\end{equation}
From the second equality in \eqref{LSigma0-id} we see that
\eqref{1000} is also equivalent to
 \begin{equation} \label{1005}
 \begin{bmatrix} 1  \\ 0 \end{bmatrix}\in
\operatorname{clos}
\begin{bmatrix} 0 & a \\ 1 &\ts_{0}
\end{bmatrix}
\begin{bmatrix}  H^{2\perp} \\ H^2 \end{bmatrix}.
\end{equation}
which in turn is equivalent to
\begin{equation} \label{1006}
 \begin{bmatrix}
   1  \\  0
\end{bmatrix} \in
\operatorname{clos} P_{H^2}
\begin{bmatrix} a \\ \ts_{0} \end{bmatrix} H^2.
\end{equation}

Similarly, the first of equations \eqref{LSigma0-id} converts
\eqref{10.7'''}
to the equivalent form
\begin{equation} \label{1003}
 \overline t \begin{bmatrix} b  \\ a \end{bmatrix} \in
\operatorname{clos} \begin{bmatrix}  0 & a \\ 1 & \ts_{0}
\end{bmatrix}
\begin{bmatrix}  H^{2\perp} \\  H^2 \end{bmatrix}.
\end{equation}
 which in turn can be rewritten as
\begin{equation} \label{1004}
 \begin{bmatrix} b/t  \\ \frac{a - a (0)}{t} \end{bmatrix}
\in
\operatorname{clos}
P_{H^2}
\begin{bmatrix} a \\ \ts_{0} \end{bmatrix} H^2,
\end{equation}
while the second of equations \eqref{LSigma0-id} can be used to
convert \eqref{10.7'''} to the seemingly different equivalent form
\begin{equation} \label{1007}
 \begin{bmatrix} \overline t  \\ 0 \end{bmatrix}
\in  \operatorname{Clos}
\begin{bmatrix}
   \overline a & 0\\
\overline \ts_{0} & 1
\end{bmatrix}
\begin{bmatrix}  H^{2\perp} \\  H^2 \end{bmatrix}.
\end{equation}
 which in turn can be rewritten as
\begin{equation} \label{1008}
 \begin{bmatrix}
\overline t  \\ 0
\end{bmatrix}
\in
\operatorname{clos}
P_{H^{2\perp}}
\begin{bmatrix}
 \overline   a \\
\overline \ts_{0}
\end{bmatrix}
H^{2\perp}.
\end{equation}

Next observe that \eqref{1003} is just the complex-conjugate version
of \eqref{1001} and hence \eqref{1003} and \eqref{1001} are
equivalent.
We conclude that in fact \eqref{10.7''} and \eqref{10.7'''} are
equivalent to each other.  Alternatively, to arrive at the same
result, observe that \eqref{1005} and \eqref{1007} are
complex-conjugate versions of each other, from which it follows that
\eqref{10.7''} and \eqref{10.7'''} are equivalent to each other.

Thus the $\gamma$-generating pair is a Nehari pair exactly when any
one of the equivalent conditions \eqref{1002}, \eqref{1006},
\eqref{1004},
or \eqref{1008} holds.  In particular, we choose condition
\eqref{1002} to arrive at condition \eqref{4} in Theorem
\ref{T:Nehari}.
\end{proof}

\begin{remark} \label{R:Neharipair-equiv} {\em
  It is known (see \cite[Section
   6]{Sarason2}) that $\gamma$-generating pairs $(a,b)$
   are in one-to-one correspondence with extreme points of the unit
ball of $H^{1}$
   via $f =\left(
   a/(1-b)\right)^2$ and via the same formula Nehari pairs
   are in one-to-one correspondence with exposed points for the unit
ball of $H^{1}$.
   The above characterization of Nehari pairs is, at the same
   time, the best known characterization of exposed points.
   }\end{remark}

\begin{remark}\label{R:Neharipair-equiv1}{\em
   We note also that (for $\gamma$-generating pairs)
\eqref{10.7''}/\eqref{10.7'''} is equivalent to
\begin{equation}\label{Nehari-subspace-eq}
 \cS := \begin{bmatrix} 1  \\
   \overline b  \\
   0  \\ \overline a
\end{bmatrix}
H^{2\perp}
\oplus
\operatorname{clos} \begin{bmatrix} 1 & b & 0 & a \\
   \overline b & 1 & \overline a & 0 \\
   0 & a & 1 & \ts_{0} \\ \overline a & 0 & \overline\ts_{0} & 1
\end{bmatrix}
\begin{bmatrix} 0 \\ 0\\ H^{2\perp} \\ H^2 \end{bmatrix}
\oplus
\begin{bmatrix} b\\
   1 \\
   a \\ 0
\end{bmatrix}
H^2
= L^{ \widehat \Sigma_{0}},
\end{equation}
which is \eqref{cK0decom-func}.
Indeed, if \eqref{10.7''}/\eqref{10.7'''} hold true, then
(as in Theorems \ref{T:converse1} and \ref{T:converse2})
$\cS$ is a
$*$-cyclic
reducing subspace, therefore, it agrees with $L^{ \widehat
\Sigma_{0}}$. Conversely,
assume that equality in \eqref{Nehari-subspace-eq} holds.
We check directly that
\begin{equation} \label{direct-check}
t\begin{bmatrix} 1  \\
   \overline b  \\
   0  \\ \overline a
\end{bmatrix}
\overline t
=
\begin{bmatrix} 1  \\
   \overline b  \\
   0  \\ \overline a
\end{bmatrix}
\perp
\begin{bmatrix} 1  \\
   \overline b  \\
   0  \\ \overline a
\end{bmatrix}
H^{2\perp}
\oplus
\operatorname{clos} \begin{bmatrix} 1 & b & 0 & a \\
   \overline b & 1 & \overline a & 0 \\
   0 & a & 1 & \ts_{0} \\ \overline a & 0 & \overline\ts_{0} & 1
\end{bmatrix}
\begin{bmatrix} 0 \\ 0\\ H^{2\perp} \\ t H^2 \end{bmatrix}
\oplus
\begin{bmatrix} b\\
   1 \\
   a \\ 0
\end{bmatrix}
H^2.
\end{equation}
From the decomposition \eqref{Nehari-subspace-eq} we conclude that
$$\begin{bmatrix} 1  \\
   \overline b  \\
   0  \\ \overline a
\end{bmatrix} \in \operatorname{clos} \begin{bmatrix} 1 & b & 0 & a
\\ \overline{b} & 1 & \overline{a} & 0 \\ 0 & a & 1 & \ts_{0} \\
\overline{a} & 0 & \overline{\ts}_{0} & 1  \end{bmatrix}
\begin{bmatrix} 0 \\ 0 \\ H^{2 \perp} \\ H^{2} \end{bmatrix}.
$$
Combining this with \eqref{direct-check} we see that
\begin{equation}\nonumber
t\begin{bmatrix} 1  \\
   \overline b  \\
   0  \\ \overline a
\end{bmatrix}
\overline t
=
\begin{bmatrix} 1  \\
   \overline b  \\
   0  \\ \overline a
\end{bmatrix}
\in
 \operatorname{clos} \begin{bmatrix} 1 & b & 0 & a \\
   \overline b & 1 & \overline a & 0 \\
   0 & a & 1 & \ts_{0} \\ \overline a & 0 & \overline\ts_{0} & 1
\end{bmatrix}
\begin{bmatrix} 0 \\ 0\\ H^{2\perp} \\ H^2 \end{bmatrix}
\ominus
 \operatorname{clos} \begin{bmatrix} 1 & b & 0 & a \\
   \overline b & 1 & \overline a & 0 \\
   0 & a & 1 & \ts_{0} \\ \overline a & 0 & \overline\ts_{0} & 1
\end{bmatrix}
\begin{bmatrix} 0 \\ 0\\ H^{2\perp} \\ t H^2 \end{bmatrix}
.
\end{equation}
This implies the equality in \eqref{10.7''} since both sides there
are of dimension one.
By a similar argument we get the equality in \eqref{10.7'''}.

The equality in \eqref{Nehari-subspace-eq} can be restated
equivalently as follows:
$f\in \cL^{ \widehat \Sigma_{0}}$ is orthogonal to ${\mathcal S}$, if
and
only if $f=0$. On the other hand, $f\in \cL^{ \widehat \Sigma_{0}}$ is
orthogonal to ${\mathcal S}$, if and
only if
$f_1, f_3$ are in $H^2$ and $f_2, f_4$ are in $H^{2\perp}$. In view
of \eqref{f-in-space}
this means that equality in \eqref{Nehari-subspace-eq} is equivalent
to the property that
the equation
\begin{equation}\label{dense-Nehari}
\widetilde S^*\begin{bmatrix} u_+\\v_+\end{bmatrix}=
\begin{bmatrix} u_-\\v_-\end{bmatrix} \text{ with }
\begin{bmatrix} u_+\\v_+\end{bmatrix}\in H^2 \text{ and }
\begin{bmatrix} u_-\\v_-\end{bmatrix}\in H^{2\perp}
\end{equation}
has only the trivial solution, where $\widetilde S^*$ is as in
\eqref{tS}.
Thus, we get an alternative characterization
of Nehari pairs (that also was obtained in \cite{Kh-regulariz,
Kh-exposed}):}
\begin{theorem}\label{T:Nehari1}
\cite{Kh-regulariz, Kh-exposed}
{\em A $\gamma$-generating pair is a Nehari pair if and only if
\eqref{dense-Nehari}
has only the trivial solution.}
\end{theorem}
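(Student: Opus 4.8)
The plan is to deduce Theorem \ref{T:Nehari1} directly from the work already done in the proof of Theorem \ref{T:Nehari}, rather than restarting from the characterization \eqref{10.7'}. The essential observation is that the main proof has already established (via the decomposition \eqref{Nehari-subspace-eq} and the remark following it) that equality in \eqref{Nehari-subspace-eq} is equivalent to the conjunction of conditions \eqref{10.7''} and \eqref{10.7'''}, which by Theorem \ref{T:converse2} is exactly the condition for $(a,b)$ to be a Nehari pair. So the remaining task is purely to re-express ``equality in \eqref{Nehari-subspace-eq}'' in the language of the equation \eqref{dense-Nehari}.

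The key reformulation is the following. Equality in \eqref{Nehari-subspace-eq} says that the subspace $\cS$ fills out all of $\cL^{\widehat \Sigma_{0}}$; equivalently, the only $f \in \cL^{\widehat \Sigma_{0}}$ orthogonal to $\cS$ is $f = 0$. First I would characterize the orthogonal complement $\cS^{\perp}$ concretely. Writing $f = \sbm{ f_1 \\ f_2 \\ f_3 \\ f_4 }$, the middle summand in \eqref{Nehari-subspace-eq} is (using \eqref{f-in-space}) all of $\cL^{\widehat \Sigma_{0}}$ intersected with the condition that $\sbm{f_1 \\ f_3}$ and $\sbm{f_2 \\ f_4}$ range over the appropriate Hardy/anti-Hardy components; orthogonality to the first and third summands pins down the remaining analytic/co-analytic constraints. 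Carrying this out, one finds that $f \perp \cS$ precisely when $f_1, f_3 \in H^2$ while $f_2, f_4 \in H^{2\perp}$. This step is essentially the computation already indicated in Remark \ref{R:Neharipair-equiv1}, so it is mostly bookkeeping with the Hellinger inner-product identities \eqref{LSigma0-id}.

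Next I would translate this component description back through the defining relation \eqref{f-in-space}, namely $\sbm{f_1 \\ f_3} = \widetilde S \sbm{f_2 \\ f_4}$. Setting $\sbm{u_- \\ v_-} := \sbm{f_2 \\ f_4} \in H^{2\perp}$ and $\sbm{u_+ \\ v_+} := \sbm{f_1 \\ f_3} \in H^2$, the relation $\sbm{f_1\\f_3} = \widetilde S \sbm{f_2\\f_4}$ is equivalent, since $\widetilde S$ is pointwise unitary, to $\widetilde S^{*} \sbm{u_+ \\ v_+} = \sbm{u_- \\ v_-}$. Thus nonzero $f \in \cS^{\perp}$ correspond exactly to nontrivial solutions of \eqref{dense-Nehari}, and $f = 0$ iff the only solution of \eqref{dense-Nehari} is trivial. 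Combining, equality in \eqref{Nehari-subspace-eq} holds iff \eqref{dense-Nehari} has only the trivial solution, and since the former is equivalent to $(a,b)$ being a Nehari pair, the theorem follows.

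I expect the only genuine subtlety to be the careful identification of $\cS^{\perp}$ in the third paragraph's first step: one must verify that imposing orthogonality against the rank-one pieces $\sbm{1 \\ \overline b \\ 0 \\ \overline a} H^{2\perp}$ and $\sbm{b \\ 1 \\ a \\ 0} H^2$ is, together with the description of the middle summand, exactly equivalent to the clean statement $f_1, f_3 \in H^2$ and $f_2, f_4 \in H^{2\perp}$ and no more. This requires using that $\sbm{1 \\ \overline b \\ 0 \\ \overline a}$ and $\sbm{b \\ 1 \\ a \\ 0}$ are themselves elements of $\cL^{\widehat \Sigma_{0}}$ satisfying \eqref{f-in-space} and that their respective analytic/co-analytic gradings are correctly accounted for. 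Everything else is a direct consequence of the pointwise unitarity of $\widetilde S$ and the identities \eqref{f-in-space}, \eqref{LSigma0-id} already in hand.
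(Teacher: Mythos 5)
Your proof is correct and follows essentially the same route as the paper: the paper's own argument in Remark \ref{R:Neharipair-equiv1} likewise passes from the Nehari-pair property through the equivalence with \eqref{Nehari-subspace-eq}, then characterizes the orthogonal complement of $\cS$ as those $f$ with $f_1, f_3 \in H^2$ and $f_2, f_4 \in H^{2\perp}$, and translates this via \eqref{f-in-space} and the pointwise unitarity of $\widetilde S$ into the statement that \eqref{dense-Nehari} has only the trivial solution. The equivalence you take as given---equality in \eqref{Nehari-subspace-eq} if and only if \eqref{10.7''}/\eqref{10.7'''} hold, hence if and only if $(a,b)$ is a Nehari pair---is exactly what the paper establishes in the first half of that same remark (via the direct check \eqref{direct-check} and the dimension-one argument), so relying on it is legitimate and your argument reproduces the paper's proof.
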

 \end{remark}

   \begin{remark} \label{R:relaxed} {\em

       In the context of the scalar Nehari problem, the
       parametrization formula \eqref{FFparam-final} reads in our terms as
     \begin{equation}\label{1011}
 M_{w}|_{H^{2}}  =  \Gamma +  \Phi_{22}\sqrt{I-\Gamma^*\Gamma}
    +\Phi_{21}\omega
(I-\Phi_{11}\omega)^{-1}\Phi_{12}\sqrt{I-\Gamma^*\Gamma}
\end{equation}
where $\Gamma$ is the given Hankel operator
from $H^2$ to $H^{2 \perp}$,  $M_{w}|{H^2} \colon H^2 \to L^2$ is the
restriction to $H^2$ of the multiplication operator $M_w \colon f(t)
\mapsto w(t) f(t)$ on $L^2$ and $w$ is a solution of the given
Nehari problem,  and $\omega$ is the free-parameter
Schur-class function (see \eqref{FFparam'} and \eqref{FFparam-final}).
The   Redheffer coefficient matrix
       $\Psi$ of \cite{tH-pre} (see (\ref{1012})) simplifies to
       \begin{equation}   \label{initial-input/output-1}
	\Psi=
\begin{bmatrix} \Psi_{11} & \Psi_{12}\\
\Psi_{21} & \Psi_{22}\end{bmatrix} \colon
\begin{bmatrix} h_1 \\ \sqrt{I-\Gamma^*\Gamma}h_2 \end{bmatrix} \to
\begin{bmatrix} \widetilde s h_1 + \widetilde s_1 h_2 \\
\widetilde s_2 h_1 + P_{H^{2}}\widetilde s_0 h_2
 \end{bmatrix},\quad h_1, h_2\in H^2;
	\end{equation}
$\ts, \ts_{1},
\ts_{2}, \ts_{0}$ arise from a $\gamma$-generating pair $(a,b)$ as in
\eqref{tS}.
From the unitary property of
$\left[\begin{smallmatrix} \widetilde s & \widetilde s_{1} \\
\widetilde s_{2} & \widetilde s_{0} \end{smallmatrix} \right]$ as a
multiplication operator on $L^{2} \oplus L^{2}$ and the fact that
$P_{H^{2 \perp}} \widetilde s_{0}|_{H^{2}} = \Gamma$, it follows that
$\Psi$ is isometric as an operator  from
$\left[\begin{smallmatrix} H^{2} \\
\overline{\operatorname{Ran}}(I-\Gamma^*\Gamma)
 \end{smallmatrix} \right]
=\left[\begin{smallmatrix} H^{2} \\ H^{2} \end{smallmatrix} \right]$
(the equality
holds since the problem is indeterminate)
to $\left[\begin{smallmatrix} H^{2} \\ H^{2} \end{smallmatrix}
\right]$.
Hence $\Psi$ is also coisometric exactly when the set
\begin{equation}  \label{rangeset}
 \left\{ \begin{bmatrix}
\widetilde s h_1 + \widetilde s_1 h_2 \\
\widetilde s_2 h_1 + P_{H^{2}}\widetilde s_0 h_2
 \end{bmatrix} \colon h_{1}, h_{2} \in H^{2}   \right\}
\end{equation}
is dense in $H^{2} \oplus H^{2}$.

Note, that $H^2$ in the first entry of the initial space for $\Psi$
and $H^2$ in both entries of the target space for $\Psi$ should be
understood
as $H^2(\mathbb C)$ since $\cD_{\rho^*}$, $\cG$ and $\cD_{T'}$ are
all of dimension one,
while $H^2$ in the second entry of the initial space for $\Psi$ is
$\cD_\Gamma$.

Conversely, given any matrix
$\widetilde S=\begin{bmatrix} \ts & \ts_{1} \\ \ts_{2} & \ts_{0}
\end{bmatrix}$
arising from a $\gamma$-generating pair $(a,b)$ as in
\eqref{tS}.
Define a Hankel operator as
$\Gamma=P_{H^{2 \perp}} \widetilde s_{0}|_{H^{2}}$ and
a matrix
$$\Psi=
\begin{bmatrix} \Psi_{11} & \Psi_{12}\\
\Psi_{21} & \Psi_{22}\end{bmatrix}
 : H^{2} \oplus H^{2}\to H^{2} \oplus H^{2}$$
as in \eqref{initial-input/output-1}. Then $\Psi$ is an
isometry.
Assume that \eqref{rangeset} is dense in $H^{2} \oplus H^{2}$.
Then $\Psi$ is a coisometry. Hence, Theorem 0.3
of \cite{tH-pre} applies to this $\Psi$. The theorem tells us that $\Psi$
is the Redheffer coefficient matrix as in \eqref{FFparam-final} for some
(in general not unique) relaxed commutant lifting problem. The data of
such a problem presented in
      the proof of Theorem 0.3 in \cite{tH-pre} is as follows:
      $A: \mathbb C\oplus H^2\to \mathbb C$ is a projection on the
first
      entry, $T'=0$ on $\mathbb C$, $R$ is a certain contraction from
a
      certain subspace $\cF\subset H^2$ to $\mathbb C\oplus H^2$ and
      $Q$ is an embedding of $\cF$ into $\mathbb C\oplus H^2$.

However, the density property of \eqref{rangeset}
can be equivalently formulated as:
{\em the equation
\begin{equation}\nonumber
\widetilde S^*\begin{bmatrix} u_+\\v_+\end{bmatrix}=
\begin{bmatrix} u_-\\v_-\end{bmatrix},\quad
\begin{bmatrix} u_+\\v_+\end{bmatrix}\in H^2,
\begin{bmatrix} u_-\\v_-\end{bmatrix}\in H^{2\perp}
\end{equation}
has only the trivial solution}. In view of Theorem \ref{T:Nehari1}
above, the latter is equivalent to the property that $\widetilde S$ is the
Redheffer coefficient matrix for a Nehari problem. We conclude that:
{\em the operator $\Psi$ \eqref{initial-input/output-1} is coisometric
} (and hence unitary) {\em if and only if $\left[ \begin{smallmatrix}
\ts & \ts_{1} \\ \ts_{2} & \ts_{0} \end{smallmatrix}
\right]$ is the Redheffer coefficient matrix} (in the sense of the
present paper) {\em for  a Nehari problem.}  Thus, in this case the RCL
problem can be taken to have the special form of a Nehari problem.
In this way we arrive at an improved version (in
the context of the Nehari problem) of the result from \cite{tH-pre}.
}\end{remark}

\end{document}